\DeclareSymbolFontAlphabet{\mathbb}{AMSb}
\DeclareSymbolFontAlphabet{\mathbbold}{bbold}
\tikzset{treeedge/.style = {above, sloped}}
\tikzset{treenode/.style = {circle, minimum size = 3pt, inner sep = 0, draw, fill}}
\renewcommand{\ulcorner}{\lefthalfcap}
\renewcommand{\lrcorner}{\righthalfcup}
\newcategory{\qCat}{q}{C}{at}
\newcategory{\Kan}{}{K}{an}
\renewcommand{\bbDelta}{\mathbbold{\Delta}}
\renewcommand{\bbLambda}{\mathbbold{\Lambda}}
\renewcommand{\bbOmega}{\mathbbold{\Omega}}
\newcommand{\presheaf}{\mathrm{\catPP sh}}
\newcommand{\Psh}[1]{{\presheaf (#1)}}
\renewcommand{\PshO}{{\Psh\bbOO}}
\newcommand{\PshDelta}{{\Psh\bbDelta}}
\newcategory{\simplicialpresheaves}{}{S}{p}
\newcommand{\Sp}[1]{\simplicialpresheaves (#1)}
\newcommand{\SpDelta}{{\Sp\bbDelta}}
\newcommand{\SpLambda}{{\Sp\bbLambda}}
\newcommand{\PshLambda}{{\Psh\bbLambda}}
\newcategory{\homotopyCategory}{}{H}{o}
\DeclareMathOperator{\Ho}{\homotopyCategory}
\newcategory{\IAlg}{\rmII}{A}{lg}
\newcategory{\ICat}{\rmII}{C}{at}
\newcategory{\elTree}{el}{T}{ree}
\newcommand{\adjunction}{
    \mathrel{\vcenter{
        \offinterlineskip\m@th
        \ialign{
            \hfil$##$\hfil\cr
            \longrightarrow\cr
            \noalign{\kern-.2ex}
            \begingroup\setlength\unitlength{.15em}
            \begin{picture} (1, 1)
                \roundcap
                \polyline (0, 0) (1, 0)
                \polyline (0.5, 0) (0.5, 1)
            \end{picture}
            \endgroup\cr
            \longleftarrow\cr
        }
    }}
}
\newcommand{\longadjunction}{
    \mathrel{\vcenter{
        \offinterlineskip\m@th
        \ialign{
            \hfil$##$\hfil\cr
            \relbar\joinrel\relbar\joinrel\relbar\joinrel\relbar\joinrel\relbar\joinrel\rightarrow\cr
            \noalign{\kern-.2ex}
            \begingroup\setlength\unitlength{.15em}
            \begin{picture} (1, 1)
                \roundcap
                \polyline (0, 0) (1, 0)
                \polyline (0.5, 0) (0.5, 1)
            \end{picture}
            \endgroup\cr
            \leftarrow\joinrel\relbar\joinrel\relbar\joinrel\relbar\joinrel\relbar\joinrel\relbar\cr
        }
    }}
}
\newcommand{\Cell}[1]{\mathsf{Cell} \left( #1 \right)}
\newcommand{\longhookrightarrow}{
    \mathrel{
        \hookrightarrow
        \!\!\!\!
        \tikz \fill[white] (0, 0) rectangle (.1,.4);
        \!\!\!\!\!\!
        \relbar
        \!\!
        \rightarrow
    }
}
\newcommand{\subto}{\longhookrightarrow}
\renewcommand{\bot}{\perp}
\newcommand{\boxproduct}{\mathbin{\boxtimes}}
\newcommand{\lboxproduct}{\mathbin{\hat{\boxtimes}}}
\newcommand{\qqquad}{\qquad\qquad}
\newcommand{\noop}[1]{}
\title{Opetopic algebras III: Presheaf models of homotopy-coherent opetopic algebras}
\date{January 2020}
\author[C. Ho Thanh]{C\'edric Ho Thanh}
\address{
    Institut de Recherche en Informatique Fondamentale (IRIF),
    Universit\'e Paris Diderot,
    France
}
\email{cedric.hothanh@irif.fr}
\urladdr{hothanh.fr/cedric}
\author[C. Leena Subramaniam]{Chaitanya Leena Subramaniam}
\address{
    Institut de Recherche en Informatique Fondamentale (IRIF),
    Universit\'e Paris Diderot,
    France
}
\email{chaitanya@irif.fr}
\urladdr{www.irif.fr/~chaitanya}
\keywords{
    Opetope,
    Higher categoy,
    Higher operad,
    Complete Segal space,
    Complete Dendroidal Segal space,
    Model category
}
\subjclass[2010]{Primary 18G55; Secondary 18C20}
\begin{document}

\begin{abstract}
    For $\Lambda$ the category of free opetopic algebras, we construct a model
    structure \emph{\`a la Cisinski} on the category of presheaves over
    $\Lambda$, and show that is is equivalent to opetopic complete Segal
    spaces. This generalizes the results of Joyal and Tierney in the simplicial
    case, and of Cisinski and Moerdijk in the planar dendroidal case.
\end{abstract}

\maketitle

\begin{small}
    \tableofcontents
\end{small}
% \addtocontents{toc}{\protect\begin{multicols}{2}}

\section{Introduction}

This work is a continuation of \cite{HoThanh2020} and \cite{HoThanh2020a}. In
the former, we introduced the notion of opetopic algebra, which can be thought
of as algebras whose operations have \emph{higher dimensional arities}. In the
latter, we lifted the notion of Segal and complete Segal space from the
simplicial and dendroidal setting \cite{Rezk2001,Cisinski2011} to our opetopic
framework. In this paper, we provide a presheaf model for complete Segal
spaces, namely \emph{opetopic $\infty$-algebras}. Much like in the simplicial
case, where they are called \emph{quasi-categories} \cite{Boardman1973,Joyal},
$\infty$-algebras are opetopic algebras where laws only hold up to homotopy,
instead of strictly. Our strategy to prove this equivalence is largely adapted
from that of Joyal and Tierney \cite{Joyal2007}.

\subsection{Related work}

Complete Segal spaces, introduced in \cite{Rezk2001}, are simplicial spaces
that have compositions and identities up to homotopy. In \cite{Joyal2007},
Joyal and Tierney showed that they can be modeled by quasi-categories, that is,
fibrant objects in a suitable model structure on the category $\PshDelta$ of
simplicial sets. Specifically, they show that the discrete space functor is a
left Quillen equivalence
\[
    (-)^\mathrm{disc} : \PshDelta_{\mathrm{Joyal}}
        \adjunction \SpDelta_{\mathrm{Rezk}} : (-)_{-, 0}
\]
where the right adjoint is the ``first row'' functor.

The notion of complete Segal space has been lifted to the dendroidal setting by
Cisinski and Moerdijk in \cite{Cisinski2013}. In this setting, they are
dendroidal spaces having operadic compositions and identities up to homotopy.
In \cite{Cisinski2011}, they show that $\infty$-operads, model complete
dendroidal Segal spaces in the same sense as before. Furthermode, since the
category of simplicial sets (resp. spaces) can be recovered as a slice of the
category of dendroidal sets (resp. spaces), the equivalence in the simplicial
case can be recovered as a special case.

\subsection{Plan}

In section \cref{sec:preliminaries}, we recall elements of Cisinski's
homotopical machinery from \cite{Cisinski2006}, and specify the so-called
Joyal--Tierney calculus \cite{Joyal2007} to an arbitrary category of simplicial
presheaves. In \cref{sec:opetopic-algebras}, we survey some results of
\cite{HoThanh2020}, and construct the folk model structure for opetopic
algebras. In \cref{sec:infty-algebra}, we consider presheaves over $\bbLambda$,
the category of free opetopic algebras, we construct the model structure for
$\infty$-algebras following the methods of Cisinski presented in
\cref{sec:cisinski-homotopy-theory}. Lastly, in
\cref{sec:infty-algebras-vs-complete-segal-spaces}, we establish a Quillen
equivalence between $\infty$-algebras and complete Segal spaces of
\cite{HoThanh2020a}.

\subsection{Acknowledgments}

We are grateful to Pierre-Louis Curien for his patient guidance and reviews.
The first named author has received funding from the European Union's Horizon
2020 research and innovation program under the Marie Sklodowska--Curie grant
agreement No. 665850.

% % !TEX root = ../main.tex
% %%% Local Variables:
% %%% mode: latex
% %%% TeX-master: "../main"
% %%% End:

\section{Preliminaries}
\label{sec:preliminaries}

\subsection{Cisinski homotopy theory}
\label{sec:cisinski-homotopy-theory}

We recall some results and constructions of \cite{Cisinski2006}.

\begin{definition}
    [Lifting property]
    \label{def:lifting-property}
    Let $\catCC$ be a category, and $l,r \in \catCC^\rightarrow$. We say that
    $l$ has the \emph{left lifting property against}\index{lifting property}
    $r$ (equivalently, $r$ has the \emph{right lifting property against} $l$),
    written $l \pitchfork r$\index{$\pitchfork$|see {lifting property}}, if for
    any solid commutative square as follows, there exists a (non necessarily
    unique) dashed arrow making the two triangles commute:
    \begin{equation}
        \label{eq:lifting-property}
        \begin{tikzcd}
            \cdot \ar[d, "l" left] \ar[r] & \cdot \ar[d, "r"] \\
            \cdot \ar[r] \ar[ur, dashed] & \cdot
        \end{tikzcd}
    \end{equation}
    If $\catCC$ has a terminal object $1$, then for all $X \in \catCC$, we
    write $l \pitchfork X$ for $l \pitchfork (X \twoheadrightarrow 1)$. Let
    $\sfLL$ and $\sfRR$ be two classes of morphisms of $\catCC$. We write
    $\sfLL \pitchfork \sfRR$ if for all $l \in \sfLL$ and $r \in \sfRR$ we have
    $l \pitchfork r$. The class of all morphisms $r$ (resp. $l$) such that
    $\sfLL \pitchfork r$ (resp. $l \pitchfork \sfRR$) is denoted
    $\sfLL^\pitchfork$ (resp ${}^\pitchfork \sfRR$).
\end{definition}

\begin{definition}
    [Orthogonality]
    \label{def:orthogonality}
    We say that $l$ is \emph{left orthogonal to}\index{orthogonal} $r$
    (equivalently, $r$ is \emph{right orthogonal} to $l$), written $l \perp
    r$\index{$\perp$|see {orthogonal}}, if for any solid commutative square as
    in \cref{eq:lifting-property}, there exists a \emph{unique} dashed arrow
    making the two triangles commute. The relation $\perp$ is also known as the
    \emph{unique lifting property}\index{unique lifting property|see
    {orthogonal}}. Note that notations of \cref{def:lifting-property} also make
    sense when $\pitchfork$ is replaced by $\perp$.
\end{definition}

\begin{notation}
    For $\catCC$ a small category, let $\Psh\catCC \eqdef \Cat (\catCC^\op,
    \Set)$\index{$\Psh\catCC$} be the category of $\Set$-presheaves over
    $\catCC$, and $\Sp\catCC \eqdef \Cat (\catCC^\op,
    \PshDelta)$\index{$\Sp\catCC$} be the category of simplicial presheaves
    over $\catCC$.
\end{notation}

\begin{definition}
    [{Cylinder object, \cite[definition 1.3.1]{Cisinski2006}}]
    \label{def:cylinder}
    Let $\catCC$ be a small category, and $X \in \Psh\catCC$ be a presheaf over
    $\catCC$. A \emph{cylinder}\index{cylinder} of $X$ is a factorization of the fold map
    \[
        \triangleDdiagram
            {X + X}{X}{\rmII X ,}{\nabla}{(i_0, i_1)}{\nabla}
    \]
    such that $(i_0, i_1) : X + X \longrightarrow \rmII X$ is a monomorphism. We
    write $X^{(e)}$ for the image of $i_e : X \longrightarrow \rmII X$.
\end{definition}

\begin{definition}
    [{$\rmII$-homotopy, \cite[definition 1.3.3, remark 1.3.4]{Cisinski2006}}]
    \label{def:homotopy}
    Let $\catCC$ be a small category, $f, g : X \longrightarrow Y$ be two
    parallel maps in $\Psh\catCC$, and $\rmII X$ be a cylinder of $X$
    (\cref{def:cylinder}). An \emph{elementary
    $\rmII$-homotopy}\index{elementary homotopy|see {homotopy}} from $f$ to $g$
    is a morphism $H : \rmII A \longrightarrow B$ such that the following
    triangle commutes:
    \[
        \triangleDLdiagram
            {A + A}{\rmII A}{B .}
            {(i_0, i_1)}{(f, g)}{H}
    \]
    Let $\simeq_\rmII$ (or just $\simeq$ is the context is clear), the
    \emph{$\rmII$-homotopy
    relation}\index{homotopy}\index{I-homotopy@$\frakII$-homotopy|see
    {homotopy}}, be the equivalence relation spanned by this relation on
    $\Psh\catCC (A, B)$.

    On readily checks that $\simeq$ is a congruence on the category
    $\Psh\catCC$, and let $\Ho\Psh\catCC$ be the quotient category. A morphism
    $f : X \longrightarrow Y$ is a \emph{$\rmII$-homotopy
    equivalence}\index{homotopy equivalence} (or just \emph{homotopy
    equivalence} if the context is clear) if it is invertible in
    $\Ho\Psh\catCC$.
\end{definition}

\begin{definition}
    [{Elementary homotopical data, \cite[definition 1.3.6]{Cisinski2006}}]
    \label{def:elementary-homotopical-data}
    Let $\catCC$ be a small category. An \emph{elementary homotopical
    data}\index{elementary homotopical date} on $\Psh\catCC$ is a functorial
    cylinder $\rmII : \Psh\catCC \longrightarrow \Psh\catCC$
    (\cref{def:cylinder}) such that
    \begin{enumerate}
        \item \condition{DH1} $\rmII$ preserves small colimits and monomorphisms;
        \item \condition{DH2} for all monomorphism $f : X \longrightarrow Y$ in
        $\Psh\catCC$, and for $e = 0, 1$, the following square is a pullback:
        \[
            \squarediagram
                {X}{Y}{\rmII X}{IY .}
                {f}{i_e}{i_e}{\rmII f}
        \]
    \end{enumerate}
\end{definition}

\begin{definition}
    [{Anodyne extension}]
    \label{def:anodyne-extension}
    Let $\catCC$ be a small category, and $\rmII : \Psh\catCC \longrightarrow
    \Psh\catCC$ be an elementary homotopical data on $\Psh\catCC$
    (\cref{def:elementary-homotopical-data}). A
    \emph{class of anodyne extensions}\index{anodyne extension} relative to $\rmII$ is a class $\mathsf{An}
    \subseteq \Psh\catCC^\rightarrow$ such that
    \begin{enumerate}
        \item \condition{An0} there exists a \emph{set} $A \subseteq
        \Psh\catCC^\rightarrow$ of monomorphisms such that $\mathsf{An} =
        {}^\pitchfork \left( A^\pitchfork \right)$
        (\cref{def:lifting-property});
        \item \condition{An1} for all monomorphism $m : X \longhookrightarrow
        Y$ in $\Psh\catCC$, and $e = 0, 1$, the cocartesian gap map $g$ is in
        $\mathsf{An}$:
        \[
            \begin{tikzcd}
                X
                    \ar[r, hook, "m"] \ar[d, "i_e" left]
                    \ar[dr, phantom, "\ulcorner", very near end] &
                Y
                    \ar[d, "i_e"] \ar[ddr, bend left] \\
                \rmII X
                    \ar[r, hook] \ar[drr, bend right] &
                \rmII X \cup Y^{(e)}
                    \ar[dr, dashed, "g" description] \\
                &
                &
                \rmII Y ;
            \end{tikzcd}
        \]
        \item \condition{An2} for all $m : X \longhookrightarrow Y$ in
        $\mathsf{An}$, the cocartesian gap map $g$ is in $\mathsf{An}$:
        \[
            \begin{tikzcd}
                X + X
                    \ar[r, hook, "m + m"] \ar[d]
                    \ar[dr, phantom, "\ulcorner", very near end] &
                Y + Y
                    \ar[d] \ar[ddr, bend left] \\
                \rmII X
                    \ar[r, hook] \ar[drr, bend right] &
                \rmII X \cup (Y + Y)
                    \ar[dr, dashed, "g" description] \\
                &
                &
                \rmII Y .
            \end{tikzcd}
        \]
    \end{enumerate}
\end{definition}

\begin{definition}
    [{Homotopical structure, \cite[definition 1.3.14]{Cisinski2006}}] Let
    \label{def:homotopical-structure}
    $\catCC$ be a small category. A \emph{homotopical
    structure}\index{homotopical structure} on $\Psh\catCC$ is a pair $(\rmII,
    \mathsf{An})$, where $\rmII$ is a functorial cylinder on $\Psh\catCC$
    (\cref{def:cylinder}), and $\mathsf{An}$ is a class of anodyne extension
    relative to $\rmII$ \cref{def:anodyne-extension}.
\end{definition}

\begin{definition}
    [{Cisinski model category}]
    \label{def:cisinski-model-structure}
    A \emph{Cisinski model category}\index{Cisinski model category} is a model
    structure on a presheaf category over a small category, that is cofibrantly
    generated, and where cofibrations are the monomorphisms.
\end{definition}

A notable source of such structures is the following theorem:

\begin{theorem}
    [{\cite[definition 1.3.21, theorem 1.3.22]{Cisinski2006}}]
    \label{th:cisinsli-model-structure}
    Let $\catCC$ be a small category, and $(\rmII, \mathsf{An})$ be a homotopical
    structure on $\Psh\catCC$ (\cref{def:homotopical-structure}). There is a
    model structure on $\Psh\catCC$ such that:
    \begin{enumerate}
        \item a morphism $f$ is a \emph{naive fibration}\index{fibration!naive}
        if $\mathsf{An} \pitchfork f$; a presheaf $X \in \PshLambda$ is
        \emph{fibrant}\index{fibrant object} if the terminal morphism $X
        \longrightarrow 1$ is a naive fibration;
        \item a morphism $f : X \longrightarrow Y$ is a \emph{weak
        equivalence}\index{weak equivalence} if for all fibrant object $P \in
        \PshLambda$, the induced map $f^* : \Ho\PshLambda (Y, P)
        \longrightarrow \Ho\PshLambda (Y, P)$ is a bijection
        (\cref{def:homotopy});
        \item a morphism $f$ is a \emph{cofibrations}\index{cofibration} if it
        is a monomorphisms, it is a \emph{acyclic
        cofibrations}\index{cofibration!acyclic} if it is a cofibration and a
        weak equivalence;
        \item a morphism $f$ is a \emph{fibration}\index{fibration} if it has
        the right lifting property with respect to acyclic cofibrations, it is
        an \emph{acyclic fibration}\index{fibration!acyclic} if it has the
        right lifting property with respect to all cofibrations.
    \end{enumerate}
    This model structure is of Cisinski type, cellular, and proper.
\end{theorem}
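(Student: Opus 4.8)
The plan is to realize the stated classes as the two halves of two weak factorization systems and then verify Quillen's axioms, following Cisinski. Write $\mathsf{Cof}$ for the monomorphisms, $\mathsf{W}$ for the weak equivalences as defined, $\mathsf{NFib} \eqdef \mathsf{An}^\pitchfork$ for the naive fibrations, and $\mathsf{TFib} \eqdef \mathsf{Cof}^\pitchfork$ for the candidate acyclic fibrations. Since $\Psh\catCC$ is a presheaf topos it is complete and cocomplete, and its monomorphisms are generated by a set (the subobject inclusions into representables); applying the small object argument to this set yields the weak factorization system $(\mathsf{Cof}, \mathsf{TFib})$. Applying the small object argument to the set $A$ furnished by \condition{An0} yields a second weak factorization system whose left class is ${}^{\pitchfork}(A^{\pitchfork}) = \mathsf{An}$ and whose right class is $A^{\pitchfork} = \mathsf{NFib}$. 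In particular every map factors as an anodyne extension followed by a naive fibration, $\mathsf{An}$ is saturated, and since $A$ consists of monomorphisms, $\mathsf{An} \subseteq \mathsf{Cof}$.

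Next I would develop the homotopy theory supplied by the cylinder. Using \condition{DH1}--\condition{DH2} and \condition{An1}--\condition{An2}, one checks that for a fibrant object $Y$ the relation $\simeq$ of \cref{def:homotopy} is an equivalence relation on each $\Psh\catCC(X, Y)$ compatible with composition, so that $\Ho\Psh\catCC(X, Y)$ computes honest homotopy classes whenever $Y$ is fibrant; this is what makes $\mathsf{W}$ well behaved and gives $2$-out-of-$3$ and closure under retracts for free, since $f \in \mathsf{W}$ iff $f^*$ is a bijection for every fibrant target. Two inclusions must then be proved. First, every anodyne extension is a weak equivalence, so $\mathsf{An} \subseteq \mathsf{Cof} \cap \mathsf{W}$; this follows by showing that a naive fibration with fibrant codomain has the lifting property making $\Ho(-, P)$ invariant under anodyne extensions, the homotopy-extension input coming from \condition{An1}. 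Second, every trivial fibration is a naive fibration (because $\mathsf{An} \subseteq \mathsf{Cof}$) and a weak equivalence, so $\mathsf{TFib} \subseteq \mathsf{NFib} \cap \mathsf{W}$; here one produces a section and then a fibrewise homotopy by lifting against the cylinder inclusions.

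The crux is the converse comparison, that a naive fibration which is a weak equivalence is already a trivial fibration. I would prove this in two moves: a naive fibration between fibrant objects lying in $\mathsf{W}$ is a homotopy equivalence (a Whitehead-type statement extracted from the bijections $\Ho(-, P)$ with $P$ the domain and codomain), and a naive fibration that is a homotopy equivalence has the right lifting property against all monomorphisms, the lift being manufactured from a homotopy inverse together with a cylinder-homotopy using \condition{An1}. Granting this, a retract argument closes the theorem: given an acyclic cofibration $i$, factor it as an anodyne extension $j$ followed by a naive fibration $p$; by $2$-out-of-$3$ $p \in \mathsf{W}$, hence $p \in \mathsf{TFib}$, hence $p$ lifts against the monomorphism $i$, exhibiting $i$ as a retract of $j$ and so anodyne. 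Thus $\mathsf{Cof} \cap \mathsf{W} = \mathsf{An}$, whence ${}^{\pitchfork}\mathsf{NFib} = \mathsf{Cof} \cap \mathsf{W}$ and the fibrations coincide with the naive fibrations; the two weak factorization systems are then $(\mathsf{Cof}, \mathsf{TFib})$ and $(\mathsf{Cof} \cap \mathsf{W}, \mathsf{NFib})$, and $\mathsf{TFib} = \mathsf{NFib} \cap \mathsf{W}$ identifies the acyclic fibrations. This reconciliation between the cylinder-homotopy world and the lifting-property world is the main obstacle, and it is exactly where \condition{DH1}, \condition{DH2}, \condition{An1}, and \condition{An2} are used essentially.

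It remains to assemble the model structure and record its extra properties. The axioms behind \cref{def:cisinski-model-structure} and \cref{th:cisinsli-model-structure} now hold: completeness and cocompleteness are automatic, the two factorizations are those constructed above, the lifting axioms are the defining adjunctions of the two weak factorization systems, and $2$-out-of-$3$ and retract-closure for $\mathsf{W}$ follow from its hom-bijection description. Cofibrant generation and cellularity are visible from the two generating sets (the monos into representables and the set $A$), so the structure is of Cisinski type. Finally I would verify right properness by checking that weak equivalences are stable under pullback along fibrations, using that fibrations are naive fibrations and that pullback preserves the relevant lifting and homotopy data, while left properness is automatic since every object is cofibrant.
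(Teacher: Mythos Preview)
The paper does not give its own proof of this theorem; it is quoted from Cisinski's monograph \cite{Cisinski2006}. Your proposal is an attempt at an independent proof, and it contains a genuine gap at the crux.

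The error is in the retract argument. You factor an arbitrary acyclic cofibration $i$ as an anodyne extension $j$ followed by a naive fibration $p$, deduce $p \in \mathsf{W}$ by $2$-out-of-$3$, and then assert ``hence $p \in \mathsf{TFib}$''. But your justification for ``naive fibration in $\mathsf{W}$ implies trivial fibration'' was the Whitehead-type step, which you explicitly stated only for naive fibrations \emph{between fibrant objects}. When factoring an arbitrary $i$, the codomain of $p$ need not be fibrant, so that step does not apply.

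This is not a repairable oversight: the conclusion $\mathsf{Cof} \cap \mathsf{W} = \mathsf{An}$ that you draw is false in general. Anodyne extensions are in general a \emph{proper} subclass of the acyclic cofibrations, and correspondingly fibrations are in general a proper subclass of naive fibrations. This is precisely why the theorem keeps the two notions separate and why \cref{lemma:cisinski:1.3.36} is a non-trivial statement (fibrations and naive fibrations agree only over a fibrant base). Concretely, in the Joyal model structure on $\PshDelta$ --- the case $n=1$ of the present paper --- the anodyne class is generated by inner horn inclusions together with the endpoint inclusion into $\frakII_\optOne$, yet there exist acyclic cofibrations that are not in this saturated class.

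Cisinski's actual argument does not try to identify $\mathsf{An}$ with $\mathsf{Cof}\cap\mathsf{W}$. Rather, one shows that $\mathsf{W}$ is an accessible (and suitably closed) class so that $\mathsf{Cof}\cap\mathsf{W}$ is itself cofibrantly generated by some set, in general strictly larger than $A$; this yields the weak factorization system $(\mathsf{Cof}\cap\mathsf{W},\mathsf{Fib})$ directly, without passing through $\mathsf{An}$. Your outline of the first factorization system, of $2$-out-of-$3$, and of $\mathsf{An}\subseteq\mathsf{Cof}\cap\mathsf{W}$ and $\mathsf{TFib}\subseteq\mathsf{NFib}\cap\mathsf{W}$ is fine; it is the attempted converse that fails.
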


\begin{lemma}
    [{\cite[proposition 1.3.36]{Cisinski2006}}]
    \label{lemma:cisinski:1.3.36}
    Let $\catCC$ be a small category, and $\Psh\catCC$ be endowed with a model
    structure as in \cref{th:cisinsli-model-structure}. Let $f : X
    \longrightarrow Y$ be a morphism in $\Psh\catCC$, and assume that $Y$ is
    fibrant. Then $f$ is a fibration if and only if it is a naive fibration.
\end{lemma}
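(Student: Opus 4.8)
The plan is to treat the two implications separately, using fibrancy of $Y$ only for the harder one.

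For the forward implication I would first record that every anodyne extension is an acyclic cofibration, so that a fibration --- a map with the right lifting property against acyclic cofibrations, by \cref{th:cisinsli-model-structure} --- is automatically a naive fibration, with no hypothesis on $Y$. That an anodyne extension is a monomorphism follows from \condition{An0}, since ${}^\pitchfork(A^\pitchfork)$ is the saturation of a set of monomorphisms and monomorphisms in a presheaf topos are stable under pushout, transfinite composition and retract. To see it is a weak equivalence I would argue directly from \cref{def:homotopy}: for an anodyne $i : A \longrightarrow B$ and a fibrant $P$, the relation $i \pitchfork P$ makes $\Ho\Psh\catCC (B, P) \longrightarrow \Ho\Psh\catCC (A, P)$ surjective, while \condition{An2} applied to $i$ --- whose gap map $\rmII A \cup (B + B) \longrightarrow \rmII B$ is again anodyne --- lets me extend a homotopy between two parallel maps $B \longrightarrow P$ from its restriction along $i$, giving injectivity.

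For the converse, suppose $f$ is a naive fibration and $Y$ is fibrant. Since naive fibrations form a right lifting class they are closed under composition, so $X \longrightarrow 1$ is a naive fibration and $X$ is fibrant too. I would then factor $f = p \circ j$ with $j : X \longrightarrow Z$ an acyclic cofibration and $p : Z \longrightarrow Y$ a fibration (\cref{th:cisinsli-model-structure}); fibrancy of $Y$ forces $Z$ fibrant. The key reduction is a retract argument: a diagonal filler $r$ for the square with top $\mathrm{id}_X$, left $j$, right $f$ and bottom $p$ (which commutes as $pj = f$) satisfies $rj = \mathrm{id}_X$ and $fr = p$, and these two identities exhibit $f$ as a retract of $p$ in the arrow category; as fibrations are closed under retracts, $f$ is a fibration. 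Everything therefore reduces to the lifting claim that a \emph{trivial cofibration $j$ between fibrant objects lifts against the naive fibration $f$}.

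To establish that claim I would exploit the homotopical structure. All objects are cofibrant, and $X + X \xrightarrow{(i_0, i_1)} \rmII X \longrightarrow X$ is a genuine cylinder object: the endpoint inclusions $i_e : X \longrightarrow \rmII X$ are the gap maps of \condition{An1} for $\emptyset \hookrightarrow X$, hence anodyne and in particular weak equivalences, so two-out-of-three makes the projection $\rmII X \longrightarrow X$ a weak equivalence. Thus $\simeq_\rmII$ is the left-homotopy relation of the model structure, and $j$, being a weak equivalence between fibrant--cofibrant objects, is a homotopy equivalence; a homotopy inverse $s$ already provides a filler solving both triangles of the square up to $\rmII$-homotopy. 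The genuinely delicate step, which I expect to be the main obstacle, is to rectify this homotopy-filler into a strict one: I would repair the upper triangle exactly and then the lower, in each case lifting the relevant cylinder homotopy through $f$. This is exactly what a naive fibration permits, since by \condition{An1} it has the right lifting property against the relative endpoint inclusions $\rmII X \cup Z^{(e)} \longrightarrow \rmII Z$ (the relative homotopy lifting property) and against the $i_e$ themselves; the care needed is to keep the two triangles compatible while performing these lifts relative to $j$.
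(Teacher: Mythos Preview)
The paper does not give its own proof of this lemma; it simply cites \cite[proposition 1.3.36]{Cisinski2006}. So there is nothing to compare against in the paper itself, and the relevant question is whether your argument recovers Cisinski's.

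It does. Your forward implication is exactly the standard one (anodynes are acyclic cofibrations by \condition{An0} and the characterisation of weak equivalences), and your converse is the retract argument Cisinski uses: factor $f = pj$, then produce a diagonal filler for the square $(\id_X, p)$ against $(j, f)$. Your identification of the crux --- rectifying a homotopy-lift to a strict one using the relative endpoint inclusions of \condition{An1} --- is correct, and your closing remark about keeping the two triangles compatible ``relative to $j$'' is precisely the point. In Cisinski's write-up this compatibility is packaged as the statement that an acyclic cofibration $j : X \hookrightarrow Z$ with $Z$ fibrant is a \emph{strong} $\rmII$-deformation retract: one first obtains $r$ with $rj = \id_X$ (lifting against the fibrant $X$, as you indicate), and then upgrades the homotopy $jr \simeq \id_Z$ to one that is constant along $j$ (this uses \condition{An1} for $j$ and fibrancy of $Z$). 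With that relative homotopy $H$ in hand, a single lift of $p \circ H$ through $f$ against the \condition{An1} gap map $\rmII X \cup Z^{(1)} \to \rmII Z$ produces the strict filler in one stroke, so the ``repair upper, then lower'' can be done without the second step undoing the first. Your sketch is thus on target; if you write it out, organising it around the strong deformation retract makes the compatibility automatic rather than something to be managed by hand.
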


% \begin{proposition}
%     \label{prop:cinsinski-model-structure:cartesian-closed}
%     Let $\catCC$ be a small category, $\Psh\catCC$ be endowed with a model
%     structure as in \cref{th:cisinsli-model-structure}. Then the cartesian
%     product $\times : \Psh\catCC \times \Psh\catCC \longrightarrow \Psh\catCC$
%     is a left Quillen functor of two variables \cite[definition
%     4.2.1]{Hovey1999}, i.e. if $u : A \longrightarrow B$ and $v : K
%     \longrightarrow L$ are two cofibrations (i.e. monomorphisms), then so is
%     the Leibniz product $u \mathbin{\hat{\times}} v$
%     (\cref{def:leibniz-construction}). Further, it is an acyclic cofibration if
%     either $u$ or $v$ is.
% \end{proposition}
% \begin{proof}
%     Clearly, since $A \times v$ and $u \times K$ are monomorphism, $u
%     \mathbin{\hat{\times}} v$ is too. Assume that $u$ is an acyclic
%     cofibration. In particular, so is $u \times K$ and $u \times L$. On the
%     other hand, the pushout $u'$ of $u \times K$ along $A \otimes v$ is an
%     acyclic cofibration as well, and by 3-for-2, $u \mathbin{\hat{\times}} v$
%     is a weak equivalence. The case where $v$ is acyclic instead of $u$ is
%     similar.
% \end{proof}

\begin{definition}
    [Skeletal category]
    \label{def:skeletal-category}
    A \emph{skeletal category}\index{skeletal category} \cite[definition
    8.1.1]{Cisinski2006} is a small category $\catCC$ endowed with a map $\deg
    : \ob \catCC \longrightarrow \bbNN$ and two wide subcategories $\catCC_+$
    and $\catCC_-$ such that the following axioms are satisfied.
    \begin{itemize}
        \item \condition{Sq0} (Invariance) Isomorphisms are in $\catCC_+$ and
        $\catCC_-$. Furthermore, if $c, c' \in \catCC$ are isomorphic, then
        $\deg c = \deg c'$.
        \item \condition{Sq1} (Dimension) If $f : c \longrightarrow c'$ is an
        arrow in $\catCC_+$ (resp. $\catCC_-$) that is not an isomorphism, then
        $\deg c < \deg c'$ (resp. $\deg c > \deg c'$).
        \item \condition{Sq2} (Factorization) Every arrow $f$ of $\catCC$ can
        be essentially uniquely factored as $f = f_+ f_-$, with $f_+ \in
        \catCC_+$ and $f_- \in \catCC_-$.
        \item \condition{Sq3} (Section) Every arrow in $\catCC_-$ admits a
        section. Two arrows $f, f' \in \catCC_-$ are equal if and only if they
        have the same sections.
    \end{itemize}
    The skeletal category $\catCC$ is \emph{normal}\index{normal skeletal
    category} \cite[definition 8.1.36, proposition 8.1.37]{Cisinski2006} if
    $\catCC$ is rigid, i.e. does not have non-trivial automorphisms. Note that
    in this case, $\catCC$ is a Reedy category \cite[defintion
    15.1.12]{Hirschhorn2009}.
\end{definition}

\begin{definition}
    [Boundary]
    \label{def:boundary}
    Let $\catCC$ be a skeletal category. The \emph{boundary} \cite[paragraph
    8.1.30]{Cisinski2006} $\partial c \in \Psh\catCC$ of an object $c \in
    \catCC$ is defined as
    \[
        \partial c
        \eqdef
        \colim_{\substack{
            f : d \rightarrow c \\
            f \in \catCC_+ \text{ not an iso}
        }} d .
    \]
    The canonical map $\sfB_c : \partial c \longrightarrow c$ is a monomorphism
    and is called the \emph{boundary inclusion} of $c$. Write $\sfBB_\catCC$
    (or just $\sfBB$ if the context is clear) for the set of boundary
    inclusions of $\catCC$.
\end{definition}

\begin{proposition}
    [{\cite[propositions 8.1.35 and 8.1.37]{Cisinski2006}}]
    \label{prop:cisinski:boundaries-generate-monomorphisms}
    Assume $\catCC$ is a normal skeletal category. Then the class of
    monomorphisms of $\Psh\catCC$ is $\Cell {\sfBB_\catCC}$.
\end{proposition}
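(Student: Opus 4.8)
The plan is to prove the two inclusions separately, the inclusion $\Cell{\sfBB_\catCC} \subseteq \{$monomorphisms$\}$ being the routine one. Each boundary inclusion $\sfB_c : \partial c \to c$ is a monomorphism by \cref{def:boundary}, and in any presheaf category the class of monomorphisms is saturated, i.e.\ closed under pushout, transfinite composition, coproduct, and retract. Hence the cellular class generated by $\sfBB_\catCC$ is contained in the monomorphisms, and this direction requires no use of the skeletal structure beyond the fact that the generators are monos.

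For the reverse inclusion I would take a monomorphism $f : X \subto Y$, regarded as a subobject inclusion, and stratify $Y$ relative to $X$ by the degree function. The key structural input is the Eilenberg--Zilber property of a normal skeletal category: every element of a presheaf factors essentially uniquely (axiom \condition{Sq2}) as a map of $\catCC_-$ applied to a non-degenerate element, and normality/rigidity rules out nontrivial automorphisms of the factoring data. Using this, define for each $n \in \bbNN$ the subpresheaf $Y_n \subseteq Y$ generated by $X$ together with all cells of $Y$ of degree at most $n$. Then $Y_{-1} = X$, each $Y_{n-1} \subto Y_n$ is a monomorphism, and $Y = \colim_n Y_n$, so $f$ is the (sequential, hence transfinite) composite of the maps $Y_{n-1} \to Y_n$.

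It then suffices to exhibit each stage as a pushout of a coproduct of boundary inclusions. Let $S_n$ be the set of non-degenerate elements of $Y$ of degree $n$ not already lying in $X$; by Yoneda each $y \in S_n$ names a map $c_y \to Y$ with $\deg c_y = n$. I would argue that the square
\[
    \begin{tikzcd}
        \coprod_{y \in S_n} \partial c_y
            \ar[r] \ar[d, "\coprod \sfB_{c_y}" left] & Y_{n-1} \ar[d] \\
        \coprod_{y \in S_n} c_y \ar[r] & Y_n
    \end{tikzcd}
\]
is cocartesian: the top map is well defined because the faces of each $y$ lie in its boundary and therefore have degree $< n$, hence already belong to $Y_{n-1}$. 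Checking that this square is a pushout is done by evaluating at each object $d \in \catCC$ and using the essentially unique factorization \condition{Sq2} together with the section axiom \condition{Sq3} to see that an element of $Y_n(d)$ either factors through $Y_{n-1}(d)$ or is a degeneracy of a unique $y \in S_n$; rigidity guarantees this uniqueness and prevents a cell from being attached along a nontrivial automorphism.

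The main obstacle is precisely this last verification that attaching $n$-cells is modeled by a single pushout along $\coprod \sfB_{c_y}$. It requires the full strength of the normal skeletal structure, and the delicate point is to confirm that $\partial c_y$ captures exactly the part of $c_y$ that has already been constructed, so that the pushout adds $y$ together with all of its degeneracies and nothing more. Once this is established, combining the three saturated-class operations (coproduct, pushout, transfinite composition) shows $f \in \Cell{\sfBB_\catCC}$, which together with the easy inclusion yields the claimed equality.
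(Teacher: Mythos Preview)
The paper does not supply its own proof of this proposition; it is stated purely as a citation to \cite[propositions 8.1.35 and 8.1.37]{Cisinski2006}, with no accompanying \texttt{proof} environment. Your proposal is correct and is precisely the standard skeletal-filtration (Eilenberg--Zilber) argument that Cisinski carries out in the cited reference: one inclusion is routine saturation, and the other proceeds by stratifying $Y$ relative to $X$ by degree, then attaching the non-degenerate cells of each degree via a pushout along a coproduct of boundary inclusions, with normality ensuring uniqueness of the attaching data. There is therefore nothing in the paper to compare against, but your write-up matches the intended external proof.
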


\subsection{Joyal--Tierney calculus}
\label{sec:joyal-tierney-calculus}

Consider the fully faithful functor $i_\catCC : \catCC \subto \catCC \times
\bbDelta$ mapping $c \in \catCC$ to the tuple $(c, [0])$. It induces an
adjunction
\[
    (-)^{\mathrm{disc}} : \Psh\catCC \adjunction \Sp\catCC : (-)_{-, 0} ,
\]
where $(-)^{\mathrm{disc}}$\index{$(-)^{\mathrm{disc}}$|see {discrete space}}
is the left Kan extension of the composite $\catCC \longrightarrow \catCC
\times \bbDelta \longrightarrow \Sp\catCC$ along the Yoneda embedding, and
where $(-)_{-, 0}$ is the precomposition by $i_\catCC$, i.e. the ``evaluation
at $0$''. We call $(-)^{\mathrm{disc}}$ the \emph{discrete space
functor}\index{discrete space}, as for $X \in \Psh\catCC$ and $c \in \catCC$,
the space $X^{\mathrm{disc}}_c$ is discrete at $X_c$.

Dually, the projection $p_\bbDelta : \catCC \times \bbDelta \longrightarrow
\bbDelta$ induces an adjunction
\[
    (-)^{\mathrm{const}} : \PshDelta \adjunction \Sp\catCC : r ,
\]\index{$(-)^{\mathrm{const}}$|see {constant space}}
where $r$ is the right Kan extension of $\catCC \times \bbDelta \longrightarrow
\bbDelta \longrightarrow \PshDelta$ along the Yoneda embedding, and
$(-)^{\mathrm{const}}$ is the precomposition by $p_\bbDelta$. We call
$(-)^{\mathrm{const}}$ the \emph{constant space functor}\index{constant space},
as for $Y \in \PshDelta$, the functor $Y^{\mathrm{const}} : \catCC^\op
\longrightarrow \PshDelta$ is constant at $Y$. The functor $r : \Sp\catCC
\longrightarrow \PshDelta$ provides a simplicial enrichment on $\Sp\catCC$ as
follows:
\[
    \Map (X, Y) \eqdef r (Y^X) .
\]
Note that $\Map (X, Y)_n = \Sp\catCC (X \times \Delta [n]^{\mathrm{const}},
Y)$.

\begin{proposition}[{Generalization of \cite[proposition 2.4]{Joyal2007}}]
    \label{prop:joyal-tierney:2.4}
    The simplicially enriched category $\Sp\catCC$ is tensored and
    cotensored over $\PshDelta$: for $K \in \PshDelta$ and $X \in \Sp \catCC$,
    we define
    \[
        K \otimes X \eqdef K^\mathrm{const} \times X ,
        \qqquad
        X^K \eqdef X^{K^\mathrm{const}} .
    \]
\end{proposition}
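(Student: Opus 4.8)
The plan is to reduce everything to a single base-change adjunction relating the cartesian closed structure of $\Sp\catCC$ to that of $\PshDelta$, and then to read off the tensor and cotensor formulas one simplicial level at a time. Two structural facts drive the argument. First, both $\PshDelta$ and $\Sp\catCC$ — the latter being presheaves over $\catCC \times \bbDelta$ — are presheaf categories, hence cartesian closed; I write $(-)^{(-)}$ for the two internal homs. Second, since $(-)^{\mathrm{const}}$ is precomposition by $p_\bbDelta$, it is computed pointwise, so it preserves all limits and colimits; in particular it preserves finite products, whence $K^{\mathrm{const}} \times L^{\mathrm{const}} \cong (K \times L)^{\mathrm{const}}$, and, being left adjoint to $r$, it preserves colimits.

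The key lemma I would isolate is the natural isomorphism
\[
    \Sp\catCC(X \times L^{\mathrm{const}}, Y) \cong \PshDelta(L, \Map(X, Y))
    \qquad (L \in \PshDelta,\ X, Y \in \Sp\catCC).
\]
To prove it, fix $X, Y$ and regard the left-hand side as a functor of $L$. Since $(-)^{\mathrm{const}}$ preserves colimits, $X \times (-)$ preserves colimits (as $\Sp\catCC$ is cartesian closed), and $\Sp\catCC(-, Y)$ sends colimits to limits, this functor $\PshDelta^\op \longrightarrow \Set$ carries colimits to limits. As $\PshDelta$ is the free cocompletion of $\bbDelta$, any such functor is represented by the simplicial set of its values on representables; here that simplicial set is $[n] \mapsto \Sp\catCC(X \times \Delta[n]^{\mathrm{const}}, Y) = \Map(X, Y)_n$, i.e. exactly $\Map(X, Y)$. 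This yields the displayed isomorphism naturally in $L$, and naturality in $X, Y$ is inherited from that of $\Map$.

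With the lemma in hand the tensor and cotensor isomorphisms are formal level-wise computations. For the tensor I use the defining formula $\Map(A, B)_n = \Sp\catCC(A \times \Delta[n]^{\mathrm{const}}, B)$, the fact that $(-)^{\mathrm{const}}$ preserves products, the lemma applied to $L = K \times \Delta[n]$, and cartesian closedness of $\PshDelta$, to get
\[
    \Map(K \otimes X, Y)_n
    \cong \Sp\catCC\bigl(X \times (K \times \Delta[n])^{\mathrm{const}}, Y\bigr)
    \cong \PshDelta(K \times \Delta[n], \Map(X, Y))
    \cong (\Map(X, Y)^K)_n ,
\]
naturally in $[n]$, whence $\Map(K \otimes X, Y) \cong \Map(X, Y)^K$. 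For the cotensor, writing $Y^K \eqdef Y^{K^{\mathrm{const}}}$ and using cartesian closedness of $\Sp\catCC$ in the first step, the same ingredients give
\[
    \Map(X, Y^K)_n
    \cong \Sp\catCC\bigl(X \times (K \times \Delta[n])^{\mathrm{const}}, Y\bigr)
    \cong \PshDelta(K \times \Delta[n], \Map(X, Y))
    \cong (\Map(X, Y)^K)_n ,
\]
so $\Map(X, Y^K) \cong \Map(X, Y)^K$. Both right-hand sides are the internal hom in $\PshDelta$ from $K$ into $\Map(X, Y)$, which is precisely the enriched adjunction defining a tensoring (varying the left entry by $\otimes$) and a cotensoring (varying the right entry by the exponential) over $\PshDelta$.

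I expect the only genuine subtlety — and the step I would write out most carefully — to be the representability argument in the key lemma, specifically verifying that the isomorphism is natural in the simplicial coordinate $[n]$, so that the level-wise bijections assemble into honest isomorphisms of simplicial sets rather than of underlying sets alone, and that the comparison maps are the canonical ones. Everything else, namely functoriality of $\otimes$ and $(-)^{(-)}$ in $K$ together with the coherence isomorphisms making these a bona fide tensoring and cotensoring, is routine: it follows from naturality in all variables and uniqueness of adjoints. Conceptually this reflects that $(-)^{\mathrm{const}} \dashv r$ is a monoidal adjunction whose left adjoint is strong monoidal, along which the self-enrichment of the cartesian closed category $\Sp\catCC$ transfers to the claimed $\PshDelta$-enrichment, tensoring, and cotensoring.
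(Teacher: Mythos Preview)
Your proof is correct, but it takes a different route from the paper's. The paper computes directly at the level of simplicial mapping objects using an end over $\catCC$: it writes $\Map(K \otimes X, Y) = \int_{c \in \catCC} \Map(K \times X_c, Y_c)$, applies cartesian closedness of $\PshDelta$ inside the end, and then pulls the end back out to obtain $\Map(K, \Map(X, Y))$; the cotensor isomorphism then follows in one line from cartesian closedness of $\Sp\catCC$. In other words, the paper slices along the $\catCC$-direction and invokes the self-enrichment of $\PshDelta$. You instead slice along the $\bbDelta$-direction, proving a representability lemma $\Sp\catCC(X \times L^{\mathrm{const}}, Y) \cong \PshDelta(L, \Map(X, Y))$ by density and then reading off each level $\Map(-,-)_n$ separately. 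Your approach is a bit more elementary in that it avoids the end formula for $\Map$ and makes the role of the adjunction $(-)^{\mathrm{const}} \dashv r$ more explicit; the paper's approach is shorter and stays at the enriched level throughout, never descending to individual simplicial degrees. Both ultimately encode the same fact, namely that the strong-monoidal left adjoint $(-)^{\mathrm{const}}$ transfers the cartesian self-enrichment of $\Sp\catCC$ to a $\PshDelta$-enrichment, as you note in your final remark.
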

\begin{proof}
    Let $Y \in \Sp\catCC$.
    \begin{enumerate}
        \item We have
        \begin{align*}
            \Map (K \otimes X, Y)
            &= \Map (K^\mathrm{const} \times X, Y)
            = \int_{c \in \catCC} \Map (K \times X_c, Y_c)
            \cong \int_{c \in \catCC} \Map \left(K, \Map (X_c, Y_c) \right) \\
            &\cong \Map (K, \int_{c \in \catCC} \Map (X_c, Y_c))
            = \Map \left( K, \Map (X, Y) \right) ,
        \end{align*}
        naturally in all variables, and thus $- \otimes X$ is an enriched left
        adjoint to $\Map (X, -)$.
        \item We have
        \[
            \Map (X, Y^K)
            = \Map (X, Y^{K^\mathrm{const}})
            \cong \Map (K^{\mathrm{const}} \times X, Y)
            = \Map (K \otimes X, Y)
        \]
        naturally in all variables, and thus $(-)^K$ is an enriched right
        adjoint to $K \otimes -$. \qedhere
    \end{enumerate}
\end{proof}

We now specify the so-called Joyal--Tierney calculus \cite[section
2]{Joyal2007} and \cite[section 4]{Riehl2014} to the category $\Sp\catCC$ of
simplicial presheaves over $\catCC$. This convenient formalism will be heavily
used throughout this work.

\begin{definition}
    [Box product]
    \label{def:box-product}
    The \emph{box product}\footnote{The box product is denoted by $\medsquare$
    in \cite[section 2]{Joyal2007}. In \cite[notation 4.1]{Riehl2014}, it is
    called the \emph{exterior product}\index{exterior product} and written
    $\underline{\times}$.}\index{box product} functor $\boxproduct : \Psh\catCC
    \times \PshDelta \longrightarrow \Sp\catCC$ is defined as
    \[
        (X \boxproduct Y)_{c, n} \eqdef X_c \times Y_n ,
    \]
    for $X \in \Psh\catCC$, $Y \in \PshDelta$, $c \in \catCC$, and $[n] \in
    \bbDelta$.

    This functor is divisible on both sides, meaning that is is left adjoint in
    each variable. The right adjoint to $X \boxproduct - : \PshDelta
    \longrightarrow \Sp\catCC$ will be denoted by $X \backslash - : \Sp\catCC
    \longrightarrow \PshDelta$, and the right adjoint to $- \boxproduct Y :
    \Psh\catCC \longrightarrow \Sp\catCC$ will be denoted by $- / Y : \Sp
    (\catCC) \longrightarrow \Psh\catCC$. Note that $X \backslash -$ and $- /
    Y$ are contravariant in $X$ and $Y$ respectively. Consequently, for $W \in
    \Sp (\catCC)$, the functors $- \backslash W : \Psh\catCC \longrightarrow
    \PshDelta$ and $W / - : \PshDelta \longrightarrow \Psh\catCC$ are mutually
    right adjoint.
\end{definition}

\begin{lemma}
    \label{lemma:joyal-tierney:easy-computations}
    \begin{enumerate}
        \item For $X \in \Psh\catCC$, we have $X^{\mathrm{disc}} = X
        \boxproduct \Delta [0]$. Dually, for $Y \in \PshDelta$, we have
        $Y^{\mathrm{const}} = 1 \boxproduct Y$, where $1$ is the terminal
        presheaf in $\Psh\catCC$.
        \item Let $W \in \Sp\catCC$. For $c \in \catCC$, we have $c
        \backslash W = W_c \in \PshDelta$. Dually, for $[n] \in \bbDelta$, we
        have $W / \Delta [n] = W_{-, n}$.
        \item Let $h$ be a morphism in $\Sp\catCC$, and $K \in \Psh\catCC$.
        Then $K \backslash h = \langle (\emptyset \hookrightarrow K) \backslash
        h \rangle$. Similarly, if $L \in \PshDelta$, then $h / L = \langle h /
        (\emptyset \hookrightarrow L) \rangle$.
        \item Let $X \in \Sp\catCC$, and $f$ be a morphism in $\Psh\catCC$.
        Then $f \backslash X = \langle f \backslash (X \twoheadrightarrow 1)
        \rangle$. Likewise, if $g$ is a morphism in $\PshDelta$, then $X / g =
        \langle (X \twoheadrightarrow 1) / g \rangle$.
    \end{enumerate}
\end{lemma}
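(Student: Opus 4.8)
The plan is to derive all four identities directly from the two defining adjunctions of the box product, $c \boxproduct - \dashv c \backslash -$ and $- \boxproduct Y \dashv - / Y$, together with the single structural observation that $\boxproduct$ carries a pair of representables to a representable. Concretely, for a representable $c \in \catCC$ and $\Delta[n] \in \PshDelta$, the external product $c \boxproduct \Delta[n]$ represents the evaluation functor $W \mapsto W_{c,n}$ on $\Sp\catCC$ (equivalently, under the currying equivalence $\Sp\catCC \cong \Psh{\catCC \times \bbDelta}$ it is the representable at $(c,[n])$). Granting this, items (1) and (2) become instances of ``cocontinuity plus Yoneda'', while items (3) and (4) reduce to the fact that a Leibniz (pullback-gap) construction collapses as soon as one of the relevant corners is terminal.

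For item (1), I would note that both $(-)^{\mathrm{disc}}$ and $- \boxproduct \Delta[0]$ are cocontinuous: the former is a left Kan extension along the Yoneda embedding, the latter a left adjoint since $\boxproduct$ is divisible in each variable. By construction of $(-)^{\mathrm{disc}}$ they agree on a representable $c$, both returning $c \boxproduct \Delta[0]$; since a cocontinuous functor on a presheaf category is the left Kan extension of its restriction to representables, the two functors are isomorphic on all of $\Psh\catCC$. The dual identity $Y^{\mathrm{const}} = 1 \boxproduct Y$ I would simply verify levelwise: $(1 \boxproduct Y)_{c,n} = 1_c \times Y_n = Y_n$, which is exactly the value of the presheaf obtained by precomposing $Y$ with the projection $p_\bbDelta$.

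For item (2), I would unwind the adjunction $c \boxproduct - \dashv c \backslash -$: for each $[m] \in \bbDelta$,
\[
    (c \backslash W)_m \cong \PshDelta(\Delta[m], c \backslash W) \cong \Sp\catCC(c \boxproduct \Delta[m], W) \cong W_{c,m} = (W_c)_m ,
\]
naturally in $[m]$, the last isomorphism being the representability of $c \boxproduct \Delta[m]$; this gives $c \backslash W = W_c$. The dual computation, using $- \boxproduct \Delta[n] \dashv - / \Delta[n]$ and evaluating at representables $c \in \catCC$, yields $(W / \Delta[n])_c \cong W_{c,n} = (W_{-,n})_c$, whence $W / \Delta[n] = W_{-,n}$. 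For items (3) and (4), recall that $\langle f \backslash h \rangle$ is the gap map from the initial corner of the naturality square of the bifunctor $\backslash$ into the pullback of its other three corners. For (3) I would take $f = (\emptyset \hookrightarrow K)$ and observe that $\emptyset \boxproduct Y = \emptyset$, so the right-adjoint value $\emptyset \backslash W$ is terminal in $\PshDelta$; the pullback $(\emptyset \backslash W) \times_{\emptyset \backslash Z} (K \backslash Z)$ therefore collapses to $K \backslash Z$, and the gap map is just the functorial image $K \backslash h$. For (4) I would dually take $h = (X \twoheadrightarrow 1)$ and use that $A \backslash 1$ is terminal for every $A$ (it represents the functor constant at a point), so the pullback $(A \backslash X) \times_{A \backslash 1} (B \backslash 1)$ collapses to $A \backslash X$ and the gap map is exactly $f \backslash X$; the $/$-duals stated in both items are proved by the symmetric collapses $W / \emptyset = 1$ and $1 / L = 1$ in $\Psh\catCC$.

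The computations are all routine, and I expect no genuine obstacle. The only points requiring care are bookkeeping the variances---both $X \backslash -$ and $- / Y$ are contravariant in their fixed argument, so the naturality squares in (3) and (4) must be oriented correctly---and establishing the representability of $c \boxproduct \Delta[n]$, which is the single fact that powers the Yoneda steps in (1) and (2). Once the terminality of $\emptyset \backslash W$ and of $A \backslash 1$ is recorded, the pullback collapses are immediate.
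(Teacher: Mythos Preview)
Your proposal is correct. Note, however, that the paper does not actually supply a proof of this lemma: it is stated and then immediately followed by the definition of the Leibniz construction, with the label \texttt{easy-computations} signalling that the verifications are left to the reader. Your write-up is therefore more detailed than anything in the paper, and the arguments you give---cocontinuity plus agreement on representables for (1), Yoneda along the adjunction for (2), and the collapse of the pullback square when one corner is terminal for (3) and (4)---are exactly the routine checks one would expect. There is nothing to compare against; your proof stands on its own as a clean unpacking of the definitions.
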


\begin{definition}
    [{Leibniz construction, \cite[definition 4.4]{Riehl2014}}]
    \label{def:leibniz-construction}
    Consider a functor $- \otimes - : \catAA \times \catBB \longrightarrow
    \catCC$, where $\catCC$ has pushouts. Its \emph{Leibniz
    construction}\index{Leibniz construction} is the functor $-
    \mathbin{\hat{\otimes}} - : \catAA^\rightarrow \times \catBB^\rightarrow
    \longrightarrow \catCC^\rightarrow$\index{$\mathbin{\hat{\otimes}}$|see
    {Leibniz construction}} which maps an arrow $f : A_1 \longrightarrow A_2$
    in $\catAA$ and $g : B_1 \longrightarrow B_2$ in $\catBB$ to the
    cocartesian gap map below:
    \[
        \begin{tikzcd}
            A_1 \otimes B_1
                \ar[r, "A_1 \otimes g"] \ar[d, "f \otimes B_1" left]
                \ar[dr, phantom, "\ulcorner", very near end] &
            A_1 \otimes B_2
                \ar[d, "f'"] \ar[ddr, bend left, "f \otimes B_2"] \\
            A_2 \otimes B_1
                \ar[r, "g'"] \ar[drr, bend right, "A_2 \otimes g"] &
            P
                \ar[dr, dashed, "f \mathbin{\hat{\otimes}} g" description] \\
            &
            &
            A_2 \otimes B_2 ,
        \end{tikzcd}
        \qqquad
        P = A_1 \otimes B_2 \coprod_{A_1 \otimes B_1}
            A_2 \otimes B_1 .
    \]
    The Leibniz construction $- \mathbin{\hat{\otimes}} -$ essentially has the
    same properties as $- \otimes -$, see \cite[section 4]{Riehl2014}.
\end{definition}

\begin{definition}
    [Leibniz box product]
    \label{def:leibniz-box-product}
    The \emph{Leibniz box product}\footnote{We follow the terminology and
    notation of \cite{Riehl2014,Riehl2019}. In \cite[section 2]{Joyal2007}, it
    is denoted by $\medsquare'$.}\index{Leibniz box product} $- \lboxproduct -
    : \Psh\catCC^\rightarrow \times \PshDelta^\rightarrow \longrightarrow
    \Sp\catCC^\rightarrow$\index{$\lboxproduct$|see {Leibniz box product}} is
    simply the Leibniz construction of \cref{def:leibniz-construction} applied
    to the box product of \cref{def:box-product}. If $\sfKK$ and $\sfLL$ are
    classes of morphisms of $\Psh\catCC$ and $\PshDelta$ respectively, let
    $\sfKK \lboxproduct \sfLL \eqdef \left\{ k \lboxproduct l \mid k \in \sfKK,
    l \in \sfLL \right\}$.

    Akin to the box product, the Leibniz box product is divisible on both
    sides. Specifically, if $h : W_1 \longrightarrow W_2$ is a morphism in
    $\Sp\catCC$, let $\langle f \backslash h \rangle$\index{$\langle -
    \backslash - \rangle$|see {Leibniz box product}}, be the morphism induced
    by the universal property of the pullback:
    \[
        \begin{tikzcd}
            X_2 \backslash W_1
                \ar[drr, bend left, "f \backslash W_1"]
                \ar[dr, "\langle f \backslash h \rangle"]
                \ar[ddr, bend right, "X_2 \backslash h"]
            & & \\
            &
            P
                \ar[r]
                \ar[d]
                \ar[dr, phantom, "\lrcorner" very near start] &
            X_1 \backslash W_1
                \ar[d, "X_1 \backslash h"] \\
            &
            X_2 \backslash W_2
                \ar[r, "f \backslash W_2"] &
            X_1 \backslash W_2 ,
        \end{tikzcd}
        \qqquad
        P = \left( X_1 \backslash W_1 \right) \prod_{X_1 \backslash W_2}
            \left( X_2 \backslash W_2 \right) .
    \]
    % \[
    %     \langle f \backslash h \rangle :
    %     X_2 \backslash W_1
    %     \longrightarrow \left( X_1 \backslash W_1 \right)
    %     \prod_{X_1 \backslash W_2} \left( X_2 \backslash W_2 \right) ,
    %     \qqquad
    %     \squarediagram
    %         {X_2 \backslash W_1}{X_2 \backslash W_2}
    %             {X_1 \backslash W_1}{X_1 \backslash W_2 .}
    %         {X_2 \backslash h}{f \backslash W_1}
    %             {f \backslash W_2}{X_1 \backslash h}
    % \]
    The morphism $\langle f \backslash h \rangle$ is also called the
    \emph{cartesian gap map}\index{cartesian gap} of the square above. Then the
    functor $\langle f \backslash - \rangle$ is right adjoint to $f
    \lboxproduct -$. Dually, let $\langle h / g \rangle$\index{$\langle - / -
    \rangle$|see {Leibniz box product}}, on the left, be the cartesian gap map
    of the square on the right
    \[
        \begin{tikzcd}
            W_1 / Y_2
                \ar[drr, bend left, "h / Y_2"]
                \ar[dr, "\langle h / g \rangle"]
                \ar[ddr, bend right, "W_1 / g"]
            & & \\
            &
            P
                \ar[r]
                \ar[d]
                \ar[dr, phantom, "\lrcorner" very near start] &
            W_2 / Y_2
                \ar[d, "W_2 / g"] \\
            &
            W_1 / Y_1
                \ar[r, "h / Y_1"] &
            W_2 / Y_1
        \end{tikzcd}
        \qqquad
        P = \left( W_1 / Y_1 \right) \prod_{W_2 / Y_1}
            \left( W_2 / Y_2 \right) .
    \]
    % \[
    %     \langle h / g \rangle : W_1 / Y_2
    %     \longrightarrow \left( W_1 / Y_1 \right) \prod_{W_2 / Y_1}
    %         \left( W_2 / Y_2 \right) ,
    %     \qqquad
    %     \squarediagram
    %         {W_1 / Y_2}{W_2 / Y_2}
    %             {W_1 / Y_1}{W_2 / Y_1 .}
    %         {h / Y_2}{W_1 / g}
    %             {W_2 / g}{h / Y_1}
    % \]
    Then the functor $\langle - / g \rangle$ is right adjoint to $-
    \lboxproduct g$.
\end{definition}

\subsection{Application to simplicial presheaves}
\label{sec:simplicial-presheaves}

In this section, we lift some technical results of \cite[section 2]{Joyal2007}
from the settings of simplicial spaces to simplicial presheaves over a normal
skeletal category $\catCC$ (\cref{def:skeletal-category}). Recall that by
\cref{prop:cisinski:boundaries-generate-monomorphisms}, the set of boundary
inclusions $\sfBB_\catCC \eqdef \left\{ \sfB_c : \partial c \longhookrightarrow
c \mid c \in \catCC \right\}$ generates the class of monomorphisms of
$\Psh\catCC$, in the sense that it is $\Cell{\sfBB_\catCC}$.

\begin{proposition}
    \label{prop:spc:generating-monomorphisms}
    The class of monomorphisms of $\Sp\catCC$ is $\Cell {\sfBB_\catCC
    \lboxproduct \sfBB_\bbDelta}$.
\end{proposition}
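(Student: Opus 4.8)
The plan is to realize $\Sp\catCC$ as a presheaf category over a normal skeletal category and then apply \cref{prop:cisinski:boundaries-generate-monomorphisms} directly. Since $\Sp\catCC = \Cat(\catCC^\op, \PshDelta) \cong \Psh{\catCC\times\bbDelta}$, and since $\bbDelta$ is itself a normal skeletal category, I would first record that the product $\catCC\times\bbDelta$ is again normal skeletal: its degree is $\deg(c,[n]) = \deg c + n$, its direct and inverse subcategories are the coordinatewise products $(\catCC\times\bbDelta)_+ = \catCC_+\times\bbDelta_+$ and $(\catCC\times\bbDelta)_- = \catCC_-\times\bbDelta_-$, the factorization of \condition{Sq2} is taken in each coordinate, and rigidity (normality) is inherited from the two factors. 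Granting this, \cref{prop:cisinski:boundaries-generate-monomorphisms} applies and identifies the monomorphisms of $\Sp\catCC$ with $\Cell{\sfBB_{\catCC\times\bbDelta}}$.

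It then suffices to prove that the boundary inclusions of $\catCC\times\bbDelta$ are exactly the Leibniz box products of boundary inclusions, i.e. that $\sfB_{(c,[n])} = \sfB_c \lboxproduct \sfB_{[n]}$ for every $(c,[n])$; this identifies the two generating sets $\sfBB_{\catCC\times\bbDelta}$ and $\sfBB_\catCC\lboxproduct\sfBB_\bbDelta$, and hence their cellular closures. The first observation is that the box product of representables is representable: $(c\boxproduct\Delta[n])_{d,m} = \catCC(d,c)\times\bbDelta([m],[n]) = (\catCC\times\bbDelta)\bigl((d,[m]),(c,[n])\bigr)$, so $c\boxproduct\Delta[n]$ is the representable on $(c,[n])$, the common codomain of $\sfB_{(c,[n])}$ and of $\sfB_c\lboxproduct\sfB_{[n]}$.

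For the domains, I would compute $\partial(c,[n])$ as a subobject of $c\boxproduct\Delta[n]$. By \cref{def:boundary} together with the factorization axiom \condition{Sq2}, $\partial(c,[n])$ is the sub-presheaf of those maps into $(c,[n])$ whose direct part is non-invertible; by the coordinatewise factorization, the direct part of $(f,g)$ is $(f_+,g_+)$, which is non-invertible iff $f_+$ is non-invertible or $g_+$ is. The former describes the subobject $\partial c\boxproduct\Delta[n]$ and the latter describes $c\boxproduct\partial\Delta[n]$, so that
\[
    \partial(c,[n]) = \bigl(\partial c \boxproduct \Delta[n]\bigr) \cup \bigl(c \boxproduct \partial\Delta[n]\bigr)
\]
as subobjects of $c\boxproduct\Delta[n]$, the two being monomorphisms because $\boxproduct$ preserves monomorphisms in each variable (\cref{def:box-product}). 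As $\boxproduct$ is computed levelwise, their intersection is $\partial c \boxproduct \partial\Delta[n]$, and the union of two subobjects is the pushout over their intersection. Thus $\partial(c,[n])$ is the pushout $\partial c\boxproduct\Delta[n] \cup_{\partial c\boxproduct\partial\Delta[n]} c\boxproduct\partial\Delta[n]$ and $\sfB_{(c,[n])}$ is its canonical map into $c\boxproduct\Delta[n]$, which is exactly the cocartesian gap map defining $\sfB_c\lboxproduct\sfB_{[n]}$ (\cref{def:leibniz-box-product}, \cref{def:leibniz-construction}). This yields the claim, since then $\Cell{\sfBB_\catCC\lboxproduct\sfBB_\bbDelta} = \Cell{\sfBB_{\catCC\times\bbDelta}}$ is the class of monomorphisms of $\Sp\catCC$.

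The step I expect to be the main obstacle is the identification of $\partial(c,[n])$ with the union of the two box products: one must carefully justify, via the coordinatewise $+/-$ factorization, that ``the direct part of a pair is non-invertible'' decomposes exactly into the two stated conditions, and that $\boxproduct$ commutes with the relevant unions and intersections of subobjects. A secondary point requiring care, since it underlies the whole reduction, is verifying that $\catCC\times\bbDelta$ is genuinely normal skeletal with coordinatewise structure, as \cref{prop:cisinski:boundaries-generate-monomorphisms} is only available under that hypothesis.
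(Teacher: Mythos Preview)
Your proposal is correct and follows essentially the same route as the paper: identify $\Sp\catCC$ with $\Psh{\catCC\times\bbDelta}$, note that $\catCC\times\bbDelta$ is normal skeletal (the paper cites \cite[remark 8.1.7]{Cisinski2006} for this), compute that $\sfB_{(c,[n])} = \sfB_c \lboxproduct \sfB_{[n]}$ via the coordinatewise factorization (the paper calls this observation ``Leibniz's formula''), and invoke \cref{prop:cisinski:boundaries-generate-monomorphisms}. Your write-up is more detailed than the paper's, but the strategy and the key computations are the same.
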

\begin{proof}
    Observe that $\bbDelta$ is normal skeletal, and thus so is the product
    $\catCC \times \bbDelta$ in an evident way \cite[remark
    8.1.7]{Cisinski2006}. In particular, for $(c, [n]) \in \catCC \times
    \bbDelta$, maps $f \in (\catCC \times \bbDelta)_+$ with codomain $(c, [n])$
    are adequate pairs or morphisms $f = (f_\catCC, f_\bbDelta) \in \catCC_+
    \times \bbDelta_+$, and $f$ is not an isomorphism if and only if $f_\catCC$
    or $f_\bbDelta$ is not\footnote{This observation is called \emph{Leibniz's
    formula}\index{Leibniz's formula} in \cite[observation 4.2]{Riehl2014}.}.
    Thus it is easy to see that the boundary and boundary inclusion of $(c,
    [n])$ are given by
    \[
        \left( c \boxproduct \partial \Delta [n] \right)
        \coprod_{\partial c \boxproduct \partial \Delta [n]}
        \left( \partial c \boxproduct \Delta [n] \right)
        \xrightarrow{\sfB_c \lboxproduct \sfB_n}
        c \boxproduct \Delta [n] .
    \]
    We apply \cref{prop:cisinski:boundaries-generate-monomorphisms} to
    conclude.
\end{proof}

\begin{definition}
    [Trivial fibration]
    \label{def:trivial-fibration}
    We say that a morphism $f$ (in some category) is a \emph{trivial
    fibration}\index{trivial fibration} if it has the right lifting property
    with respect to all monomorphisms.
\end{definition}

In model category theory, a trivial fibration\index{trivial fibration} is
usually a fibration that is also a weak equivalence. In fact, the motivation
for the terminology of \cref{def:trivial-fibration} is that in the familiar
Quillen model structure on $\PshDelta$, both notions coincide. More generally,
they coincide in all Cisinski model category
(\cref{def:cisinski-model-structure}). In general however, there is no reason
for it to be the case, and both terminologies clash. To remedy this, we resort
to a classical alternative: fibrations that are weak equivalences will be
called \emph{acyclic fibrations}\index{acyclic fibration} throughout this
paper. Likewise, we will favor the name \emph{acyclic
cofibration}\index{acyclic cofibration} instead of trivial
cofibration\index{trivial cofibration}.

\begin{proposition}
    [{Generalization of \cite[proposition 2.3]{Joyal2007}}]
    \label{prop:joyal-tierney:2.3}
    Let $f : X \longrightarrow Y$ be a morphism in $\Sp\catCC$. The following
    are equivalent:
    \begin{enumerate}
        \item $f$ is a trivial fibration;
        \item the map $\langle \sfB_c \backslash f \rangle$ is a trivial
        fibration, for all $c \in \catCC$;
        \item the map $\langle u \backslash f \rangle$ is a acyclic fibtation,
        for all monomorphism $u$ in $\Psh\catCC$;
        \item the map $\langle f / \sfB_n \rangle$ is a trivial fibration, for
        all $n \in \bbNN$;
        \item the map $\langle f / v \rangle$ is a trivial fibration, for all
        monomorphism $v$ in $\PshDelta$.
    \end{enumerate}
\end{proposition}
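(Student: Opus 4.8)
The plan is to prove this by a standard adjunction-and-Leibniz-calculus argument, exploiting that all the constructions $\langle u \backslash f \rangle$ and $\langle f / v \rangle$ are right adjoints to Leibniz box products.

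Let me think about the structure of this proposition and how to prove it.The plan is to prove the cyclic chain of implications $(1) \Rightarrow (3) \Rightarrow (2) \Rightarrow (1)$ and $(1) \Rightarrow (5) \Rightarrow (4) \Rightarrow (1)$, exploiting the adjunctions recorded in \cref{def:leibniz-box-product}. The central tool is the adjunction $f \lboxproduct - \dashv \langle f \backslash - \rangle$ (and dually $- \lboxproduct g \dashv \langle - / g \rangle$), which converts lifting problems through the Leibniz box product. Concretely, for a monomorphism $u$ in $\Psh\catCC$, a morphism $v$ in $\PshDelta$, and $f$ in $\Sp\catCC$, the defining adjunction gives the equivalence
\[
    v \pitchfork \langle u \backslash f \rangle
    \iff
    u \lboxproduct v \pitchfork f .
\]
This is the engine that lets me translate each statement about $\langle u \backslash f \rangle$ or $\langle f / v \rangle$ into a statement about the single map $f$ tested against Leibniz box products of monomorphisms.

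First I would establish $(1) \Rightarrow (3)$. Assuming $f$ is a trivial fibration, I must show $\langle u \backslash f \rangle$ has the right lifting property against every monomorphism $v$ in $\PshDelta$. By the adjunction above this is equivalent to $u \lboxproduct v \pitchfork f$. Since $u$ and $v$ are both monomorphisms, $u \lboxproduct v$ is a monomorphism in $\Sp\catCC$ (this is immediate from \cref{prop:spc:generating-monomorphisms}, as the Leibniz box product of two monomorphisms lies in the saturation $\Cell{\sfBB_\catCC \lboxproduct \sfBB_\bbDelta}$), and a trivial fibration by definition lifts against all monomorphisms. Hence $u \lboxproduct v \pitchfork f$ holds, giving $(3)$. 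The implication $(3) \Rightarrow (2)$ is trivial specialization: take $u = \sfB_c$. For $(2) \Rightarrow (1)$, I would run the adjunction in reverse together with the cellular generation of monomorphisms: assuming $\langle \sfB_c \backslash f \rangle$ is a trivial fibration for every $c$, I want $f \pitchfork \sfBB_\catCC \lboxproduct \sfBB_\bbDelta$, which by \cref{prop:spc:generating-monomorphisms} suffices for $f$ to lift against all monomorphisms. The adjunction turns $\sfB_c \lboxproduct \sfB_n \pitchfork f$ into $\sfB_n \pitchfork \langle \sfB_c \backslash f \rangle$, and since $\langle \sfB_c \backslash f \rangle$ is a trivial fibration it lifts against the monomorphism $\sfB_n$; a retract/saturation argument via $\Cell{-}$ closes the loop. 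The second chain $(1) \Rightarrow (5) \Rightarrow (4) \Rightarrow (1)$ is entirely dual, using $- \lboxproduct g \dashv \langle - / g \rangle$ and the symmetric role of $\bbDelta$ in the product $\catCC \times \bbDelta$.

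The main obstacle I anticipate is bookkeeping in the step $(2) \Rightarrow (1)$, specifically checking that lifting against the generating set $\sfBB_\catCC \lboxproduct \sfBB_\bbDelta$ genuinely upgrades to lifting against all monomorphisms. This is where \cref{prop:spc:generating-monomorphisms} does the real work: the class of maps with the right lifting property against a fixed map is closed under the cellular operations (pushout, transfinite composition, retract), so $f^\pitchfork$ containing the generators forces it to contain their entire saturation $\Cell{\sfBB_\catCC \lboxproduct \sfBB_\bbDelta}$, which is precisely the class of monomorphisms. The one point requiring care is that the adjunction isomorphism is natural and interacts correctly with these cellular colimits, so that ``$\langle \sfB_c \backslash f\rangle$ trivial fibration for all $c$'' really is equivalent to ``$f$ lifts against all $\sfB_c \lboxproduct \sfB_n$'' and not merely against a single such map; once this naturality is in hand the proposition follows formally.
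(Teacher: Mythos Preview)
Your proposal is correct and follows exactly the approach the paper sketches: the paper's proof is a one-liner citing \cref{prop:spc:generating-monomorphisms} and the adjunctions $u \lboxproduct - \dashv \langle u \backslash - \rangle$ and $- \lboxproduct v \dashv \langle - / v \rangle$, and your argument simply unpacks these into the explicit cyclic chains of implications. The bookkeeping worry you raise about $(2) \Rightarrow (1)$ is handled precisely as you say, by the fact that ${}^\pitchfork\{f\}$ is closed under the cellular operations, so lifting against the generators $\sfBB_\catCC \lboxproduct \sfBB_\bbDelta$ suffices.
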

\begin{proof}
    Simple consequence of \cref{prop:spc:generating-monomorphisms} and the
    adjunctions $u \lboxproduct - \dashv \langle u \backslash - \rangle$ and $-
    \lboxproduct v \dashv \langle - / v \rangle$ of
    \cref{sec:joyal-tierney-calculus}.
\end{proof}

\begin{lemma}
    \label{lemma:reedy-match}
    Let $\catDD$ be a normal skeletal category (particular, it is Reedy), and
    $\Psh\catEE$ be endowed with a model structure. Consider the Reedy model
    structure on $\Psh\catEE^{\catDD^\op}$.
    \begin{enumerate}
        \item For $X \in \Psh\catEE^{\catDD^\op}$ and $d \in \catDD$, the
        matching object of $X$ at $d$ is $\partial d \backslash X$.
        \item For $f : X \longrightarrow Y$ in $\Psh\catEE^{\catDD^\op}$ and $d
        \in \catDD$, the relative matching map of $f$ at $d$ is $\langle \sfB_d
        \backslash f \rangle$. In particular, $f$ is a Reedy fibration if and
        only if for all $d \in \catDD$, the map $\langle \sfB_d \backslash f
        \rangle$ is a fibration in $\Psh\catEE$.
        \item A map $f : X \longrightarrow Y$ in $\Psh\catEE^{\catDD^\op}$ is
        an acyclic Reedy fibration if and only for all $d \in \catDD$, the
        relative matching map $\langle \sfB_d \backslash f \rangle$ is an
        acyclic fibration in $\Psh\catEE$.
    \end{enumerate}
\end{lemma}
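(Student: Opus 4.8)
The plan is to reduce everything to the box-product calculus of \cref{sec:joyal-tierney-calculus}, which is not special to the pair $(\Psh\catCC, \PshDelta)$: it applies verbatim after replacing $\PshDelta$ by the arbitrary presheaf category $\Psh\catEE$. First I would record that \cref{def:box-product} generalizes to a box product $\boxproduct : \Psh\catDD \times \Psh\catEE \to \Psh\catEE^{\catDD^\op}$, given on objects by $(A \boxproduct B)_d \eqdef \coprod_{A_d} B$; nothing in that definition, nor in \cref{lemma:joyal-tierney:easy-computations} or \cref{def:leibniz-box-product}, uses any particular feature of $\PshDelta$. In particular $\boxproduct$ is divisible on both sides, so it has a right adjoint $A \backslash - : \Psh\catEE^{\catDD^\op} \to \Psh\catEE$ in the second variable, and for fixed $X$ the contravariant functor $- \backslash X$ is mutually right adjoint to $X / -$. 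The evaluation formula $d \backslash X = X_d$ holds exactly as in \cref{lemma:joyal-tierney:easy-computations}, since degreewise $d \boxproduct -$ is the copower by the representable $\catDD(-, d)$, whence $d \backslash X = \int_{d' \in \catDD} X_{d'}^{\,\catDD(d', d)} \cong X_d$ by the Yoneda lemma.

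For item (1), I would use the colimit presentation of the boundary from \cref{def:boundary}, namely $\partial d = \colim_{\mathcal J} e$ over the category $\mathcal J$ of non-isomorphism arrows $e \to d$ in $\catDD_+$. A contravariant right adjoint sends a colimit over $\mathcal J$ to the limit over $\mathcal J^\op$, so
\[
    \partial d \backslash X
    \;=\; \lim_{\mathcal J^\op} \left( e \backslash X \right)
    \;=\; \lim_{\mathcal J^\op} X_e .
\]
Because $\catDD$ is normal (\cref{def:skeletal-category}) its only isomorphisms are identities, so here ``non-isomorphism'' may be read as ``non-identity''; and regarding $X$ as a diagram on the Reedy category $\catDD^\op$, one checks that $\mathcal J^\op$ is exactly the matching category of $\catDD^\op$ at $d$. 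Hence the displayed limit is the matching object, i.e. $\match_d X = \partial d \backslash X$.

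For item (2), I would instantiate the cartesian gap map of \cref{def:leibniz-box-product} at the boundary inclusion $\sfB_d : \partial d \longhookrightarrow d$ in the first variable and at $f : X \to Y$ in the second. Rewriting each corner of the defining pullback square via $d \backslash X = X_d$, $d \backslash Y = Y_d$, and the identification of item (1), that square becomes the defining square of the relative matching map, whose two legs are $f_d : X_d \to Y_d$ and the canonical map $X_d \to \match_d X$. Hence $\langle \sfB_d \backslash f \rangle$ is exactly the relative matching map of $f$ at $d$. The ``in particular'' clause is then the standard characterization of Reedy fibrations, that $f$ is a Reedy fibration if and only if every relative matching map is a fibration (see \cite{Hirschhorn2009}); item (3) is the acyclic-fibration half of the same characterization, again combined with the identification just obtained.

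The only genuine content is item (1): once the matching object and the relative matching map have been rewritten as box-product expressions, items (2) and (3) are formal. Accordingly, the hard part — such as it is — is the bookkeeping in item (1), namely verifying that the generalized $- \backslash X$ really is continuous in the boundary variable and that $\mathcal J^\op$ coincides with the matching category of $\catDD^\op$. Neither point is deep, but both must be handled carefully to keep the variances straight.
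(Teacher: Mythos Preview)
Your proposal is correct and follows essentially the same approach as the paper: compute $\partial d \backslash X$ by turning the defining colimit of $\partial d$ into a limit via the contravariant right adjoint $-\backslash X$, identify each $d' \backslash X$ with $X_{d'}$, and recognize the resulting limit as the matching object; then items (2) and (3) are immediate from the definitions and the standard Reedy characterization in \cite{Hirschhorn2009}. Your write-up is more explicit than the paper's (you spell out the generalization of $\boxproduct$ to $\Psh\catDD \times \Psh\catEE$ and the identification of $\mathcal J^\op$ with the matching category, using normality to equate ``non-isomorphism'' with ``non-identity''), but these are elaborations of the same argument rather than a different route.
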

\begin{proof}
    For the fist point, observe that
    \[
        \partial d \backslash X
        = \left( \colim_{\substack{
            d' \rightarrow d \\ \text{in } \catDD_+, \text{not iso}
        }} d' \right) \backslash X
        \cong \lim_{\substack{
            d' \rightarrow d \\ \text{in } \catDD_+, \text{not iso}
        }} (d' \backslash X)
        \cong \lim_{\substack{
            d' \rightarrow d \\ \text{in } \catDD_+, \text{not iso}
        }} X_{d'} ,
    \]
    which is the matching object of $X$ at $d$. The second claim is by
    definition, and the third is \cite[theorem 15.3.15]{Hirschhorn2009}.
\end{proof}

For the the rest of this section, we assume that $\Psh\catCC$ is endowed with a
Cisinski model structure (\cref{def:cisinski-model-structure}).

\begin{definition}
    \label{def:vertical-horizontal-model-structures}
    Since $\catCC$ is normal skeletal, it is a Reedy category. Let
    $\Sp\catCC_v$, the \emph{vertical model structure}\index{vertical model
    structure} on $\Sp\catCC \cong \SpDelta^{\catCC^\op}$, be the Reedy
    structure induced by the Quillen model structure on $\PshDelta$. In this
    structure, a map $f : X \longrightarrow Y$ is a weak equivalence (also
    called \emph{column-wise weak equivalence}\index{column-wise weak
    equivalence|see {vertical model structure}}) if for all $c \in \catCC$, the
    map of simplicial sets $c \backslash f = f_c : X_c \longrightarrow Y_c$ is
    a weak equivalence. It is a fibration (also called \emph{vertical
    fibration}\index{vertical fibration|see {vertical model structure}}) if for
    all $c \in \catCC$, the relative matching map $\langle \sfB_c \backslash f
    \rangle$ is a Kan fibration, where $\sfB_c : \partial c \longrightarrow c$
    is the boundary inclusion of $c$. Fibrant spaces in $\Sp\catCC_v$ are also
    called \emph{vertically fibrant}\index{vertically fibrant|see {vertical
    model structure}}.

    Dually, let $\Sp\catCC_h$, the \emph{horizontal model
    structure}\index{horizontal model structure} on $\Sp\catCC \cong
    \Psh\catCC^{\bbDelta^\op}$, be the Reedy structure induced by the model
    structure on $\Psh\catCC$. The description of weak equivalence and
    fibrations transpose from the vertical model structure \emph{mutadis
    mutandis}\index{row-wise weak equivalence|see {vertical model
    structure}}\index{horizontal fibration|see {vertical model
    structure}}\index{horizontally fibrant|see {vertical model structure}}.
\end{definition}

\begin{proposition}
    \label{prop:vertical-horizontal-model-structures:cisinski}
    The model structures $\Sp\catCC_v$ and $\Sp\catCC_h$ are of Cisinsky type
    (\cref{def:cisinski-model-structure}).
\end{proposition}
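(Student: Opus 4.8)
The plan is to check, for each of $\Sp\catCC_v$ and $\Sp\catCC_h$, the three requirements of \cref{def:cisinski-model-structure}: that the ambient category is a presheaf category over a small category, that the structure is cofibrantly generated, and that its cofibrations are exactly the monomorphisms. The first is immediate, since $\Sp\catCC = \Cat(\catCC^\op, \PshDelta) \cong \Psh{\catCC \times \bbDelta}$ is presheaves over the small category $\catCC \times \bbDelta$. For cofibrant generation I would appeal to the standard fact that a Reedy model structure valued in a cofibrantly generated model category and indexed by a Reedy category is itself cofibrantly generated; here the bases are $\PshDelta$ with the Quillen structure and $\Psh\catCC$ with its Cisinski structure, both cofibrantly generated, and the indexing categories $\catCC$ and $\bbDelta$ are normal skeletal, hence Reedy (\cref{def:vertical-horizontal-model-structures}).

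The real content is that the cofibrations coincide with the monomorphisms, and I would establish this through the acyclic fibrations rather than directly. Recall that in any model category the cofibrations are precisely the maps with the left lifting property against all acyclic fibrations; it therefore suffices to show that the acyclic fibrations of each structure are exactly the trivial fibrations of $\Sp\catCC$ (\cref{def:trivial-fibration}), i.e.\ the maps with the right lifting property against every monomorphism. For the vertical structure, \cref{lemma:reedy-match} identifies the acyclic Reedy fibrations as those $f$ for which every relative matching map $\langle \sfB_c \backslash f \rangle$ is an acyclic fibration in the base $\PshDelta$; since $\PshDelta$ carries the Quillen structure, ``acyclic fibration'' and ``trivial fibration'' agree there, so this is precisely condition~(2) of \cref{prop:joyal-tierney:2.3} characterizing the trivial fibrations of $\Sp\catCC$. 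For the horizontal structure the symmetric statement applies, reading off the $\bbDelta$-variable: the acyclic Reedy fibrations are the $f$ for which every $\langle f / \sfB_n \rangle$ is an acyclic fibration in $\Psh\catCC$, and because $\Psh\catCC$ is Cisinski its acyclic and trivial fibrations again coincide, so by condition~(4) of \cref{prop:joyal-tierney:2.3} these are once more the trivial fibrations of $\Sp\catCC$.

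Combining these, in both structures the acyclic fibrations are exactly the trivial fibrations of $\Sp\catCC$. The cofibrations, being the maps with the left lifting property against the acyclic fibrations, are therefore ${}^\pitchfork\bigl(A^\pitchfork\bigr)$ for $A \eqdef \sfBB_\catCC \lboxproduct \sfBB_\bbDelta$: indeed, \cref{prop:spc:generating-monomorphisms} exhibits the monomorphisms as the saturation of $A$, so the trivial fibrations, their right-orthogonal, equal $A^\pitchfork$. Since the monomorphisms form a saturated class in a presheaf category, ${}^\pitchfork(A^\pitchfork)$ is precisely the class of monomorphisms. Hence the cofibrations of $\Sp\catCC_v$ and of $\Sp\catCC_h$ are exactly the monomorphisms, and both structures are of Cisinski type.

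I expect the main obstacle to lie in the bridge of the second paragraph: \cref{lemma:reedy-match} phrases acyclic Reedy fibrations in terms of relative matching maps being \emph{acyclic} fibrations in the base, whereas \cref{prop:joyal-tierney:2.3} phrases trivial fibrations in terms of the same maps being \emph{trivial} fibrations. Reconciling the two hinges on the coincidence of acyclic and trivial fibrations in each base category --- classical for the Quillen structure on $\PshDelta$, and exactly the defining feature of a Cisinski model category for $\Psh\catCC$ --- so it is precisely here that the standing hypothesis that $\Psh\catCC$ carry a Cisinski model structure is used.
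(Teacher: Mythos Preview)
Your proposal is correct and follows essentially the same approach as the paper: cofibrant generation by the standard Reedy result (the paper cites \cite[theorem 15.6.27]{Hirschhorn2009}), then identifying acyclic Reedy fibrations with trivial fibrations via \cref{lemma:reedy-match} and \cref{prop:joyal-tierney:2.3}, using that acyclic and trivial fibrations agree in each Cisinski base. You are more explicit than the paper in deducing that cofibrations are exactly the monomorphisms from the identification of acyclic with trivial fibrations, but this is the same argument spelled out.
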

\begin{proof}
    By \cite[theorem 15.6.27]{Hirschhorn2009}, both are cofibrantly generated.
    A map $f \in \Sp\catCC$ is a vertical (resp. horizontal) acyclic fibration
    if and only if for all $c \in \catCC$ (resp. $n \in \bbNN$), the matching
    map $\langle \sfB_c \backslash f \rangle$ (resp. $\langle f / \sfB_n
    \rangle$) is a acyclic fibration in $\PshDelta_{\mathrm{Quillen}}$ (resp.
    in the model structure on $\Psh\catCC$, which is assumed to be of Cisinsly
    type), i.e. a trivial fibration. By \cref{prop:joyal-tierney:2.3}, $f$ is a
    trivial fibration. Finally, $f$ is a vertical or a horizontal acyclic fibration if
    and only if it is a trivial fibration. Therefore, vertical and horizontal
    cofibrations are the monomorphisms.
\end{proof}

\begin{proposition}
    [{Generalization of \cite[proposition 2.5]{Joyal2007}}]
    \label{prop:joyal-tierney:2.5}
    Let $\catCC$ be a normal skeletal category, and $f : X \longrightarrow Y$
    be a morphism in $\Sp\catCC$. The following are equivalent:
    \begin{enumerate}
        \item $f$ is a vertical fibration, i.e. the map $\langle \sfB_c
        \backslash f \rangle$ is a Kan fibration, for $c \in \catCC$;
        \item the map $\langle u \backslash f \rangle$ is a Kan fibration, for
        all monomorphism $u \in \Psh\catCC$;
        \item the map $\langle f / \sfH_n^k \rangle$ is a trivial fibration, where
        $\sfH_n^k : \Lambda^k [n] \longrightarrow \Delta [n]$ is the $k$-th
        horn inclusion of $[n]$, for all $n \in \bbNN$ and $0 \leq k \leq n$;
        \item the map $\langle f / v \rangle$ is a trivial fibration, for all
        anodyne extension $v \in \PshDelta$.
    \end{enumerate}
\end{proposition}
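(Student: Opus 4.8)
The plan is to reduce all four clauses to a single kind of lifting assertion, namely $u \lboxproduct v \pitchfork f$, and then to trade generators for their saturations in each of the two variables \emph{separately}. The engine is the two-variable adjunction packaged by the Joyal--Tierney calculus of \cref{sec:joyal-tierney-calculus}: since $u \lboxproduct - \dashv \langle u \backslash - \rangle$ and $- \lboxproduct v \dashv \langle - / v \rangle$, transposing across one ordinary adjunction at a time yields, for any $u \in \Psh\catCC^\rightarrow$, $v \in \PshDelta^\rightarrow$ and $f \in \Sp\catCC^\rightarrow$, the equivalences
\[
    (u \lboxproduct v) \pitchfork f
    \iff
    v \pitchfork \langle u \backslash f \rangle
    \iff
    u \pitchfork \langle f / v \rangle .
\]
I would record these first, exactly in the spirit of the proof of \cref{prop:joyal-tierney:2.3}.

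With this in hand I would translate each clause. Recall that a Kan fibration is exactly a map with the right lifting property against the horn inclusions $\sfH_n^k$ (equivalently, against all anodyne extensions), and that a trivial fibration is, by \cref{def:trivial-fibration}, exactly a map with the right lifting property against all monomorphisms. Applying the engine, clause (1) unfolds to ``$\sfB_c \lboxproduct \sfH_n^k \pitchfork f$ for all $c$ and all $n, k$'', clause (2) to ``$u \lboxproduct \sfH_n^k \pitchfork f$ for all monomorphisms $u$ and all $n, k$'', clause (3) to ``$u \lboxproduct \sfH_n^k \pitchfork f$ for all $n, k$ and all monomorphisms $u$'', and clause (4) to ``$u \lboxproduct v \pitchfork f$ for all monomorphisms $u$ and all anodyne $v$''. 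Thus (2) and (3) are literally the same condition with the two universal quantifiers exchanged, and their equivalence is immediate.

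It then remains to vary the generators slotwise. For $(1) \Rightarrow (2)$, fix a horn $\sfH_n^k$: by the engine the class $\{ u \mid u \lboxproduct \sfH_n^k \pitchfork f \} = \{ u \mid u \pitchfork \langle f / \sfH_n^k \rangle \}$ is of the form ${}^\pitchfork(-)$, hence weakly saturated (closed under pushout, transfinite composition, coproduct and retract); by hypothesis it contains every boundary inclusion $\sfB_c$, so by \cref{prop:cisinski:boundaries-generate-monomorphisms} it contains $\Cell{\sfBB_\catCC}$, i.e.\ all monomorphisms, which is clause (2). Dually, for $(2) \Rightarrow (4)$, fix a monomorphism $u$: the class $\{ v \mid u \lboxproduct v \pitchfork f \} = \{ v \mid v \pitchfork \langle u \backslash f \rangle \}$ is again weakly saturated and contains the horn inclusions, hence contains the smallest weakly saturated class they generate, namely the anodyne extensions, which is clause (4). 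The converse implications $(2) \Rightarrow (1)$ and $(4) \Rightarrow (3)$ are trivial, as boundary inclusions are monomorphisms and horns are anodyne; combined with $(2) \Leftrightarrow (3)$ this closes the loop.

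The argument carries no genuine obstacle once the engine and the two generation facts ($\Cell{\sfBB_\catCC}$ is all monomorphisms, and the anodyne extensions are the saturation of the horns) are available. The single point demanding care is that saturation must be performed one variable at a time: the class $\{ u \mid u \lboxproduct v \pitchfork f \}$ is weakly saturated only for fixed $v$, and symmetrically, so I would be careful to fix one slot while enlarging the other and never to saturate both simultaneously.
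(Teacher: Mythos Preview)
Your proof is correct. The paper does not actually supply a proof of \cref{prop:joyal-tierney:2.5}; it states the proposition without argument, presumably because it is entirely analogous to \cref{prop:joyal-tierney:2.3}, whose proof is the one-line ``Simple consequence of \cref{prop:spc:generating-monomorphisms} and the adjunctions $u \lboxproduct - \dashv \langle u \backslash - \rangle$ and $- \lboxproduct v \dashv \langle - / v \rangle$''. Your write-up is exactly the intended unpacking of that sentence in the present setting: translate each clause into a lifting condition of the form $u \lboxproduct v \pitchfork f$ via the two-variable adjunction, observe that (2) and (3) become literally the same statement, and pass between generators and their saturations one slot at a time using \cref{prop:cisinski:boundaries-generate-monomorphisms} on the $\Psh\catCC$ side and the fact that anodyne extensions are the saturation of the horn inclusions on the $\PshDelta$ side.
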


\begin{definition}
    [{Homotopically constant space \cite[section 3]{Rezk2001a}}]
    \label{def:homotopically-constant}
    A space $X \in \Sp\catCC$ is \emph{homotopically
    constant}\index{homotopically constant} if for all map $f : [k]
    \longrightarrow [l]$ in $\bbDelta$, the structure map $X / f : X_{-, l}
    \longrightarrow X_{-, k}$ is a weak equivalence\footnote{A vertically
    fibrant and homotopically constant space is called a \emph{simplicial
    resolution}\index{simplicial resolution} in \cite[definition
    4.7]{Dugger2001a}.}.
\end{definition}

\begin{lemma}
    \label{lemma:homotopically-constant}
    Let $X \in \Sp\catCC$. The following are equivalent:
    \begin{enumerate}
        \item $X$ is homotopically constant;
        \item for all $k \in \bbNN$, writing $s : [k] \longrightarrow [0]$
        the terminal map in $\bbDelta$, the structure map $X / s : X_{-, 0}
        \longrightarrow X_{-, k}$ a weak equivalence;
        \item for all codegeneracy $s^i : [k] \longrightarrow [k-1]$ in
        $\bbDelta$, the structure map $X / s^i : X_{-, k-1} \longrightarrow X_{-,
        k}$ is a weak equivalence;
        \item for all $k \in \bbNN$ and all map $d : [0] \longrightarrow [k]$
        in $\bbDelta$, the structure map $X / d : X_{-, k} \longrightarrow
        X_{-, 0}$ is a weak equivalence;
        \item for all coface map $d^i : [k] \longrightarrow [k+1]$ in
        $\bbDelta$, the structure map $X / d^i : X_{-, k+1} \longrightarrow
        X_{-, k}$ is a weak equivalence.
    \end{enumerate}
\end{lemma}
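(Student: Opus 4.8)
The plan is to prove all five conditions equivalent by a single cycle of implications
$(1) \Rightarrow (3) \Rightarrow (5) \Rightarrow (4) \Rightarrow (2) \Rightarrow (1)$, so that each condition implies the next and the loop closes. The only real ingredient is the two-out-of-three property of weak equivalences in the Cisinski model structure on $\Psh\catCC$, combined with two elementary facts about $\bbDelta$: that $[0]$ is terminal, and that every morphism factors as a composite of cofaces and codegeneracies. Throughout I would exploit that $X$, viewed as an object of $\Psh\catCC^{\bbDelta^\op}$, is contravariant in $\bbDelta$, so that $X/(g \circ f) = X/f \circ X/g$ and $X/\mathrm{id} = \mathrm{id}$, together with the identification $X/\Delta[n] = X_{-,n}$ from \cref{lemma:joyal-tierney:easy-computations}.

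The implication $(1) \Rightarrow (3)$ is immediate, since codegeneracies are morphisms of $\bbDelta$ and $(1)$ asserts that \emph{every} structure map is a weak equivalence. For $(3) \Rightarrow (5)$, I would fix a coface $d^i : [k] \longrightarrow [k+1]$ and invoke the cosimplicial identity providing a codegeneracy $s : [k+1] \longrightarrow [k]$ with $s \circ d^i = \mathrm{id}_{[k]}$; applying $X/(-)$ yields $X/d^i \circ X/s = \mathrm{id}$, and since $X/s$ is a weak equivalence by $(3)$, two-out-of-three forces $X/d^i$ to be one as well. For $(5) \Rightarrow (4)$, I would write a vertex $d : [0] \longrightarrow [k]$ as a composite of coface maps $[0] \longrightarrow [1] \longrightarrow \cdots \longrightarrow [k]$; then $X/d$ is a composite of maps of the form $X/d^i$, each a weak equivalence by $(5)$, hence itself a weak equivalence, as these are closed under composition.

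The two remaining steps invoke the terminal object. For $(4) \Rightarrow (2)$, I would fix the terminal map $s : [k] \longrightarrow [0]$ and choose any vertex $d : [0] \longrightarrow [k]$ (which exists since $[k]$ is nonempty); as $s \circ d = \mathrm{id}_{[0]}$, applying $X/(-)$ gives $X/d \circ X/s = \mathrm{id}$, and since $X/d$ is a weak equivalence by $(4)$, two-out-of-three makes $X/s$ one too. Finally, for $(2) \Rightarrow (1)$, I would take an arbitrary $f : [k] \longrightarrow [l]$; since $[0]$ is terminal, the terminal maps satisfy $s_{[l]} \circ f = s_{[k]}$, whence $X/f \circ X/s_{[l]} = X/s_{[k]}$. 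Both $X/s_{[l]}$ and $X/s_{[k]}$ are weak equivalences by $(2)$, and two-out-of-three then yields that $X/f$ is a weak equivalence, which is precisely $(1)$.

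I do not anticipate a genuine obstacle: the content is entirely formal, and the only point requiring care is the variance, i.e. keeping the composition order straight when transporting the cosimplicial identities and the terminal/section relations through the contravariant functor $X/(-)$. Choosing the cyclic ordering above has the advantage that each step uses only \emph{one} structural fact about $\bbDelta$ — a single section relation or a single factorization into faces — rather than ever factoring a general morphism into both faces and degeneracies at once, which keeps the bookkeeping minimal.
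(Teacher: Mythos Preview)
Your proof is correct. The ingredients are exactly those of the paper---two-out-of-three, the section identities $s^j d^i = \mathrm{id}$ and $s \circ d = \mathrm{id}_{[0]}$, and factoring maps to or from $[0]$ as composites of cofaces or codegeneracies---but the organization differs. The paper argues in a hub-and-spoke pattern centered on condition~(2): it shows $(1)\Rightarrow(2)\Rightarrow(3)$ and $(1)\Rightarrow(4)\Rightarrow(5)$ trivially, then closes the loops with $(3)\Rightarrow(2)$ (factoring the terminal map as a composite of codegeneracies), $(5)\Rightarrow(4)$, $(4)\Rightarrow(2)$, and $(2)\Rightarrow(1)$. Your single cycle replaces the paper's $(3)\Rightarrow(2)$ step by the direct implication $(3)\Rightarrow(5)$ via the section identity; this is a genuine, if minor, alternative. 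The cycle is slightly more economical in the number of nontrivial implications, while the paper's layout makes the logical dependencies between the five conditions more transparent (in particular, that (2) alone already implies everything).
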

\begin{proof}
    \begin{itemize}
        \item (1) $\implies$ (2) $\implies$ (3) and (1) $\implies$ (4)
        $\implies$ (5) are trivial.
        \item (2) $\implies$ (1) Take a map $f : [k] \longrightarrow [l]$ in
        $\bbDelta$. Clearly, $s = s f$, so $X / s = (X / f) (X / s)$. By
        3-for-2, $X / f$ is a weak equivalence.
        \item (3) $\implies$ (2) Note that the terminal map $s : [k]
        \longrightarrow [0]$ is a composite of codegeneracies $[k]
        \longrightarrow [k-1] \longrightarrow \cdots \longrightarrow [1]
        \longrightarrow [0]$.
        \item (4) $\implies$ (2) The terminal map $s : [k] \longrightarrow [0]$
        is a retraction of any map $d : [0] \longrightarrow [k]$, thus $X / s$
        is a section of $X / d$. By 3-for-2, $X / s$ is a weak equivalence.
        \item (5) $\implies$ (4) Note that all map $d : [0] \longrightarrow
        [k]$ is a composite of coface maps.
        \qedhere
    \end{itemize}
\end{proof}

\begin{proposition}
    [{Generalization of \cite[proposition 2.8]{Joyal2007}}]
    \label{prop:joyal-tierney:2.8}
    A vertically fibrant space $X \in \Sp\catCC$
    (\cref{def:vertical-horizontal-model-structures}) is homotopically
    constant.
\end{proposition}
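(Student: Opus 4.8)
The plan is to verify condition~(4) of \cref{lemma:homotopically-constant}: that for every $k \in \bbNN$ and every vertex map $d : [0] \to [k]$ in $\bbDelta$, the structure map $X/d : X_{-, k} \to X_{-, 0}$ is a weak equivalence in $\Psh\catCC$. Once this is in place, the implication $(4) \Rightarrow (1)$ of that lemma immediately yields that $X$ is homotopically constant, so this reduction is the whole game.

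The key observation is that the map of representables $\Delta[0] \hookrightarrow \Delta[k]$ induced by $d$ is a monomorphism between contractible simplicial sets, hence a trivial cofibration in the Quillen model structure on $\PshDelta$, i.e.\ an anodyne extension. Since $X$ is vertically fibrant, the terminal map $X \twoheadrightarrow 1$ is a vertical fibration (\cref{def:vertical-horizontal-model-structures}), so I can apply the equivalence $(1) \Leftrightarrow (4)$ of \cref{prop:joyal-tierney:2.5} to $f = (X \twoheadrightarrow 1)$: for every anodyne extension $v \in \PshDelta$, the cartesian gap map $\langle (X \twoheadrightarrow 1)/v \rangle$ is a trivial fibration. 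By \cref{lemma:joyal-tierney:easy-computations}\,(4), this gap map is exactly the structure map $X/v$. Specializing $v$ to the anodyne vertex inclusion $\Delta[0] \hookrightarrow \Delta[k]$ above, I obtain that $X/d : X_{-, k} \to X_{-, 0}$ is a trivial fibration in $\Psh\catCC$.

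It then remains only to pass from ``trivial fibration'' to ``weak equivalence''. Since $\Psh\catCC$ is equipped with a Cisinski model structure, trivial fibrations coincide with acyclic fibrations (as recalled after \cref{def:trivial-fibration}), so $X/d$ is in particular a weak equivalence, completing the verification of condition~(4). The argument is essentially formal once the Joyal--Tierney calculus is available; the one genuinely geometric input---and the point I would be most careful about---is the identification of the vertex inclusions $\Delta[0] \hookrightarrow \Delta[k]$ as anodyne extensions, since it is precisely this that lets me route the problem through the anodyne characterization of vertical fibrations (\cref{prop:joyal-tierney:2.5}\,(4)) rather than manipulating degeneracies by hand.
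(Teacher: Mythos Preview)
Your proof is correct and follows essentially the same route as the paper: use that $d : \Delta[0] \hookrightarrow \Delta[k]$ is anodyne, apply \cref{prop:joyal-tierney:2.5} to the terminal map $X \twoheadrightarrow 1$ to conclude that $X/d = \langle (X \to 1)/d \rangle$ is a trivial fibration, and finish with \cref{lemma:homotopically-constant}. Your version is simply more explicit about the identification via \cref{lemma:joyal-tierney:easy-computations}(4) and the passage from trivial fibration to weak equivalence in a Cisinski model structure.
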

\begin{proof}
    Take $d : [0] \longrightarrow [n]$ in $\bbDelta$. Since $d : \Delta [0]
    \longrightarrow \Delta [n]$ is a trivial cofibration in
    $\PshDelta_{\mathrm{Quillen}}$, by \cref{prop:joyal-tierney:2.5}, the map
    $X / d = \langle (X \rightarrow 1) / d \rangle$ is a trivial fibration.
    Apply \cref{lemma:homotopically-constant} to conclude.
\end{proof}

\begin{proposition}
    [{Generalization of \cite[proposition 2.9]{Joyal2007}}]
    \label{prop:joyal-tierney:2.9}
    A map $f : X \longrightarrow Y$ between vertically fibrant spaces is a weak
    equivalence in $\Sp\catCC_h$ if and only if $f_{-, 0} : X_{-, 0}
    \longrightarrow Y_{-, 0}$ is a weak equivalence.
\end{proposition}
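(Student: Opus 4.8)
The plan is to exploit the fact that the horizontal weak equivalences are precisely the row-wise weak equivalences, and to collapse every row onto the zeroth one using the homotopical constancy of vertically fibrant spaces from \cref{prop:joyal-tierney:2.8}. Recall that $\Sp\catCC_h$ is the Reedy model structure on $\Psh\catCC^{\bbDelta^\op}$ (\cref{def:vertical-horizontal-model-structures}), so a morphism $f$ is a horizontal weak equivalence if and only if $f_{-, n} : X_{-, n} \to Y_{-, n}$ is a weak equivalence in $\Psh\catCC$ for every $n \in \bbNN$. The forward implication is then immediate: if $f$ is a horizontal weak equivalence, instantiating this at $n = 0$ shows that $f_{-, 0}$ is a weak equivalence.

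For the converse, assume $f_{-, 0}$ is a weak equivalence; I will deduce that $f_{-, n}$ is one for every $n$. Since $X$ and $Y$ are vertically fibrant, \cref{prop:joyal-tierney:2.8} tells me they are homotopically constant, so by \cref{def:homotopically-constant} the structure map induced by any morphism of $\bbDelta$ is a weak equivalence. Fixing $n \in \bbNN$ and a vertex $d : [0] \to [n]$ in $\bbDelta$, both $X / d : X_{-, n} \to X_{-, 0}$ and $Y / d : Y_{-, n} \to Y_{-, 0}$ are therefore weak equivalences in $\Psh\catCC$. Naturality of $f$ with respect to $d$ produces the commutative square
\[
    \squarediagram
        {X_{-, n}}{Y_{-, n}}{X_{-, 0}}{Y_{-, 0}}
        {f_{-, n}}{X / d}{Y / d}{f_{-, 0}}
\]
whose two vertical legs are weak equivalences and whose bottom edge $f_{-, 0}$ is a weak equivalence by hypothesis. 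By 3-for-2, the top edge $f_{-, n}$ is a weak equivalence as well. Since $n$ was arbitrary, $f$ is a row-wise weak equivalence, hence a weak equivalence in $\Sp\catCC_h$.

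I expect no real obstacle here: the whole substance of the argument is the reduction from an arbitrary row to the zeroth row, which is handed to me by \cref{prop:joyal-tierney:2.8}, and everything else is just the 3-for-2 property of the weak equivalences of the Cisinski model structure on $\Psh\catCC$. The only point worth verifying is that the displayed square genuinely commutes, i.e.\ that $f$ is natural in the simplicial direction; but this is automatic, since $f$ is by definition a morphism of $\Sp\catCC \cong \Psh\catCC^{\bbDelta^\op}$.
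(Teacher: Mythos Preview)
Your proof is correct and essentially identical to the paper's: both build the naturality square relating $f_{-,n}$ and $f_{-,0}$ via structure maps that are weak equivalences by \cref{prop:joyal-tierney:2.8}, and conclude by 3-for-2. The only cosmetic difference is that the paper uses the terminal map $s:[n]\to[0]$ (giving vertical maps $X_{-,0}\to X_{-,n}$), whereas you use a vertex $d:[0]\to[n]$ (giving vertical maps $X_{-,n}\to X_{-,0}$); either choice works since homotopical constancy makes both directions weak equivalences.
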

\begin{proof}
    Let $n \in \bbNN$ and $s : [n] \longrightarrow [0]$ be the terminal map in
    $\bbDelta$. We have a commutative square
    \[
        \squarediagram
            {X_{-, 0}}{Y_{-, 0}}{X_{-, n}}{Y_{-, n}}
            {f_{-, 0}}{X / s}{Y / s}{f_{-, n}}
    \]
    where by \cref{prop:joyal-tierney:2.8}, the vertical morphisms are weak
    equivalences. The result follows by 3-for-2.
\end{proof}

\section{Opetopic algebras}
\label{sec:opetopic-algebras}

\subsection{Remiders}
\label{sec:opetopic-algebras:reminders}

In this section, we fix once and for all a parameter $n \geq 1$ and $k \geq 0$,
and write $\optPolyFun$ for $\optPolyFun^n$, $\Alg$ for $\Alg^k
(\optPolyFun^n)$, $\sfAA$ for $\sfAA_{k, n}$, etc. Recall the main construction
of \cite{HoThanh2019}, namely the reflective adjunction $h : \Psh{\bbOO_{\geq
n-k}} \adjunction \Alg : N$\index{$h$}\index{$N$} between the category of
truncated opetopic sets (i.e. trivial below dimension $n-k$), and $k$-colored
$n$-opetopic algebras. It exhibits $\Alg$ as the localization $\sfAA_{k,
n}^{-1} \PshO$, or equivalently, as the orthoginality class $\sfAA_{k,
n}^\perp$ (\cref{def:orthogonality}). Let $\bbLambda$ be the full subcategory
of $\Alg$ spanned by the image of $h$, i.e. the full subcategory of free
algebras. Taking $h_! : \Psh{\bbOO_{\geq n-k}} \longrightarrow
\PshLambda$\index{$h^*$} to be the left Kan extension of
$\bbOO_{\geq n-k} \stackrel{h}{\longrightarrow} \bbLambda \longrightarrow
\PshLambda$ along the Yoneda embedding, and $v : \PshLambda \longrightarrow
\Alg$\index{$v$}\index{$M$} to be the left Kan extension of the inclusion
$\bbLambda \longhookrightarrow \Alg$ along the Yoneda embedding, the reflection
$h$ factors as $h \cong v h_!$:
\begin{equation}
    \label{eq:triangle-adjunction}
    \begin{tikzcd} [column sep = small, row sep = large]
        &
        \Alg
        \arrow[dl, shift left = .4em, "N" below right, "\scriptstyle{\bot}" {sloped, above}]
        \arrow[dr, shift right = .4em, "M" below left, "\scriptstyle{\bot}" {sloped, above}] &
        \\
        \Psh{\bbOO_{\geq n-k}}
        \arrow[ur, shift left = .4em, "h" above left]
        \arrow[rr, shift left = .4em, "h_!" above] &
        &
        \PshLambda ,
        \arrow[ul, shift right = .4em, "v" above right]
        \arrow[ll, shift left = .4em, "h^*" below, "\scriptstyle{\bot}" {sloped, above}]
    \end{tikzcd}
\end{equation}

\begin{proposition}
    \label{prop:vM:reflective}
    The adjunction $v : \PshLambda \adjunction \Alg : M$ is reflective, i.e.
    $M$ is an embedding of categories.
\end{proposition}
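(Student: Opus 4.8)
The plan is to recognise $M$ as a restricted Yoneda (``nerve'') functor and to reduce the claim to the density of $\bbLambda$ inside $\Alg$. Write $\iota : \bbLambda \subto \Alg$ for the inclusion, so that by definition $v = \operatorname{Lan}_y \iota$, where $y$ is the Yoneda embedding of $\bbLambda$. Since $\Alg$ is cocomplete (being a reflective localisation of a presheaf category), $v$ preserves all colimits and is left adjoint to the nerve $A \mapsto \Alg(\iota(-), A)$; by uniqueness of adjoints this nerve is $M$, so that $M(A) = \Alg(-, A)|_{\bbLambda}$ is the restricted Yoneda embedding. Now $M$ is fully faithful --- equivalently, $v \dashv M$ is reflective --- if and only if the counit $v M A \to A$ is invertible for every $A$, and this in turn is exactly the statement that $\iota$ is a \emph{dense} functor. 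So it suffices to prove that the free algebras are dense in $\Alg$.

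First I would show that the composite $\phi \eqdef h \circ y : \bbOO_{\geq n-k} \to \Alg$, sending an opetope $c$ to the free algebra $h(yc)$, is dense. The nerve of $\phi$ is computed directly from the reflective adjunction $h \dashv N$ and the Yoneda lemma: for $c \in \bbOO_{\geq n-k}$ and $A \in \Alg$,
\[
    \Alg(\phi c, A)
    \cong \Psh{\bbOO_{\geq n-k}}(y c, N A)
    \cong (N A)_c ,
\]
naturally in $c$ and $A$. Hence the nerve of $\phi$ is (isomorphic to) $N$ itself. Since $h \dashv N$ is reflective, $N$ is fully faithful, and a functor is dense precisely when its nerve is fully faithful; therefore $\phi$ is dense.

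Finally, $\phi$ factors through the full subcategory $\bbLambda$, as each $\phi c = h(yc)$ is a free algebra and hence an object of $\bbLambda$. I would then invoke the standard fact that density is inherited by any full subcategory through which a dense functor factors, so that, since $\phi$ is dense and factors as $\bbOO_{\geq n-k} \to \bbLambda \overset{\iota}{\subto} \Alg$ with $\iota$ full, the inclusion $\iota$ is dense as well; this yields that $M$ is fully faithful, as desired. The argument is essentially formal once framed this way, and the only non-trivial inputs are the reflectivity of $h \dashv N$ (which we are given) and the density-enlargement lemma. The delicate point in the latter is that a natural transformation between nerves must be pinned down on \emph{all} of $\bbLambda$ and not merely on the image of $\phi$; this is where one uses the canonical colimit presentation supplied by density of $\phi$. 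Concretely, the counit $v M A \to A$ is the comparison out of $\colim_{\bbLambda / A} \iota$, and its invertibility reflects the presentation $A \cong h(NA) \cong \colim\bigl( \bbOO_{\geq n-k} / NA \to \Alg \bigr)$ obtained by applying the colimit-preserving functor $h$ to the tautological colimit of representables computing $NA$, the composite diagram landing in $\bbLambda$ through $\phi$.
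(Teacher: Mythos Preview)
Your argument is correct. You recognise $M$ as the restricted Yoneda functor along the inclusion $\iota : \bbLambda \hookrightarrow \Alg$, reduce full faithfulness of $M$ to density of $\iota$, and then deduce density of $\iota$ from density of $\phi = h \circ y : \bbOO_{\geq n-k} \to \Alg$ via the density--enlargement lemma. The identification of the nerve of $\phi$ with $N$ is clean, and the enlargement step is valid: since $\bbLambda$ is by definition the full subcategory spanned by the objects $h(yc)$, the corestriction $\bbOO_{\geq n-k} \to \bbLambda$ is essentially surjective, and from this one checks directly that any natural transformation between restricted Yoneda presheaves on $\bbLambda$ is already determined on the image of $\phi$, hence comes from a map in $\Alg$. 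One small omission: the paper's statement says ``embedding'', which here also entails injectivity on objects; this follows immediately from the factorisation $N \cong h^* M$ together with injectivity of $N$, and you should say so.

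The paper takes a different route: it observes that $M$ is injective on objects (because $N = h^* M$ is) and then invokes \cite[theorem~4.10]{Weber2007} to obtain full faithfulness directly. Weber's result is a general nerve theorem for monads with arities; the present situation (a reflective localisation $h \dashv N$ with $\bbLambda$ the full image of $h$ on representables) is an instance of that framework, and the theorem packages exactly the density argument you carry out by hand. So your proof is essentially a self-contained unpacking of the relevant special case of Weber's theorem: it trades a black-box citation for an elementary argument using only the reflectivity of $h \dashv N$ and the standard fact that a full subcategory containing a dense subcategory is itself dense. The paper's proof is shorter but relies on heavier machinery; yours makes the mechanism transparent.
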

\begin{proof}
    Recall that $N$ is an embedding, i.e. fully faithful and injective on
    objects. In particular, $M$ must be injective on objects as well. By
    \cite[theorem 4.10]{Weber2007}, $M$ is fully faithful.
\end{proof}

\begin{definition}
    [Spine]
    \label{def:spine-lambda}
    Let $\omega \in \bbOO_{n+1}$, let $\catSS_\omega \eqdef \bbOO_{n-k, n} / S
    [\omega]$\index{$\catSS_\omega$|see {spine}} be the category of elements of
    the spine $S [\omega] \in \Psh{\bbOO_{n-k, n}}$\index{$S [\omega]$|see
    {spine}} of $\omega$. Define the \emph{spine}\index{spine}\index{$S [h
    \omega]$|see {spine}} of $h \omega$ to be the colimit
    \[
        S [h \omega] \eqdef \colim \left(
            \catSS_\omega
            \longrightarrow \bbOO_{n-k,n}
            \stackrel{h}{\longrightarrow} \bbLambda
            \longrightarrow \PshLambda
        \right) .
    \]
    We have an inclusion $\sfS_{h \omega} : S [h \omega] \longrightarrow h
    \omega$\index{$\sfS_{h \omega}$|see {spine inclusion}} called the
    \emph{spine inclusion}\index{spine inclusion} of $h \omega$ and let $\sfSS
    \eqdef \left\{\sfS_{h \omega} : S [h \omega] \longrightarrow h \omega \mid
    \omega \in \bbOO_{n+1} \right\}$\index{$\sfSS$|see {spine inclusion}} be
    the set of spine inclusions of $\PshLambda$.
\end{definition}

\begin{lemma}
    \label{lemma:hsriek-spines}
    The functor $h_! : \PshO \longrightarrow \PshLambda$ maps $\sfSS_{n+1}
    \subseteq \PshO$ to $\sfSS \subseteq \PshLambda$, and morphisms in
    $\sfSS_{> n+1} \subseteq \PshO$ to $\sfSS$-local isomorphisms.
\end{lemma}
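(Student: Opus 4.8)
The statement splits into its two clauses, and I would treat them in order, using the first to power the second.

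For the clause $h_!(\sfSS_{n+1}) \subseteq \sfSS$, the plan is to exploit that $h_!$ is a left Kan extension along the Yoneda embedding, hence cocontinuous, and restricts to $h$ on representables ($h_! c \cong hc$ for $c \in \bbOO_{\geq n-k}$). Fix $\omega \in \bbOO_{n+1}$. Since the spine $S[\omega]$ has no cells above dimension $n$, I may regard it in $\Psh{\bbOO_{\geq n-k}}$, where its category of elements is exactly $\catSS_\omega$ and, by density, $S[\omega] \cong \colim_{\catSS_\omega} c$; likewise $\omega \cong \colim_{\bbOO_{\geq n-k}/\omega} c$ and $\sfS_\omega$ is the comparison induced by the inclusion of diagrams $\catSS_\omega \hookrightarrow \bbOO_{\geq n-k}/\omega$. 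Applying $h_!$ commutes past both colimits and replaces each cell $c$ by $hc$, so $h_! S[\omega] \cong \colim_{\catSS_\omega} hc$, which is $S[h\omega]$ by \cref{def:spine-lambda}, while $h_!\omega \cong h\omega$. Both $h_!\sfS_\omega$ and $\sfS_{h\omega}$ are then the canonical map out of $\colim_{\catSS_\omega} hc$ determined by the same cocone $(hc \to h\omega)_{c}$, so they coincide; the only thing to check is this identification of cocones, which is immediate from the construction of $\sfS_\omega$. This yields $h_!\sfS_\omega = \sfS_{h\omega} \in \sfSS$.

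For the clause on $\sfSS_{>n+1}$, I would transpose everything across the adjunction $h_! \dashv h^*$. By definition $h_!\sfS_\psi$ is an $\sfSS$-local isomorphism precisely when, for every $\sfSS$-local $Z \in \PshLambda$, the map $\PshLambda(h_!\psi, Z) \to \PshLambda(h_! S[\psi], Z)$ is a bijection; transposing, this says exactly $h^*Z \perp \sfS_\psi$. Applying the same transposition to the defining orthogonality of $Z$ and feeding in the previous clause ($h_! S[\omega] = S[h\omega]$, $h_!\omega = h\omega$) gives the unconditional equivalence
\[
    Z \perp \sfSS
    \quad\Longleftrightarrow\quad
    h^*Z \perp \sfSS_{n+1} .
\]
Hence the whole clause reduces to the purely opetopic assertion that \emph{any} $Y \in \Psh{\bbOO_{\geq n-k}}$ orthogonal to all $(n+1)$-dimensional spine inclusions $\sfSS_{n+1}$ is automatically orthogonal to every higher spine inclusion in $\sfSS_{>n+1}$; applying it to $Y = h^*Z$ closes the argument.

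This reduced statement is where the real work lies, and I expect it to be the main obstacle. I would prove it by induction on the dimension $m > n+1$ of $\psi$, showing that $\sfS_\psi$ lies in the orthogonality-saturation of $\sfSS_{n+1}$ (the class of maps against which every $\sfSS_{n+1}$-orthogonal object lifts uniquely, which contains the isomorphisms and is closed under pushout, transfinite composition, retract, and $2$-out-of-$3$). The geometric input is that the passage from $S[\psi]$ to $\psi$ adjoins only the top $m$-cell together with the target $(m-1)$-cell, both governed by the source pasting of $\psi$; factoring $\sfS_\psi$ through the spines of those source and target faces and applying the inductive hypothesis reduces every factor to $\sfSS_{n+1}$. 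Making this factorization precise is the crux: it is the opetopic form of ``the $(n+1)$-st Segal condition entails all higher ones'', and it rests on the combinatorics of substitution and grafting of opetopes and on the characterization $\Alg = \sfAA_{k,n}^\perp$ of \cite{HoThanh2019}. Once it is established, the chain $h^*Z \perp \sfSS_{n+1} \Rightarrow h^*Z \perp \sfS_\psi$ shows that $h_!\sfS_\psi$ is an $\sfSS$-local isomorphism, completing the proof.
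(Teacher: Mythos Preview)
Your treatment of the first clause is correct and in fact more transparent than the paper's, which simply cites \cite[lemma~4.5.1]{HoThanh2019}; your direct computation with colimits and the defining cocone is exactly what underlies that citation.

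The second clause, however, has a genuine gap. The ``purely opetopic assertion'' to which you reduce---that every $Y \in \Psh{\bbOO_{\geq n-k}}$ with $Y \perp \sfSS_{n+1}$ is automatically orthogonal to $\sfSS_{>n+1}$---is false. A counterexample: take $Y$ to be terminal in dimensions $\leq n+1$ (so trivially $\sfSS_{n+1}$-orthogonal) but with two distinct cells over some $\psi \in \bbOO_{n+2}$. Then the unique map $S[\psi] \to Y$ admits two extensions to $O[\psi]$, so $Y \not\perp \sfS_\psi$. The point is that $\sfSS_{n+1}$-orthogonality imposes no constraint whatsoever on cells of dimension $> n+1$, so the ``top $m$-cell'' you correctly identify as part of what $\sfS_\psi$ adjoins can never be produced from the saturation of $\sfSS_{n+1}$. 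Your inductive factorization sketch cannot close this gap, because there is nothing in $\sfSS_{n+1}$ that fills an $m$-cell for $m > n+1$.

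What makes the paper's argument work is precisely that it does \emph{not} transpose back to $\PshO$: it stays in $\PshLambda$, where the high-dimensional opetopic data has been collapsed by $h_!$. Concretely, for $\omega \in \bbOO_m$ with $m \geq n+3$ the paper considers the square
\[
    \begin{tikzcd}
        S[\tgt\omega] \ar[r, "i"] \ar[d, "\sfS_{\tgt\omega}" left] & S[\omega] \ar[d, "\sfS_\omega"] \\
        O[\tgt\omega] \ar[r, "\tgt"] & O[\omega]
    \end{tikzcd}
\]
and applies $h_!$. The top map lands in $\Cell{\sfSS}$ by \cite[lemma~3.4.9]{HoThanh2019}; the left map is handled by induction on $m$; and the crucial bottom map---the target embedding---becomes an $\sfSS$-local isomorphism after $h_!$ by \cite[corollary~3.4.10]{HoThanh2019} plus induction. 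That last step is exactly where the argument uses that $h_!$ kills the top cell: in $\PshLambda$ the inclusion $h_!O[\tgt\omega] \hookrightarrow h_!O[\omega]$ is already an $\sfSS$-local isomorphism, whereas in $\PshO$ it certainly is not an $\sfSS_{n+1}$-local isomorphism. Two-out-of-three then gives $h_!\sfS_\omega$. Your adjunction transposition loses precisely this collapsing, which is why the reduced claim becomes too strong.
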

\begin{proof}
    Recall from \cite[lemma 4.5.1]{HoThanh2019} that $\dotH_! :
    \Psh{\bbOO_{n-k, n+2}} \longrightarrow \PshLambda$ maps $\sfSS_{n+1}$ to
    $\sfSS$ and $\sfSS_{n+2}$ to $\sfSS$-local isomorphisms. Thus, so does $h_!
    : \Sp{\bbOO_{\geq n-k}} \longrightarrow \SpLambda$. Take $\omega \in
    \bbOO_m$ with $m \geq n + 3$ and consider the following square in $\PshO$:
    \[
        \squarediagram
            {S [\tgt \omega]}{S [\omega]}
                {O [\tgt \omega]}{O [\omega] .}
            {i}{\sfS_{\tgt \omega}}{\sfS_\omega}{\tgt}
    \]
    By \cite[lemma 3.4.9]{HoThanh2019}, $i \in \Cell {\sfSS_{n+1}}$, thus a
    $\sfSS$-local isomorphism, and by induction, $\sfS_{\tgt \omega}$ is a
    $\sfSS$-local isomorphism. By \cite[corollary 3.4.10]{HoThanh2019} the
    bottom target embedding is a $\sfSS_{m-1, m}$-local isomorphism, thus a
    $\sfSS$-local isomorphism by induction. Consequently, $\sfS_{h \omega}$ is
    a $\sfSS$-local isomorphism.
\end{proof}

\begin{theorem}
    \label{th:algebraic-localization:lambda}
    The left adjoint $v : \PshLambda \longrightarrow \Alg$ is the
    Gabriel--Ulmer localization with respect to the set $\sfSS$ of spine
    inclusions. Equivalently, $M : \Alg \longrightarrow \PshLambda$ corestricts
    as an isomorphism $\Alg \longrightarrow \sfSS^\perp$.
\end{theorem}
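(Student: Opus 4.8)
The plan is to prove the ``equivalently'' reformulation, namely that $M$ corestricts to an isomorphism $\Alg \to \sfSS^\perp$; the first assertion then follows, since a reflective localization (\cref{prop:vM:reflective}) whose fully faithful right adjoint has image exactly $\sfSS^\perp$ is precisely the Gabriel--Ulmer localization at $\sfSS$. Throughout I read off from \cref{eq:triangle-adjunction} the identities $h \cong v h_!$ and $N \cong h^* M$, and recall that $N, M$ are fully faithful and injective on objects, and that $\sfAA = \sfSS_{n+1} \cup \sfSS_{> n+1}$ is the set of spine inclusions of dimension $> n$ in $\PshO$, so that $\Alg = \sfAA^\perp$ via $N$. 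The strategy is to study the nerve $h^*$ on local objects and to show it restricts to an isomorphism $\sfSS^\perp \to \sfAA^\perp$; composing with $N \colon \Alg \to \sfAA^\perp$ then yields the corestriction of $M$.

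First I show the easy inclusion $M \Alg \subseteq \sfSS^\perp$. Fix $A \in \Alg$ and $\omega \in \bbOO_{n+1}$; I must check $\sfS_{h \omega} \perp M A$. By \cref{lemma:hsriek-spines} we have $\sfS_{h \omega} = h_!(\sfS_\omega)$ with $\sfS_\omega \in \sfSS_{n+1}$, so transposing across $h_! \dashv h^*$ reduces this to $\sfS_\omega \perp h^* M A = N A$, which holds because $\sfSS_{n+1} \subseteq \sfAA$ and $N A \in \sfAA^\perp$. Thus $M$ corestricts to $M' \colon \Alg \to \sfSS^\perp$ with $h^* M' = N$.

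Next I analyse $h^* \colon \sfSS^\perp \to \sfAA^\perp$. It lands in $\sfAA^\perp$: for $X \in \sfSS^\perp$, transposing $\sfSS = h_!(\sfSS_{n+1}) \perp X$ gives $\sfSS_{n+1} \perp h^* X$, while by the second part of \cref{lemma:hsriek-spines} the maps $h_!(\sfSS_{> n+1})$ are $\sfSS$-local isomorphisms, hence orthogonal to the local object $X$, so transposing again gives $\sfSS_{> n+1} \perp h^* X$; together $h^* X \in \sfAA^\perp$. Moreover $h^*$ is the nerve of the functor $h \colon \bbOO_{\geq n-k} \to \bbLambda$, i.e. $(h^* X)_c = X_{h c}$, and since $\bbLambda$ is by definition the full subcategory of free algebras $\{h c\}$, this $h$ is surjective on objects; restriction along an object-surjection is conservative, so $h^*$ reflects isomorphisms. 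Finally $h^*$ is surjective on objects of $\sfAA^\perp$, because $h^* M' = N$ and $N$ hits all of $\sfAA^\perp$. Hence, once $h^*$ is shown fully faithful on $\sfSS^\perp$, it is an isomorphism $\sfSS^\perp \to \sfAA^\perp$, and $M' = (h^*)^{-1} N$ is an isomorphism $\Alg \to \sfSS^\perp$ (an isomorphism, as in the analogous statement for $N$, since $M$ is injective on objects and $\sfSS^\perp$ is a replete orthogonality class).

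The main obstacle is the full faithfulness of $h^*$ on $\sfSS^\perp$: that for $X, Z \in \sfSS^\perp$ the map $h^* \colon \PshLambda(X, Z) \to \PshO(h^* X, h^* Z)$ is a bijection (equivalently, that the counit $h_! h^* X \to X$ of $h_! \dashv h^*$ is a $\sfSS$-local isomorphism for local $X$). Injectivity is immediate from surjectivity of $h$ on objects; the content is surjectivity, i.e. that an $\bbOO_{\geq n-k}$-natural family $(X_{h c} \to Z_{h c})_c$ is automatically natural for \emph{every} morphism of free algebras in $\bbLambda = \Alg$. This is exactly where the Segal (spine) conditions satisfied by the local objects $X, Z$ must be used: I expect to deduce naturality on the extra $\bbLambda$-morphisms from the colimit presentation of $S [h \omega]$ in \cref{def:spine-lambda} together with \cref{lemma:hsriek-spines}, which exhibit every such morphism through spine data that $\sfSS$-local presheaves respect. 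Granting this, the argument above closes and $v$ is the localization at $\sfSS$.
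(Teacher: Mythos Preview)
Your proposal is incomplete: you explicitly leave the full faithfulness of $h^*$ on $\sfSS^\perp$ as an assumption (``Granting this, the argument above closes''), and this is the heart of the matter. Everything else you wrote is correct, but that missing step is not a routine verification. What you are asking for is precisely that the counit $h_! h^* X \to X$ be an $\sfSS$-local isomorphism for every $\sfSS$-local $X$; equivalently, that a presheaf on $\bbLambda$ which satisfies the spine Segal conditions is already determined (up to isomorphism) by its restriction along $h$. This is the essential content of Weber's nerve theorem \cite[theorem 4.10]{Weber2007}, and there is no shortcut to it via the colimit description of $S[h\omega]$ in \cref{def:spine-lambda} alone: the difficulty is exactly that morphisms in $\bbLambda$ are not all in the image of $h$, and you need some structural input (monad with arities, or an equivalent density/canonical-colimit argument) to control them.

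The paper's proof is much shorter because it invokes Weber's theorem directly: it gives, as a black box, that $X \in \PshLambda$ lies in the essential image of $M$ if and only if $h^* X$ lies in the essential image of $N$. From there the argument is exactly the adjunction-transposition you already carried out: $h^* X \in \sfAA^\perp$ iff $h_!(\sfSS_{\geq n+1}) \perp X$, and by \cref{lemma:hsriek-spines} this is equivalent to $\sfSS \perp X$. In effect you have reconstructed the easy half of the paper's argument (both inclusions $M\Alg \subseteq \sfSS^\perp$ and $h^*(\sfSS^\perp) \subseteq \sfAA^\perp$) but attempted to replace the citation of Weber by a direct argument that you do not complete. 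Either cite \cite[theorem 4.10]{Weber2007}, as the paper does, or supply a genuine proof that the counit $h_! h^* X \to X$ is an $\sfSS$-local isomorphism for $X \in \sfSS^\perp$.
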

\begin{proof}
    By \cite[theorem 4.10]{Weber2007}, a presheaf $X \in \PshDelta$ is in the
    essential image of $M$ if and only if $h^* X$ is in the essential image of
    $N$. In other words, $X$ is an algebra if and only if $\sfSS_{\geq n+1}
    \perp h^* X$, which under the adjunction $h_! \dashv h^*$, is equivalent to
    $\sfSS \perp X$.
\end{proof}

\subsection{Free opetopic algebras}
\label{sec:opetopic-algebras:free-opetopic-algebras}

Recall that $\bbLambda = \bbLambda_{1, n}$ is the full subcategory of $\Alg$
spanned by free algebras on opetopes in $\bbOO_{n-1, n+1}$. However, if $\phi
\in \bbOO_{n-1}$, then $h \phi = h \itree{\phi}$, and if $\psi \in \bbOO_n$,
then $h \psi = h \ytree{\psi}$. So we may consider $\bbLambda$ to be the full
subcategory of $\Alg$ spanned by free algebras over $(n+1)$-opetopes.

\begin{definition}
    [Diagram]
    \label{def:diagram}
    For $\omega, \omega' \in \bbOO_{n+1}$ and $f : h \omega
    \longrightarrow h \omega'$ in $\bbLambda$, a \emph{diagram}\index{diagram}
    \cite{HoThanh2019} of $f$ is the datum of a cospan
    \[
        \omega
        \stackrel{\src_{[p]}}{\longrightarrow}
        \xi
        \stackrel{\tgt}{\longleftarrow}
        \omega' ,
    \]
    with $\xi \in \bbOO_{n+2}$, such that $f = (h \tgt)^{-1} (h \src_{[p]})$.
    The situation is summarized as follows:
    \begin{equation}
        \label{eq:example-diagram}
        \frac{
            \begin{tikzcd} [ampersand replacement = \&]
                \&
                \xi \\
                \omega \arrow[ur, sloped, near end, "\src_{[p]}"] \&
                \omega' \arrow[u, "\tgt"]
            \end{tikzcd}
        }{
            \begin{tikzcd} [ampersand replacement = \&]
                h \omega \arrow[r, "f"] \& h \omega'
            \end{tikzcd}
        }
    \end{equation}
    The morphism $f$ is said to be \emph{diagrammatic}\index{diagrammatic
    morphism|see {diagram}} if it admits a diagram.
\end{definition}

\begin{lemma}
    [{Diagrammatic lemma, \cite{HoThanh2019}}]
    \label{lemma:diagrammatic-lemma}
    Let $f : h \omega \longrightarrow h \omega'$ be a morphism in $\bbLambda$.
    If $\omega$ is not degenerate, then $f$ is diagrammatic.
\end{lemma}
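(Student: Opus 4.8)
The plan is to turn the abstract morphism $f$ into a single cell, to read off from that cell an $(n+2)$-opetope $\xi$ with $\tgt \xi = \omega'$ carrying a distinguished source node of shape $\omega$, and then to verify the defining equation $f = (h\tgt)^{-1}(h\src_{[p]})$. In other words, the whole content is to manufacture the cospan $\omega \stackrel{\src_{[p]}}{\longrightarrow} \xi \stackrel{\tgt}{\longleftarrow} \omega'$ out of $f$.

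First I would use the adjunction $h \dashv N$ together with the fullness of $\bbLambda$ in $\Alg$ to identify $\bbLambda(h\omega, h\omega') = \Alg(h\omega, h\omega') \cong (N h\omega')_\omega$. Thus $f$ is the same datum as a single $\omega$-shaped cell $x$ of the underlying opetopic set of the free algebra $h\omega'$, and the problem reduces to realizing $x$ as the name of a source face of some $(n+2)$-opetope with target $\omega'$.

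Next I would describe $x$ combinatorially. By \cref{th:algebraic-localization:lambda} the free algebra $h\omega'$ is $\sfSS$-local, so $\omega'$ is realized as the composite of the pasting diagram $S[\omega']$ of its source $n$-cells; by the structure theory of free opetopic algebras recalled from \cite{HoThanh2019}, an $(n+1)$-cell of $h\omega'$ is the same datum as a composable sub-pasting of $S[\omega']$ together with its composite. Hence the cell $x$ amounts to a choice of sub-region of $S[\omega']$ whose composite has shape $\omega$. From such a sub-region I would build $\xi$: its source tree has $\omega$ as the node at a suitable address $[p]$ and the complementary partial composites as the remaining nodes, the whole tree composing to $\omega'$; such an $(n+2)$-opetope exists by the (zoom/grafting) construction of opetopes, whence $\tgt \xi = \omega'$ and $\src_{[p]} \xi = \omega$ by construction. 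It is precisely here that non-degeneracy of $\omega$ is used: it guarantees that $\omega$ corresponds to a nonempty sub-region, hence to a genuine node of the source tree, rather than to a unit cell, which carries no node and so could never appear as any $\src_{[p]} \xi$.

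Finally I would verify the equation. The target embedding $\tgt : O[\omega'] \to O[\xi]$ is an $\sfSS$-local isomorphism — this is exactly the input exploited in the proof of \cref{lemma:hsriek-spines}, via \cite[corollary 3.4.10]{HoThanh2019} — so $h\tgt : h\omega' \to h\xi$ is invertible and $(h\tgt)^{-1}$ is defined. Tracing the generating cell of $h\omega$ through $h\src_{[p]}$ and then $(h\tgt)^{-1}$ lands on the cell of $h\xi \cong h\omega'$ named by the node $[p]$, which is $x$ by construction; hence $(h\tgt)^{-1}(h\src_{[p]}) = f$. The hard part is the middle step: making the passage from the cell $x$ to an actual $(n+2)$-opetope $\xi$ fully precise, including the choice of the address $[p]$ and the check that the complementary regions assemble into a valid source tree composing to $\omega'$. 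This is the genuinely combinatorial obstacle, and it is exactly where the non-degeneracy hypothesis is indispensable; the initial reduction to a cell and the final trace are then formal.
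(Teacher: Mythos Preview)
The paper does not prove this lemma: it is stated with a citation to \cite{HoThanh2019} and no argument is given here. So there is no ``paper's own proof'' to compare against; the lemma functions as an imported black box in this text.

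That said, your outline is a reasonable reconstruction of how such a proof would go, and the overall architecture---reduce $f$ via the adjunction $h\dashv N$ to a single $\omega$-shaped cell of $Nh\omega'$, read that cell as a subtree datum inside $\omega'$, assemble from it an $(n{+}2)$-opetope $\xi$ with $\tgt\xi=\omega'$ and a distinguished source node of shape $\omega$, then check $(h\tgt)^{-1}(h\src_{[p]})=f$---is the natural one. Your identification of where non-degeneracy enters (a degenerate $\omega$ has no node, hence cannot occur as any $\src_{[p]}\xi$) is correct. You are also honest that the genuinely nontrivial content is the middle combinatorial step: producing $\xi$ with the correct source tree from the subtree datum, and choosing the address $[p]$. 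As written this step is only gestured at; to make it a proof you would need to invoke the explicit description of $(Nh\omega')_{n+1}$ and the substitution/grafting calculus of $(n{+}2)$-opetopes from \cite{HoThanh2019} (in the spirit of \cref{lemma:h-subtrees} one dimension up), rather than appealing to ``the structure theory of free opetopic algebras'' in the abstract.
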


\begin{lemma}
    \label{lemma:h-subtrees}
    Let $\omega \in \bbOO_{n+1} = \tree \optPolyFun^{n-1}$ and $\psi \in
    \bbOO_n$. Then $h \omega_\psi = (N h \omega)_\psi$ is the set of
    sub-$\optPolyFun^{n-1}$-trees $\nu$ of $\omega$ such that $\tgt \nu =
    \psi$.
\end{lemma}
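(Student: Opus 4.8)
The plan is to identify the $\psi$-cells of $N h \omega$ with a hom-set in the category $\bbLambda$ of free algebras, and then read that hom-set off the combinatorics of $(n+2)$-opetopes by means of the diagrammatic lemma. First I would use the adjunction $h \dashv N$ together with the Yoneda lemma to rewrite
\[
    (N h \omega)_\psi = \Psh{\bbOO_{\geq n-1}}(O[\psi], N h \omega) \cong \Alg(h \psi, h \omega) = \bbLambda(h \psi, h \omega),
\]
the last equality holding because $\bbLambda$ is by definition a full subcategory of $\Alg$ and both $h\psi$ (with $\psi \in \bbOO_n$) and $h\omega$ (with $\omega \in \bbOO_{n+1}$) are objects of $\bbLambda$. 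Next I would invoke the identity $h\psi = h\ytree\psi$ recalled at the opening of this subsection, which replaces the $n$-opetope $\psi$ by the corolla $\ytree\psi \in \bbOO_{n+1}$; this puts the source and target of every $f : h\ytree\psi \to h\omega$ in the same dimension $n+1$, so that the diagrammatic machinery of \cref{def:diagram} becomes available.

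The second step is to describe $\bbLambda(h\ytree\psi, h\omega)$ via the Diagrammatic lemma (\cref{lemma:diagrammatic-lemma}). Since the corolla $\ytree\psi$ has exactly one node it is never degenerate, so the lemma applies to \emph{every} morphism $f$: each such $f$ is diagrammatic, presented by a cospan $\ytree\psi \xrightarrow{\src_{[p]}} \xi \xleftarrow{\tgt} \omega$ with $\xi \in \bbOO_{n+2}$ and $f = (h\tgt)^{-1}(h\src_{[p]})$. The key structural input I would use, from the substitution calculus of \cite{HoThanh2019}, is that an $(n+2)$-opetope $\xi$ together with a target embedding $\tgt : \omega \to \xi$ is precisely a decomposition of the $\optPolyFun^{n-1}$-tree $\omega$ into sub-$\optPolyFun^{n-1}$-trees, one tile per node of $\xi$. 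The marked node $\src_{[p]}\xi = \ytree\psi$ is the corolla on $\psi$, and under this decomposition it is the contracted image of a distinguished subtree $\nu \subseteq \omega$ whose composite is exactly $\tgt\nu = \psi$. Conversely, every subtree $\nu$ with $\tgt\nu = \psi$ produces such a diagram (take $\xi$ to be the decomposition of $\omega$ that isolates $\nu$ and contracts it to its composite), hence a morphism $f_\nu$, namely the $\psi$-cell of $h\omega$ obtained by composing the pasting diagram $\nu$ inside the algebra.

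I would finish by checking that $\nu \mapsto f_\nu$ is a bijection. Surjectivity is exactly the Diagrammatic lemma together with the decomposition just described, while injectivity amounts to recovering $\nu$ from any diagram of $f$ independently of the choice of $\xi$. The \textbf{main obstacle} is precisely this last bookkeeping: the cospan presentation of a morphism is not unique, so a single $f$ may admit several diagrams $\xi$, and one must show that reading off the subtree under the marked node is invariant under the moves relating different diagrams of the same $f$ — this is where the essential uniqueness of the source/target factorization (\cref{def:diagram}) and the grafting/substitution identities of \cite{HoThanh2019} are needed. Once this invariance is in place, comparing the composite $\tgt\nu = \psi$ on both sides shows that the correspondence descends to the set of subtrees with target $\psi$ and is mutually inverse, closing the argument.
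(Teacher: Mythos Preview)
Your argument is a genuine proof sketch, but it takes a substantially longer road than the paper. The paper's proof is essentially one line: it invokes the explicit formula
\[
    h \omega_\psi
    = \sum_{\substack{\nu \in \bbOO_{n+1} \\ \tgt \nu = \psi}}
        \PshO (S [\nu], O [\omega])
\]
coming directly from the construction of $h$ in \cite{HoThanh2019} (the free algebra on a polynomial monad has operations given by trees, graded by their target), and then observes that a map $S[\nu] \to O[\omega]$ is precisely an embedding of the $\optPolyFun^{n-1}$-tree $\nu$ into $\omega$. No passage through $\bbLambda$, no diagrammatic lemma, no $(n+2)$-opetopes.

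Your route via $\bbLambda(h\ytree\psi, h\omega)$ and the diagrammatic lemma is conceptually interesting but buys you a bookkeeping burden the paper avoids entirely: you must show that the subtree read off from a diagram of $f$ is independent of the choice of $\xi$, and that distinct subtrees yield distinct morphisms. You correctly flag this as the main obstacle and gesture at the substitution identities, but you do not actually carry it out; as written the injectivity direction is asserted rather than proved. There is also a mild risk of circularity, since the diagrammatic lemma in \cite{HoThanh2019} may well rest on the explicit description of cells of $h\omega$ that this lemma is recording. In short: your approach can be made to work, but the paper's direct unwinding of the free-algebra formula is both shorter and logically prior.
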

\begin{proof}
    By definition,
    \[
        h \omega_\psi
        = \sum_{\substack{\nu \in \bbOO_{n+1} \\ \tgt \nu = \psi}}
            \PshO (S [\nu], O [\omega]) ,
    \]
    and morphisms $S [\nu] \longrightarrow O [\omega]$ are precisely the
    $\optPolyFun^{n-1}$-tree embeddings of $\nu$ in $\omega$.
\end{proof}

\begin{lemma}
    \label{lemma:h-morphism-equality}
    Let $f_1, f_2 : h \omega \longrightarrow h \omega'$ be two morphism in
    $\bbLambda$, with $\omega, \omega' \in \bbOO_{n+1}$. Then $f_1 = f_2$ if
    and only if for all $[p] \in \omega^\nodesymbol$, $f_1 c_{\omega, [p]} =
    f_2 c_{\omega, [p]}$. In other words, $f_1 = f_2$ if and only if $f_1$ and
    $f_2$ agree on all one-node subtrees of $\omega$.
\end{lemma}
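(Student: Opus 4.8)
The forward implication is trivial, so the whole content lies in the converse, and I assume throughout that $\omega$ is non-degenerate (so that $\omega^\nodesymbol \neq \emptyset$); this is the case of interest, matching the hypothesis of \cref{lemma:diagrammatic-lemma}. The plan is to first reduce equality of $f_1$ and $f_2$ to equality of their restrictions to the spine $S [h \omega]$, and then to show that those restrictions are pinned down by the one-node subtrees alone.

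\emph{Reduction to the spine.} By \cref{th:algebraic-localization:lambda}, the algebra $h \omega'$, viewed in $\PshLambda$ through $M$, lies in $\sfSS^\perp$, while the spine inclusion $\sfS_{h \omega}$ belongs to $\sfSS$ by \cref{def:spine-lambda}. Hence $\sfS_{h \omega} \perp h \omega'$, so precomposition with $\sfS_{h \omega}$ is a \emph{bijection}
\[
    \PshLambda (h \omega, h \omega')
    \xrightarrow{\ \sim\ }
    \PshLambda (S [h \omega], h \omega') .
\]
In particular it is injective, so it suffices to prove that $f_1 \sfS_{h \omega} = f_2 \sfS_{h \omega}$, i.e. that two maps out of $S [h \omega]$ agreeing on every one-node subtree coincide.

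\emph{The one-node subtrees cover the spine.} Recall from \cref{def:spine-lambda} that $S [h \omega] = \colim_{\catSS_\omega} (\cdots)$, a colimit whose diagram has one object per cell of the spine $S [\omega]$, in dimensions $n-k$ through $n$. The top-dimensional ($n$-dimensional) objects are exactly the nodes of $\omega$, that is, the one-node subtrees, and the colimit injection at the node $[p]$, postcomposed with $\sfS_{h \omega}$, is precisely $c_{\omega, [p]}$ (using \cref{lemma:h-subtrees} to read off the $n$-cells as subtrees). I claim the node injections are jointly epimorphic. Combinatorially, every cell of $S [\omega]$ is a face of some node: each $(n-1)$-cell is an input or output edge of an adjacent node (in the tree $\omega$, every edge is incident to a node as soon as there is at least one node), and every lower-dimensional cell is a face of such an edge. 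Thus every object of $\catSS_\omega$ admits a morphism to a node object, and by the cocone identities each colimit injection factors through a node injection. Consequently any two maps out of $S [h \omega]$ that agree on all the $c_{\omega, [p]}$ agree on every injection, hence are equal; combined with the reduction above, this proves $f_1 = f_2$.

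The crux --- and the only genuinely non-formal step --- is the joint epimorphy, which reduces to the purely combinatorial observation that the nodes of $\omega$ cover its spine; once phrased inside $\catSS_\omega$ as above, the rest is a formal manipulation of colimit cocones and orthogonality. The one point requiring care is the degeneracy hypothesis: for $\omega = \itree \phi$ (with $\phi \in \bbOO_{n-1}$) there are no nodes, the hypothesis is vacuous, and the statement is not about one-node subtrees at all; this degenerate opetope is therefore excluded, exactly as in \cref{lemma:diagrammatic-lemma}.
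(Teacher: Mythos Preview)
Your proof is correct and follows essentially the same approach as the paper's: both reduce to the spine via orthogonality (the paper transposes along $h \dashv N$ to $\Psh{\bbOO_{\geq n-1}}$ and uses $\sfSS_{\geq n+1} \perp h\omega'$ there, while you stay in $\PshLambda$ and invoke \cref{th:algebraic-localization:lambda}), then observe that the spine is determined by its nodes. Your version is more explicit about the joint-epimorphy step and about excluding the degenerate case, both of which the paper leaves implicit.
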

\begin{proof}
    Under the adjunction $\Psh{\bbOO_{\geq n-1}} \adjunction \Alg$, $f_i$
    corresponds to a morphism $\barF_i : O [\omega] \longrightarrow h \omega'$,
    and since $\sfSS_{\geq n+1} \perp h \omega'$, if is uniquely determined by
    its restriction $\barF_i : S [h \omega] \longrightarrow h \omega'$.
\end{proof}

\begin{notation}
    \label{notation:generators-h-omega}
    As a consequence of \cref{lemma:h-morphism-equality}, if $\omega \in
    \bbOO_{n+1}$ is not degenerate, the algebra $h \omega$ is freely generated
    by its one-node subtrees. We denote these generators by $c_{\omega, [p]}$,
    where $[p]$ ranges over $\omega^\nodesymbol$.
\end{notation}

\subsection{Quotients}
\label{sec:opetopic-algebras:quotient}

Let $X \in \Alg$. Recall that $X$ can be seen as an opetopic set $X \in \PshO$
such that $\sfAA_{k, n} \bot X$. We now present a notation that shows how
solutions of lifting problems $S [h \omega] \longrightarrow X$ can really be
understood as compositions of ``tree-shaped arities''.

For $\psi \in \bbOO_{n-1}$ and $x \in X_\psi$, write $\id_x \in
X_{\ytree{\psi}}$\index{$\id_x$} for the target of the solution of the lifting
problem
\[
    \diagramlines{}{}{--}{}
    \triangleULdiagram
        {S [\itree{\psi}] = O [\psi]}{X}{O [\itree{\psi}] .}
        {x}{}{\exists ! f}
\]
Explicitely, if $f$ is the solution, let $\id_x$ be the cell of
$X_{\ytree{\psi}}$ selected by
\[
    O [\ytree{\psi}]
    \stackrel{\tgt}{\longrightarrow} O [\itree{\psi}]
    \stackrel{f}{\longrightarrow} X .
\]

Let $\omega, \omega' \in \bbOO_n$, and $[p] \in \omega^\nodesymbol$ such that
$\src_{[p]} \omega = \tgt \omega'$, so that $\ytree{\omega} \graft_{[[p]]}
\ytree{\omega'}$ is a well defined $(n+1)$-opetope. For $x \in X_\omega$, $x'
\in X_{\omega'}$ such that $\src_{[p]} x = \tgt x'$, let $x \graft_{[p]}
x'$\index{$x \graft_{[p]} x'$} be the target of the solution of the following
lifting problem:
\[
    S [\ytree{\omega} \graft_{[[p]]} \ytree{\omega'}]
    \xrightarrow{[] \mapsto x,  [[p]] \mapsto x'} X .
\]
An iterated composition as on the right can be concisely written as on the
left:
\[
    x \biggraft_{[p_i]} y_i \eqdef
    ( \cdots (x \graft_{[p_1]} y_1) \graft_{[p_2]} y_2 \cdots)
        \graft_{[p_k]} y_k ,
\]
where $x \in X_\omega$, $\omega^\nodesymbol = \left\{ [p_1], \ldots, [p_k]
\right\}$, $y_i \in X_{\omega_i}$, and $\tgt \omega_i = \src_{[p_i]} \omega$.

Let $X \in \Alg$ be an opetopic algebra, $\omega \in \bbOO_n$, and $x, y \in
X_\omega$. We say that $x$ and $y$ are \emph{parallel}\index{parallel cells} if
the following two composites are equal:
\[
    \eqdiagram
        {\partial O [\omega]}{O [\omega]}{X .}
        {\sfB_\omega}{x}{y}
\]
In that case, let the \emph{quotient}\index{quotient} of $X$ by the equality $x
= y$ be the following coequalizer in $\Alg$:
\[
    \coeqdiagram
        {h \omega}{X}{X / \left\{ x = y \right\} .}
        {x}{y}{}
\]
For each $\omega \in \bbOO_n$, take a set $K_\omega \subseteq X_n \times X_n$
of pairs of parallel cells, and let $K \eqdef \sum_\omega K_\omega$. Then the
quotient of $X$ by $K$ is the following coequalizer in $\Alg$:
\[
    \coeqdiagram
        {\sum_{\omega \in \bbOO_n} \sum_{(x, y) \in K_\omega} h \omega}
            {X}{X / K ,}
        {p_1}{p_2}{}
\]
where $p_1 (x, y) \eqdef x$ and $p_2 (x, y) \eqdef y$.

\subsection{The folk model structure}

In this section, fix a parameter $n \geq 1$, and take $k = 1$. As usual, we
omit $n$ and $k$ from most notations, e.g. $\Alg = \Alg^1 (\optPolyFun^n)$. Let
$A \in \Alg$ be an algebra, $\psi \in \bbOO_n$, and $d : \partial h \psi
\longrightarrow A$, and consider the following pullback
\[
    \diagramarrows{c->}{}{}{c->}
    \pullbackdiagram
        {A_d}{A_{h \psi} = \PshLambda (h \psi, A)}
            {*}{\PshLambda (\partial h \psi, A) .}
        {}{}{\sfB_{h \psi}^*}{d}
\]
Explicitely, $A_d$ is the subset of $A_{h \psi}$ of all cell $a$ such that
$\src_{[q]} a = d (\src_{[q]} \psi)$ for all $[q] \in \psi^\nodesymbol$, and
$\tgt a = d (\tgt \psi)$.

\begin{example}
    Take $n = 2$, so that $\Alg$ is the category of planar colored operads. For
    $P \in \Alg$ and $\optInt{k} \in \bbOO_2$, a morphism $d : \partial h
    \optInt{k} \longrightarrow P$ is just a $(k+1)$-tuple of colors $(d_0,
    \ldots, d_{k-1}; d_k)$ of $P$, where $d_i = d \src_{[*^i]} \optInt{k}$ for
    $0 \leq i < k$, and $d_k = d \tgt \optInt{k}$. The set $P_d$ is simply $P
    (d_0, \ldots, d_{k-1}; d_k)$.
\end{example}

\begin{definition}
    [Internal isomorphism]
    \label{def:isomorphism}
    Let $f : A \longrightarrow B$ be a morphism of algebras. Let $\phi \in
    \bbOO_{n-1}$ and $a \in A_{h \ytree{\phi}}$. If $x = \src_{[]} a$ and $y =
    \tgt a$, we write $a : x \longrightarrow y$. We say that $a$ is an
    \emph{internal isomorphism}\index{internal isomorphism} (or just
    \emph{isomorphism}\index{isomorphism|see {internal isomorphism}}) if it is
    invertible, i.e. if there exists a cell $a^{-1} \in A_{h \ytree{\phi}}$
    such that
    \[
        a \graft_{[]} a^{-1} = \id_y,
        \qqquad
        a^{-1} \graft_{[]} a = \id_x .
    \]
    In this case, we also say that $x$ and $y$ are \emph{isomorphic}, and write
    $x \cong y$.
\end{definition}

\begin{definition}
    [Natural transformation]
    \label{def:natural-transformation}
    Let $f, g : A \longrightarrow B$ be two parallel morphisms of algebras. A
    \emph{natural transformation}\index{natural transformation} $\alpha : f
    \longrightarrow g$ is a collection of cells $\alpha_a : f(a)
    \longrightarrow g(a)$ (called \emph{components}\index{component|see
    {natural transformation}}, see \cref{def:isomorphism} for notation) such
    that for all $\psi \in \bbOO_n$ and $x \in A_{h \psi}$, the following
    relation holds:
    \[
        g(x) \biggraft_{[q]} \alpha_{\src_{[q]} a}
        =
        \alpha_{\tgt a} \graft_{[]} f(x) .
    \]
    Natural transformations can be composed in the obvious way, and the
    adequate exchange law holds. A \emph{natural isomorphism}\index{natural
    isomorphism} is an invertible natural transformation, or equivalently, on
    whose compoments are all isomorphisms.
\end{definition}

\begin{definition}
    [Algebraic equivalence]
    \label{def:algebraic-equivalence}
    Let $f : A \longrightarrow B$ be a morphism of algebras.
    \begin{enumerate}
        \item We say that $f$ is \emph{fully faithful}\index{fully faithful} if
        for all $\psi \in \bbOO_n$ and for all morphism $d : \partial h \psi
        \longrightarrow A$, the induced map $f_d : A_d \longrightarrow B_{fd}$
        is a bijection.
        \item We say that $f$ is \emph{essentially
        surjective}\index{essentially surjective} of for all $b \in B_{n-1}$,
        there exist an $a \in A_{n-1}$ such that $f(a) \cong b$.
        \item We say that $f$ is an \emph{algebraic
        equivalence}\index{algebraic equivalence} (or \emph{equivalence of
        algebras}\index{equivalence of algebras|see {algebraic equivalence}},
        or simply \emph{equivalence}) if it is fully faithful and essentially
        surjective. Clearly, if $n = 1$, we recover the notion of fully
        faithful functor between categories, whereas if $n = 2$, this matches
        the definition of operadic equivalence of \cite{Moerdijk2007}.
    \end{enumerate}
\end{definition}

\begin{proposition}
    \label{prop:algebraic-equivalence-weakly-invertible}
    A morphism $f : A \longrightarrow B$ of algebras is an equivalence if and
    only if it is invertible up to natural isomorphism, i.e. there exists $g :
    B \longrightarrow A$ and natural isomorphisms $\epsilon : gf
    \longrightarrow \id_A$ and $\eta : \id_B \longrightarrow fg$.
\end{proposition}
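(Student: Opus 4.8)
The plan is to prove both directions of the equivalence, the forward direction by constructing an explicit quasi-inverse from the data of the algebraic equivalence, and the backward direction by verifying that having a quasi-inverse forces full faithfulness and essential surjectivity. I would treat this exactly as one treats the classical statement that a functor is an equivalence of categories if and only if it is fully faithful and essentially surjective, lifted to the opetopic setting via the notions of \cref{def:isomorphism,def:natural-transformation,def:algebraic-equivalence}.

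First I would handle the easy direction: assume $f$ is invertible up to natural isomorphism, with $g : B \longrightarrow A$ and natural isomorphisms $\epsilon : gf \longrightarrow \id_A$ and $\eta : \id_B \longrightarrow fg$. For essential surjectivity, given $b \in B_{n-1}$, the component $\eta_b : b \longrightarrow f(g(b))$ is an internal isomorphism, so $f(g(b)) \cong b$ with $g(b) \in A_{n-1}$ witnessing. For full faithfulness, fix $\psi \in \bbOO_n$ and $d : \partial h \psi \longrightarrow A$; I would show $f_d : A_d \longrightarrow B_{fd}$ is a bijection by constructing an inverse from $g$ together with conjugation by the components of $\epsilon$ and $\eta$, exactly as one conjugates by unit/counit isomorphisms in the $1$-categorical case. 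The naturality relation of \cref{def:natural-transformation} is precisely what makes this conjugation well-defined on the fibers $A_d$, and $\epsilon, \eta$ being natural \emph{isomorphisms} gives that the two composites are mutually inverse.

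The substantive direction is the converse: assume $f$ is fully faithful and essentially surjective, and build $g$. On objects (cells in $A_{n-1}$ and $B_{n-1}$), essential surjectivity lets me choose, for each $b \in B_{n-1}$, a cell $g(b) \in A_{n-1}$ together with an isomorphism $\eta_b : b \longrightarrow f(g(b))$; this requires a choice and is where the construction is genuinely non-canonical. I would then extend $g$ to higher cells by transporting along these chosen isomorphisms and inverting the bijections $f_d$ furnished by full faithfulness: given a cell $a \in B_{h\psi}$ over some boundary datum $d$, full faithfulness of $f$ identifies the fiber over the conjugated boundary with a fiber in $A$, and I define $g$ on that cell to be the unique preimage. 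The key step is checking that this assignment respects the algebra structure (grafting and identities), so that $g$ is an actual morphism of algebras rather than a mere family of maps, and that the $\eta_b$ assemble into a natural isomorphism $\id_B \longrightarrow fg$; the corresponding $\epsilon : gf \longrightarrow \id_A$ is then recovered by full faithfulness, since $f\epsilon$ can be specified and reflected back.

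The main obstacle I anticipate is the compatibility of $g$ with the grafting operations $\graft_{[p]}$ and the identities $\id_x$, since $g$ is only defined fiberwise via the inverses of the $f_d$ and via the chosen boundary isomorphisms. Verifying that these fiberwise definitions are coherent amounts to a diagram chase using the defining lifting problems for $\graft$ and $\id$ in \cref{sec:opetopic-algebras:quotient} together with the naturality square of \cref{def:natural-transformation}, and the fact that internal isomorphisms compose and invert as expected. I expect this to be routine but notationally heavy, and it is the place where the opetopic bookkeeping (addresses $[p]$, targets, sources) must be tracked carefully; the underlying logic is identical to the classical $1$-categorical proof, so no new idea beyond the definitions of this section should be needed.
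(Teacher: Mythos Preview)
Your proposal is correct and follows essentially the same approach as the paper: both treat the backward direction as routine, and for the forward direction both choose $g(b)$ and $\eta_b$ via essential surjectivity, extend $g$ to $n$-cells by conjugating the boundary along the $\eta$'s and invoking full faithfulness to obtain a unique preimage, and then recover $\epsilon$ by reflecting $\eta_{f(a)}^{-1}$ through full faithfulness. The paper even cites the classical \cite[theorem IV.4.1]{MacLane1998} as the model, exactly as you anticipated.
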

\begin{proof}
    This is similar to \cite[theorem IV.4.1]{MacLane1998}. Necessity is easy.
    Assume that $f$ is an equivalence. For $b \in B_{n-1}$, choose an $a = g
    (b) \in A_{n-1}$ and an isomorphism $\eta_b : b \longrightarrow f(g(b))$.
    Let $\psi \in \bbOO_n$ and $y \in B_{h \psi}$ be such that $\tgt y$ is in
    the image of $f$, say $\tgt y = f (a)$ for some $a \in A_{n-1}$. Then
    \[
        z \eqdef y \biggraft_{[q]} \eta_{\src_{[q]} y}^{-1}
    \]
    where $[q]$ ranges over $\psi^\nodesymbol$, has all its faces in the image
    of $f$, as $\src_{[q]} z = \src_{[]} \eta_{\src_{[q]} y}^{-1} = f (g
    (\src_{[q]} y))$. Since $f$ is fully faithful, there exist a unique $g (y)$
    in $A_{h \psi}$ such that $f (g (y)) = z$. This defines a morphism $g : B
    \longrightarrow A$, and a natural isomorphism $\eta : \id_B \longrightarrow
    fg$.

    Let $a \in A_{n-1}$, and consider the isomorphism $e \eqdef
    \eta_{f(a)}^{-1}$. Note that $e : fgf (a) \longrightarrow f (a)$, and since
    $f$ is fully faithful, there exists a unique $\epsilon_a : gf (a)
    \longrightarrow a$ such that $e = f(\epsilon_a)$. It is straightforward to
    check that the $\epsilon_a$s are the component of a natural isomorphism
    $\epsilon : gf \longrightarrow \id_A$.
\end{proof}

\begin{lemma}
    \label{lemma:acyclic-cofibration-weakly-retractible}
    Let $f : A \longrightarrow B$ be an equivalence that is injective on
    $(n-1)$-cells. Then $f$ admits a retract up to isomorphism, i.e. a weak
    inverse $g : B \longrightarrow A$ together with natural isomorphisms
    $\epsilon : gf \longrightarrow \id_A$ and $\eta : \id_B \longrightarrow fg$
    as in \cref{prop:algebraic-equivalence-weakly-invertible}, but where $gf =
    \id_A$ and $\epsilon$ is an identity (i.e. all its compoments are
    identities), and where $f = fgf$, and for $a \in A_{n-1}$, $\eta_{f(a)} =
    \id_{f(a)}$
\end{lemma}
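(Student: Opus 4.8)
The plan is to re-run the construction of \cref{prop:algebraic-equivalence-weakly-invertible} verbatim, but to use the injectivity of $f$ on $(n-1)$-cells to make the choices \emph{canonical} on the image of $f$. Concretely, for $b \in B_{n-1}$ lying in the image of $f$, injectivity guarantees a \emph{unique} $a \in A_{n-1}$ with $f(a) = b$; I would set $g(b) \eqdef a$ and take $\eta_b \eqdef \id_b$, which is legitimate since $f(g(b)) = f(a) = b$ is isomorphic to $b$ via the identity. For $b$ outside the image I retain the generic choices of that proposition: essential surjectivity furnishes some $g(b)$ together with an isomorphism $\eta_b : b \longrightarrow f(g(b))$. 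The extension of $g$ to $n$-cells, and the natural isomorphism $\eta : \id_B \longrightarrow fg$, are then defined exactly as in that proof, via the full-faithfulness argument applied to $z \eqdef y \biggraft_{[q]} \eta_{\src_{[q]} y}^{-1}$. This already produces a weak inverse with natural isomorphisms $\epsilon, \eta$; it remains only to check that the canonical choices force the additional strict conditions.

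Next I would verify $gf = \id_A$. On $(n-1)$-cells this is immediate: $g(f(a)) = a$ by the canonical choice above. For a cell $a' \in A_{h \psi}$, put $y \eqdef f(a')$. Every source $\src_{[q]} y = f(\src_{[q]} a')$ then lies in the image of $f$, so by construction $\eta_{\src_{[q]} y} = \id_{\src_{[q]} y}$; hence $z = y \biggraft_{[q]} \id_{\src_{[q]} y} = y = f(a')$ by the unit law for grafting with identities. Writing $d \eqdef \sfB_{h \psi}^* a'$ for the boundary of $a'$, the cell $g(y)$ produced by full faithfulness (\cref{def:algebraic-equivalence}) is the unique element of $A_d$ with $f(g(y)) = z = f(a')$, so injectivity of $f_d : A_d \longrightarrow B_{fd}$ forces $g(y) = a'$. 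Thus $gf = \id_A$ on all cells, and consequently $f = fgf$ at once.

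To see that $\epsilon$ is an identity, recall from \cref{prop:algebraic-equivalence-weakly-invertible} that $\epsilon_a$ is the unique cell with $f(\epsilon_a) = \eta_{f(a)}^{-1}$. Since $f(a)$ lies in the image of $f$, the canonical choice gives $\eta_{f(a)} = \id_{f(a)}$, so $f(\epsilon_a) = \id_{f(a)} = f(\id_a)$, and full faithfulness yields $\epsilon_a = \id_a$. The two remaining requirements are now automatic: $\eta_{f(a)} = \id_{f(a)}$ holds by construction for every $a \in A_{n-1}$, and $f = fgf$ was already noted.

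I expect the only genuinely delicate point to be the verification that $gf = \id_A$ on $n$-cells. It rests on two facts that must be invoked carefully: first, that the sources and target of $f(a')$ all lie in the image of $f$, so that the grafted correction terms $\eta_{\src_{[q]} y}^{-1}$ collapse to identities; and second, the unit law $y \biggraft_{[q]} \id = y$, which turns the generic formula for $z$ into the trivial one. Everything else is bookkeeping within the framework already established in \cref{prop:algebraic-equivalence-weakly-invertible}.
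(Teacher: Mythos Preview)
Your proposal is correct and follows essentially the same approach as the paper: amend the construction of \cref{prop:algebraic-equivalence-weakly-invertible} by choosing $g(f(a)) \eqdef a$ and $\eta_{f(a)} \eqdef \id_{f(a)}$ wherever injectivity permits, then read off the strict identities. In fact you supply more detail than the paper does, which simply asserts that $gf = \id_A$ after extension; your explicit check on $n$-cells (collapsing the $\eta$-corrections to identities and invoking the bijectivity of $f_d$) is exactly the content behind that assertion.
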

\begin{proof}
    It suffices to amend the proof of
    \cref{prop:algebraic-equivalence-weakly-invertible} so that $g$, $\epsilon$
    and $\eta$ have the desired properties. For $a \in A_{n-1}$, since $f$ is
    injective on objects, we may choose $g(f(a))$ to be $a$, and $\eta_{f(a)}$
    to be $\id_{f(a)}$. It follows that, after extending $g$ to a morphism $B
    \longrightarrow A$, we have $gf = \id_A$. Further, for $a \in A_{n-1}$,
    $\epsilon_a : gf (a) \longrightarrow a$ is the only $n$-cell of $A$ such
    that $f (\epsilon_a) = \eta_{f(a)}^{-1} = \id_{f(a)}$, whence $\epsilon$ is
    the identity.
\end{proof}

\begin{definition}
    [Isofibration]
    \label{def:isofibration}
    A morphism $f : A \longrightarrow B$ of algebras is an
    \emph{isofibration}\index{isofibration} if for all $a \in A_{n-1}$, all
    isomorphism $g : f(a) \longrightarrow b$ in $B$, there exists an
    isomorphism $g' : a \longrightarrow a'$ in $A$ such that $f(g') = g$.
\end{definition}

\begin{theorem}
    [{Generalization of \cite[theorem 1.4]{Joyal2007}}]
    \label{th:folk-model-structure-algebras}
    The category $\Alg = \Alg^1 (\optPolyFun^n)$ of $1$-colored algebras admits
    a model structure where the weak equivalences are the algebraic
    equivalences (\cref{def:algebraic-equivalence}), the cofibrations are those
    morphism that are injective on $(n-1)$-cells, and where the fibrations are
    the isofibrations (\cref{def:isofibration}). We call this stucture the
    \emph{folk model structure}\index{folk model structure}, and denote it by
    $\Alg_{\mathrm{folk}}$\index{$\Alg_{\mathrm{folk}}$|see {folk model
    structure}}.

    Furthermore, acyclic fibrations are the algebraic equivalences that are
    surjective on $(n-1)$-cells, and every object is both fibrant and
    cofibrant.
\end{theorem}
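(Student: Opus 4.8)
The plan is to verify the model-category axioms directly, following the classical proof that $\Cat$ carries its folk (canonical) model structure, with the $(n-1)$-cells of an algebra playing the role of objects and the internal isomorphisms of \cref{def:isomorphism} playing the role of categorical isomorphisms. First I would dispatch the formal axioms. By \cref{th:algebraic-localization:lambda} the category $\Alg$ is the reflective localization of the presheaf category $\PshLambda$ at the set $\sfSS$, hence locally presentable and in particular bicomplete. Two-out-of-three for algebraic equivalences follows from \cref{prop:algebraic-equivalence-weakly-invertible}: the weak equivalences are exactly the morphisms that become invertible in the quotient of $\Alg$ by the congruence identifying naturally isomorphic maps, and isomorphisms satisfy two-out-of-three; the same quotient argument gives closure under retracts. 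Closure under retracts of the other two classes is immediate, since injectivity on $(n-1)$-cells is detected by the retract-preserving functor $(-)_{n-1} : \Alg \to \Set$, and the isofibration lifting property survives the usual retract chase. Finally every object is cofibrant, as $\emptyset \to X$ is vacuously injective on $(n-1)$-cells, and fibrant, as the terminal algebra has a single cell in each dimension so that $X \to 1$ lifts isomorphisms trivially.

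Next I would pin down the acyclic fibrations, proving the \emph{furthermore} clause in passing. An isofibration that is an equivalence is surjective on $(n-1)$-cells: essential surjectivity provides for each $b$ an isomorphism $f(a) \cong b$, which the isofibration lifts to $a \cong a'$ with $f(a') = b$; conversely an equivalence surjective on $(n-1)$-cells is an isofibration, since full faithfulness upgrades the lifted cell to an honest isomorphism. I would then take as generating cofibrations the set $I$ consisting of the cell-creating maps $\emptyset \to h\phi$ for $\phi \in \bbOO_{n-1}$, the boundary inclusions $\sfB_{h\psi} : \partial h\psi \to h\psi$ for $\psi \in \bbOO_n$, and the fold maps $h\psi \sqcup_{\partial h\psi} h\psi \to h\psi$, and check that $I^\pitchfork$ is exactly the class of equivalences surjective on $(n-1)$-cells: lifting against $\emptyset \to h\phi$ gives surjectivity on $(n-1)$-cells, lifting against $\sfB_{h\psi}$ gives surjectivity of each $f_d : A_d \to B_{fd}$ (fullness), and lifting against the fold gives injectivity of each $f_d$ (faithfulness). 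Since the generators are injective on $(n-1)$-cells, so are all $I$-cell complexes; the small object argument then produces the (cofibration, acyclic fibration) factorization, and the retract argument identifies the cofibrations with ${}^\pitchfork(I^\pitchfork)$ and supplies the lifting of cofibrations against acyclic fibrations.

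It remains to treat acyclic cofibrations against fibrations. For the lifting property I would invoke \cref{lemma:acyclic-cofibration-weakly-retractible}: given an acyclic cofibration $i : A \to B$, an isofibration $p : X \to Y$, and a commuting square $(u, v)$, the lemma furnishes a strict retraction $r$ with $ri = \id_A$ and a natural isomorphism $\eta : \id_B \to ir$ whose components on the image of $i$ are identities. The composite $ur : B \to X$ satisfies $p(ur) = vir$, and $v\eta : v \Rightarrow vir = p(ur)$ is a natural isomorphism which, lifted cellwise along the isofibration $p$, corrects $ur$ into a genuine lift $w$ with $pw = v$; since $\eta$ is trivial on the image of $i$, the correction is trivial there and $wi = u$. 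For the factorization I would construct the opetopic iso-comma object $E_f$, whose $(n-1)$-cells are triples $(a, b, \gamma)$ with $\gamma : f(a) \cong b$ an internal isomorphism and whose $n$-cells are obtained by grafting along these isomorphisms; then $a \mapsto (a, f(a), \id)$ is an acyclic cofibration $A \to E_f$ and $(a, b, \gamma) \mapsto b$ is an isofibration $E_f \to B$.

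The main obstacle I anticipate is precisely the construction and verification of $E_f$, together with the cellwise lifting of natural isomorphisms along an isofibration. Both require organizing the grafting and identity operations $\graft$, $\id_{(-)}$ of \cref{sec:opetopic-algebras:quotient} so that the triples $(a, b, \gamma)$ genuinely assemble into an algebra, so that $E_f \to B$ has the required isomorphism-lifting property, and so that a compatible family of internal isomorphisms can be lifted coherently along $p$; this is where the opetopic combinatorics, rather than formal model-category manipulation, does the real work. Once these are in hand, the identification of the fibrations with the maps having the right lifting property against acyclic cofibrations follows by testing against the walking-isomorphism inclusion as a generating acyclic cofibration, and the remaining axioms are formal.
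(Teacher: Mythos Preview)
Your plan is essentially the paper's: verify Quillen's axioms directly, use \cref{lemma:acyclic-cofibration-weakly-retractible} together with componentwise lifting along the isofibration for the acyclic-cofibration-against-fibration case, and build the iso-comma algebra (your $E_f$, the paper's $C$ with $(n-1)$-cells the triples $(a,v,b)$ and fibers $C_d = A_{\proj d}$) for the (acyclic cofibration, fibration) factorization. The paper handles the other factorization by an explicit construction $D$ with $D_{n-1} = A_{n-1} + B_{n-1}$ and $D_d = B_{\bar f d}$, whereas you invoke the small object argument on your generating set $I$; both are fine, and your $I$ is exactly the set $\sfJJ$ that the paper uses afterwards in \cref{th:folk-model-structure-algebras:cofibrantly-generated}.

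There is one genuine gap in your exposition. The retract argument only shows ${}^\pitchfork(I^\pitchfork) \subseteq \{\text{injective on }(n-1)\text{-cells}\}$, via closure of the latter under retracts of $I$-cell complexes. For the reverse inclusion you must still prove directly that a map injective on $(n-1)$-cells lifts against every map in $I^\pitchfork$, i.e.\ against every fully faithful map surjective on $(n-1)$-cells; only then does the retract argument apply to conclude that your cofibrations coincide with ${}^\pitchfork(I^\pitchfork)$. This is precisely the computation the paper carries out in its \condition{M4} part (2): choose a set-theoretic lift on $(n-1)$-cells using injectivity and surjectivity, then use full faithfulness of $p$ to extend uniquely on $n$-cells via $l(x) = p_d^{-1} g(x)$. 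It is short, but it is not a consequence of the retract argument and should be stated explicitly.
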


The second claim can easily be checked once the model structure is established.
The rest of this section is dedicated to proving this. To that end, we verify
each of Quillen's axioms, cf \cite[definition 7.1.3]{Hirschhorn2009}.

\begin{proof}
    [Proof of \cref{th:folk-model-structure-algebras}, \condition{M1}: limit
    axiom] Since $\Alg$ is the category of models of a small projective sketch
    \cite{HoThanh2019}, it is locally presentable \cite[corollary
    1.52]{Adamek1980}, and therefore has all small limits and colimits.
\end{proof}

\begin{proof}
    [Proof of \cref{th:folk-model-structure-algebras}, \condition{M2}: 3-for-2
    axiom] This is clear.
\end{proof}

\begin{proof}
    [Proof of \cref{th:folk-model-structure-algebras}, \condition{M3}: retract
    axiom]
    Straightforward verifications.
\end{proof}

\begin{proof}
    [Proof of \cref{th:folk-model-structure-algebras}, \condition{M4}: lifting
    axiom]
    Consider a commutative square
    \[
        \diagramarrows{}{>->}{->>}{}
        \squarediagram
            {A}{B}{C}{D,}{f}{i}{p}{g}
    \]
    where $i$ is a cofibration and $p$ is a fibration. We show that a lift
    exists whenever $i$ or $p$ is a weak equivalence.
    \begin{enumerate}
        \item Assume that $i$ is a weak equivalence, and let $r : C
        \longrightarrow A$ be a weak retract of $i$ as in
        \cref{lemma:acyclic-cofibration-weakly-retractible}, together with the
        natural isomorphism $\eta : \id_C \longrightarrow ir$. Let $c \in
        C_{n-1}$, and consider $gir (c) = pfr (c) \in D_{n-1}$. Since $p$ is an
        isobibration, there exist an isomorphism $\beta_c : fr(c)
        \longrightarrow b$ in $B$ such that $p(\beta_c) = g(\eta_c^{-1}) : gir
        (c) = pfr (c) \longrightarrow g (c)$. In particular, $p (b) = g (c)$,
        and define $l (c) \eqdef b$. This defined a lift $l : C_{n-1}
        \longrightarrow B_{n-1}$.

        The construction above also provides a natural isomorphism $\beta : fr
        \longrightarrow l$. Without loss of generality, we choose $l$ and the
        components of $\beta$ such that for all $a \in A_{n-1}$, $li (a) =
        f(a)$, and $\beta_{i (a)} = \id_{f(a)}$. For $\psi \in \bbOO_n$ and $x
        \in C_{h \psi}$, let
        \[
            l (x) \eqdef \beta_{\tgt x} \graft_{[]} fr(x)
                \biggraft_{[q]} \beta_{\src_{[q]} x}^{-1} .
        \]
        It can easily be checked that $l$ is then a morphism of algebras $C
        \longrightarrow B$, and finally the desired lift.

        \item Assume that $p$ is a weak equivalence. In particular, on
        $(n-1)$-cells, $i$ is an injection, and $p$ a surjection, and so a lift
        $l : C_{n-1} \longrightarrow B_{n-1}$ can be found. We now extend $l$
        to a morphism of algebras $l : C \longrightarrow B$.

        Let $\psi \in \bbOO_n$, $x \in C_\psi$, and $d$ be the composite
        \[
            \partial h \psi
            \stackrel{\sfB_{h \psi}}{\longhookrightarrow} h \psi
            \stackrel{x}{\longrightarrow} C .
        \]
        Since $p$ is fully faithful, it induces a bijection
        \[
            p_d : C_{l d} \stackrel{\cong}{\longrightarrow} D_{g d} ,
        \]
        and letting $l (x) \eqdef p_d^{-1} g (x)$ extends $l$ to an algebra
        morphism which is the desired lift.
        \qedhere
    \end{enumerate}
\end{proof}

\begin{proof}
    [Proof of \cref{th:folk-model-structure-algebras}, \condition{M5}:
    factorization axiom]
    Let $f : A \longrightarrow B$ be a morphism of algebra.
    \begin{enumerate}
        \item We decompose $f$ as $f = pi$, where $i$ is an acyclic cofibation,
        and where $p$ is fibration. Define $C \in \PshLambda$ as follows. For
        $\phi \in \bbOO_{n-1}$, let
        \[
            C_{h \phi} \eqdef \left\{ (a, v, b) \mid
                a \in A_{h \phi}, b \in B_{h \phi}, v \in B_{\ytree{\phi}}
                \text{ isomorphism } f (a) \longrightarrow b
            \right\} .
        \]
        There is ab obvious projection $\proj : C_{n-1} \longrightarrow
        A_{n-1}$, and if $\psi \in \bbOO_n$ and $d : \partial h \psi
        \longrightarrow C_{n-1}$, let $C_d \eqdef A_{\proj d}$. At this stage,
        $C$ clearly extends as an algebra, essentially inheriting the same law
        as $A$.

        Let $i : A \longrightarrow C$ map an $(n-1)$-cell $a$ to $(a,
        \id_{f(a)}, f(a))$. This completely determines $i$, as indeed, for an
        $n$-cell $x \in A_{\geq n}$ we necessarily have $i (x) = x$. Clearly,
        $i$ is a fully faithful cofibration. It remains to show that it is
        essentially surjective. If $(a, v, b) \in C_{n-1}$, note that $\id_a$
        exhibits an isomorphism $i (a) = (a, \id_{f(a)}, f(a)) \longrightarrow
        (a, v, b)$ in $C$. Therefore, $i$ is an acyclic cofibration.

        Let $p : C \longrightarrow B$ be the obvious projection $(a, v, b)
        \longmapsto b$ on $(n-1)$-cells. Let $\psi \in \bbOO_n$ and $x \in C_{h
        \psi}$. Write $\tgt x = (a_{\tgt}, v_{\tgt}, b_{\tgt})$, for a source
        address $[q] \in \psi^\nodesymbol$, write $\src_{[q]} x = (a_{[q]},
        v_{[q]}, b_{[q]})$, and define
        \[
            p (x) \eqdef v_{\tgt} \graft_{[]} f(x) \biggraft_{[q]}
                v_{[q]}^{-1} .
        \]
        This defines a morphism $p : C \longrightarrow B$ which we claim to be
        a fibration. Indeed, if $(a, v, b) \in C_{n-1}$ and $w : b
        \longrightarrow b'$ is an isomorphism in $C$, we have an isomorphism
        $\id_a : (a, v, b) \longrightarrow (a, w \graft_{[]} v, b')$ in
        $C$, and
        \[
            p (\id_{f(a)}) = (w \graft_{[]} v) \graft_{[]} f(\id_a)
                \graft_{[]} v = w .
        \]

        Lastly, it is clear that $f = pi$, and thus $f$ decomposes as a acyclic
        cofibration followed by a fibration.

        \item We decompose $f$ as $f = pi$, where $i$ is an cofibation, and
        where $p$ is an acyclic fibration. Define $D \in \PshLambda$ as
        follows. On $(n-1)$-cells, it is given as on the left, and let $\barF$
        be defined as on the right:
        \[
            D_{n-1} \eqdef A_{n-1} + B_{n-1} ,
            \qqquad
            \barF \eqdef (f, \id_{B_{n-1}}) : D_{n-1} \longrightarrow B_{n-1} .
        \]
        Explicitely, $\barF$ maps $a \in A_{n-1}$ to $f(a)$, and $b \in
        B_{n-1}$ to $b$. For $\psi \in \bbOO_n$ and $d : \partial h \psi
        \longrightarrow D_{n-1}$, let the fiber $D_d$ be simply $B_{\barF d}$.
        At this stage, $D$ clearly extends as an algebra, essentially
        inheriting the same law as $B$.

        Let $i : A \longrightarrow D$ map an $(n-1)$-cell $a$ to $a$, and an
        $n$-cell $x$ to $f(x)$. Obviously, this is a cofibration. Let $p : D
        \longrightarrow A$ map an $(n-1)$-cell $d$ to $\barF (d)$, and an
        $n$-cell $x$ to $x$. This can easily be seen to be an acyclic
        fibration. Lastly, $f = pi$, so that $f$ can be decomposed into a
        cofibration followed by an acyclic fibration.
        \qedhere
    \end{enumerate}
\end{proof}

\begin{definition}
    [Rezk interval]
    \label{def:rezk-interval-algebra}
    Let $\phi \in \bbOO_{n-1}$. The \emph{Rezk interval of shape
    $\phi$}\index{Rezk interval}\index{algebraic Rezk interval} is the algebra
    $\frakJJ_\phi \in \Alg$\index{$\frakJJ_\phi$|see {algebraic Rezk interval}}
    generated by one invertible operation $j_\phi$ of shape $\ytree{\phi}$.
    Explicitely, $\frakJJ_\phi$ has two $(n-1)$-cells $0_\phi, 1_\phi \in
    (\frakJJ_\phi)_\phi$, and four $n$-cells $j_\phi, j_\phi^{-1}, \id_0, \id_1
    \in (\frakJJ_\phi)_{\ytree{\phi}}$, satisfying the following equalities
    \[
        \src_{[]} j_\phi = \tgt j_\phi^{-1} = 0_\phi ,
        \quad
        \tgt j_\phi = \src_{[]} j_\phi^{-1} = 1_\phi ,
        \quad
        j_\phi \graft_{[]} j_\phi^{-1} = \id_1 ,
        \quad
        j_\phi^{-1} \graft_{[]} j_\phi = \id_0 .
    \]

    Note that up to isomorphism, there is a unique endpoint inclusion
    $\sfJ_\phi : h \phi \longrightarrow \frakJJ_\phi$, and let $\sfEE_\frakJJ
    \eqdef \left\{ \sfJ_\phi \mid \phi \in \bbOO_{n-1} \right\}$.

    If $X = (X_\psi \mid \psi \in \bbOO_{n-1})$ is a set over $\bbOO_{n-1}$,
    let $\frakJJ_X \eqdef \sum_\psi \sum_{x \in X_\psi} \frakJJ_\psi$. In the
    $x \in X_\psi$ component, we write $j_x$ instead of $j_\psi$, and
    similarly, $j_x^{-1}$, $0_x$, and $1_x$. The \emph{Rezk
    interval}\index{Rezk interval}\index{algebraic Rezk interval} (without any
    mention of shape) is the sum $\frakJJ = \frakJJ_{\bbOO_{n-1}} \eqdef
    \sum_\psi \frakJJ_\psi$ in $\Alg$.
\end{definition}

\begin{theorem}
    \label{th:folk-model-structure-algebras:cofibrantly-generated}
    The model structure $\Alg_{\mathrm{folk}}$ is cofibrantly generated, and
    $\sfEE_\frakJJ$ can be taken as a set of generating acyclic cofibrations.
\end{theorem}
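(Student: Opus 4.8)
The plan is to reduce everything to two lifting-property computations. Recall that a model category is cofibrantly generated precisely when it admits a set $I$ of generating cofibrations and a set $J$ of generating acyclic cofibrations permitting the small object argument, with $I^\pitchfork$ the class of acyclic fibrations and $J^\pitchfork$ the class of fibrations. Since $\Alg_{\mathrm{folk}}$ already exists by \cref{th:folk-model-structure-algebras} and $\Alg$ is locally presentable (so every object is small, cf.\ the proof of \condition{M1}), it therefore suffices to exhibit such sets, taking $J \eqdef \sfEE_\frakJJ$. So the entire statement follows once we check that $\sfEE_\frakJJ^\pitchfork$ is the class of isofibrations, and that a suitable set $I$ of cofibrations has $I^\pitchfork$ equal to the acyclic fibrations.

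For $J = \sfEE_\frakJJ$ I would unwind the lifting problem against $\sfJ_\phi : h\phi \longrightarrow \frakJJ_\phi$ for fixed $\phi \in \bbOO_{n-1}$. A morphism $h\phi \longrightarrow A$ is the same datum as an $(n-1)$-cell $a \in A_\phi$, and, by the construction of $\frakJJ_\phi$ as the algebra freely generated by a single invertible operation (\cref{def:rezk-interval-algebra}), a morphism $\frakJJ_\phi \longrightarrow B$ is exactly an internal isomorphism of $B$ of shape $\ytree{\phi}$ (\cref{def:isomorphism}); since $\sfJ_\phi$ is the endpoint inclusion, commutativity of the square forces this isomorphism to have source $f(a)$. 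A diagonal filler is then precisely an internal isomorphism of $A$ with source $a$ lying over the given one. This is word-for-word the condition of \cref{def:isofibration}, so $f \in \sfEE_\frakJJ^\pitchfork$ if and only if $f$ is an isofibration, i.e.\ a fibration; in particular every $\sfJ_\phi$ is an acyclic cofibration.

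For the generating cofibrations I would take $I$ to be the union of three families indexed by opetopes: the maps $\emptyset \longrightarrow h\phi$ for $\phi \in \bbOO_{n-1}$, the boundary inclusions $\sfB_{h\psi} : \partial h\psi \longrightarrow h\psi$ for $\psi \in \bbOO_n$, and the codiagonals $\nabla : h\psi \cup_{\partial h\psi} h\psi \longrightarrow h\psi$ for $\psi \in \bbOO_n$. Each is injective on $(n-1)$-cells, hence a cofibration. Unwinding the right lifting property of a map $f : A \longrightarrow B$ against each family gives, respectively: surjectivity of $f$ on $(n-1)$-cells (from $\emptyset \to h\phi$); surjectivity of every fibre map $f_d : A_d \longrightarrow B_{fd}$ (from $\sfB_{h\psi}$, these being top-dimensional cells, so that no relative matching correction intervenes); and injectivity of every such $f_d$ (from the codiagonal, whose RLP expresses exactly that $f$ is a monomorphism on fibres). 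The last two together say precisely that $f$ is fully faithful (\cref{def:algebraic-equivalence}). By the second assertion of \cref{th:folk-model-structure-algebras} the acyclic fibrations are the algebraic equivalences surjective on $(n-1)$-cells; since surjectivity on $(n-1)$-cells entails essential surjectivity, this is exactly the class of fully faithful maps surjective on $(n-1)$-cells, so $I^\pitchfork$ is the class of acyclic fibrations, which completes the verification.

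The computations are routine Yoneda and pushout/pullback bookkeeping, with one genuinely content-bearing point: that maps out of $\frakJJ_\phi$ coincide with internal isomorphisms. The only real work, and thus the main obstacle, is to confirm that freeness of $\frakJJ_\phi$ on one invertible $n$-cell makes a morphism $\frakJJ_\phi \longrightarrow B$ the same datum as an internal isomorphism of $B$ of shape $\ytree{\phi}$ — so that no extra relations constrain the filler and a lift is automatically an \emph{isomorphism} rather than a mere cell — with $\sfJ_\phi$ corresponding to the source-endpoint. This is where the defining data of \cref{def:rezk-interval-algebra} are used. A separate remark worth keeping in mind is that, unlike in a presheaf topos, boundary inclusions alone do \emph{not} generate the acyclic fibrations here (cofibrations are only injective on $(n-1)$-cells, not monomorphisms), which is exactly why the codiagonal family is needed to force injectivity of the fibre maps.
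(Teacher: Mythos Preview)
Your proposal is correct and follows essentially the same approach as the paper: verify that $\sfEE_\frakJJ^\pitchfork$ is exactly the class of isofibrations, and exhibit a set of cofibrations whose right lifting class is the acyclic fibrations. Your three families coincide with the paper's set $\sfJJ$ (the paper writes the middle family as $v\sfBB$, which contains your $\sfB_{h\psi}$ for $\psi \in \bbOO_n$ as the degree-$1$ part, the extra members being redundant for the lifting characterization), and your unwinding of the lifting conditions is exactly the ``straightforward check'' the paper leaves to the reader.
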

\begin{proof}
    Clearly, a morphism $f$ is an isofibration if and only if $\sfEE_\frakJJ
    \pitchfork f$. It is straightforward to check that $f$ is an acyclic
    fibration of and only if $\sfJJ \pitchfork f$, where $\sfJJ$ is
    \[
        \left\{
            \emptyset \longhookrightarrow h \phi \mid \phi \in \bbOO_{n-1}
        \right\}
        \cup v \sfBB \cup
        \left\{
            h \psi \coprod_{\partial h \psi} h \psi \longrightarrow h \psi
            \mid \psi \in \bbOO_n
        \right\} ,
    \]
    (see \cref{def:boundary-lambda} for the definition of $\sfBB$).
\end{proof}

\subsection{Homotopy equivalences}

\begin{definition}
    [Rezk cylinder of an algebra]
    \label{def:rezk-cylinder-algebra}
    Let $A \in \Alg$, and consider the following pushout in $\Alg$:
    \[
        \pushoutdiagram
            {h A_{n-1} + h A_{n-1}}{A + A}{\frakJJ_{A_{n-1}}}{A' .}
            {}{}{(i_0, i_1)}{}
    \]
    For $a$ a cell of $A$, write $a^{(e)} \eqdef i_e (a)$, for $e = 0, 1$.
    Explicitely, $A'$ is generated by two copies of $A$, and for each $a \in
    A_\psi$, $\psi \in \bbOO_{n-1}$, an additional cell $j_a$ of shape
    $\ytree{\psi}$ with $\src_{[]} j_a = a^{(0)}$ and $\tgt j_a = a^{(1)}$,
    which can informally be written $j_a : a^{(0)} \longrightarrow a^{(1)}$.

    The \emph{Rezk cylinder}\index{Rezk cylinder}\index{algebraic Rezk
    cylinder} of $A$ is the quotient $\frakJJ A \eqdef A' / K$\index{$\frakJJ
    A$|see {algebraig Rezk cylinder}}, where
    \begin{equation}
        \label{eq:rezk-cylinder:relations}
        K \eqdef \left\{
            a^{(1)} \biggraft_{[p_i]} j_{\src_{[p_i]} a}
                = j_{\tgt a} \graft_{[]} a^{(0)}
            \mid a \in A_\omega, \omega \in \bbOO_n,
                \omega^\nodesymbol = \left\{ [p_1], \ldots \right\}
        \right\} .
    \end{equation}
    See \cref{sec:opetopic-algebras:quotient} for the $\graft$ notation. It is
    a cylinder object in the sense of \cref{def:cylinder}, i.e. we have a
    canonical factorization of the codiagonal map
    \[
        \diagramarrows{}{>->}{<-}{}
        \triangleDdiagram
            {A + A}{\frakJJ A}{A .}
            {\nabla}{(i_0, i_1)}{\nabla}
    \]
    Explicitly, $\nabla : \frakJJ A \longrightarrow A$ maps $a^{(e)}$ to $a$,
    for a cell $a \in A$, and $j_a$ to $\id_a$, if $a$ is $(n-1)$-dimensional.
    Note that $(i_0, i_1) : A + A \longrightarrow \frakJJ A$ is a monomorphism,
    since the relation $K$ of \cref{eq:rezk-cylinder:relations} does not
    identify cells of $e A$, for $e = 0, 1$. We write
    $A^{(e)}$\index{$A^{(e)}$|see {algebraic Rezk cylinder}} the image of
    $i_e$, and by abuse of notation, $i_e : A^{(e)} \longhookrightarrow \frakJJ
    A$ the obvious inclusion.
\end{definition}

\begin{example}
    Let $\psi \in \bbOO_n$. Then $\frakJJ h \psi$ is generated by
    \begin{enumerate}
        \item $\psi^{(0)}, \psi^{(1)} \in (\frakJJ h \psi)_{\psi}$, i.e. two
        copies of $h \psi$,
        \item for each $[p] \in \psi^\nodesymbol$, and writing $\phi \eqdef
        \src_{[p]} \psi$, two cells $j_\phi, j_\phi^{-1} \in (\frakII h
        \psi)_{\ytree{\phi}}$, i.e. one copy of $\frakJJ_\phi$,
        \item $j_{\tgt \psi}$, $j_{\tgt \psi}$, i.e. one copy of $\frakJJ_{\tgt
        \psi}$,
    \end{enumerate}
    subject to the relation \cref{eq:rezk-cylinder:relations}. Likewise, for
    $\omega \in \bbOO_{n+1}$, the cylinder $\frakII h \omega$ is generated by
    two copies of $h \omega$, and one copy $\frakJJ_{\edg_{[q]} \omega}$ for
    all edge address $[q]$ of $\omega$.
\end{example}

\begin{remark}
    The cylinder $\frakJJ A$ of an algebra $A$ can be thought of the
    \emph{Boardman--Vogt tensor product} \cite{Weiss2011,Boardman1973,May1972}
    $\frakJJ \otimes_{\mathrm{BV}} A$. Unfortunately, in the planar case, a
    general such construction is not possible, as there is no way to ``shuffle
    the inputs''. Since this is the only obstruction, one could still define a
    $A \otimes_{\mathrm{BV}} B$ as long as either $A$ of $B$ only have unary
    (i.e. endotopic) operations.
\end{remark}

\begin{lemma}
    \label{lemma:j-homotopy-natural-isomorphism}
    Let $f, g : A \longrightarrow B$ be two parallel morphisms of algebras. The
    following are equivalent:
    \begin{enumerate}
        \item $f \simeq g$ (\cref{def:homotopy});
        \item there is an elementary $\frakJJ$-homotopy from $f$ to $g$;
        \item there exist a natural isomorphism $f \longrightarrow g$.
    \end{enumerate}
\end{lemma}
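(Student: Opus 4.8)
The plan is to prove the middle equivalence $(2) \Leftrightarrow (3)$ directly by unwinding the presentation of the Rezk cylinder, and then to tie in $(1)$ by a purely formal argument about equivalence relations.

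First I would establish $(3) \Rightarrow (2)$. Given a natural isomorphism $\alpha : f \longrightarrow g$, recall from \cref{def:rezk-cylinder-algebra} that $\frakJJ A$ is the quotient by $K$ of the pushout $A'$ of $A + A$ and $\frakJJ_{A_{n-1}}$ along $h A_{n-1} + h A_{n-1}$. To define a morphism $H : \frakJJ A \longrightarrow B$ it therefore suffices to give a morphism $A + A \longrightarrow B$, namely $(f, g)$, together with a compatible morphism $\frakJJ_{A_{n-1}} \longrightarrow B$ respecting the relation $K$. By \cref{def:rezk-interval-algebra} the component $\frakJJ_\psi$ is freely generated by a single invertible operation $j_\psi$, so such a map amounts to a choice, for each $(n-1)$-cell $a \in A_\psi$, of an invertible cell $f(a) \longrightarrow g(a)$; I set this to be $\alpha_a$. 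Compatibility with $A + A$ over $h A_{n-1} + h A_{n-1}$ records exactly that $\src_{[]} \alpha_a = f(a)$ and $\tgt \alpha_a = g(a)$, and after applying $H$ the relation $K$ of \cref{eq:rezk-cylinder:relations} becomes precisely the naturality equation of \cref{def:natural-transformation}. Thus $\alpha$ being a natural transformation is exactly what is needed for $H$ to descend to the quotient, and $H i_0 = f$, $H i_1 = g$ by construction.

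Conversely, for $(2) \Rightarrow (3)$ I would run the same dictionary backwards: an elementary homotopy $H : \frakJJ A \longrightarrow B$ restricts to $f$ and $g$ on the two copies of $A$ and sends each generator $j_a$ to a cell $\alpha_a \eqdef H(j_a) : f(a) \longrightarrow g(a)$. Since $j_a$ is invertible in $\frakJJ A$ (its inverse being the image of $j_\psi^{-1}$ from $\frakJJ_\psi$), each $\alpha_a$ is invertible, and the relation $K$ forces the family $(\alpha_a)$ to satisfy naturality; hence $\alpha$ is a natural isomorphism. This exhibits $(2)$ and $(3)$ as two packagings of the same data. To tie in $(1)$: the implication $(2) \Rightarrow (1)$ is immediate, since by \cref{def:homotopy} the relation $\simeq$ is by definition the equivalence relation generated by elementary $\frakJJ$-homotopies. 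For $(1) \Rightarrow (3)$ I would observe that ``there exists a natural isomorphism $f \longrightarrow g$'' is itself an equivalence relation on each hom-set---reflexive via identity components, symmetric by inverting componentwise, transitive by grafting components using the composition of \cref{def:natural-transformation}. By the already-established $(2) \Rightarrow (3)$ this relation contains the elementary-homotopy relation, hence contains the equivalence relation $\simeq$ it generates; so $f \simeq g$ yields a natural isomorphism, closing the cycle $(1) \Rightarrow (3) \Rightarrow (2) \Rightarrow (1)$.

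The main obstacle I anticipate is the bookkeeping in the $(2) \Leftrightarrow (3)$ correspondence: verifying that $K$ translates exactly into the naturality condition (matching the source and target addresses and the $\graft$ and $\biggraft$ notation), and checking that invertibility of $j_a$ survives in the quotient $\frakJJ A$, so that the components are genuine isomorphisms and not merely cells. The colimit-in-$\Alg$ nature of $\frakJJ A$, a pushout followed by a coequalizer, is what legitimizes ``define $H$ on generators and check the relation''; I would be careful that no further identifications in the quotient spoil the faithfulness of the reading $H \mapsto (\alpha_a)$.
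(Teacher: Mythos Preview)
Your proof is correct and follows essentially the same approach as the paper: the core content is the dictionary between elementary $\frakJJ$-homotopies and natural isomorphisms, and $(2)\Rightarrow(1)$ is by definition. You are in fact slightly more careful than the paper on the $(1)\Rightarrow(3)$ step: the paper writes ``A homotopy $H$ from $f$ to $g$ \ldots'' as if an elementary homotopy were given, whereas you explicitly observe that the natural-isomorphism relation is itself an equivalence relation and hence contains the one generated by elementary homotopies.
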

\begin{proof}
    \begin{itemize}
        \item (1) $\implies$ (3). A homotopy $H$ from $f$ to $g$ as in
        \cref{def:homotopy} induces a natural isomorphism $\hatHH$ with
        components $\hatHH_a = H (j_a)$ (see \cref{def:rezk-cylinder-algebra}
        for notations), for $a \in A_{n-1}$.
        \item (3) $\implies$ (2). A natural isomorphism $\alpha : f
        \longrightarrow g$ induces a homotopy $\chalpha : \frakJJ A
        \longrightarrow B$ from $f$ to $g$, where for $a \in A$, $\chalpha
        (a^{(0)}) \eqdef f(a)$ (see \cref{def:rezk-cylinder-algebra} for
        notations), $\chalpha (a^{(1)}) \eqdef g(a)$, $\chalpha (j_a) =
        \alpha_a$, and $\chalpha (j_a^{-1}) \eqdef \alpha_a^{-1}$.
        \item (2) $\implies$ (1). By definition.
        \qedhere
    \end{itemize}
\end{proof}

\begin{proposition}
    \label{prop:folk-weq-j-homotopy-eq}
    A morphism $f : A \longrightarrow B$ is a weak equivalence (for the folk
    structure of \cref{th:folk-model-structure-algebras}) if and only if it is
    an isomorphism in $\Ho\Alg$ (\cref{def:homotopy}). Therefore, the category
    $\Ho\Alg$ is the localization of $\Alg$ at the algebraic equivalences.
\end{proposition}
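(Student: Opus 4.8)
The plan is to read this off from the two earlier characterizations of invertibility, since \cref{prop:algebraic-equivalence-weakly-invertible} and \cref{lemma:j-homotopy-natural-isomorphism} already contain all the substance. Recall first that, by \cref{th:folk-model-structure-algebras}, the weak equivalences of $\Alg_{\mathrm{folk}}$ are by definition the algebraic equivalences, so it suffices to prove that $f$ is an algebraic equivalence if and only if it becomes invertible in $\Ho\Alg$.

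For the forward implication I would start from an algebraic equivalence $f$ and invoke \cref{prop:algebraic-equivalence-weakly-invertible} to produce a weak inverse $g : B \to A$ together with natural isomorphisms $gf \cong \id_A$ and $fg \cong \id_B$. The implication $(3) \Rightarrow (1)$ of \cref{lemma:j-homotopy-natural-isomorphism} turns these natural isomorphisms into $\frakJJ$-homotopies $gf \simeq \id_A$ and $fg \simeq \id_B$; passing to the quotient category $\Ho\Alg$ then gives $[g][f] = \id_A$ and $[f][g] = \id_B$, so $[f]$ is an isomorphism with inverse $[g]$. For the converse I would run the same chain backwards: given $g$ with $gf \simeq \id_A$ and $fg \simeq \id_B$ in $\Ho\Alg$, the implication $(1) \Rightarrow (3)$ of \cref{lemma:j-homotopy-natural-isomorphism} supplies natural isomorphisms $gf \cong \id_A$ and $fg \cong \id_B$, and \cref{prop:algebraic-equivalence-weakly-invertible} then certifies that $f$ is an algebraic equivalence. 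This establishes the biconditional.

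For the final sentence I would observe that the biconditional just proved is exactly a Whitehead theorem for $\Alg_{\mathrm{folk}}$: since \cref{th:folk-model-structure-algebras} asserts that every object is both fibrant and cofibrant, a morphism is a weak equivalence precisely when it is invertible in the quotient $\Ho\Alg = \Alg / {\simeq}$. By the general theory of model categories (see e.g. \cite{Hirschhorn2009}), the canonical functor $\gamma : \Alg \to \Ho\Alg$ is then the localization of $\Alg$ at the weak equivalences. Concretely, any functor inverting all algebraic equivalences sends $\frakJJ$-homotopic maps to equal maps, because the cylinder collapse $\nabla : \frakJJ A \to A$ is a weak equivalence: a short check with \cref{lemma:j-homotopy-natural-isomorphism} (using the universal $1$-cells $j_a$) shows that $i_e$ is a homotopy inverse to $\nabla$, so $i_e$ is invertible in $\Ho\Alg$ and hence, by the equivalence just proved, a weak equivalence, whence so is $\nabla$ by $3$-for-$2$. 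Thus $\gamma$ factors any such functor uniquely, which is the localization universal property.

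I do not expect a genuine obstacle in the main equivalence, which is a formal consequence of the two cited results. The one point deserving care is the localization statement: one must be sure that the homotopy relation defining $\Ho\Alg$ really coincides with \emph{being connected by a natural isomorphism} — which is precisely the content of \cref{lemma:j-homotopy-natural-isomorphism} — and that the quotient functor enjoys the universal property, for which the bifibrancy of all objects (and the resulting fact that $\nabla$ and each $i_e$ are weak equivalences) is the decisive input.
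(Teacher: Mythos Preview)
Your proposal is correct and follows exactly the route the paper takes: the paper's proof is the single line ``Follows from \cref{prop:algebraic-equivalence-weakly-invertible,lemma:j-homotopy-natural-isomorphism}'', and your argument is precisely the unpacking of how those two results combine to give both directions of the biconditional. Your additional discussion of the localization claim is more detailed than what the paper provides (the paper leaves the ``Therefore'' implicit), but the argument you sketch---showing $\nabla$ is a weak equivalence via bifibrancy and then invoking the standard Whitehead-type result---is the right way to make it precise.
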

\begin{proof}
    Follows from
    \cref{prop:algebraic-equivalence-weakly-invertible,lemma:j-homotopy-natural-isomorphism}.
\end{proof}

\section{The homotopy theory of \texorpdfstring{$\infty$}{infinity}-opetopic algebra}
\label{sec:infty-algebra}

We fix a parameter $n \geq 1$, and assume that $k = 1$. In \cite{HoThanh2019},
we built a reflective adjunction
\[
    h : \Psh{\bbOO_{\geq n-1}} \adjunction \Alg : N .
\]
We consider $N$ to be the inclusion of a full subcategory $\Alg
\longhookrightarrow \Psh{\bbOO_{\geq n-1}}$, and omit it if unambiguous.

\subsection{A skeletal structure on \texorpdfstring{$\bbLambda$}{Lambda}}

In this section, we endow $\bbLambda = \bbLambda_{1, n}$ with the structure of
a skeletal category (cf. \cref{sec:cisinski-homotopy-theory}). We first need to
asign a notion of \emph{degree}\index{degree}\index{$\deg$|see {degree}} to any
objects $\lambda \in \bbLambda$, denoted by $\deg \lambda \in \bbNN$. Next, we
need to specify two casses of morphisms $\bbLambda_+$ and $\bbLambda_-$
satisfying axioms \condition{Sq0} to \condition{Sq3}.

\begin{definition}
    \label{def:algebraic-dimension}
    Recall that the objects of $\bbLambda$ are the free algebras on
    $(n+1)$-opetopes. For $\omega \in \bbOO_{n+1}$, let $\deg h \omega \eqdef
    \hash \omega^\nodesymbol$.
\end{definition}

\begin{definition}
    \label{def:lambda-+-}
    Let $f : h \omega \longrightarrow h \omega'$ be a morphism in $\bbLambda$,
    with $\omega, \omega' \in \bbOO_{n+1}$. In particular, it induces a set map
    between $(n-1)$-cells: $f_{n-1} : h \omega_{n-1} \longrightarrow h
    \omega'_{n-1}$. Let $\bbLambda_+$ (resp. $\bbLambda_-$) be the wide
    subcategory spanned by those morphisms $f$ such that $f_{n-1}$ is an
    monomorphism (resp. epimorphism).
\end{definition}

The rest of this section is dedicated to prove the following result.

\begin{theorem}
    \label{th:lambda-skeletal}
    With the data above, $\bbLambda$ is a skeletal category
    (\cref{def:skeletal-category}).
\end{theorem}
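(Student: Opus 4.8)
The plan is to verify the four axioms \condition{Sq0}--\condition{Sq3} of \cref{def:skeletal-category} in turn, organizing everything around a combinatorial dictionary for morphisms in $\bbLambda$. By \cref{lemma:h-morphism-equality} and \cref{notation:generators-h-omega}, a morphism $f : h\omega \longrightarrow h\omega'$ is determined by the images of the one-node generators $c_{\omega,[p]}$, and by \cref{lemma:h-subtrees} these images are subtrees $\nu_{[p]}$ of $\omega'$; compatibility forces them to be node-disjoint and to assemble, along the tree structure of $\omega$, into a single subtree $\nu_f \subseteq \omega'$ with $\#\nu_f^\nodesymbol = \sum_{[p]} \#\nu_{[p]}^\nodesymbol$. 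The induced map $f_{n-1}$ on $(n-1)$-cells is exactly the induced map on edges, sending an edge of $\omega$ to the corresponding boundary edge of $\nu_f$. I would first record this dictionary, using \cref{lemma:diagrammatic-lemma} to produce the diagram of $f$ when $\omega$ is non-degenerate, and treating the degenerate case $\omega = \itree\phi$ (where $\deg h\omega = 0$) separately and by hand.

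Axiom \condition{Sq0} is then immediate: an isomorphism induces a bijection on $(n-1)$-cells, hence lies in both $\bbLambda_+$ and $\bbLambda_-$, and it matches up the generating one-node subtrees, so it preserves $\#\omega^\nodesymbol = \deg h\omega$. For \condition{Sq2} I would take the image factorization of the set map $f_{n-1}$ as a surjection onto its image followed by an injection. The content is to realize the image edge-set as the edges of an $(n+1)$-opetope $\omega''$ through which $f$ factors, with $h\omega \longrightarrow h\omega''$ epi and $h\omega'' \longrightarrow h\omega'$ mono on $(n-1)$-cells: concretely $\omega''$ is obtained from $\omega$ by identifying edges according to the kernel of $f_{n-1}$ and collapsing the nodes that thereby become degenerate, and I would check that this yields a well-formed opetope using the substitution calculus of \cite{HoThanh2019}. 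Essential uniqueness follows from the uniqueness of the set-level image factorization, together with the fact that a morphism in $\bbLambda_-$ is pinned down by $f_{n-1}$ once the intermediate subtrees are forced.

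The heart of the argument is \condition{Sq1}, which I would prove by node-counting against this dictionary. If $f \in \bbLambda_+$ is mono on edges, then no two distinct edges of $\omega$ are identified, so no generator can be sent to a degenerate (unit) subtree; hence every $\nu_{[p]}$ has at least one node and $\#\omega^\nodesymbol \leq \#\nu_f^\nodesymbol \leq \#\omega'^\nodesymbol$, with both inequalities tight only when each $\nu_{[p]}$ is a single corolla and $\nu_f = \omega'$, i.e.\ only when $f$ is an isomorphism; thus a non-iso forces $\deg h\omega < \deg h\omega'$. Dually, if $f \in \bbLambda_-$ is epi on edges, every edge of $\omega'$ is a boundary edge of some $\nu_{[p]}$, which I would show forces the $\nu_{[p]}$ to cover all nodes of $\omega'$ and to be corollas or units; then $\#\omega'^\nodesymbol = \sum_{[p]} \#\nu_{[p]}^\nodesymbol \leq \#\omega^\nodesymbol$, again with equality only in the isomorphism case, giving $\deg h\omega > \deg h\omega'$.

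Finally, for \condition{Sq3} I would exhibit a map in $\bbLambda_-$ as a composite of elementary collapses of unit nodes and build a section by choosing a set-theoretic section of the edge-surjection $f_{n-1}$ and re-expanding the collapsed units, these choices being always realizable; two parallel maps in $\bbLambda_-$ then agree iff they induce the same $f_{n-1}$, iff they admit the same sections. I expect the main obstacle to lie precisely in the two combinatorial inputs underlying \condition{Sq1} and \condition{Sq2}: that the generator-images are node-disjoint subtrees assembling to $\nu_f$, and that surjectivity (resp.\ injectivity) of $f_{n-1}$ controls degeneracy and the covering of nodes. These are statements about tree substitution in the polynomial monad $\optPolyFun^{n-1}$ rather than formal category theory, and, together with the realization of the intermediate opetope $\omega''$ and the separate handling of degenerate $\omega$ (where \cref{lemma:diagrammatic-lemma} does not apply), this is where the real work concentrates.
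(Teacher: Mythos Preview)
Your outline is correct, but the paper takes a shorter and more conceptual route. The key difference is that the paper first proves a lemma (\cref{lemma:lambda+:injective}): $f \in \bbLambda_+$ (resp.\ $\bbLambda_-$) if and only if $f$ is a monomorphism (resp.\ epimorphism) of presheaves, using \cref{lemma:h-subtrees} to identify $h\omega_n$ with the set of subtrees of $\omega$. With this in hand, \condition{Sq1} is immediate by counting subtrees rather than nodes: a non-iso mono on $n$-cells forces $\#h\omega_n < \#h\omega'_n$, hence fewer nodes. This replaces your node-disjoint-subtree bookkeeping and the separate analysis of the $\bbLambda_-$ case. For \condition{Sq2}, the paper does not build the intermediate opetope as a quotient of $\omega$: it simply takes $\nu_f \subseteq \omega'$, the image of the maximal subtree $\omega$ under $f_n$, and factors $f$ as $h\omega \twoheadrightarrow h\nu_f \hookrightarrow h\omega'$; uniqueness follows because maps in $\bbLambda_+$ are literal inclusions in $\Psh{\bbOO_{\geq n-1}}$. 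This sidesteps the verification that your quotient-and-collapse construction yields a well-formed opetope, which is the most delicate step in your plan. For \condition{Sq3}, the paper gives an explicit section by sending each node of $\omega'$ to the lexicographically minimal node in its fibre, and deduces the ``same sections $\Rightarrow$ equal'' clause by a short minimality argument; your re-expansion idea would work but is less direct. Finally, the paper never needs to treat degenerate $\omega$ separately, since the subtree description of $h\omega'_n$ covers that case uniformly. What your approach buys is a more explicit combinatorial picture of morphisms in $\bbLambda$; what the paper's buys is that every axiom reduces to a one-line argument once \cref{lemma:lambda+:injective} is in place.
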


\begin{proof}
    [Proof of \cref{th:lambda-skeletal}, axiom \condition{Sq0}] Since
    $\bbLambda$ is rigid (i.e. has no isomorphisms beside the identities), this
    axiom holds trivially.
\end{proof}

\begin{lemma}
    \label{lemma:lambda+:injective}
    Let $f : h \omega \longrightarrow h \omega'$ be a morphism in $\bbLambda$,
    with $\omega, \omega' \in \bbOO_{n+1}$. The following are equivalent:
    \begin{enumerate}
        \item $f \in \bbLambda_+$ (resp. $\bbLambda_-$);
        \item the set map between $n$-cells $f_n : h \omega_n \longrightarrow h
        \omega'_n$ is an monomorphism (resp. epimorphism);
        \item $f$ is a monomorphism in $\Psh{\bbOO_{\geq n-1}}$ (resp.
        epimorphism).
    \end{enumerate}
\end{lemma}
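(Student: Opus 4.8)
The plan is to establish the cycle $(3)\Rightarrow(2)\Rightarrow(1)$ together with $(1)\Rightarrow(2)\Rightarrow(3)$, handling the monomorphism and epimorphism cases in parallel. Both implications out of $(3)$ are free: monomorphisms and epimorphisms in a presheaf category are detected dimensionwise, so if $f$ is a monomorphism (resp.\ epimorphism) in $\Psh{\bbOO_{\geq n-1}}$ then each component $f_m$ is injective (resp.\ surjective), and we simply read off $(2)$ at dimension $n$ and $(1)$ at dimension $n-1$. The implication $(2)\Rightarrow(1)$ is nearly as cheap. In the injective case the assignment $e\mapsto\id_e$ identifies the $(n-1)$-cells with a subset of the $n$-cells and satisfies $f(\id_e)=\id_{f(e)}$, so injectivity of $f_n$ forces injectivity of $f_{n-1}$; in the surjective case every $(n-1)$-cell of $h\omega'$ arises as the target of an $n$-cell, so surjectivity of $f_n$ together with the compatibility of $f$ with $\tgt$ yields surjectivity of $f_{n-1}$. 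This leaves the two genuine steps $(1)\Rightarrow(2)$ and $(2)\Rightarrow(3)$, each of which propagates control from the bottom dimension upward.

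For $(1)\Rightarrow(2)$ I would work with the description of $n$-cells supplied by \cref{lemma:h-subtrees}: the $n$-cells of $h\omega$ are the subtrees of $\omega$, and since $f$ is a map of opetopic sets it commutes with source and target, hence sends a subtree $\nu$ to a subtree whose root and leaves are the $f$-images of those of $\nu$. In the monomorphism case a subtree of $\omega$ is determined by its root together with its set of leaves, so if $f_{n-1}$ is injective and $f(\nu_1)=f(\nu_2)$ then $\nu_1$ and $\nu_2$ share root and leaves and therefore coincide. In the epimorphism case one must reverse this: given a subtree $\nu'$ of $\omega'$, surjectivity of $f_{n-1}$ lifts the boundary edges of $\nu'$, and one reconstructs the subtree of $\omega$ they bound, using that in a tree a subtree is recovered from a coherent choice of its boundary.

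For $(2)\Rightarrow(3)$ the higher dimensions are controlled by the Segal condition. By \cref{th:algebraic-localization:lambda} an algebra is orthogonal to the spine inclusions $\sfSS$, so each $(n+1)$-cell is uniquely determined by its spine, a configuration of $n$-cells glued along $(n-1)$-cells, and, iterating, every cell of dimension exceeding $n$ is likewise pinned down by the $n$-cells beneath it. As $f$ respects spines, dimensionwise injectivity (resp.\ surjectivity) on the $(n-1)$- and $n$-cells propagates upward: for injectivity this is immediate from the spine bijection, while for surjectivity one lifts a pasting configuration cell by cell, using the tree structure to keep the chosen lifts compatible.

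The hard part will be the epimorphism case. Surjectivity of $f_{n-1}$ only returns preimages of individual edges, and assembling these into compatible preimages of entire subtrees and pasting schemes is where the content lies; this is precisely the point at which one must use that the source and target are free algebras on single opetopes, since the corresponding propagation of surjectivity is false for maps between arbitrary algebras. The combinatorial fact underlying it—that a subtree, and more generally a pasting configuration, of an opetope can be reconstructed from a coherent choice of boundary preimages—is the technical heart. If this becomes unwieldy, I would instead invoke \cref{lemma:diagrammatic-lemma} to present $f$ (for $\omega$ nondegenerate) as the map induced by a morphism of the underlying trees, thereby reducing all three conditions to injectivity (resp.\ surjectivity) of that tree morphism on edges and treating degenerate $\omega$ separately.
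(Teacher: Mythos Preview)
Your approach is essentially the same as the paper's: both reduce $(2)\Leftrightarrow(3)$ to the Segal condition (cells above dimension $n$ are determined by their spines, so injectivity/surjectivity propagates upward) and $(1)\Leftrightarrow(2)$ to the subtree description of \cref{lemma:h-subtrees}. The paper's argument is terser and does not single out the epimorphism direction as delicate—your concern about lifting pasting configurations coherently in the surjective case is legitimate, but the paper glosses over it just as you anticipate needing to.
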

\begin{proof}
    Recall that $h : \Psh{\bbOO_{\geq n-1}} \longrightarrow \Alg$ is the
    localization with respect to the set $\sfS_{\geq n+1}$. Therefore,
    $f_{\geq n}$ is an monomorphism (resp. epimorphism) if and only if $f_n$
    is. It remains to show that $f_{n-1}$ is a monomorphism (resp. epimorphism)
    if and only if $f_n$ is. But this is a direct consequence of
    \cref{lemma:h-subtrees}, stating that $h \omega_n$ is the set of
    sub-$\optPolyFun^{n-1}$-trees of $\omega$.
\end{proof}

\begin{corollary}
    \label{coroll:lambda+:iso}
    Let $f : h \omega \longrightarrow h \omega'$ be a morphism in $\bbLambda$,
    with $\omega, \omega' \in \bbOO_{n+1}$. The following are equivalent:
    \begin{enumerate}
        \item $f \in \bbLambda_+ \cap \bbLambda_-$, i.e. $f_{n-1}$ is an
        isomorphism;
        \item $f_n$ is an isomorphism;
        \item $f$ is an isomorphism;
    \end{enumerate}
\end{corollary}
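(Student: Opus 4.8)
The plan is to read all three equivalences off \cref{lemma:lambda+:injective} directly, by applying that lemma once in its monomorphism incarnation and once in its epimorphism incarnation and then intersecting the two conclusions. First observe that, by \cref{def:lambda-+-}, the condition $f \in \bbLambda_+ \cap \bbLambda_-$ says precisely that $f_{n-1} : h \omega_{n-1} \longrightarrow h \omega'_{n-1}$ is simultaneously injective and surjective, i.e. a bijection of sets; this is exactly the parenthetical reformulation of (1), so there is nothing to prove for that reading.

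For (1) $\Leftrightarrow$ (2), I would invoke the equivalence (1) $\Leftrightarrow$ (2) of \cref{lemma:lambda+:injective}: it gives $f \in \bbLambda_+$ iff $f_n$ is a monomorphism, and $f \in \bbLambda_-$ iff $f_n$ is an epimorphism. Intersecting the two, $f \in \bbLambda_+ \cap \bbLambda_-$ holds iff $f_n : h \omega_n \longrightarrow h \omega'_n$ is both injective and surjective, i.e. a bijection, i.e. an isomorphism of sets. That is exactly (2).

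For (1) $\Leftrightarrow$ (3), I would instead use the equivalence (1) $\Leftrightarrow$ (3) of \cref{lemma:lambda+:injective}, namely that $f \in \bbLambda_+$ iff $f$ is a monomorphism in $\Psh{\bbOO_{\geq n-1}}$ and $f \in \bbLambda_-$ iff $f$ is an epimorphism there. Hence (1) holds iff $f$ is both monic and epic in $\Psh{\bbOO_{\geq n-1}}$. The crucial remark is that $\Psh{\bbOO_{\geq n-1}}$, being a presheaf topos, is balanced, so a morphism that is at once monic and epic is automatically an isomorphism; conversely every isomorphism is both monic and epic. Finally, since the inclusion $\bbLambda \longhookrightarrow \Alg \longhookrightarrow \Psh{\bbOO_{\geq n-1}}$ is fully faithful (recall that $N$ is an embedding), it reflects isomorphisms, so $f$ is an isomorphism in $\bbLambda$ iff its image is one in $\Psh{\bbOO_{\geq n-1}}$. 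This yields (1) $\Leftrightarrow$ (3), and completes the cycle of equivalences.

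I expect no serious obstacle here: the whole statement is a formal consequence of \cref{lemma:lambda+:injective} combined with the fact that both $\Set$ and $\Psh{\bbOO_{\geq n-1}}$ are balanced. The only point that deserves a moment's care is precisely this balancedness step, since in an arbitrary category monic plus epic does not entail invertible; it is legitimate here only because we are working inside a topos. Should one wish to avoid balancedness altogether, an alternative is to check bijectivity dimension by dimension—the localization underlying $h$ forces $f_{\geq n}$ to be determined by $f_n$ (as in the proof of \cref{lemma:lambda+:injective}), while the bottom dimension $f_{n-1}$ is governed by (1)—but invoking that $\Psh{\bbOO_{\geq n-1}}$ is balanced is cleaner and I would prefer it.
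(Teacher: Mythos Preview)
Your proposal is correct and takes essentially the same approach as the paper: the paper's proof is the single sentence ``Recall that in any presheaf category, the epi-monos are exactly the isomorphisms,'' which is precisely the balancedness observation you spell out. Your version is simply more explicit about how each equivalence is read off \cref{lemma:lambda+:injective}, and your remark on reflecting isomorphisms along the fully faithful embedding $\bbLambda \hookrightarrow \Psh{\bbOO_{\geq n-1}}$ fills in a detail the paper leaves implicit.
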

\begin{proof}
    Recall that in any presheaf category, the epi-monos are exactly the
    isomorphisms.
\end{proof}

\begin{proof}
    [Proof of \cref{th:lambda-skeletal}, axiom \condition{Sq1}] Let $f : h
    \omega \longrightarrow h \omega'$ be a morphism in $\bbLambda$ that is not
    an isomorphism, with $\omega, \omega' \in \bbOO_{n+1}$. If $f \in
    \bbLambda_+$, then by \cref{lemma:lambda+:injective,coroll:lambda+:iso},
    $f_n$ is a monomorphism that is not an isomirphism, thus $\hash h \omega_n
    < \hash h \omega'_n$, i.e. the number of subtrees of $\omega$ is strictly
    less than that of $\omega'$. In particular, t he number of nodes of $\omega$
    is strictly less than that of $\omega'$, i.e. $\deg h \omega < \deg h
    \omega'$.

    The case where $f \in \bbLambda_-$ is treated similarly.
\end{proof}

\begin{proof}
    [Proof of \cref{th:lambda-skeletal}, axiom \condition{Sq2}] Let $f : h
    \omega \longrightarrow h \omega'$ be a morphism in $\bbLambda$, with
    $\omega, \omega' \in \bbOO_{n+1}$. It maps the maximal subtree $\omega
    \subseteq \omega$ to a subtree of $\omega'$, say $\nu \subseteq \omega'$.
    Then the following is the desired factorization:
    \[
        \diagramarrows{}{}{<-c}{}
        \triangleDdiagram
            {h \omega}{h \omega'}{h \nu .}
            {f}{f}{}
    \]
    Uniqueness comes from the fact that $\bbLambda_+$ only contains inclusions
    (of $\Psh{\bbOO_{\geq n-1}}$).
\end{proof}

\begin{proof}
    [Proof of \cref{th:lambda-skeletal}, axiom \condition{Sq3}] Let $f : h
    \omega \longrightarrow h \omega'$ be a morphism in $\bbLambda_-$. We define
    a section $g$ of $f$. By adjointness, this is equivalent to specifying a
    map $\barG : O [\omega'] \longrightarrow h \omega$ in $\Psh{\bbOO_{\geq
    n-1}}$. Since $\sfSS_{\geq n +1} \perp h \omega$, it is enough to define
    $\barG$ on the spine $S [\omega']$ of $\omega'$. For $[p] \in
    (\omega')^\nodesymbol$, let
    \[
        \barG (\src_{[p]} \omega') \eqdef \src_{[q]} \omega,
        \qqquad
        [q] \eqdef \min \left\{
            [r] \in \omega^\nodesymbol \mid
            f (\src_{[r]} \omega) = \src_{[p]} \omega'
        \right\} .
    \]
    In other words, $g$ maps the node $\src_{[p]} \omega'$ to the
    lexicographically minimal node in the fiber $f^{-1} (\src_{[p]} \omega')$.
    The composite
    \[
        S [\omega']
        \stackrel{\barG}{\longrightarrow} h \omega
        \stackrel{f}{\longrightarrow} h \omega'
    \]
    maps a node $\src_{[p]} \omega'$ to $\src_{[p]} \omega'$, thus $g$ is a
    section of $f$.

    Let now $f_1, f_2 : h \omega \longrightarrow h \omega'$ be a morphism in
    $\bbLambda_-$ having the same sections. In particular, for $g_i$ the
    section of $f_i$ constructed as above, where $i = 1, 2$, we have $f_1 g_2 =
    \id_{h \omega'}$. Thus for $[p] \in (\omega')^\nodesymbol$, we have $g_2
    (\src_{[p]} \omega') \preceq g_1 (\src_{[p]} \omega')$, meaning that the
    node $g_2 (\src_{[p]} \omega')$ is lexicographically inferior to (or
    ``below'') $g_1 (\src_{[p]} \omega')$ in $\omega$. Conversely, since $f_2
    g_1 = \id_{h \omega'}$, we have $g_1 (\src_{[p]} \omega') \preceq g_2
    (\src_{[p]} \omega')$, and finally, $g_1 (\src_{[p]} \omega') = g_2
    (\src_{[p]} \omega')$. By \cref{lemma:h-morphism-equality}, $g_1 = g_2$, so
    clearly, $f_1 = f_2$.
\end{proof}

\subsection{Anodyne extensions}
\label{sec:infty-alg:model-structure:anodyne-extensions}

\begin{definition}
    [{Boundary, \cite[8.1.30]{Cisinski2006}}]
    \label{def:boundary-lambda}
    Let $\lambda \in \bbLambda$. The \emph{boundary}\index{boundary} $\partial
    \lambda \in \PshLambda$\index{$\partial \lambda$|see {boundary}} of
    $\lambda$ (see \cref{def:boundary}) is the colimit in the middle, which can
    equivalently be defined as the union (in $\PshLambda$) on the right
    \[
        \partial \lambda
        \eqdef \colim_{\substack{
            f : \lambda' \rightarrow \lambda \text{ in } \bbLambda_+ , \\
            f \text{ not an iso.}
        }} \lambda'
        =
        \bigcup_{\substack{
            f : \lambda' \rightarrow \lambda \text{ in } \bbLambda_+ , \\
            \deg \lambda' = \deg \lambda - 1
        }} \im f .
    \]

    Explicitely, if $\lambda = h \omega$, for $\omega \in \bbOO_{n+1}$, then
    $\partial h \omega$ is the subpresheaf of $h \omega$ spanned by its
    $(n-1)$-cells.

    We write $\sfB_\lambda : \partial \lambda \longrightarrow
    \lambda$\index{$\sfB_\lambda$|see {boundary inclusion}} the \emph{boundary
    inclusion}\index{boundary inclusion} of $\lambda$, and $\sfBB \eqdef
    \left\{\sfB_\lambda \mid \lambda \in \bbLambda \right\}$\index{$\sfBB$|see
    {boundary inclusion}} the set of boundary inclusions.
\end{definition}

\begin{proposition}
    The class of monomorphisms if $\PshLambda$ is exactly the class of
    $\sfBB$-cell complexes $\Cell \sfBB$. Thus in the terminology of
    \cite[definition 1.2.26]{Cisinski2006}, $\sfBB$ is a \emph{cellular
    model}\index{cellular model} of $\PshLambda$.
\end{proposition}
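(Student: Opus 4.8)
The plan is to reduce the statement to \cref{prop:cisinski:boundaries-generate-monomorphisms}, which asserts exactly this equality of classes for an arbitrary \emph{normal} skeletal category. By \cref{th:lambda-skeletal} the category $\bbLambda$, equipped with the degree of \cref{def:algebraic-dimension} and the wide subcategories $\bbLambda_\pm$, is already known to be skeletal, so the only thing left to verify is \emph{normality}: that $\bbLambda$ is rigid in the sense of \cref{def:skeletal-category}, i.e.\ has no non-trivial automorphisms.

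First I would check rigidity. By \cref{coroll:lambda+:iso}, a morphism $f : h\omega \to h\omega'$ in $\bbLambda$ is an isomorphism precisely when $f_{n-1}$ is, and in that situation $\omega$ and $\omega'$ necessarily have the same degree; it therefore suffices to show that every automorphism of a single object $h\omega$ is the identity. For $\omega$ non-degenerate, \cref{notation:generators-h-omega} describes $h\omega$ as the algebra freely generated by its one-node subtrees $c_{\omega,[p]}$, with $[p]$ ranging over $\omega^\nodesymbol$. An automorphism must permute these generators while preserving all the source and target incidences that encode the grafting structure of $\omega$; since that incidence data pins each node to its address in $\omega$, the only admissible permutation is trivial, so the automorphism is the identity. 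The degenerate shapes are handled identically. This shows $\bbLambda$ is rigid, hence a normal skeletal category, as was already anticipated in the proof of axiom \condition{Sq0}.

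With normality established I would simply invoke \cref{prop:cisinski:boundaries-generate-monomorphisms} to conclude that the class of monomorphisms of $\PshLambda$ is $\Cell{\sfBB}$, and the \emph{cellular model} terminology follows immediately by definition. The main obstacle is the rigidity verification, which is the only genuine content of the argument; everything else is a direct citation. Given the combinatorial presentation of $h\omega$ via its one-node subtrees, this step should be routine.
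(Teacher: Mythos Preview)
Your proposal is correct and takes essentially the same approach as the paper: both reduce to \cref{prop:cisinski:boundaries-generate-monomorphisms} via normality of $\bbLambda$. The paper's proof is a single sentence, since rigidity was already asserted (without argument) in the verification of \condition{Sq0} for \cref{th:lambda-skeletal}; your expanded rigidity check simply fills in what the paper left implicit.
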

\begin{proof}
    Since $\bbLambda$ is a normal skeletal category,
    \cref{prop:cisinski:boundaries-generate-monomorphisms} applies.
\end{proof}

\begin{definition}
    [Elementary face]
    \label{def:face}
    Let $\lambda \in \bbLambda$.
    \begin{enumerate}
        \item A \emph{elementary face}\index{elementary face} of $\lambda$ is a
        morphism $f : \lambda' \longrightarrow \lambda$ in $\bbLambda_+$, where
        $\deg \lambda' = \deg \lambda - 1$.
        \item Let $f : \lambda' \longrightarrow \lambda$ be an elementary face
        of $\lambda$, and write $\lambda = h \omega$ and $\lambda' = h
        \omega'$, with $\omega, \omega' \in \bbOO_{n+1}$. The face $f$ is
        \emph{inner}\index{inner face} if $f_{n-1}$ exhibits a bijection
        between the leaves of $\omega'$ (seen as $(n-1)$-cells of $h \omega'$)
        and the leaves of $\omega$, and if it maps the root edge $\edg_{[]}
        \omega$ to $\edg_{[]} \omega'$. In other words, $f_{n-1}$ is a
        bijection $h \partial O [\tgt \omega] \longrightarrow h \partial O
        [\tgt \omega']$.
    \end{enumerate}
\end{definition}

\begin{remark}
    \label{def:inner-face}
    If $f : h \omega' \longrightarrow h \omega$ is an inner face of $h \omega$,
    then a counting argument on the number of nodes of $\omega'$ and $\omega$,
    there exists a unique $[p] \in (\omega')^\nodesymbol$ such that $f
    c_{\omega', [p]}$ (see \cref{notation:generators-h-omega}) is a subtree of
    $\omega$ with two nodes. If $[p] \neq [p'] \in (\omega')^\nodesymbol$, then
    $f c_{\omega', [p']}$ is a generator of $h \omega$, i.e. a one-node
    subtree. Thus $f$ exhibits a subtree $\nu \eqdef f c_{\omega, [p]}$ of
    $\omega$ with two nodes, or equivalently, an inner edge.
\end{remark}

This remark motivates the following terminology:

\begin{definition}
    [Inner horn]
    \label{def:inner-horn}
    Let $\omega \in \bbOO_{n+1}$.
    \begin{enumerate}
        \item If $\omega$ is degenerate (resp. an endotope), say $\omega =
        \itree{\phi}$ for some $\phi \in \bbOO_{n+1}$ (resp. $\ytree{\phi}$ for
        some $\phi \in \bbOO_n$), then the inner horn of $h \omega$ is simply
        $\Lambda h \omega \eqdef h \phi$. Write $\sfH : \Lambda h \omega
        \longrightarrow h \omega$ for the horn inclusion of $h \omega$.

        \item Otherwise, for $f$ an inner face of $\lambda$, define $\Lambda^f
        \lambda$, the \emph{inner horn}\index{inner horn} of $\lambda$ at $f$,
        as the colimit on the left, or equivalently, as the union on the right
        \[
            \Lambda^f \lambda
            \eqdef \colim_{\substack{
                g : \lambda' \rightarrow \lambda \text{ elem. face} \\
                g \neq f
            }} \lambda'
            =
            \bigcup_{\substack{
                g : \lambda' \rightarrow \lambda \text{ elem. face} \\
                g \neq f
            }} \im g .
        \]
        Let $\sfH_\lambda^f : \Lambda^f \lambda \longrightarrow
        \lambda$\index{$\sfH_\lambda^f$|see {inner horn inclusion}} be the
        \emph{inner horn inclusion}\index{inner horn inclusion} $\lambda$ at
        $f$.
    \end{enumerate}
    Let $\sfHH_{\mathrm{inner}}$ be the set of inner horn inclusions.
\end{definition}

\begin{definition}
    [Inner anodyne extension]
    \label{def:inner-anodyne-extension}
    The class
    $\mathsf{An}_{\mathrm{inner}}$\index{$\mathsf{An}_{\mathrm{inner}}$|see
    {inner anodyne extension}} of \emph{inner anodyne extensions}\index{inner
    anodyne extension} if defined as $\mathsf{An}_{\mathrm{inner}} \eqdef
    {}^\pitchfork \left(\sfHH_{\mathrm{inner}}^\pitchfork \right)$. An
    \emph{inner fibration}\index{inner fibration} is a morphism $f \in
    \PshLambda$ such that $\sfHH_{\mathrm{inner}} \pitchfork f$, or
    equivalently, such that $\mathsf{An}_{\mathrm{inner}} \pitchfork f$.
\end{definition}

\begin{definition}
    \label{def:super-horn}
    Generalizing \cref{def:inner-horn}, if $\omega \in \bbOO_{n+1}$, and $I$ is
    a set of (not necessarily inner) faces of $h \omega$, we define
    \[
        \Lambda^I \lambda
        \eqdef \colim_{\substack{
            g : \lambda' \rightarrow \lambda \text{ elem. face} \\
            g \notin I
        }} \lambda'
        =
        \bigcup_{\substack{
            g : \lambda' \rightarrow \lambda \text{ elem. face} \\
            g \notin I
        }} \im g ,
    \]\index{$\Lambda^I \lambda$}\index{$\sfH_{h \omega}^I$}
    and write $\sfH_{h \omega}^I : \Lambda^I h \omega \longrightarrow h \omega$
    the canonical inclusion.
\end{definition}

\begin{lemma}
    \label{lemma:relative-super-horns-inclusions}
    Let $\omega \in \bbOO_{n+1}$ have $d \geq 2$ nodes, and $\emptyset \neq I
    \subseteq J$ be two non empty sets of inner faces of $h \omega$. Then the
    inclusion $\sfH_{h \omega}^I$ factors as
    \[
        \Lambda^J h \omega
        \stackrel{u}{\longhookrightarrow} \Lambda^I h \omega
        \stackrel{\sfH_{h \omega}^J}{\longhookrightarrow} h \omega ,
    \]
    and $u$ is a cell complex of inner horn inclusions of opetopes with at most
    $d - 1$ nodes.
\end{lemma}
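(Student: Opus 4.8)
The plan is to induct on the number $d$ of nodes of $\omega$, building $\Lambda^I h\omega$ up from $\Lambda^J h\omega$ by attaching the images of the inner faces in $J \setminus I$ one at a time. Since $I \subseteq J$, discarding fewer faces produces a larger union, so $\Lambda^J h\omega \subseteq \Lambda^I h\omega$ and the factorization $\sfH_{h\omega}^J = \sfH_{h\omega}^I \circ u$ is immediate once $u$ is the evident inclusion; the real content is that $u$ is a cell complex of inner horn inclusions of opetopes with at most $d-1$ nodes. The base case $d = 2$ is vacuous: such an $\omega$ has a single inner edge, hence a single inner face, so $\emptyset \neq I \subseteq J$ forces $I = J$ and $u = \id$. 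For the inductive step ($d \geq 3$) I would enumerate $J \setminus I = \{f_1, \ldots, f_m\}$ and set $J_k \eqdef J \setminus \{f_1, \ldots, f_k\}$, yielding a filtration $\Lambda^J h\omega = \Lambda^{J_0} h\omega \subseteq \cdots \subseteq \Lambda^{J_m} h\omega = \Lambda^I h\omega$.

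Because unions of subobjects are pushouts in the presheaf topos $\PshLambda$, each inclusion $\Lambda^{J_k} h\omega \hookrightarrow \Lambda^{J_{k+1}} h\omega$ is the pushout of $\Lambda^{J_k} h\omega \cap \im f_{k+1} \hookrightarrow \im f_{k+1}$. Writing $e$ for the inner edge contracted by $f_{k+1}$ and $\omega_e$ for the resulting $(d-1)$-node opetope, so that $\im f_{k+1} \cong h\omega_e$, I would identify this intersection combinatorially. The faces of $\omega_e$ are codimension-$2$ faces of $\omega$: every outer face of $\omega_e$ also factors through an outer face of $\omega$, which lies in no $J_k$ (as $J_k$ consists of inner faces) and is therefore retained in $\Lambda^{J_k} h\omega$; whereas an inner face of $\omega_e$ contracting a second inner edge $e'$ corresponds to contracting $\{e, e'\}$ in $\omega$ and factors through exactly the two inner faces $f_{k+1}$ and $f_{e'}$. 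Such a face survives in $\Lambda^{J_k} h\omega$ precisely when $f_{e'} \notin J_k$, equivalently (as $e' \neq e$) when $f_{e'} \notin J_{k+1}$. Hence the intersection is the inner horn $\Lambda^K h\omega_e$, where $K$ is the set of inner faces of $\omega_e$ corresponding to the $f_{e'} \in J_{k+1}$; and $K \neq \emptyset$ because every face of the nonempty set $I \subseteq J_{k+1}$ is an inner face distinct from $f_{k+1}$, hence contributes to $K$.

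It then remains to see that each attaching map $\Lambda^K h\omega_e \hookrightarrow h\omega_e$ is a cell complex of inner horns of opetopes with at most $d-1$ nodes, after which closure of the cellular class under pushout and composition along the filtration finishes the proof. Here the induction enters: choosing any single $f' \in K$, the inductive hypothesis applied to $\omega_e$ (which has $d-1 \geq 2$ nodes) with $\{f'\} \subseteq K$ shows that $\Lambda^K h\omega_e \hookrightarrow \Lambda^{\{f'\}} h\omega_e$ is a cell complex of inner horns of opetopes with at most $d-2$ nodes, while $\Lambda^{\{f'\}} h\omega_e \hookrightarrow h\omega_e$ is the single inner horn inclusion $\sfH_{h\omega_e}^{f'}$ of the $(d-1)$-node opetope $\omega_e$; their composite is the required cell complex.

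The main obstacle I anticipate is the combinatorial bookkeeping of the intersection: one must verify that a codimension-$2$ face of $\omega$ obtained by contracting two inner edges factors through no outer face (so that it is governed purely by the two inner faces $f_{k+1}, f_{e'}$), and dually that every outer face of $\omega_e$ extends to an outer face of $\omega$. These are the tree-combinatorial facts underlying the \emph{simplicial-identity}-style commutation of face operators, and genuine care is needed for the nodes incident to the contracted edge $e$. Everything else --- the pushout description of unions, the nonemptiness of $K$, and the bookkeeping of node counts in the induction --- is formal.
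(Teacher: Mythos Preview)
Your approach is essentially the paper's: both peel off the faces in $J \setminus I$ one at a time and identify each elementary step as a pushout of a generalized horn inclusion $\Lambda^{K} h\omega_{/[e]} \hookrightarrow h\omega_{/[e]}$ of the contracted $(d-1)$-node opetope. Your version is in fact more complete --- the paper inducts only on $\#(J \setminus I)$ and stops after exhibiting that pushout, leaving the further reduction of $\sfH_{h\omega_{/[e]}}^{K}$ to genuine single-face inner horn inclusions (your induction on $d$ and the factorization through $\Lambda^{\{f'\}} h\omega_e$) implicit.
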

\begin{proof}
    We proceed by induction on $m \eqdef \hash (J - I)$. If $m = 0$, then $u$
    is an identity, and the result holds trivially. Assume the lemma holds up
    to $m-1$. Take $f \in J - I$, and let $J' \eqdef J - \{ f \}$. The
    inclusion $u$ decomposes as
    \[
        \Lambda^J h \omega
        \stackrel{v}{\longhookrightarrow} \Lambda^{J'} h \omega
        \stackrel{w}{\longhookrightarrow} \Lambda^I h \omega
    \]
    and by induction, $w$ is a cell complex of inner horn inclusions of
    opetopes with at most $d - 1$ nodes. It remains to show that $v$ is too.

    The inner face $f$ of $h \omega$ exhibits a subtree $\nu \subseteq \omega$
    with two nodes, or equivalently, an inner edge of $\omega$, say at address
    $[e]$. Let $\omega_{/[e]}$ be $\omega$ where this inner edge has been
    contracted. Explicitely, decomposing $\omega$ on the left so as to exhibits
    the subtree $\nu$, the opetope $\omega_{/[e]}$ is defined on the right:
    \[
        \omega = \alpha \graft_{[p]} \nu \biggraft_{[l_i]} \beta_i ,
        \qqquad
        \omega_{/[e]} \eqdef \alpha \graft_{[p]} \ytree{\tgt \nu}
            \biggraft_{[\readdress_\nu [l_i]]} \beta_i ,
    \]
    where $[l_i]$ ranges over $\nu^\leafsymbol$. Then the inclusion $w :
    \Lambda^J h \omega \longrightarrow \Lambda^{J'} h \omega$ above is obtained
    as the following pushout
    \[
        \pushoutdiagram
            {\Lambda^{J'} h \omega_{/[e]}}{\Lambda^J h \omega}
                {h \omega_{/[e]}}{\Lambda^{J'} h \omega .}
            {}{\sfH_{\omega/[e]}^{J'}}{w}{}
    \]
\end{proof}

\begin{lemma}
    [{Generalization of \cite[lemma 5.1]{Moerdijk2009}}] Let $\omega \in
    \bbOO_{n+1}$, and $I$ be a non empty set of inner faces of $h \omega$. Then
    the inclusion $\sfH_{h \omega}^I : \Lambda^I h \omega \longhookrightarrow h
    \omega$ is an inner anodyne extension.
\end{lemma}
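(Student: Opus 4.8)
The plan is to derive this as a short corollary of \cref{lemma:relative-super-horns-inclusions}, using only the closure properties of the saturated class $\mathsf{An}_{\mathrm{inner}}$. First I would observe that the hypothesis already pins down $\omega$: by \cref{def:inner-face}, every inner face of $h\omega$ exhibits a two-node subtree (equivalently an inner edge) of $\omega$, so the mere existence of an inner face --- guaranteed since $I \neq \emptyset$ --- forces $\omega$ to have $d \geq 2$ nodes. This both places us in the range where \cref{lemma:relative-super-horns-inclusions} is stated and rules out the degenerate and endotopic cases of \cref{def:inner-horn}, in which $h\omega$ has no inner faces and hence admits no nonempty $I$.

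Next I would fix any $f \in I$ and apply \cref{lemma:relative-super-horns-inclusions} to the nested pair of nonempty sets $\{f\} \subseteq I$. This exhibits the inclusion $\sfH_{h\omega}^I$ as a composite
\[
    \Lambda^I h\omega
    \stackrel{u}{\longhookrightarrow} \Lambda^f h\omega
    \stackrel{\sfH_{h\omega}^f}{\longhookrightarrow} h\omega ,
\]
where $\Lambda^f h\omega = \Lambda^{\{f\}} h\omega$ is the inner horn of $h\omega$ at the single face $f$, and $u$ is a cell complex of inner horn inclusions of opetopes with at most $d-1$ nodes (in the edge case $I = \{f\}$ the map $u$ is just an identity).

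Finally I would assemble these two factors inside $\mathsf{An}_{\mathrm{inner}}$. By \cref{def:inner-horn} every inner horn inclusion, of every opetope, is a generator, i.e. an element of $\sfHH_{\mathrm{inner}}$; thus $\sfH_{h\omega}^f \in \sfHH_{\mathrm{inner}} \subseteq \mathsf{An}_{\mathrm{inner}}$, while $u \in \Cell{\sfHH_{\mathrm{inner}}}$. Since $\mathsf{An}_{\mathrm{inner}} = {}^\pitchfork \left( \sfHH_{\mathrm{inner}}^\pitchfork \right)$ is saturated, it contains all cell complexes of its generators and is closed under composition, so $\sfH_{h\omega}^I = \sfH_{h\omega}^f \circ u$ is an inner anodyne extension.

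All the genuine work lives in \cref{lemma:relative-super-horns-inclusions}, which I am assuming; granting it, there is no real obstacle beyond careful bookkeeping. The two points that deserve attention are the orientation of the factorization (since $\{f\} \subseteq I$ we have $\Lambda^I h\omega \subseteq \Lambda^f h\omega$, so the smaller horn appears on the left) and the fact that the constituents of $u$ are \emph{single} inner horns of smaller opetopes, which are themselves generators of $\mathsf{An}_{\mathrm{inner}}$; this is precisely why --- unlike in a direct face-by-face filling argument --- no additional induction on the number of nodes is required at this stage.
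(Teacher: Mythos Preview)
Your proof is correct and more direct than the paper's. The paper argues by a double induction on $d = \deg h\omega$ and $m = \#I$: it peels off one inner face $f$ from $I$, factors $\sfH_{h\omega}^I$ through $\Lambda^{I \setminus \{f\}} h\omega$, handles the outer factor $\sfH_{h\omega}^{I \setminus \{f\}}$ by induction on $m$, and handles the inner factor by invoking \cref{lemma:relative-super-horns-inclusions} together with induction on $d$. You instead apply \cref{lemma:relative-super-horns-inclusions} once, to the pair $\{f\} \subseteq I$, reducing immediately to a single inner horn inclusion composed with a cell complex of single inner horn inclusions; no induction is needed at this stage because both factors are visibly in $\Cell{\sfHH_{\mathrm{inner}}}$.

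One remark worth making: the paper's appeal to ``induction on $d$'' is, strictly speaking, redundant once \cref{lemma:relative-super-horns-inclusions} is granted as stated, since a cell complex of elements of $\sfHH_{\mathrm{inner}}$ is inner anodyne by definition. It seems to be there because the written proof of \cref{lemma:relative-super-horns-inclusions} only exhibits each step as a pushout of a \emph{generalized} horn inclusion $\sfH_{h\omega_{/[e]}}^{J'}$ of a $(d-1)$-node opetope, not obviously a single-face one; the induction on $d$ in the main lemma is effectively patching that gap. Your argument takes the relative lemma's statement at face value, which is legitimate and yields the cleaner proof; just be aware that the inductive burden has been shifted into, rather than eliminated from, the proof of \cref{lemma:relative-super-horns-inclusions}.
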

\begin{proof}
    We proceed by induction $d \eqdef \deg h \omega$ and on $m \eqdef \hash I$.
    Since $I$ is non empty, $d \geq 2$. If $d = 2$, then $h \omega$ has a
    unique inner face, and the corresponding inner horn inclusion is just the
    spine inclusion $S [h \omega] \longhookrightarrow h \omega$.

    Assume now that $d \geq 3$. If $m = 1$, then $\Lambda^I h \omega$ is just
    an inner horn, and the claim holds. If $m \geq 2$, take $f \in I$, and let
    $J \eqdef I - \{ f \}$. The inclusion $\sfH_{h \omega}^I$ decomposes as
    \[
        \Lambda^I h \omega
        \stackrel{u}{\longhookrightarrow} \Lambda^J h \omega
        \stackrel{\sfH_{h \omega}^J}{\longhookrightarrow} h \omega .
    \]
    By induction on $m$, $\sfH_{h \omega}^J$ is an inner anodyne extension. By
    \cref{lemma:relative-super-horns-inclusions}, $u$ is a cell complex
    of inner horn inclusions of opetopes of at most $d-1$ nodes, and so by
    induction on $d$, $u$ is an inner anodyne extension as well.
\end{proof}

\begin{lemma}
    \label{lemma:spine-super-horn}
    Let $\omega \in \bbOO_{n+1}$ have $d \geq 2$ nodes, and $I$ be the set
    of all inner faces of $\omega$. Note that $\Lambda^I h \omega$ contains all
    the generators $c_{\omega, [p]}$, and thus the spine inclusion
    $\sfS_{h \omega}$ decomposes as
    \[
        S [h \omega]
        \stackrel{u}{\longhookrightarrow} \Lambda^I h \omega
        \stackrel{\sfH_{h \omega}^I}{\longhookrightarrow} h \omega .
    \]
    Then the inclusion $u$ is a cell complex of spine inclusions of opetopes of
    at most $d - 1$ nodes.
\end{lemma}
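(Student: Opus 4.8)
The plan is to induct on the number of nodes $d$. The base case $d = 2$ is immediate: such an $\omega$ has a single inner edge, hence a single inner face, so the elementary faces of $h\omega$ lying outside $I$ are exactly its two one-node subtrees. Their union is the spine, so $\Lambda^I h\omega = S[h\omega]$ and $u$ is an identity, a trivially empty cell complex. So I would assume $d \geq 3$ and that the result holds for every opetope with strictly fewer than $d$ nodes. For the inductive step I would build $\Lambda^I h\omega$ out of $S[h\omega]$ by attaching the outer faces $F_1, \dots, F_m$ of $h\omega$ one at a time, each being a $(d-1)$-node opetope, via the filtration
\[
    X_0 \eqdef S[h\omega], \qquad X_i \eqdef X_{i-1} \cup F_i, \qquad X_m = \Lambda^I h\omega ,
\]
with all unions taken inside $h\omega$. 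Since $\PshLambda$ is a topos, each step $X_{i-1} \hookrightarrow X_i$ is the pushout of the attaching inclusion $g_i : X_{i-1} \cap F_i \longrightarrow h F_i$ along $X_{i-1} \cap F_i \hookrightarrow X_{i-1}$, and $\Cell{-}$ is closed under composition, so it suffices to show each $g_i$ is a cell complex of spine inclusions of opetopes with at most $d-1$ nodes.

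The next step is to pin down the source of $g_i$. Every generator $c_{\omega, [p]}$ contained in $F_i$ already lies in $S[h\omega]$, so $S[h F_i] \subseteq X_{i-1} \cap F_i$; and for $j < i$ the overlap $F_j \cap F_i$ is obtained from $F_i$ by deleting one further boundary node, hence is a codimension-one outer face of $F_i$. Thus $X_{i-1} \cap F_i = S[h F_i] \cup \bigcup_{j < i} (F_j \cap F_i)$ is a subpresheaf $Z_i$ with $S[h F_i] \subseteq Z_i \subseteq \Lambda^{I_i} h F_i$, where $I_i$ denotes the set of inner faces of $F_i$. The inductive hypothesis applied to $F_i$ handles the inclusion $S[h F_i] \hookrightarrow \Lambda^{I_i} h F_i$ as a cell complex of spine inclusions of opetopes with at most $d-2$ nodes, and \cref{lemma:relative-super-horns-inclusions} lets me pass between the various partial super-horns $Z_i$ that actually occur; the intermediate contraction cells of $F_i$ get attached by spine inclusions of smaller opetopes whose generators are the composite cells already present as targets of the faces comprising $Z_i$. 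Assembling these, together with the spine inclusion $\sfS_{h F_i}$ of the $(d-1)$-node opetope $F_i$ itself, should exhibit $g_i \in \Cell{\sfSS}$ with all building blocks of node count at most $d-1$.

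The main obstacle is precisely this middle reduction. Because any two outer faces of $\omega$ share a codimension-one face of each other, the source $Z_i$ of the attaching map is in general strictly larger than the spine $S[h F_i]$: by the time $F_i$ is reached, some of its proper faces have already been dragged in by earlier $F_j$. Consequently the top cell of $F_i$ cannot be filled by a naive spine inclusion meeting the current stage exactly in the spine, and one must instead organize the attachment of $Z_i \hookrightarrow h F_i$ carefully — controlling exactly which faces of $F_i$ are present, exploiting that composite cells appear as targets of already-attached lower faces, and invoking the bookkeeping of \cref{lemma:relative-super-horns-inclusions} — so that every stage is genuinely a pushout of a spine inclusion of an opetope with at most $d-1$ nodes. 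This combinatorial control of the overlaps, rather than the filtration itself, is the delicate heart of the argument.
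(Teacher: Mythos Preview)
The paper's proof takes a different, non-inductive route. It observes that both $S[h\omega]$ and $\Lambda^I h\omega$ are unions indexed by the outer faces $\omega_1,\dots,\omega_m$ (the $(d-1)$-node subtrees of $\omega$): namely $S[h\omega]=\bigcup_i S[h\omega_i]$ and $\Lambda^I h\omega=\bigcup_i h\omega_i$. It then writes $u$ directly as the composite of pushouts of the spine inclusions $\sfS_{h\omega_i}$, one per outer face, so that only spine inclusions of opetopes with exactly $d-1$ nodes appear and no induction on $d$ is invoked.

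Your overlap worry is genuine, and in fact the paper's argument as written glosses over the same point (already for $d\geq 4$ the intersection $X_{i-1}\cap h\omega_i$ can strictly contain $S[h\omega_i]$, e.g.\ for the linear opetope corresponding to $\Delta[4]$). But your proposed fix does not close the gap either. The appeal to \cref{lemma:relative-super-horns-inclusions} is a mismatch: that lemma decomposes a relative super-horn inclusion as a cell complex of \emph{inner horn} inclusions, not spine inclusions, so it contributes nothing toward a spine cell complex. Your intermediate object $Z_i=S[hF_i]\cup\bigcup_{j<i}(F_j\cap F_i)$ is also not in general of the form $\Lambda^{J}hF_i$ for any set $J$ of elementary faces: for $i=2$ it is the spine of $F_2$ together with the single outer face $F_1\cap F_2$, and since that outer face omits one node of $F_2$ it does not contain the whole spine, so $Z_2$ is not the complement of any set of elementary faces. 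Finally, invoking ``the spine inclusion $\sfS_{hF_i}$ itself'' cannot finish the attachment, since its source is $S[hF_i]$, not $Z_i$. Your inductive scaffolding is reasonable, but the key step---exhibiting each $Z_i\hookrightarrow hF_i$ as a spine cell complex of bounded node count---remains unestablished in your proposal.
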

\begin{proof}
    Note that an outer face (all elementary faces that are not inner) of $h
    \omega$ are inclusions $\omega' \subseteq \omega$ of subtrees of $\omega$
    of $d - 1$ nodes. Hence $\Lambda^I h \omega$ is the union on the left,
    while $S [h \omega]$ can be expressed as on the right
    \[
        \Lambda^I h \omega = \bigcup_{\substack{
            \omega' \subseteq \omega \\ \deg h \omega' = d - 1
        }} h \omega' ,
        \qqquad
        S [h \omega] = \bigcup_{\substack{
            \omega' \subseteq \omega \\ \deg h \omega' = d - 1
        }} S [h \omega'] .
    \]
    Thus
    \[
        u : \bigcup_{\substack{
            \omega' \subseteq \omega \\ \deg h \omega' = d - 1
        }} S [h \omega']
        \longhookrightarrow
        \bigcup_{\substack{
            \omega' \subseteq \omega \\ \deg h \omega' = d - 1
        }} h \omega'
    \]
    can be obtained as a composition of pushouts of the $\sfS_{\omega'}$'s.
    Explicitely, write $\omega_1, \ldots, \omega_m \subseteq \omega$ be the
    subtrees of $\omega$ with $d-1$ nodes, and $g_1, \ldots, g_m$ the
    associated outer face inclusions of $h \omega$. Define $X_0 \eqdef S
    [\omega]$, $\iota_0$ be the identity on $X_0$, and for $1 \leq i \leq m$,
    let $X_i$ be the pushout
    \[
        \diagramsize{}{4}
        \diagramarrows{}{c->}{c->}{}
        \pushoutdiagram
            {S [\omega_i]}{X_{i-1}}{h \omega_i}{X_i ,}
            {\iota_{i-1} g_i \sfS_{\omega_i}}{}{v}{}
    \]
    and $\iota_i \eqdef v \iota_{i-1}$. Then $\Lambda^I h \omega = X_m$.
\end{proof}

\begin{proposition}
    [{Generalization of \cite[proposition 2.4]{Cisinski2013}}]
    \label{prop:spine-complex-inner-anodyne}
    We have an inclusion ${}^\pitchfork \left( \sfSS^\pitchfork \right)
    \subseteq \mathsf{An}_{\mathrm{inner}}$.
\end{proposition}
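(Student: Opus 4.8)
The plan is to reduce the stated inclusion to the single claim that every spine inclusion is inner anodyne, and then to prove that claim by induction on the number of nodes, threading the two preceding lemmas into one another.

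First I would record a purely formal observation. Since $\sfSS$ is a set, $^\pitchfork \left( \sfSS^\pitchfork \right)$ is exactly the smallest weakly saturated class of morphisms containing $\sfSS$, i.e. its closure under pushout, transfinite composition and retract. On the other hand, $\mathsf{An}_{\mathrm{inner}} = {}^\pitchfork \left( \sfHH_{\mathrm{inner}}^\pitchfork \right)$ is a left lifting class, hence is itself weakly saturated. Consequently it suffices to prove $\sfSS \subseteq \mathsf{An}_{\mathrm{inner}}$, that is, that every spine inclusion $\sfS_{h \omega} : S [h \omega] \longhookrightarrow h \omega$ (for $\omega \in \bbOO_{n+1}$) is an inner anodyne extension: the weak saturation of $\sfSS$ will then be contained in $\mathsf{An}_{\mathrm{inner}}$.

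I would establish this by induction on $d \eqdef \deg h \omega = \hash \omega^\nodesymbol$. When $d \leq 1$, the spine of $h \omega$ is all of $h \omega$, so $\sfS_{h \omega}$ is an identity and lies in $\mathsf{An}_{\mathrm{inner}}$ trivially. For the inductive step $d \geq 2$, I would invoke \cref{lemma:spine-super-horn} with $I$ the set of \emph{all} inner faces of $\omega$, which is non-empty since a tree with at least two nodes has an inner edge. That lemma factors the spine inclusion as
\[
    S [h \omega]
    \stackrel{u}{\longhookrightarrow} \Lambda^I h \omega
    \stackrel{\sfH_{h \omega}^I}{\longhookrightarrow} h \omega .
\]
Here $\sfH_{h \omega}^I$ is inner anodyne by the preceding lemma (the generalization of \cite[lemma 5.1]{Moerdijk2009}), applied to the non-empty set $I$. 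The map $u$ is a cell complex of spine inclusions of opetopes with at most $d-1$ nodes; by the induction hypothesis each of these is inner anodyne (when $d = 2$ they involve at most one node and are identities), and since $\mathsf{An}_{\mathrm{inner}}$ is weakly saturated it is closed under such cell complexes, so $u \in \mathsf{An}_{\mathrm{inner}}$. The composite of two inner anodyne maps being inner anodyne, we conclude $\sfS_{h \omega} \in \mathsf{An}_{\mathrm{inner}}$, completing the induction.

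Once the reduction is in place the argument is routine, and no genuinely new construction is needed: the substantive work was already carried out in \cref{lemma:spine-super-horn} and the Moerdijk-type lemma. The one point that demands care is the well-foundedness of the induction, namely that the spine inclusions produced in the decomposition of $u$ strictly decrease the node count — but this is precisely what \cref{lemma:spine-super-horn} guarantees, so I do not expect any real obstacle here beyond bookkeeping.
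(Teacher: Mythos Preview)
Your proposal is correct and follows essentially the same approach as the paper: reduce to showing each $\sfS_{h\omega}$ is inner anodyne, then induct on the node count using the factorization through $\Lambda^I h\omega$ provided by \cref{lemma:spine-super-horn} and the Moerdijk-type lemma. The only cosmetic difference is that the paper isolates $d=2$ as a separate base case (observing directly that $S[h\omega] = \Lambda^f h\omega$ for the unique inner face $f$), whereas you absorb it into the inductive step; both are fine.
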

\begin{proof}
    It is enough to show that spine inclusions $\sfS_{h \omega} : S [h \omega]
    \longhookrightarrow h \omega$ are inner anodyne extensions.
    \begin{enumerate}
        \item If $\deg h \omega = \hash \omega^\nodesymbol = 0, 1$, then
        $\sfS_{h \omega}$ is an identity, thus an inner anodyne extension.
        \item If $\deg h \omega = 2$, then $h \omega$ admits a unique inner
        face $f : h \ytree{\tgt \omega} \longrightarrow h \omega$, and note
        that $\Lambda^f h \omega = S [h \omega]$. Thus in this case, the spine
        inclusion is an inner horn inclusion.
        \item Assume $d = \deg h \omega \geq 3$, and let $I$ be the set of all
        inner faces of $h \omega$. Note that since $\Lambda^I h \omega$
        contains all generators of $h \omega$ (i.e. subtrees of one node), the
        spine inclusion $\sfS_{h \omega}$ decomposes as
        \[
            S [h \omega]
            \stackrel{u}{\longhookrightarrow} \Lambda^I h \omega
            \stackrel{\sfH_{h \omega}^f}{\longhookrightarrow} h \omega ,
        \]
        and in order to show that $\sfS_{h \omega}$ is an inner anodyne extension,
        it suffices to show that $u$ is. By
        \cref{lemma:spine-super-horn}, it is a cell complex of spine
        inclusions of opetopes of at most $d - 1$ nodes, thus by induction, it
        is an inner anodyne extension.
        \qedhere
    \end{enumerate}
\end{proof}

\begin{proposition}
    \label{prop:inner-horn-inclusions-S-local-isomorphisms}
    Inner anodyne extensions are $\sfSS$-local isomorphisms.
\end{proposition}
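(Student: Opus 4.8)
The plan is to exploit \cref{th:algebraic-localization:lambda}, which identifies $v : \PshLambda \longrightarrow \Alg$ with the localization at $\sfSS$; consequently the $\sfSS$-local isomorphisms are precisely the morphisms inverted by $v$. Write $W$ for this class. Since $v$ is a left adjoint it preserves all colimits, and isomorphisms are stable under pushout, transfinite composition, and retracts, so $W$ is weakly saturated and enjoys $2$-for-$3$. By \cref{def:inner-anodyne-extension} and the small object argument, $\mathsf{An}_{\mathrm{inner}} = {}^\pitchfork(\sfHH_{\mathrm{inner}}^\pitchfork)$ is the smallest weakly saturated class containing the inner horn inclusions. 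Thus it suffices to show that every inner horn inclusion $\sfH_{h\omega}^f$ lies in $W$, which I would do by induction on $d \eqdef \deg h\omega = \hash \omega^\nodesymbol$.

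For the base cases, if $d \leq 1$ then $\omega$ is degenerate or an endotope and the identifications $h\phi = h\itree{\phi}$, $h\psi = h\ytree{\psi}$ make the inner horn inclusion an isomorphism, hence a member of $W$; if $d = 2$ then $h\omega$ has a unique inner face and $\sfH_{h\omega}^f = \sfS_{h\omega} \in \sfSS \subseteq W$ (cf. \cref{prop:spine-complex-inner-anodyne}).

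For the inductive step, suppose $d \geq 3$ and that all inner horn inclusions of opetopes with at most $d-1$ nodes lie in $W$. Let $I$ be the set of all inner faces of $h\omega$. First, \cref{lemma:spine-super-horn} factors the spine inclusion as $\sfS_{h\omega} = \sfH_{h\omega}^I \circ u$, where $u$ is a cell complex of spine inclusions of opetopes with at most $d-1$ nodes; each such inclusion lies in $\sfSS \subseteq W$, so $u \in W$ by weak saturation, and since $\sfS_{h\omega} \in \sfSS \subseteq W$, $2$-for-$3$ gives $\sfH_{h\omega}^I \in W$. Next, fixing a single inner face $f$ and applying \cref{lemma:relative-super-horns-inclusions} to $\{f\} \subseteq I$, the inclusion $\sfH_{h\omega}^I$ factors through $\Lambda^f h\omega$ as $\sfH_{h\omega}^I = \sfH_{h\omega}^f \circ u'$, where $u'$ is a cell complex of inner horn inclusions of opetopes with at most $d-1$ nodes. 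By the induction hypothesis $u' \in W$, and combined with $\sfH_{h\omega}^I \in W$, a final application of $2$-for-$3$ yields $\sfH_{h\omega}^f \in W$. This closes the induction and shows $\mathsf{An}_{\mathrm{inner}} \subseteq W$.

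The main obstacle will be the two-tiered bookkeeping: confirming that $W$ really is weakly saturated (which rests entirely on $v$ being a colimit-preserving localization, so that inverting the generators propagates to their saturated closure) and correctly threading the nested factorizations of \cref{lemma:spine-super-horn,lemma:relative-super-horns-inclusions}, whose face-set indexing must be aligned so that each $2$-for-$3$ step applies to a genuinely commuting triangle. The degenerate base cases, though ultimately trivial, also warrant the explicit appeal to the identifications in $\bbLambda$.
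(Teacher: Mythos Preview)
Your proposal is correct and follows essentially the same route as the paper: reduce to inner horn inclusions, induct on $d = \deg h\omega$, use \cref{lemma:spine-super-horn} to handle the map $S[h\omega] \hookrightarrow \Lambda^I h\omega$ as a spine complex, use \cref{lemma:relative-super-horns-inclusions} to express $\Lambda^I h\omega \hookrightarrow \Lambda^f h\omega$ as a cell complex of smaller inner horn inclusions, and conclude by $2$-for-$3$ against the spine inclusion $\sfS_{h\omega}$. The paper simply writes the single three-term factorization $S[h\omega] \hookrightarrow \Lambda^I h\omega \hookrightarrow \Lambda^f h\omega \hookrightarrow h\omega$ and argues as you do; your explicit justification, via \cref{th:algebraic-localization:lambda}, that the class of $\sfSS$-local isomorphisms is weakly saturated and satisfies $2$-for-$3$ is a welcome clarification the paper leaves implicit.
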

\begin{proof}
    It is enough to show that inner horn inclusions are $\sfSS$-local
    isomorphisms. Let $\omega \in \bbOO_{n+1}$ have $d \geq 2$ nodes. If $d =
    2$, the it has a unique inner face, and the corresponding inner horn
    inclusion is simply the spine inclusion $S [h \omega] \longhookrightarrow h
    \omega$.

    Assume $d \geq 3$, let $I$ be the set of all inner faces of $h \omega$, and
    $f \in I$. Then the spine inclusion $\sfS_{h \omega}$ decomposes as
    \[
        S [h \omega]
        \stackrel{u}{\longhookrightarrow} \Lambda^I h \omega
        \stackrel{v}{\longhookrightarrow} \Lambda^f h \omega
        \stackrel{\sfH_{h \omega}^f}{\longhookrightarrow} h \omega .
    \]
    By \cref{lemma:spine-super-horn}, $u$ is a spine complex, thus an $\sfSS$
    local-isomorphism. By \cref{lemma:relative-super-horns-inclusions}, $v$ is
    a cell complex of inner horn inclusions of opetopes with at most $d - 1$
    nodes. By induction, $v$ is an $\sfSS$ local-isomorphism. Thus $vu$ and
    $\sfS_{h \omega} = \sfH_{h \omega}^f \cdot (vu)$ are $\sfSS$
    local-isomorphisms, and by 3-for-2, so is $\sfH_{h \omega}^f$.
\end{proof}

\subsection{Cylinder objects}
\label{sec:cylinder}

The Resk cylinder construction of \cref{def:rezk-cylinder-algebra} extends to
$\PshLambda$.

\begin{definition}
    [Rezk interval]
    \label{def:rezk-interval-lambda}
    For $\phi \in \bbOO_{n-1}$, recall the definition of the Rezk interval
    $\frakJJ_\phi \in \Alg$. Define $\frakII_\phi \eqdef M
    \frakJJ_\phi$\index{$\frakII_\phi$}. Write $\sfI_\phi : h \phi
    \longrightarrow \frakII_\phi$\index{$\sfI_\phi$} for the endpoint
    inclusion, and $\sfEE_\frakII \eqdef \left\{ \sfI_\phi \mid \phi \in
    \bbOO_{n-1} \right\}$.
\end{definition}

\begin{definition}
    [Rezk cylinder]
    \label{def:rezk-cylinder-lambda}
    For $\omega \in \bbOO_{\geq n-1}$, let $\frakII h \omega \eqdef M (\frakJJ
    h \omega)$. Extend $\frakII$ by colimits to obtain a functor $\frakII :
    \PshLambda \longrightarrow \PshLambda$\index{$\frakII X$|see {Rezk
    cylinder}}. The \emph{Rezk cylinder}\index{Rezk cylinder} $\frakII X$ of a
    presheaf $X \in \PshLambda$ is a cylinder object in the sense of
    \cref{def:cylinder}, i.e. we have a canonical factorization of the
    codiagonal map
    \[
        \diagramarrows{}{>->}{<-}{}
        \triangleDdiagram
            {X + X}{\frakJJ X}{X .}
            {\nabla}{(i_0, i_1)}{\nabla}
    \]
\end{definition}

Explicitely, for $X \in \PshLambda$,
\[
    \frakII X \eqdef \colim_{h \omega \rightarrow X} M (\frakJJ h \omega) .
\]
In dimension $(n-1)$ and $n$, $\frakII X$ is the following pushout
\[
    \pushoutdiagram
        {X_{n-1} + X_{n-1}}{X_{n-1, n} + X_{n-1, n}}
            {M \frakJJ_{X_{n-1}}}{(\frakII X)_{n-1, n} ,}
        {}{b}{}{}
\]
where $b$ maps a $x \in X_\psi$ in the first (resp. second) component to $0_x$
(resp. $1_x$) in $M \frakJJ_x \subseteq M \frakJJ_{X_{n-1}}$. The $(n+1)$-cells
of $\frakII X$ are so that in $v \frakII X$, the following relation (analogous
to \cref{eq:rezk-cylinder:relations}) holds:
\[
    x^{(1)} \biggraft_{[p_i]} j_{\src_{[p_i]} x}
    = j_{\tgt x} \graft_{[]} x^{(0)}
\]
for a cell $x \in X_\omega$, $\omega \in \bbOO_n$, and $\omega^\nodesymbol =
\left\{ [p_1], \ldots \right\}$. We readily deduce the following:

\begin{lemma}
    \label{lemma:cynlinder:I-vs-J}
    For $X \in \PshLambda$ we have a canonical isomorphism $\frakJJ v X \cong v
    \frakII X$.
\end{lemma}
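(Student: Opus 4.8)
The plan is to compute $v \frakII X$ directly from the definition of $\frakII$ and to recognize the result as the presentation of $\frakJJ v X$ given in \cref{def:rezk-cylinder-algebra}. Recall from \cref{prop:vM:reflective} that $v \dashv M$ is reflective with $M$ fully faithful; equivalently, the counit $v M \Rightarrow \id_\Alg$ is a natural isomorphism. Since $v$ is a left adjoint it preserves all colimits, and $\frakII X \eqdef \colim_{h \omega \rightarrow X} M (\frakJJ h \omega)$ is by definition a colimit over the category of elements of $X$. Applying $v$ and using $v M \cong \id_\Alg$ therefore yields a canonical isomorphism
\[
    v \frakII X
    \cong \colim_{h \omega \rightarrow X} v M (\frakJJ h \omega)
    \cong \colim_{h \omega \rightarrow X} \frakJJ h \omega .
\]

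First I would identify this right-hand colimit with $\frakJJ v X$. Since $v h \omega = h \omega$ for every representable and $v$ preserves the canonical colimit $X = \colim_{h \omega \rightarrow X} h \omega$, we have $v X = \colim_{h \omega \rightarrow X} h \omega$ in $\Alg$, so the required identification amounts to $\colim_{h \omega \rightarrow X} \frakJJ h \omega \cong \frakJJ(\colim_{h \omega \rightarrow X} h \omega)$. Concretely, this is exactly the explicit description given just before the statement: applying the colimit-preserving functor $v$ to the pushout computing $(\frakII X)_{n-1, n}$ returns the pushout of \cref{def:rezk-cylinder-algebra} for $A = v X$ (using $(v X)_{n-1} = X_{n-1}$, $v(h X_{n-1}) \cong h((v X)_{n-1})$, and $v M \frakJJ_{X_{n-1}} \cong \frakJJ_{(v X)_{n-1}}$), while the relation imposed on the $(n+1)$-cells of $v \frakII X$ is precisely the relation $K$ of \cref{eq:rezk-cylinder:relations} for $A = v X$. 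Matching generators and relations then presents $v \frakII X$ as $\frakJJ v X$.

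Equivalently, and more conceptually, I would observe that both $\frakJJ \circ v$ and $v \circ \frakII$ preserve colimits and agree on representables — for $h \omega$ the former gives $\frakJJ h \omega$ and the latter gives $v M \frakJJ h \omega \cong \frakJJ h \omega$ — whence they are naturally isomorphic by density. The colimit-preservation of $v \circ \frakII$ is automatic ($\frakII$ is a colimit extension and $v$ is a left adjoint); that of $\frakJJ \circ v$ reduces to cocontinuity of $\frakJJ$ itself, which is plausible since, as in the remark following \cref{def:rezk-cylinder-algebra}, $\frakJJ A \cong \frakJJ \otimes_{\mathrm{BV}} A$ and tensoring with the fixed unary object $\frakJJ$ ought to be a left adjoint.

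The main obstacle I anticipate is precisely justifying that $\frakJJ$ commutes with the canonical colimit presenting $v X$ — i.e. controlling the $(n-1)$-cell functor $A \mapsto A_{n-1}$, which is not cocontinuous on all of $\Alg$ and must be shown to interact correctly with these presentations. The point that makes it go through is that the localization $v$ inverts the spine inclusions $\sfSS$, which are identities below dimension $n+1$, so that dimensions $\leq n$ of $\frakII X$ are left untouched by $v$; this is the step I would verify with the most care.
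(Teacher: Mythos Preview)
Your proposal is correct, and your first approach—matching the explicit pushout-and-relations description of $\frakII X$ against the presentation of $\frakJJ v X$ from \cref{def:rezk-cylinder-algebra}—is precisely what the paper does: it gives that explicit description of $\frakII X$ and then states the lemma as an immediate consequence (``We readily deduce the following''), without a separate formal proof. Your more conceptual density argument is a genuine alternative; as you yourself note, its only nontrivial step is the cocontinuity of $\frakJJ$, which the paper never isolates and which is not needed once one argues directly via generators and relations.
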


\begin{proposition}
    The functorial cylinder $\frakII$ is an elementary homotopical data
    (\cref{def:elementary-homotopical-data}).
\end{proposition}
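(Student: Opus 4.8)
The cylinder structure of $\frakII$ — the factorization of the codiagonal and the fact that $(i_0,i_1)$ is a monomorphism — is already recorded in \cref{def:rezk-cylinder-lambda}, so the plan is to verify only \condition{DH1} and \condition{DH2} of \cref{def:elementary-homotopical-data}. I would dispatch the two conditions in the order colimit-preservation, then monomorphism-preservation, then the pullback square, exploiting throughout the explicit level-wise description of $\frakII X$ given immediately before the statement.

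The colimit half of \condition{DH1} is essentially free: by construction $\frakII$ is the left Kan extension along the Yoneda embedding of the assignment $h\omega \mapsto M(\frakJJ h\omega)$, that is, the cocontinuous extension of its value on representables, and such a functor preserves all small colimits by construction. Nothing further is needed here.

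For the monomorphism half of \condition{DH1} I would read off the cell structure of $\frakII X$ from the construction. Since $M$ is fully faithful (\cref{prop:vM:reflective}), the cells of each $M(\frakJJ h\omega)$ are controlled, and passing to the colimit shows that for every $\lambda \in \bbLambda$ the set $(\frakII X)_\lambda$ splits into three disjoint families: the copy-$0$ cells, the copy-$1$ cells, and the remaining ``mixed'' cells, namely those using at least one interval generator $j_a$ (one for each $(n-1)$-cell $a$ of $X$, with $\src_{[]} j_a = a^{(0)}$ and $\tgt j_a = a^{(1)}$). A monomorphism $f : X \hookrightarrow Y$ is injective on the cells of $X$, hence sends copy-$e$ cells to copy-$e$ cells and mixed cells to mixed cells injectively; therefore $\frakII f$ is injective at every $\lambda$, i.e. a monomorphism.

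For \condition{DH2}, since limits in $\PshLambda$ are computed objectwise, I would check the pullback at each $\lambda \in \bbLambda$. The inclusion $i_e : Y \to \frakII Y$ realizes $Y_\lambda$ as exactly the copy-$e$ family of $(\frakII Y)_\lambda$; because $\frakII f$ respects the three-family decomposition and sends no mixed cell into a single copy, the preimage under $\frakII f$ of the copy-$e$ family is precisely the copy-$e$ family of $(\frakII X)_\lambda$, namely $X^{(e)}_\lambda \cong X_\lambda$. This identifies $X$ with the fibre product $\frakII X \times_{\frakII Y} Y$, establishing \condition{DH2}. The main obstacle is the bookkeeping behind this decomposition: one must check, using the generators-and-relations presentation of the Rezk cylinder together with the full faithfulness of $M$, that the relations \cref{eq:rezk-cylinder:relations} neither collapse a mixed cell into $X^{(0)}$ or $X^{(1)}$ nor identify cells across the two copies — precisely the non-degeneracy already invoked in \cref{def:rezk-cylinder-algebra} to see that $(i_0,i_1)$ is monic.
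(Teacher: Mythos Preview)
Your proposal is correct and matches the paper's approach: the paper's proof is the single sentence ``Straightforward unpacking of the definition of $\frakII$'', and your argument carries out precisely that unpacking, verifying \condition{DH1} via cocontinuity of the left Kan extension together with the explicit cell decomposition, and \condition{DH2} via the three-family analysis. You supply considerably more detail than the paper does, but the underlying strategy is identical.
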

\begin{proof}
    Straightforward unpacking of the definition of $\frakII$.
\end{proof}

\subsection{Homotopical structure}
\label{sec:homotopical-structure}

Recall from \cref{def:inner-horn} that the class $\mathsf{An}_{\mathrm{inner}}$
of inner anodyne extensions of $\PshLambda$ is the class ${}^\pitchfork
\left(\sfHH_{\mathrm{inner}}^\pitchfork \right)$ of retracts of cell complexes
of inner horn inclusions.

\begin{definition}
    [{$\frakII$-anodyne extension}]
    \label{def:i-anodyne-extension}
    Recall from \cref{def:rezk-interval-lambda} the set $\sfEE_\frakII$ of
    endpoint inclusions of the Rezk intervals in $\PshLambda$. Let
    $\mathsf{An}_\frakII$, the class of \emph{$\frakII$-anodyne
    extensions}\index{anodyne extension}, be the class ${}^\pitchfork
    \left(\left(\sfHH_{\mathrm{inner}} \cup \sfEE_\frakII \right)^\pitchfork
    \right)$.
\end{definition}

\begin{definition}
    [Lifting problem]
    \label{def:lifting-problem}
    Let $k : K \longhookrightarrow h \omega$ be a subpresheaf of a
    representable presheaf, and $f : K \longrightarrow X$ a morphism. We say
    that $f$ is a \emph{lifting problem}\index{lifting problem} of degree $d$
    (or \emph{$d$-lifting problem}), where $d = \deg h \omega$. We say that $f$
    is \emph{$k$-unsolved} (or just \emph{unsolved} if $k$ is clear from the
    context) if $f$ does not factor through $k$.
\end{definition}

\begin{lemma}
    \label{lemma:unit-mono:inner-anodyne}
    Let $X \in \PshDelta$, be such that the unit map $\eta_X : X
    \longrightarrow M v X$ is a monomorphism. Then $\eta_X$ is an inner horn
    complex, and in particular an inner anodyne extension.
\end{lemma}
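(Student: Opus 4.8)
The plan is to prove the stronger statement that $\eta_X$ lies in $\Cell{\sfHH_{\mathrm{inner}}}$, from which membership in $\mathsf{An}_{\mathrm{inner}}$ is immediate. By \cref{th:algebraic-localization:lambda}, $MvX$ is the reflection of $X$ into $\sfSS^\perp$, and $\eta_X$ is the corresponding unit, hence an $\sfSS$-local isomorphism. The hypothesis that $\eta_X$ is a monomorphism lets me regard $X$ as a subpresheaf of $MvX$, and the strategy is to exhibit the inclusion $X \longhookrightarrow MvX$ as a transfinite cell complex of spine inclusions; the passage from spine inclusions to inner horns is then supplied by the results already at hand.

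First I would define $Z \subseteq MvX$ to be the smallest subpresheaf containing $X$ and closed under spine filling: whenever a spine $S [h \omega] \to Z$ is given, the filler $h \omega \to MvX$ (necessarily unique, since $MvX \in \sfSS^\perp$) is required to factor through $Z$. Building $Z$ by the evident transfinite attachment, each successor step adjoins exactly one such filler, which is a genuinely new cell of $MvX$ because $MvX$ has unique spine fillers; hence each step is a pushout of a spine inclusion $\sfS_{h \omega}$, and $X \longhookrightarrow Z$ is a cell complex in $\Cell\sfSS$. Moreover $Z$ inherits unique spine fillers from $MvX$ (two fillers of a spine in $Z$ agree in $MvX$, hence in $Z$ since $Z \hookrightarrow MvX$ is a monomorphism), so $Z \in \sfSS^\perp$ is itself local.

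It remains to identify $Z$ with $MvX$. Since $X \longhookrightarrow Z$ is a cell complex of spine inclusions it is an $\sfSS$-local isomorphism (spine inclusions are inner anodyne by \cref{prop:spine-complex-inner-anodyne}, hence local isomorphisms by \cref{prop:inner-horn-inclusions-S-local-isomorphisms}, and local isomorphisms are closed under the colimits defining cell complexes); as $\eta_X$ is a local isomorphism too, 3-for-2 forces the inclusion $Z \longhookrightarrow MvX$ to be a local isomorphism between two $\sfSS$-local objects, and such a map is an isomorphism. Therefore $\eta_X$ is, up to this identification, the cell complex $X \longhookrightarrow Z$, so $\eta_X \in \Cell\sfSS$. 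Finally, the proof of \cref{prop:spine-complex-inner-anodyne} shows that every spine inclusion is itself a cell complex of inner horn inclusions; since $\Cell{\sfHH_{\mathrm{inner}}}$ is closed under pushout and transfinite composition, we obtain $\Cell\sfSS \subseteq \Cell{\sfHH_{\mathrm{inner}}}$, whence $\eta_X$ is an inner horn complex and, in particular, an inner anodyne extension.

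I expect the main obstacle to be the second paragraph: making precise that closing $X$ under spine filling inside $MvX$ is an honest $\sfSS$-cell complex rather than merely an $\sfSS$-injective subobject. This is exactly where monicity of $\eta_X$ is essential — without it the localization unit genuinely identifies cells, the filler attached at each stage need not be new, and the construction would require quotients (pushouts that fail to be monomorphisms), breaking the identification $\eta_X \in \Cell\sfSS$.
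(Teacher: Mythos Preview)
You correctly identify the second paragraph as the crux, and it does not go through as written. The phrase ``a genuinely new cell of $MvX$'' hides the problem: uniqueness of the spine filler in $MvX$ guarantees only that the \emph{top} cell of $h\omega$ is new, but $h\omega \setminus S[h\omega]$ contains every subtree of $\omega$ with at least two nodes, and any of these may already lie in $X^{(\alpha)}$. When that happens the abstract pushout $X^{(\alpha)} \cup_{S[h\omega]} h\omega$ acquires a duplicate of that cell and the induced map to $MvX$ fails to be a monomorphism; if instead you pass to the image in $MvX$, the step $X^{(\alpha)} \hookrightarrow X^{(\alpha+1)}$ is a pushout of some inclusion $A \hookrightarrow h\omega$ with $S[h\omega] \subsetneq A$, which is not a spine inclusion.

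A concrete instance in the simplicial case: let $X$ be the $2$-simplex $012$ with an extra edge $23$ glued at the vertex $2$. Then $MvX \cong \Delta[3]$ and $\eta_X$ is the evident inclusion. The spine $(01,12,23)$ of $\Delta[3]$ lies in $X$, but its filler contains the face $012$, already present in $X$; the pushout of $X$ along this spine inclusion therefore has two distinct $2$-simplices mapping to $012 \in \Delta[3]$.

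The paper's proof avoids this by attaching inner horns rather than spines. An inner horn $\Lambda^f h\omega \hookrightarrow h\omega$ is missing only the top cell and a single two-node face, so just one cell's novelty must be verified. Even so an ordering is required: the paper's condition \condition{L2} (fill a horn of degree $\leq 2$ only when no unsolved horn of degree $\geq 3$ remains) is precisely what rules out the missing face having been produced at an earlier stage, and the monicity check hinges on it. Your spine-based route would need to control all intermediate subtrees simultaneously, and no such mechanism is supplied.
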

\begin{proof}
    Let $X^{(0)} \eqdef X$ and $\iota^{(0)} \eqdef \eta_X$. If $x \in X^{(0)}$,
    we write $\deg x = 0$. Assume by induction that we have an inclusion
    $\iota^{(\alpha)} : X^{(\alpha)} \longhookrightarrow M v X$ for all ordinal
    $\alpha < \beta$, and a degree function $\deg : X^{(\alpha)}
    \longrightarrow \alpha + 1$.

    If $\beta$ is a limit ordinal, simply set $X^{(\beta)} \eqdef
    \bigcup_{\alpha < \beta} X^{(\alpha)}$, and $\iota^{(\beta)}$ to be the
    induced inclusion. Assume that $\beta$ is a successor ordinal, say $\beta =
    \alpha + 1$, and choose a horn lifting problem $l : \Lambda^f h \omega
    \longrightarrow X^{(\alpha)}$ such that
    \begin{enumerate}
        \item \condition{L1} $l$ is unsolved in $X^{(\alpha)}$,
        \item \condition{L2} $l$ has degree $\leq 2$ only if there is no
        unsolved horn lifting problem of degree $\geq 3$.
    \end{enumerate}
    If such an $l$ does not exist, simply set $X^{(\alpha + 1)} \eqdef
    X^{(\alpha)}$ and $\iota^{(\alpha + 1)} \eqdef \iota^{(\alpha)}$. If it
    does, set $X^{(\alpha + 1)}$ to be the following pushout
    \[
        \diagramarrows{}{c->}{c->}{}
        \pushoutdiagram
            {\Lambda^f h \omega}{X^{(\alpha)}}{h \omega}{X^{(\alpha + 1)} ,}
            {l}{}{}{u}
    \]
    and $\iota^{(\alpha + 1)}$ to be the induced map $X^{(\alpha + 1)}
    \longrightarrow M v X$. For $x \in X^{(\alpha + 1)} - X^{(\alpha)}$, set
    $\deg x \eqdef \alpha + 1$.

    We claim that $\iota^{(\alpha + 1)}$ is a monomorphism. Towards a
    contradiction, assume that it is not. For $\nu \subseteq \omega$ the
    two-nodes subtree corresponding to the inner face $f$, write $x \in
    X^{(\alpha + 1)}_{h \nu}$ the cell selected by the arrow
    \[
        h \nu \longhookrightarrow h \omega
        \stackrel{u}{\longrightarrow} X^{(\alpha + 1)}
    \]
    Then there exist $y \in X^{(\alpha)}_{h \nu}$ such that $\iota_{(\alpha +
    1)} (x) = \iota_{(\alpha + 1)} (y)$.
    \begin{enumerate}
        \item If $l$ has degree $\leq 2$, then the lifting problem $l$ was not
        unsolved in $X^{(\alpha)}$, a contradiction with condition
        \condition{L1}.
        \item If the degree of $l$ is $\geq 3$, then the cell $y$ fives a
        factorization of $l$ as
        \[
            \triangleULdiagram
                {\Lambda^f h \omega}{X^{(\alpha)}}{\partial h \omega ,}
                {l}{}{l'}
        \]
        with $l'$ unsolved in $X^{(\alpha)}$. But this contradicts condition
        \condition{L2}. Indeed, the step of the induction that created $y$ but
        not $u (h \omega)$ only considered a lifting problem of degree $2$,
        whereas $l$ could have been considered instead.
    \end{enumerate}
    Therefore, $\iota^{(\alpha + 1)}$ is a monomorphism.

    If $\kappa$ is the cardinal of the set of horn lifting problems of $M v X$.
    Then the sequence $X_\alpha$ stabilizes after $\kappa + 1$, as all lifting
    problems have been exhausted. Clearly,
    \[
        M v X \cong \colim_{\alpha < \kappa + 1} X^{(\alpha)} ,
    \]
    and by construction, $\eta_X$ is an inner horn complex.
\end{proof}

\begin{corollary}
    [Generation lemma]
    \label{coroll:generation-lemma}
    Take $X \in \PshLambda$, $A \in \Alg$, and $m : X \longhookrightarrow M A$
    be a monomorphism such that its transpose $\barM : v X \longrightarrow A$
    is an isomorphism. Then $m$ is an inner horn complex, and in particular, an
    inner anodyne extension.
\end{corollary}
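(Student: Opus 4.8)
The plan is to reduce the statement to \cref{lemma:unit-mono:inner-anodyne} by factoring $m$ through the unit of the reflective adjunction $v \dashv M$ of \cref{prop:vM:reflective}. Since $\barM \colon v X \longrightarrow A$ is by definition the transpose of $m$ under this adjunction, the triangle identity gives $m = M (\barM) \circ \eta_X$, where $\eta_X \colon X \longrightarrow M v X$ is the unit at $X$.

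First I would note that $\barM$ is an isomorphism by hypothesis, so $M (\barM)$ is an isomorphism in $\PshLambda$, $M$ being a functor. From the factorisation $m = M (\barM) \circ \eta_X$, with $m$ a monomorphism and $M (\barM)$ invertible, it follows that $\eta_X = M (\barM)^{-1} \circ m$ is a monomorphism. This is exactly the hypothesis of \cref{lemma:unit-mono:inner-anodyne}, which then shows that $\eta_X$ is an inner horn complex, hence an inner anodyne extension.

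It remains to transfer the conclusion from $\eta_X$ to $m$. For the inner anodyne claim this is immediate: the class $\mathsf{An}_{\mathrm{inner}} = {}^\pitchfork (\sfHH_{\mathrm{inner}}^\pitchfork)$ (\cref{def:inner-anodyne-extension}) is the left class of a lifting property, so it contains all isomorphisms and is closed under composition; since $M (\barM)$ is an isomorphism and $\eta_X$ is inner anodyne, $m = M (\barM) \circ \eta_X$ is inner anodyne. For the stronger statement that $m$ is an inner horn complex, I would transport the cell presentation of $\eta_X$ produced in the proof of \cref{lemma:unit-mono:inner-anodyne} along $M (\barM)$: that construction exhausts all horn lifting problems and stabilises, so its tower terminates at $M v X$, and relabelling the terminal object $M v X$ as $M A$ via the isomorphism $M (\barM)$ leaves every attaching square a pushout, endowing $m$ with the same cell structure but with codomain $M A$.

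The main obstacle is precisely this last transport: postcomposing a cell complex with an isomorphism is only harmless when the tower has a genuine terminal stage, rather than being reached only in the colimit at a limit ordinal, and I would lean on the explicit stabilisation of the tower in \cref{lemma:unit-mono:inner-anodyne} to license it. Everything else is formal adjunction bookkeeping.
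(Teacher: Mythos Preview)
Your proof is correct and follows exactly the paper's approach: the paper simply states that the hypothesis means $m$ is the unit $\eta_X$ up to isomorphism, then invokes \cref{lemma:unit-mono:inner-anodyne}. Your worry about transporting the cell presentation along $M(\barM)$ is harmless---pushout squares and colimit cocones are preserved under relabelling the terminal object by an isomorphism, regardless of whether the tower stabilises at a successor or a limit stage---so the caveat you flag is not a genuine obstacle.
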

\begin{proof}
    The condition states that up to isomorphism, $m$ is the unit map $\eta_X :
    X \longrightarrow M v X$. We can apply \cref{lemma:unit-mono:inner-anodyne}
    to conclude.
\end{proof}

\begin{proposition}
    \label{prop:cylinder:An1}
    The pair $(\frakII, \mathsf{An}_\frakII)$ satisfies condition
    \condition{An1} of \cref{def:anodyne-extension}.
\end{proposition}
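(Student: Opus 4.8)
The plan is to use the saturation of $\mathsf{An}_\frakII$ to reduce \condition{An1} to the generating monomorphisms, and then to dispatch those with the generation lemma (\cref{coroll:generation-lemma}). Write $g_{m,e}$ for the cocartesian gap map attached to a monomorphism $m : X \longhookrightarrow Y$ and to $e = 0,1$. Since $\frakII$ preserves colimits and monomorphisms (it is an elementary homotopical datum, \condition{DH1}, \condition{DH2}), the assignment $m \mapsto g_{m,e}$ is built from $\frakII$ and pushouts, so in the spirit of the Leibniz construction (\cref{def:leibniz-construction}) it carries pushouts, transfinite composites and retracts of monomorphisms to pushouts, transfinite composites and retracts of the corresponding gap maps. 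As $\mathsf{An}_\frakII$ is saturated, the class $\left\{ m \mid g_{m,e} \in \mathsf{An}_\frakII \right\}$ is therefore saturated as well; by \cref{prop:cisinski:boundaries-generate-monomorphisms} the monomorphisms of $\PshLambda$ are exactly $\Cell\sfBB$, so it suffices to prove $g_{\sfB_\lambda, e} \in \mathsf{An}_\frakII$ for every boundary inclusion $\sfB_\lambda : \partial \lambda \longhookrightarrow \lambda$, with $\lambda = h\omega$ and $\omega \in \bbOO_{n+1}$. Up to swapping the two endpoints we may take $e = 0$.

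We then split according to $\deg h\omega = \hash \omega^\nodesymbol$. If $\deg h\omega = 0$, then $\omega = \itree\phi$ for some $\phi \in \bbOO_{n-1}$, so $\partial h\omega = \emptyset$, $\frakII\emptyset = \emptyset$, and the gap map is precisely the endpoint inclusion $\sfI_\phi : h\phi \longrightarrow \frakII_\phi$, which lies in $\sfEE_\frakII \subseteq \mathsf{An}_\frakII$ by definition. This is the only case in which the endpoint inclusions are genuinely needed, mirroring the fact that in the classical setting the non-inner generator $\{e\} \hookrightarrow J$ arises as the gap map of $\emptyset \hookrightarrow \Delta^0$. For $\deg h\omega \geq 1$ I claim, more strongly, that the gap map $g : \frakII(\partial h\omega) \cup (h\omega)^{(0)} \longhookrightarrow \frakII h\omega$ is inner anodyne. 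Its codomain is $\frakII h\omega = M(\frakJJ h\omega)$ (\cref{def:rezk-cylinder-lambda}) with $\frakJJ h\omega \in \Alg$, and $g$ is a monomorphism, so by \cref{coroll:generation-lemma} it is enough to show that the transpose $\bar g : v\bigl(\frakII(\partial h\omega) \cup (h\omega)^{(0)}\bigr) \longrightarrow \frakJJ h\omega$ is an isomorphism of algebras.

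To compute the source of $\bar g$, unfold the gap as $\frakII(\partial h\omega) \cup (h\omega)^{(0)} = \frakII(\partial h\omega) \sqcup_{\partial h\omega} h\omega$ and apply the colimit-preserving functor $v$ together with \cref{lemma:cynlinder:I-vs-J}:
\[
    v\bigl(\frakII(\partial h\omega) \sqcup_{\partial h\omega} h\omega\bigr)
    \cong \frakJJ\bigl(v\,\partial h\omega\bigr) \sqcup_{v\,\partial h\omega} h\omega .
\]
Now $v\,\partial h\omega$ is the free algebra on the set $E$ of $(n-1)$-cells (edge colors) of $\omega$, whose Rezk cylinder is a sum $\frakJJ_E$ of Rezk intervals, so the source is the algebra $\frakJJ_E \sqcup_E h\omega$ obtained from $h\omega$ by adjoining one invertible interval at each edge color. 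The crux, and the main obstacle, is to identify this with $\frakJJ h\omega$ via $\bar g$: this is a statement internal to $\Alg$ about the Rezk cylinder of a \emph{free} algebra. Unwinding \cref{def:rezk-cylinder-algebra}, $\frakJJ h\omega$ is the quotient of $(h\omega + h\omega) \sqcup_{E+E} \frakJJ_E$ by the naturality relations \cref{eq:rezk-cylinder:relations}, and because $h\omega$ is freely generated by its one-node subtrees $c_{\omega,[p]}$ (\cref{notation:generators-h-omega}), those relations exhibit each copy-$1$ generator as the conjugate
\[
    c_{\omega,[p]}^{(1)}
    = \bigl(j_{\tgt} \graft_{[]} c_{\omega,[p]}^{(0)}\bigr)
        \biggraft_{[q]} j_{\src_{[q]}}^{-1}
\]
of its copy-$0$ counterpart by the invertible edge intervals. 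Hence the copy-$1$ of $h\omega$ is entirely redundant once copy-$0$ and the edge intervals are present, which is exactly the content of $\frakJJ_E \sqcup_E h\omega \cong \frakJJ h\omega$. The one computation requiring care is to verify that $\bar g$ realizes this isomorphism — surjectivity via the conjugation formula, injectivity by checking that no further relations are imposed — and it is precisely here that freeness of $h\omega$ is used (there are no relations among generators to propagate) and that the hypothesis $\deg h\omega \geq 1$ enters, since for $\deg h\omega = 0$ the set $E$ of boundary edges is empty and no interval is adjoined. With $\bar g$ an isomorphism, \cref{coroll:generation-lemma} settles the case $\deg h\omega \geq 1$, and the reduction of the first paragraph then yields \condition{An1}.
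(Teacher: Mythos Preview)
Your argument follows the paper's exactly: reduce to boundary inclusions (the paper invokes \cite[proposition 1.1.16]{Cisinski2006} for this, you unwind the saturation directly), observe that $\deg h\omega = 0$ yields the endpoint inclusion $\sfI_\phi \in \sfEE_\frakII$, and for $\deg h\omega \geq 1$ apply \cref{coroll:generation-lemma} together with the conjugation formula
\[
    a^{(1)} = j_{\tgt a} \graft_{[]} a^{(0)} \biggraft_{[q]} j_{\src_{[q]} a}^{-1}.
\]

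One intermediate claim is not correct as written: for $\deg h\omega \geq 2$, the algebra $v\,\partial h\omega$ is \emph{not} the free algebra on the set $E$ of edge colours. Already for $n = 1$ and $h\omega = [2]$ one has $v\,\partial h\omega = \tau\,\partial\Delta[2]$, the free category on three objects and three generating arrows, which carries two distinct morphisms $0 \to 2$. (You may have been misled by the paper's informal description of $\partial h\omega$ after \cref{def:boundary-lambda}.) This does not actually break your proof: the conjugation reasoning applies equally to the correct source $\frakJJ(v\,\partial h\omega) \sqcup_{v\,\partial h\omega} h\omega$, since the naturality relations in $\frakJJ(v\,\partial h\omega)$, once the $0$-copy is identified with $h\omega$ via the pushout, force any two parallel $1$-copy cells with equal $0$-copy conjugates to coincide, and hence $\bar g$ is still an isomorphism. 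The paper sidesteps the whole computation by arguing directly that $\frakII\,\partial h\omega$ supplies all the edge intervals $j_\bullet$ while $(h\omega)^{(0)}$ supplies the $0$-copy generators, so the inclusion freely generates $\frakJJ h\omega$; your treatment of injectivity is in fact more explicit than the paper's on this point.
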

\begin{proof}
    By \cite[proposition 1.1.16]{Cisinski2006}, it is enough to check the claim
    when $m$ is a boundary inclusion, say $m = \sfB_{h \omega} : X = \partial h
    \omega \longrightarrow Y = h \omega$, where $\omega \in \bbOO_{n+1}$.
    \begin{enumerate}
        \item Assume $\deg h \omega = 0$, i.e. $\omega$ is degenerate, say
        $\omega = \itree{\phi}$. Then $\partial h \omega = \emptyset$, and
        since $\frakII$ preserves colimits, $\frakII \partial h \omega =
        \emptyset$ as well. Thus $g$ is the inclusion $h \omega^{(e)} = h
        \itree{\phi} \longrightarrow \frakII h \omega = M \frakJJ_\phi$, i.e.
        an endpoint inclusion of the Rezk interval $\frakJJ_\phi$, which by
        definition is $\frakII$-anodyne

        \item Assume $\deg h \omega \geq 1$. We only treat the case $e = 0$,
        the other one being similar. Let $[p] \in \omega^\nodesymbol$, and
        consider the $n$-cell $a = \src_{[p]} \omega$ of $h \omega$. By
        \cref{eq:rezk-cylinder:relations}, in $\frakII h \omega$, we have
        \[
            a^{(1)} = j_{\tgt a} \graft_{[]} a^{(0)}
                \biggraft_{[q_i]} j_{\src_{[q_i]} a}^{-1}
        \]
        where $[q_i]$ ranges over $(\src_{[p]} \omega)^\nodesymbol$. Therefore,
        $\frakII h \omega$ is freely generated by $\frakII \partial h \omega
        \cup h \omega^{(0)}$. By \cref{coroll:generation-lemma},
        $g$ is an inner anodyne extension.
        \qedhere
    \end{enumerate}
\end{proof}

\begin{proposition}
    \label{prop:cylinder:An2}
    The pair $(\frakII, \mathsf{An}_\frakII)$ satisfies condition
    \condition{An2} of \cref{def:anodyne-extension}.
\end{proposition}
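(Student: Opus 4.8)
The plan is to verify condition \condition{An2} by reducing to the generators of $\mathsf{An}_\frakII$ and then treating inner horns and Rezk endpoints separately, the latter being the genuine difficulty.

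First I would observe that the \condition{An2} gap map operation $m \mapsto g_m$, sending $m : X \longhookrightarrow Y$ to the cocartesian gap map $g_m : \frakII X \cup (Y + Y) \longrightarrow \frakII Y$ of the naturality square of the boundary inclusion $(i_0, i_1) : (-) + (-) \Rightarrow \frakII$, is a Leibniz-type (pushout-corner) construction (\cref{def:leibniz-construction}). Because $\frakII$ preserves small colimits and monomorphisms (\condition{DH1} of \cref{def:elementary-homotopical-data}) and $\mathsf{An}_\frakII$, being of the form ${}^\pitchfork(-)$, is saturated, the class $\{ m \mid g_m \in \mathsf{An}_\frakII \}$ is closed under pushout, transfinite composition and retract. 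Since $\mathsf{An}_\frakII$ is the saturation of $\sfHH_{\mathrm{inner}} \cup \sfEE_\frakII$, it therefore suffices to check \condition{An2} for $m$ in this generating set. Each such $m$ is a monomorphism, so \cref{prop:cylinder:An1} applies to it, and $g_m$ is itself a monomorphism by \condition{DH1}--\condition{DH2}.

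For an inner horn $m = \sfH_\lambda^f : \Lambda^f \lambda \longhookrightarrow \lambda$ I would invoke the generation lemma. Writing $\frakII \lambda = M(\frakJJ \lambda)$ (\cref{def:rezk-cylinder-lambda}), the gap map $g_m$ is a monomorphism into an object in the image of $M$, whose transpose along $v \dashv M$ is, using $v \frakII \cong \frakJJ v$ (\cref{lemma:cynlinder:I-vs-J}), the algebraic \condition{An2} gap map of $vm$. Now $m$ is inner anodyne, hence an $\sfSS$-local isomorphism by \cref{prop:inner-horn-inclusions-S-local-isomorphisms}, so $vm$ is an isomorphism by \cref{th:algebraic-localization:lambda}; consequently its gap map is an isomorphism too. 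By \cref{coroll:generation-lemma}, $g_m$ is then an inner horn complex, in particular inner anodyne, and a fortiori $\frakII$-anodyne.

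The hard part is the Rezk endpoint $m = \sfI_\phi : h \phi \longhookrightarrow \frakII_\phi$ (\cref{def:rezk-interval-lambda}), where the shortcut above fails: $v \sfI_\phi$ is the folk generating acyclic cofibration $\{0\} \to \frakJJ_\phi$, which is \emph{not} inverted by $v$, so $v g_m$ is not an isomorphism and the generation lemma does not apply. Here I would analyse the iterated cylinder $\frakII \frakII_\phi$ explicitly through its generators and the cylinder relation \cref{eq:rezk-cylinder:relations}, and exhibit $g_m$ as a finite cell complex whose attaching cells are of two kinds: fillers carrying no new invertible operation, which are inner anodyne (treated as in the horn case, via \cref{coroll:generation-lemma} applied to the subpresheaf they generate), and cells carrying a new invertible operation, which are attached along pushouts of Rezk endpoint inclusions in $\sfEE_\frakII$. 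As $\mathsf{An}_\frakII$ contains both kinds of maps and is closed under pushout and composition, this presents $g_m$ as $\frakII$-anodyne. Organising these cells of $\frakII \frakII_\phi$ coherently --- rather than the formal reductions --- is where the real work lies.
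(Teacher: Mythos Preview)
Your reduction to generators and your treatment of the inner-horn case are correct and match the paper's argument essentially verbatim.

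However, your diagnosis of the Rezk endpoint case is wrong, and this is the one place where real content is needed. You assert that for $m = \sfI_\phi : h\phi \longhookrightarrow \frakII_\phi$ the transpose $v g_m$ is \emph{not} an isomorphism and hence the generation lemma (\cref{coroll:generation-lemma}) does not apply. In fact it does. Compute both sides explicitly: by \cref{lemma:cynlinder:I-vs-J} and reflectivity, $\frakII\,\frakII_\phi \cong M(\frakJJ\frakJJ_\phi)$, and $\frakJJ\frakJJ_\phi$ is the algebra on four $(n-1)$-cells $00_\phi, 01_\phi, 10_\phi, 11_\phi$ with four generating invertible $n$-cells $j_\phi^{(0*)}, j_\phi^{(1*)}, j_\phi^{(*0)}, j_\phi^{(*1)}$ subject to the commutative-square relation $j_\phi^{(*1)} \graft_{[]} j_\phi^{(0*)} = j_\phi^{(1*)} \graft_{[]} j_\phi^{(*0)}$ coming from \cref{eq:rezk-cylinder:relations}. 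The domain $\frakII(h\phi) \cup (\frakII_\phi + \frakII_\phi)$ of $g_m$ already contains three of these four generators (together with their inverses); the commutative-square relation then expresses the fourth as a composite of the other three. Hence the domain generates the target algebra, the transpose of $g_m$ along $v \dashv M$ is an isomorphism, and \cref{coroll:generation-lemma} applies directly to give that $g_m$ is an inner horn complex, so in particular $\frakII$-anodyne.

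In other words, you correctly identified that $v m = v\sfI_\phi$ is not an isomorphism, but drew the wrong conclusion: what matters for \cref{coroll:generation-lemma} is whether the transpose of $g_m$ is an isomorphism, not whether $v m$ is. The cylinder relation supplies exactly the redundancy needed to make the former hold even though the latter fails. Your proposed workaround via a mixed cell decomposition using pushouts of $\sfEE_\frakII$ is therefore unnecessary; the argument is uniform across both generator types once one observes this.
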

\begin{proof}
    By \cite[proposition 1.1.16]{Cisinski2006}, it is enough to check the claim
    when $m$ is an inner horn inclusion or an endpoint inclusion of a Rezk
    interval.
    \begin{enumerate}
        \item Assume $m$ is a inner horn inclusion, say $m = \sfH_{h \omega}^f
        : X = \Lambda^f h \omega \longrightarrow Y = h \omega$, where $\omega
        \in \bbOO_{n+1}$. Recall that by
        \cref{prop:inner-horn-inclusions-S-local-isomorphisms}, $v \Lambda^f h
        \omega = h \omega$. Thus applying $v$ to the diagram of
        \cref{def:elementary-homotopical-data} \condition{An2} yeilds
        \[
            \begin{tikzcd}
                h \omega + h \omega
                    \ar[r, equal] \ar[d]
                    \ar[dr, phantom, "\ulcorner", very near end] &
                    h \omega + h \omega
                    \ar[d] \ar[ddr, bend left] \\
                \frakJJ h \omega
                    \ar[r, equal] \ar[drr, bend right, equal] &
                \frakJJ h \omega
                    \ar[dr, dashed, "v g" description] \\
                &
                &
                \frakJJ h \omega .
            \end{tikzcd}
        \]
        Thus $g : \frakII \Lambda^f h \omega \cup (h \omega + h \omega)
        \longrightarrow \frakII h \omega = M \frakJJ h \omega$ is such that its
        transpose
        \[
            \frakJJ h \omega
            \stackrel{v g}{\longrightarrow} v M \frakJJ h \omega
            \cong \frakJJ h \omega
        \]
        is an isomorphism. By \cref{coroll:generation-lemma}, $g$ is an inner
        anodyne extension.

        \item Assume $m$ is an endpoint inclusion of a Rezk interval, say $m :
        X = M h \phi \longhookrightarrow Y = M \frakJJ_\phi$, for a $\phi \in
        \bbOO_{n-1}$. We thus have $\frakII X \cup (Y + Y) = \frakII M h\phi
        \cup (M \frakJJ_\phi + M \frakJJ_\phi) = M \frakJJ_\phi \cup (M
        \frakJJ_\phi + M \frakJJ_\phi)$.

        We have $\frakII Y = \frakII (M \frakJJ_\phi) = M (\frakJJ v (M
        \frakJJ_\phi)) \cong M (\frakJJ \frakJJ_\phi)$. The algebra $\frakJJ
        \frakJJ_\phi$ contains four $(n-1)$-cells $00_\phi$, $01_\phi$,
        $10_\phi$, and $11_\phi$ of shape $\phi$, and is generated by the
        $n$-cells $j_\phi^{(0*)} : 00_\phi \longrightarrow 01_\phi$,
        $j_\phi^{(1*)} : 10_\phi \longrightarrow 11_\phi$, $j_\phi^{(*0)} :
        00_\phi \longrightarrow 10_\phi$, $j_\phi^{(1*)} : 10_\phi
        \longrightarrow 11_\phi$ and their inverses. Further, the equality on
        the right holds, which can be depicted as a commutative square of
        invertible arrows on the right:
        \[
            j_\phi^{(*1)} \graft_{[]} j_\phi^{(0*)}
            = j_\phi^{(1*)} \graft_{[]} j_\phi^{(*0)} ,
            \qqquad
            \squarediagram
                {00_\phi}{01_\phi}{10_\phi}{11_\phi .}
                {j_\phi^{(0*)}}{j_\phi^{(*0)}}{j_\phi^{(*1)}}{j_\phi^{(1*)}}
        \]
        On the other hand, the pushout $M \frakJJ_\phi \cup (M \frakJJ_\phi + M
        \frakJJ_\phi)$ contains $j_\phi^{(0*)}$, $j_\phi^{(*0)}$,
        $j_\phi^{(*1)}$, and their inverses. Thus it generates $\frakJJ
        \frakJJ_\phi$, i.e. the cocartesian gap map $g$ satisfies the
        conditions of \cref{coroll:generation-lemma}. Consequently, it is an
        inner anodyne extension.
        \qedhere
    \end{enumerate}
\end{proof}

\begin{theorem}
    \label{th:pshlambda:infty-model-structure}
    The category $\bbLambda$ endowed with the functorial cylinder $\frakII :
    \PshLambda \longrightarrow \PshLambda$ and the class $\mathsf{An}_\frakII$
    of $\frakII$-anodyne extension forms a homotopical structure
    (\cref{def:homotopical-structure})

    Using \cref{th:cisinsli-model-structure}, we obtain the following
    \emph{model structure for $\infty$-algebras}\index{model structure for
    $\infty$-algebras} $\PshLambda_\infty$\index{$\PshLambda_\infty$|see {model
    structure for $\infty$-algebras}} on $\PshLambda$:
    \begin{enumerate}
        \item a morphism $f$ is a \emph{naive fibration} if
        $\mathsf{An}_\frakII \pitchfork f$ (\cref{def:i-anodyne-extension}); a
        presheaf $X \in \PshLambda$ is \emph{fibrant} if the terminal morphism
        $X \longrightarrow 1$ is a naive fibration;
        \item a morphism $f : X \longrightarrow Y$ is a \emph{weak equivalence}
        if for all fibrant object $P \in \PshLambda$, the induced map $f^* :
        \Ho\PshLambda (Y, P) \longrightarrow \Ho\PshLambda (Y, P)$ is a
        bijection, where $\simeq$ is the $\frakII$-homotopy relation of
        \cref{def:rezk-cylinder-lambda,def:homotopy};
        \item a morphism $f$ is a \emph{cofibrations} if it is a monomorphisms,
        it is a \emph{acyclic cofibrations} if it is a cofibration and a weak
        equivalence;
        \item a morphism $f$ is a fibration if it has the right lifting
        property with respect to acyclic cofibrations, it is an \emph{acyclic
        fibration} if it has the right lifting property with respect to all
        cofibrations.
    \end{enumerate}
    In particular, $\PshLambda_\infty$ is of Cisinski type
    (\cref{def:cisinski-model-structure}), cellular, and proper. Fibrant
    objects in $\PshLambda_\infty$ are called
    \emph{$\infty$-algebras}\index{$\infty$-algebra} (or \emph{inner Kan
    complexes}\index{inner Kan complex|see {$\infty$-algebra}}).
\end{theorem}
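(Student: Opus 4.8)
The plan is to verify directly that $(\frakII, \mathsf{An}_\frakII)$ is a homotopical structure in the sense of \cref{def:homotopical-structure}, and then to invoke \cref{th:cisinsli-model-structure} to produce the model structure together with its listed properties. Almost all of the substantive work has already been carried out in the preceding propositions, so the argument is essentially one of assembly.

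First I would recall that $\frakII$ is a functorial cylinder by construction (\cref{def:rezk-cylinder-lambda}), and that it satisfies conditions \condition{DH1} and \condition{DH2}, i.e.\ that it is an elementary homotopical data (\cref{def:elementary-homotopical-data}); this was established in \cref{sec:cylinder}. It therefore remains to check that $\mathsf{An}_\frakII$ is a class of anodyne extensions relative to $\frakII$ (\cref{def:anodyne-extension}), for which the three conditions \condition{An0}, \condition{An1}, \condition{An2} must hold.

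For \condition{An0}, I would observe that by \cref{def:i-anodyne-extension} we have $\mathsf{An}_\frakII = {}^\pitchfork\left(\left(\sfHH_{\mathrm{inner}} \cup \sfEE_\frakII\right)^\pitchfork\right)$, and that the generating family $\sfHH_{\mathrm{inner}} \cup \sfEE_\frakII$ is a \emph{set} of monomorphisms: inner horn inclusions $\sfH_{h\omega}^f$ are subpresheaf inclusions by \cref{def:inner-horn}, and the endpoint inclusions $\sfI_\phi : h\phi \longrightarrow \frakII_\phi$ of the Rezk intervals are monomorphisms by \cref{def:rezk-interval-lambda}; both families are indexed by (finite data over) the small category $\bbLambda$, so their union is a set. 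Thus \condition{An0} holds by definition. Condition \condition{An1} is precisely \cref{prop:cylinder:An1}, and condition \condition{An2} is precisely \cref{prop:cylinder:An2}. Hence $\mathsf{An}_\frakII$ is a class of anodyne extensions relative to $\frakII$, and $(\frakII, \mathsf{An}_\frakII)$ is a homotopical structure.

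Finally I would apply \cref{th:cisinsli-model-structure} to the homotopical structure $(\frakII, \mathsf{An}_\frakII)$. This immediately yields a model structure on $\PshLambda$ whose naive fibrations, weak equivalences, cofibrations, and (acyclic) fibrations are characterized exactly as in items (1)--(4), once $\mathsf{An}$ is read as $\mathsf{An}_\frakII$ and the homotopy relation is the $\frakII$-homotopy relation of \cref{def:rezk-cylinder-lambda,def:homotopy}; the assertions that the structure is of Cisinski type, cellular, and proper are part of the conclusion of \cref{th:cisinsli-model-structure}. The genuinely hard part of the development is not this final assembly but the verification of \condition{An1} and \condition{An2}, and in particular \cref{prop:cylinder:An2}, which relied on the generation lemma (\cref{coroll:generation-lemma}) and the explicit analysis of $\frakJJ\frakJJ_\phi$; granting those results, the theorem follows formally.
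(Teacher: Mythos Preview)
Your proposal is correct and follows essentially the same approach as the paper: verify \condition{An0} directly from the definition of $\mathsf{An}_\frakII$ as generated by the set $\sfHH_{\mathrm{inner}} \cup \sfEE_\frakII$ of monomorphisms, cite \cref{prop:cylinder:An1,prop:cylinder:An2} for \condition{An1} and \condition{An2}, and invoke \cref{th:cisinsli-model-structure}. If anything, your write-up is slightly more careful than the paper's, since you explicitly recall that $\frakII$ is an elementary homotopical data before assembling the homotopical structure.
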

\begin{proof}
    By definition, $\mathsf{An}_\frakII$ is the class of cell complexes over a
    set of monomorphisms, thus it satisfies axiom \condition{An0}. Axioms
    \condition{An1} and \condition{An2} are checked by
    \cref{prop:cylinder:An1,prop:cylinder:An2} respectively.
\end{proof}

This construction is a direct generalization of the model structure
$\PshDelta_{\mathrm{Joyal}}$ for quasi-categories \cite[theorem
1.9]{Joyal2007}, and of $\Psh\bbOmega_{\mathrm{CM}}$ for planar
$\infty$-operads \cite[theorem 2.4]{Cisinski2011}.

\begin{lemma}
    \label{lemma:everyone-is-isofibrant}
    For all $X \in \PshLambda$ we have $\sfEE_\frakII \pitchfork X$. In
    particular, if $\sfHH_{\mathrm{inner}} \pitchfork X$ if and only $X$ is an
    $\infty$-algebra.
\end{lemma}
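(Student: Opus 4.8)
The plan is to reduce $\sfEE_\frakII \pitchfork X$ to the observation that each endpoint inclusion $\sfI_\phi$ is a \emph{split} monomorphism. Recall that $l \pitchfork X$ abbreviates $l \pitchfork (X \to 1)$, so $\sfI_\phi \pitchfork X$ asks precisely that every map $g : h\phi \to X$ extend along $\sfI_\phi$; if $\sfI_\phi$ admits a retraction $\rho$ with $\rho\,\sfI_\phi = \id$, then $g\rho$ is such an extension for \emph{any} $X$. Conversely, taking $X = h\phi$ and $g = \id$ forces a retraction. So it suffices to produce a retraction of $\sfI_\phi$, uniformly in $\phi \in \bbOO_{n-1}$.

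First I would work in $\Alg$ and retract the algebraic endpoint inclusion $\sfJ_\phi : h\phi \to \frakJJ_\phi$ of \cref{def:rezk-interval-algebra}. The interval $\frakJJ_\phi$ has the two $(n-1)$-cells $0_\phi, 1_\phi$ and is generated by the invertible $n$-cell $j_\phi$ together with $j_\phi^{-1}, \id_0, \id_1$. Define the collapse $r : \frakJJ_\phi \to h\phi$ sending both $0_\phi$ and $1_\phi$ to the generating $(n-1)$-cell of $h\phi$ and both $j_\phi, j_\phi^{-1}$ to its identity. This respects the defining relations of $\frakJJ_\phi$, since $j_\phi \graft_{[]} j_\phi^{-1} = \id_1$ and $j_\phi^{-1} \graft_{[]} j_\phi = \id_0$ both become $\id \graft_{[]} \id = \id$ in $h\phi$; hence $r$ is a morphism of algebras, and as $\sfJ_\phi$ names the endpoint $0_\phi$ we get $r\,\sfJ_\phi = \id_{h\phi}$.

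Next I would transport the splitting through $M$. By \cref{def:rezk-interval-lambda} one has $\frakII_\phi = M\frakJJ_\phi$ and $\sfI_\phi = M\sfJ_\phi$, using that $M$ carries the free algebra $h\phi$ to the corresponding representable of $\PshLambda$ (as $v$ sends that representable back to $h\phi$). Applying the functor $M$ to $r\,\sfJ_\phi = \id$ yields $(Mr)\,\sfI_\phi = \id_{h\phi}$, so $\sfI_\phi$ is a split monomorphism in $\PshLambda$, and therefore $\sfI_\phi \pitchfork X$ for all $X$ via the explicit extension $g\,(Mr)$. This proves $\sfEE_\frakII \pitchfork X$.

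For the final clause, recall that $X$ is an $\infty$-algebra exactly when it is fibrant in $\PshLambda_\infty$, i.e. when $\mathsf{An}_\frakII \pitchfork X$. Since $\mathsf{An}_\frakII = {}^\pitchfork\bigl((\sfHH_{\mathrm{inner}} \cup \sfEE_\frakII)^\pitchfork\bigr)$ and the operators ${}^\pitchfork(-)$, $(-)^\pitchfork$ form a Galois connection, one has $\mathsf{An}_\frakII^\pitchfork = (\sfHH_{\mathrm{inner}} \cup \sfEE_\frakII)^\pitchfork$, so $\mathsf{An}_\frakII \pitchfork X$ holds iff $\sfHH_{\mathrm{inner}} \pitchfork X$ and $\sfEE_\frakII \pitchfork X$. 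The latter always holds by the first part, whence $X$ is an $\infty$-algebra iff $\sfHH_{\mathrm{inner}} \pitchfork X$. The whole argument is formal apart from the one genuine check, the well-definedness of the collapse $r$, which is just the contractibility of the Rezk interval; I foresee no real obstacle.
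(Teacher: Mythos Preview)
Your proof is correct and is essentially the same as the paper's: the paper constructs the lift directly by sending $0_\phi,1_\phi\mapsto x$ and $j_\phi,j_\phi^{-1}\mapsto\id_x$, which is exactly your extension $g\,(Mr)$ unpacked, since your collapse $r$ is the Rezk map $\sfR_\phi$ of \cref{def:rezk-map}. Your packaging via ``$\sfI_\phi$ is a split monomorphism'' is a clean way to phrase the same content.
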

\begin{proof}
    Let $\phi \in \bbOO_{n-1}$, and consider a lifting problem $f : h \phi
    \longrightarrow X$. In particular, $f$ exhibits a cell $x \in X_{h \phi}$,
    but also a cell $\id_x \in X_{h \ytree{\phi}}$. Then a lift $\barF$ of $f$
    as in
    \[
        \triangleULdiagram
            {h \phi}{X}{\frakII_\phi}{f}{\sfI_\phi}{\barF}
    \]
    can be obtained by mapping $0_\phi$ and $1_\phi$ to $x$, and $j_\phi$ and
    $j_\phi^{-1}$ to $\id_x$ (see \cref{def:rezk-interval-algebra} for
    notations).
\end{proof}

The following results comes as a ``sanity-check'' for the model structure of
\cref{th:pshlambda:infty-model-structure}:

\begin{proposition}
    \label{prop:nerves-infty-algebras}
    For $A \in \Alg$, its nerve $M A$ is an $\infty$-algebra.
\end{proposition}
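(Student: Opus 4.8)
The plan is to reduce fibrancy of $M A$ to a right lifting property against inner horns, and then to extract that property from the fact that $M A$ is $\sfSS$-local. By \cref{def:i-anodyne-extension}, the class $\mathsf{An}_\frakII$ of $\frakII$-anodyne extensions is generated by $\sfHH_{\mathrm{inner}} \cup \sfEE_\frakII$, so $M A$ is fibrant (i.e.\ an $\infty$-algebra) precisely when $\sfHH_{\mathrm{inner}} \pitchfork M A$ and $\sfEE_\frakII \pitchfork M A$. The second condition holds for \emph{every} object by \cref{lemma:everyone-is-isofibrant}, which also records that $\sfHH_{\mathrm{inner}} \pitchfork X$ is equivalent to $X$ being an $\infty$-algebra. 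Hence it suffices to prove that $M A$ has the right lifting property against all inner horn inclusions.

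To do this I would place $M A$ inside the localization picture. By \cref{th:algebraic-localization:lambda}, $M$ corestricts to an isomorphism onto $\sfSS^\perp$, so $M A \in \sfSS^\perp$; that is, $M A$ is $\sfSS$-local. On the other hand, \cref{prop:inner-horn-inclusions-S-local-isomorphisms} shows that every inner horn inclusion (indeed every inner anodyne extension) is an $\sfSS$-local isomorphism. So both inputs needed for an orthogonality argument are already in place.

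The final step uses the adjunction $v \dashv M$ directly. For any algebra $B$, the adjunction furnishes a natural bijection $\PshLambda(-, M B) \cong \Alg(v(-), B)$. Since $v$ is the Gabriel--Ulmer localization at $\sfSS$ (\cref{th:algebraic-localization:lambda}), an $\sfSS$-local isomorphism $u : X \longrightarrow Y$ is by definition a map inverted by $v$; thus $v u$ is an isomorphism, and therefore $\PshLambda(u, M B) : \PshLambda(Y, M B) \longrightarrow \PshLambda(X, M B)$ is a bijection, i.e.\ $u \perp M B$. Taking $B = A$ and letting $u$ range over inner horn inclusions yields $\sfHH_{\mathrm{inner}} \perp M A$, hence in particular $\sfHH_{\mathrm{inner}} \pitchfork M A$. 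Combined with the reduction above, this proves $M A$ is an $\infty$-algebra. The only genuinely delicate point is this last orthogonality principle relating $\sfSS$-local isomorphisms to $\sfSS$-local objects; everything else is bookkeeping with the generating sets. Since that principle is just the one-line unit/adjunction computation spelled out above, I would include it inline rather than appeal to it as a black box.
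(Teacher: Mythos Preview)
Your proof is correct and follows essentially the same route as the paper: invoke \cref{th:algebraic-localization:lambda} to get $M A \in \sfSS^\perp$, use \cref{prop:inner-horn-inclusions-S-local-isomorphisms} to deduce $\sfHH_{\mathrm{inner}} \perp M A$, and finish with \cref{lemma:everyone-is-isofibrant}. The only difference is that you spell out the passage from ``$M A$ is $\sfSS$-local and inner horns are $\sfSS$-local isomorphisms'' to ``$\sfHH_{\mathrm{inner}} \perp M A$'' via the adjunction $v \dashv M$, whereas the paper treats this step as immediate.
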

\begin{proof}
    By \cref{th:algebraic-localization:lambda}, $\sfSS \perp M A$, and thus by
    \cref{prop:inner-horn-inclusions-S-local-isomorphisms},
    $\sfHH_{\mathrm{inner}} \perp M A$. We apply
    \cref{lemma:everyone-is-isofibrant} to conclude.
\end{proof}

\begin{proposition}
    \label{prop:infty-folk-quillen-adjunction}
    We have a Quillen adjunction $v : \PshLambda_\infty \adjunction
    \Alg_{\mathrm{folk}} : M$.
\end{proposition}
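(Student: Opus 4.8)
The plan is to verify directly that $v$ is a left Quillen functor, via the standard recognition criterion for Quillen adjunctions between cofibrantly generated model categories (see, e.g., \cite{Hirschhorn2009}): since $\PshLambda_\infty$ is cofibrantly generated by \cref{th:pshlambda:infty-model-structure}, it suffices to exhibit a generating set of cofibrations and a generating set of acyclic cofibrations, and to check that $v$ carries the former into cofibrations and the latter into acyclic cofibrations of $\Alg_{\mathrm{folk}}$. As generating cofibrations I would take the boundary inclusions $\sfBB$ (a cellular model for $\PshLambda$), and as generating acyclic cofibrations the set $\sfHH_{\mathrm{inner}} \cup \sfEE_\frakII$ defining the $\frakII$-anodyne extensions (\cref{def:i-anodyne-extension}). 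Throughout I would use that $v$ is a left adjoint, hence preserves colimits, and that the counit $vM \to \id$ is invertible because $v \dashv M$ is reflective (\cref{prop:vM:reflective}).

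For the cofibrations, I would invoke \cref{th:folk-model-structure-algebras:cofibrantly-generated}, which exhibits a generating set $\sfJJ$ of the folk structure having $v\sfBB$ as one of its three components. Since the acyclic fibrations of $\Alg_{\mathrm{folk}}$ are exactly the maps with the right lifting property against $\sfJJ$, every element of $\sfJJ$ has the left lifting property against all acyclic fibrations and is therefore a cofibration; in particular $v\sfBB$ consists of cofibrations, which settles the first half.

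For the acyclic cofibrations I would treat the two families separately. An inner horn inclusion $\sfH_{h\omega}^f \in \sfHH_{\mathrm{inner}}$ is an $\sfSS$-local isomorphism by \cref{prop:inner-horn-inclusions-S-local-isomorphisms}, and since $v$ is precisely the Gabriel--Ulmer localization at $\sfSS$ (\cref{th:algebraic-localization:lambda}), the image $v\sfH_{h\omega}^f$ is an \emph{isomorphism} of algebras, hence trivially an acyclic cofibration. For an endpoint inclusion $\sfI_\phi : h\phi \to \frakII_\phi = M\frakJJ_\phi$ in $\sfEE_\frakII$ (\cref{def:rezk-interval-lambda}), I would pass to the transpose under $v \dashv M$: since $v(h\phi) = h\phi$ and the counit is invertible, the transpose of $\sfI_\phi$ is exactly the algebraic endpoint inclusion $\sfJ_\phi : h\phi \to \frakJJ_\phi$ of \cref{def:rezk-interval-algebra}, so that $v\sfI_\phi$ is identified with $\sfJ_\phi$ up to the canonical isomorphism $vM\frakJJ_\phi \cong \frakJJ_\phi$. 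As $\sfJ_\phi \in \sfEE_\frakJJ$ is a generating acyclic cofibration of $\Alg_{\mathrm{folk}}$ by \cref{th:folk-model-structure-algebras:cofibrantly-generated}, the map $v\sfI_\phi$ is an acyclic cofibration, completing the verification of the criterion.

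The step I expect to require the most care is not any single computation but the bookkeeping around the recognition criterion itself: one must be certain that $\sfHH_{\mathrm{inner}} \cup \sfEE_\frakII$ genuinely generates the acyclic cofibrations of the Cisinski structure $\PshLambda_\infty$, rather than merely the $\frakII$-anodyne extensions, and that the identifications $v(h\phi) = h\phi$ and $vM\frakJJ_\phi \cong \frakJJ_\phi$ are compatible enough that $v\sfI_\phi$ is \emph{equal} (not just parallel) to $\sfJ_\phi$. Both points follow from the reflectivity of $v \dashv M$ and from Cisinski's construction recalled in \cref{th:cisinsli-model-structure}, under which the anodyne extensions coincide with the acyclic cofibrations, so no essential obstacle remains.
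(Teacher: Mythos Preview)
Your argument for the cofibrations is fine (the paper in fact takes an even more direct route: a monomorphism $f$ in $\PshLambda$ is in particular injective on $(n-1)$-cells, hence so is $vf$, which is exactly the definition of cofibration in $\Alg_{\mathrm{folk}}$). Your computations for $v$ on $\sfHH_{\mathrm{inner}}$ and $\sfEE_\frakII$ are also correct: $v$ sends inner horn inclusions to isomorphisms and the $\sfI_\phi$ to the $\sfJ_\phi$.

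The gap is in your last paragraph. In a Cisinski model structure the class $\mathsf{An}$ of anodyne extensions is \emph{contained in} the acyclic cofibrations, but in general it does \emph{not} coincide with them, and the set $A$ from axiom \condition{An0} is not a generating set of acyclic cofibrations. (Already in $\PshDelta_{\mathrm{Joyal}}$ there are acyclic cofibrations that are not inner anodyne.) \Cref{th:cisinsli-model-structure} does assert cofibrant generation, but the generating acyclic cofibrations come from a separate small-object construction, not from $\sfHH_{\mathrm{inner}} \cup \sfEE_\frakII$. So the recognition criterion you invoke cannot be applied with that set, and the claim that ``the anodyne extensions coincide with the acyclic cofibrations'' is simply false.

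The fix is precisely what the paper does, and your own computations already set it up: having shown that $v$ preserves cofibrations and that $v$ sends $\mathsf{An}_\frakII$ into acyclic cofibrations of $\Alg_{\mathrm{folk}}$, argue on the right adjoint instead. By adjunction, for any folk fibration $f$ one has $\mathsf{An}_\frakII \pitchfork Mf$, i.e.\ $Mf$ is a \emph{naive} fibration. Its codomain is a nerve, hence an $\infty$-algebra by \cref{prop:nerves-infty-algebras}, so \cref{lemma:cisinski:1.3.36} upgrades $Mf$ to an actual fibration. Thus $M$ preserves fibrations, and together with $v$ preserving cofibrations this yields the Quillen adjunction. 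The key ingredient you are missing is exactly this passage through \cref{lemma:cisinski:1.3.36}, which compensates for the gap between anodyne extensions and acyclic cofibrations.
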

\begin{proof}
    Trivially, if $f : X \longrightarrow Y$ is a monomorphism in $\PshLambda$,
    then $v f$ is injective on $(n-1)$-cells. Therefore, $v$ preserves
    cofibrations. To conclude, it suffices to show that $M$ preserves
    fibrations.

    First, note that $v$ maps $\sfHH_{\mathrm{inner}}$ to isomorphisms, and
    since the adjunction is reflective, it maps $\sfEE_\frakII = M
    \sfEE_\frakJJ$ (see \cref{def:rezk-interval-lambda}) to $\sfEE_\frakJJ$ up
    to isomorphism. Therefore, $v$ maps $\mathsf{An}_\frakII$ to acyclic
    cofibrations. Let now $f$ be a fibration in $\Alg_{\mathrm{folk}}$. Then
    $f$ has the right lifting property against all acyclic cofibrations, and in
    particular, $v \mathsf{An}_\frakII \pitchfork f$. By adjointness,
    $\mathsf{An}_\frakII \pitchfork M f$, and thus $Mf$ is a naive fibration.
    By \cref{prop:nerves-infty-algebras}, the codomain of $M f$ is an
    $\infty$-algebra, and we apply \cref{lemma:cisinski:1.3.36} to conclude
    that $M f$ is a fibration.
\end{proof}

\begin{definition}
    [Quillen model structure]
    \label{def:pshlambda:quillen-model-structure}
    Let $\sfHH$ be the set of all horn inclusions of $\bbLambda$, not only the
    inner ones. The \emph{Quillen model structure}\index{Quillen model
    structure}
    $\PshLambda_{\mathrm{Quillen}}$\index{$\PshLambda_{\mathrm{Quillen}}$|see
    {Quillen model structure}} on $\PshLambda$ is the Bousfield localization
    $\sfHH^{-1} \PshLambda_\infty$, which exists by \cite[theorem
    4.1.1]{Hirschhorn2009}.

    Note that $\PshLambda_{\mathrm{Quillen}}$ is still of Cisinski type,
    cellular, and proper. In this structure, fibrant objects are called
    \emph{Kan complexes}\index{Kan complex}, and fibrations are \emph{Kan
    fibrations}\index{Kan fibration}. For instance, in the case $(k, n) = (1,
    1)$, we recover the classical Quillen structure on simplicial sets.
\end{definition}

\subsection{Underlying categories}
\label{sec:underlying-categories}

This section is devoted to prove the technical
\cref{lemma:v-reflects-isofibrations}. If $n = 1$, i.e. $\bbLambda_{1, 1} =
\bbDelta$, then the result is stated in \cite[proposition 1.13]{Joyal2007} and
\cite[2.13]{Joyal}. The bulk of the work is to study the \emph{underlying
category}\index{underlying category} adjunction
\begin{equation}
    C_{\phi !} : \PshDelta \adjunction \PshLambda : C_\phi^* ,
\end{equation}
where $\phi \in \bbOO_{n-1}$, to reduce a similar statements for an arbitrary
$n \geq 1$ to $n = 1$.

In this section, we assume $n \geq 1$, and for once, do not omit if from
various notations, e.g. $\bbLambda_{1, n}$, $\Alg_{1, n}$, etc. Recall that
$\bbLambda_{1, 1} = \bbDelta$ and $\Alg_{1, 1} = \Cat$. In this case, the
adjunction $v_{1, n} : \Psh{\bbLambda_{1, n}} \adjunction \Alg_{1, n} : M_{1,
n}$ is more commonly denoted by $\tau : \PshDelta \adjunction \Cat : N$.

Pick $\phi \in \bbOO_{n-1}$, and define an functor $C_\phi : \bbDelta
\longrightarrow \bbLambda_{1, n}$ as
\[
    [0] \longmapsto h \phi ,
    \qquad
    [1] \longmapsto h \ytree{\phi} ,
    \qquad
    [i] \longmapsto h \left(
        \ytree{\phi} \graft_{[]} \ytree{\phi} \graft_{[]} \cdots
        \graft_{[]} \ytree{\phi}
    \right) ,
\]
where on the right, there are $i$ instances of $\ytree{\phi}$. In other words,
$C_\phi$ maps $[i]$ to $h \omega$, where $\omega \in \bbOO_{n+1}$ is the linear
tree with $i$ nodes, all decorated by $\ytree{\phi} \in \bbOO_n$. Note that
$C_\phi$ is an embedding that exhibits $\bbDelta$ as an
\emph{sieve}\index{sieve} of $\bbLambda_{1, n}$ in that of for all morphism
$\lambda \longrightarrow \lambda '$ in $\bbLambda_{1, n}$, if $\lambda' \in \im
C_\phi$, then so is $\lambda$.

This functor induces an adjunction
\begin{equation}
    \label{eq:dphi-presheaf-adjunction}
    C_{\phi !} : \PshDelta \adjunction \Psh{\bbLambda_{1, n}} : C_\phi^* ,
\end{equation}
where $C_{\phi !}$ is the left Kan extension of $\bbDelta \xrightarrow{C_\phi}
\bbLambda \longrightarrow \Psh{\bbLambda_{1, n}}$ along the Yoneda embedding,
and $C_\phi^*$ is the precomposition by $C_\phi$. Since $C_\phi$ is an
embedding, so is $C_{\phi !}$, and furthermore, it exhibits $\PshDelta$ as an
sieve of $\Psh{\bbLambda_{1, n}}$. The following result follows directly from
this observation:

\begin{lemma}
    \label{lemma:C-constructions}
    Let $k \in \bbNN$.
    \begin{enumerate}
        \item We have $C_{\phi !} S [k] \cong S [C_\phi [k]]$.
        \item If $\Lambda^l [k]$ is an inner horn of $[k]$, then $C_{\phi !}
        \Lambda^l [k]$ is an inner horn of $C_\phi [k]$, and all inner horns of
        $C_\phi [k]$ are obtained in this way.
    \end{enumerate}
\end{lemma}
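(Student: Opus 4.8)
The plan is to exploit the two features of $C_{\phi !}$ recorded above: as a left adjoint it preserves all colimits, and being the left Kan extension of $C_\phi$ along the Yoneda embedding it sends the representable $\Delta [j]$ to the representable on $C_\phi [j]$; moreover it exhibits $\PshDelta$ as a sieve of $\Psh{\bbLambda_{1, n}}$. This last fact means that every subobject of $C_\phi [k]$ lies in the essential image of $C_{\phi !}$, so that $C_{\phi !}$ induces an isomorphism between the poset of subobjects of $\Delta [k]$ and that of $C_\phi [k]$.

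For \textbf{(1)}, recall that $C_\phi [k] = h \omega$ with $\omega \in \bbOO_{n+1}$ the linear tree with $k$ nodes, each decorated by $\ytree{\phi}$. The simplicial spine $S [k]$ is the colimit of $k$ copies of $\Delta [1]$ glued consecutively along copies of $\Delta [0]$. Applying $C_{\phi !}$, which preserves this colimit and sends $\Delta [1] \mapsto h \ytree{\phi}$ and $\Delta [0] \mapsto h \phi$, I obtain the colimit of $k$ copies of $h \ytree{\phi}$ glued along $h \phi$ at each shared $(n-1)$-cell. By \cref{def:spine-lambda} this is exactly $S [h \omega] = S [C_\phi [k]]$.

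For \textbf{(2)}, I first identify the elementary faces of $C_\phi [k]$. Any such face $\lambda' \longrightarrow C_\phi [k]$ is a non-invertible map of $\bbLambda_+$ with $\deg \lambda' = k - 1$; since $\im C_\phi$ is a sieve, $\lambda' \in \im C_\phi$, and as $C_\phi$ is fully faithful it corresponds to a map $[k-1] \longrightarrow [k]$ in $\bbDelta$, which is a coface $d^i$ once we know $C_\phi$ preserves the skeletal $\pm$ structure. This gives a degree-respecting bijection between the $k+1$ faces of $[k]$ and those of $C_\phi [k]$. By \cref{def:face}, a face of $C_\phi [k]$ is inner exactly when it fixes the leaf and root edges of $\omega$, i.e. when it contracts one of its $k-1$ inner edges; these are precisely the images of the inner cofaces $d^i$ with $0 < i < k$.

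Finally, the inner horn $\Lambda^l [k]$ (for $0 < l < k$) is the union of all faces of $[k]$ except $d^l$. Since $C_{\phi !}$ preserves this colimit and the bijection above sends inner faces to inner faces, $C_{\phi !} \Lambda^l [k]$ is the union of all faces of $C_\phi [k]$ omitting the inner face corresponding to $d^l$, which is by definition an inner horn of $C_\phi [k]$; conversely every inner horn of $C_\phi [k]$ omits a single inner face, necessarily of this form. I expect the one genuinely delicate point to be the compatibility of $C_\phi$ with the skeletal structure, namely that it carries $\bbDelta_+$ into $\bbLambda_+$ and $\bbDelta_-$ into $\bbLambda_-$; granting this, the sieve property reduces everything else to the standard simplicial bookkeeping.
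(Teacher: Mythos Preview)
Your proposal is correct and follows essentially the same approach as the paper: both arguments rest on the facts that spines and inner horns are colimits of representables over a fixed representable, that $C_{\phi !}$ preserves colimits and representables, and that the sieve property forces all relevant subobjects of $C_\phi [k]$ to lie in the image of $C_{\phi !}$. The paper compresses this into two sentences, while you spell out the bijection on elementary faces and the identification of inner faces explicitly; your flagged concern about $C_\phi$ preserving the $\pm$ decomposition is easily discharged, since $C_\phi [i]$ has exactly $i+1$ cells of shape $\phi$ in dimension $n-1$ (one per vertex of $[i]$), so injections and surjections in $\bbDelta$ go to maps that are injective and surjective on $(n-1)$-cells, which is the definition of $\bbLambda_\pm$.
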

\begin{proof}
    The spine and inner horns of $C_\phi [k]$ are obtained as colimits of
    presheaves over $C_\phi [k]$. Since $C_{\phi !}$ exhibits $\PshDelta$ as an
    sieve of $\Psh{\bbLambda_{1, n}}$, those colimits can be computed in
    $\PshDelta$.
\end{proof}
% \begin{proof}
%     We have
%     \begin{align*}
%         C_{\phi !} S [k]
%         &= C_{\phi !} \left(
%             \Delta [1] \coprod_{\Delta [0]} \Delta [1]
%             \coprod_{\Delta [0]} \cdots \coprod_{\Delta [0]} \Delta [1]
%         \right) & k \text{ times} \\
%         &\cong C_\phi [1] \coprod_{C_\phi [0]} C_\phi [1]
%             \coprod_{C_\phi [0]} \cdots \coprod_{C_\phi [0]} C_\phi [1] \\
%         &= h_{1, 1} \ytree{\phi}
%             \coprod_{h_{1, 1} \phi} h_{1, 1} \ytree{\phi}
%             \coprod_{h_{1, 1} \phi} \cdots
%             \coprod_{h_{1, 1} \phi} h_{1, 1} \ytree{\phi} \\
%         &\cong S [C_\phi [k]] .
%     \end{align*}
% \end{proof}

\begin{lemma}
    \label{lemma:C-adjunction-restricts}
    The adjunction \cref{eq:dphi-presheaf-adjunction} restrics and corestricts
    as an adjunction $C_{\phi !} : \Cat \adjunction \Alg_{1, n} : C_\phi^*$. We
    thus have three commutative squares, where the last one is obtained by
    adjunction from the second one:
    \[
        \diagramarrows{->}{<-}{<-}{->}
        \squarediagram
            {\PshDelta}{\Psh{\bbLambda_{1, n}}}{\Cat}{\Alg_{1, n} ,}
            {C_{\phi !}}{N}{M_{1, n}}{C_{\phi !}}
        \qqquad
        \diagramarrows{<-}{<-}{<-}{<-}
        \squarediagram
            {\PshDelta}{\Psh{\bbLambda_{1, n}}}{\Cat}{\Alg_{1, n} ,}
            {C_\phi^*}{N}{M_{1, n}}{C_\phi^*}
        \qqquad
        \diagramarrows{->}{->}{->}{->}
        \squarediagram
            {\PshDelta}{\Psh{\bbLambda_{1, n}}}{\Cat}{\Alg_{1, n} .}
            {C_{\phi !}}{\tau}{v_{1, n}}{C_{\phi !}}
    \]
\end{lemma}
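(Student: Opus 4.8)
The plan is to realise both $\Cat = \Alg_{1,1}$ and $\Alg_{1,n}$ as orthogonality classes inside the respective presheaf categories and to reduce the entire statement to the two assertions that $C_\phi^*$ and $C_{\phi!}$ each preserve local objects. By \cref{th:algebraic-localization:lambda}, applied both at the ambient $n$ and at $n=1$ (where $\bbLambda_{1,1}=\bbDelta$, $\Alg_{1,1}=\Cat$), the nerves $M_{1,n}$ and $N$ identify $\Alg_{1,n}$ with $\sfSS^\perp\subseteq\PshLambda$ and $\Cat$ with $\sfSS_\bbDelta^\perp\subseteq\PshDelta$, where $\sfSS_\bbDelta$ denotes the spine inclusions $S[k]\hookrightarrow\Delta[k]$, and $v_{1,n}$, $\tau$ are the associated Gabriel--Ulmer localizations. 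Under these identifications the three squares read: $(2)$ $C_\phi^*$ sends $\sfSS^\perp$ into $\sfSS_\bbDelta^\perp$; $(1)$ $C_{\phi!}$ sends $\sfSS_\bbDelta^\perp$ into $\sfSS^\perp$; and $(3)$ the mate identity $v_{1,n}C_{\phi!}\cong C_{\phi!}\tau$. The restricted adjunction will then fall out formally.

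For the right adjoint I would first record, from \cref{lemma:C-constructions}, that $C_{\phi!}$ carries each spine inclusion $S[k]\hookrightarrow\Delta[k]$ to $\sfS_{C_\phi[k]}\in\sfSS$, so $C_{\phi!}\sfSS_\bbDelta\subseteq\sfSS$. Consequently, for $A\in\sfSS^\perp$ and $s\in\sfSS_\bbDelta$, the adjunction bijection $\PshDelta(s,C_\phi^* A)\cong\PshLambda(C_{\phi!}s,A)$ turns orthogonality of $A$ against $C_{\phi!}s\in\sfSS$ into orthogonality of $C_\phi^* A$ against $s$; hence $C_\phi^* A\in\sfSS_\bbDelta^\perp$. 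This proves $(2)$ and produces the corestriction $C_\phi^*:\Alg_{1,n}\to\Cat$. The restricted adjunction then follows from the chain $\Alg_{1,n}(v_{1,n}C_{\phi!}N\mathcal C,A)\cong\PshLambda(C_{\phi!}N\mathcal C,M_{1,n}A)\cong\PshDelta(N\mathcal C,C_\phi^* M_{1,n}A)\cong\Cat(\mathcal C,C_\phi^* A)$, using the reflections and the full faithfulness of $N$ and $M_{1,n}$; and $(3)$ is its mate, equivalently the observation that $v_{1,n}C_{\phi!}$ is cocontinuous and inverts $\sfSS_\bbDelta$ (since it inverts $\sfSS$), hence factors through $\tau$.

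The crux is $(1)$: that the \emph{left} adjoint $C_{\phi!}$ preserves local objects, which is not formal. Here I would use that $C_\phi$ exhibits $\bbDelta$ as a sieve of $\bbLambda_{1,n}$, so that $C_{\phi!}X$ is the extension of $X$ by the empty set: $(C_{\phi!}X)_\lambda=X_{[k]}$ when $\lambda=C_\phi[k]$ is a linear $\ytree\phi$-tree, and $(C_{\phi!}X)_\lambda=\emptyset$ otherwise, since the comma category computing the left Kan extension is empty off the image. To verify $C_{\phi!}X\in\sfSS^\perp$ for $X\in\sfSS_\bbDelta^\perp$, I would test the spine inclusion $\sfS_{h\omega}$ of each $\omega\in\bbOO_{n+1}$. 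When $h\omega=C_\phi[k]$, \cref{lemma:C-constructions} identifies $\sfS_{h\omega}$ with $C_{\phi!}$ of $S[k]\hookrightarrow\Delta[k]$, so full faithfulness of $C_{\phi!}$ reduces orthogonality to the Segal condition on $X$. When $h\omega\notin\im C_\phi$, the key point is that $\ytree\phi$ is a unary (endotopic) operation, so any $\omega$ all of whose nodes carry $\ytree\phi$ is forced to be linear and hence lies in $\im C_\phi$; thus a non-linear $\omega$, or one with a node not of shape $\ytree\phi$, has a one-node subtree on which $C_{\phi!}X$ is empty, whence $\PshLambda(S[h\omega],C_{\phi!}X)=\emptyset=(C_{\phi!}X)_{h\omega}$ and $\sfS_{h\omega}$ is orthogonal vacuously. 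This establishes $(1)$, so $C_{\phi!}N\mathcal C$ is already local and $M_{1,n}(v_{1,n}C_{\phi!}N\mathcal C)=C_{\phi!}N\mathcal C$, yielding the first square and identifying the restricted $C_{\phi!}$ with the literal restriction. The main obstacle is precisely this unarity-plus-sieve argument; everything else is formal localization bookkeeping.
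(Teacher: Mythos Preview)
Your proposal is correct and follows the same strategy as the paper: realise both categories as $\sfSS$-orthogonality classes, use \cref{lemma:C-constructions} plus adjunction for the $C_\phi^*$ direction, and use the sieve description of $C_{\phi!}$ to dispose of spines $\sfS_{h\omega}$ with $h\omega \notin \im C_\phi$ vacuously. Your unarity argument for this last step (all nodes $\ytree\phi$ force $\omega$ to be linear, so a non-image $\omega$ must have a node $\psi \neq \ytree\phi$ on which $C_{\phi!}X$ is empty) is in fact more carefully stated than the paper's phrasing via $(n-1)$-dimensional faces, but the approach is identical.
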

\begin{proof}
    \begin{enumerate}
        \item Take $K \in \PshDelta$. Note that if $\phi' \in \bbOO_{n-1}$,
        $\phi' \neq \phi$, then $(C_{\phi !} K)_{\phi'} = \emptyset$, i.e.
        $C_{\phi !} K$ does not have cells of shape $\phi' \neq \phi$. If
        $\omega \in \bbOO_{\geq n-1}$, $\omega \nin \im C_\phi$, then $\omega$
        has a $(n-1)$-dimensional face different from $\phi$, whence
        $\Psh{\bbLambda_{1, n}} (S [h \omega], C_{\phi !} K) = \emptyset$. By
        \cref{lemma:C-constructions} and the fact that $C_{\phi !}$ is fully faithful,
        if $\sfSS_{1, 1} \perp K$, then $\sfSS_{1, n} \perp d_{\phi, !} K$, and
        $C_{\phi !}$ restrics and corestricts as a functor $\Cat
        \longrightarrow \Alg_{1, n}$.
        \item Take $A \in \Alg_{1, n}$, i.e. a presheaf $A \in
        \Psh{\bbLambda_{1, n}}$ such that $\sfSS_{1, n} \perp A$. By
        \cref{lemma:C-constructions}, $C_{\phi !} \sfSS_{1, 1} \subseteq \sfSS_{1,
        n}$, so in particular, $C_{\phi !} \sfSS_{1, 1} \perp A$, whence
        $\sfSS_{1, 1} \perp C_\phi^* A$, i.e. $C_\phi^* A \in \Cat$.
    \end{enumerate}
\end{proof}

\begin{lemma}
    \label{lemma:C-cylinders}
    \begin{enumerate}
        \item For $A \in \Cat$, we have $C_{\phi !} \frakJJ A \cong \frakJJ
        C_{\phi !} A$.
        \item For $K \in \PshDelta$, we have $C_{\phi !} \frakII K \cong
        \frakII C_{\phi !} K$.
        \item Let $f, g : K \longrightarrow L$ be two parallel maps in
        $\PshDelta$. If $f \simeq g$ (\cref{def:homotopy}), then $C_{\phi !} f
        \simeq C_{\phi !} g$.
    \end{enumerate}
\end{lemma}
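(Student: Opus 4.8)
The strategy is to establish (1) directly and then bootstrap: (2) follows from (1), and (3) follows from (2). The common engine is that $C_{\phi !}$ is a left adjoint (\cref{eq:dphi-presheaf-adjunction}), hence cocontinuous, so it commutes with every colimit appearing in the cylinder constructions; the content lies in matching the generating pieces. For (1), I would expand $\frakJJ A$ as in \cref{def:rezk-cylinder-algebra}: it is the quotient $A'/K$, where $A'$ is the pushout of $A + A$ and $\frakJJ_{A_{n-1}}$ over $h A_{n-1} + h A_{n-1}$, and $K$ is the naturality relation \cref{eq:rezk-cylinder:relations}. Since $C_{\phi !}$ preserves coproducts, pushouts and coequalizers, it suffices to identify each ingredient with its counterpart for $C_{\phi !} A$. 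As $A \in \Cat$ (so $n = 1$ and $A_{n-1} = A_0$), the support computation in the proof of \cref{lemma:C-adjunction-restricts} shows that $C_{\phi !} A$ has $\phi$-cells exactly $A_0$ and no cells of any other $(n-1)$-shape, which handles the boundary terms. The key local identity is $C_{\phi !}\frakJJ_{\mathrm{pt}} \cong \frakJJ_\phi$: the interval $\frakJJ_{\mathrm{pt}}$ in $\Cat$ is the free-living isomorphism, presented by a single invertible operation of shape $[1]$, and $C_\phi$ sends $[1]$ to $\ytree{\phi}$, invertibility being an algebraic condition preserved by the functor; summing over $A_0$ yields $C_{\phi !}\frakJJ_{A_{n-1}} \cong \frakJJ_{(C_{\phi !} A)_{n-1}}$ compatibly with the endpoint maps $(i_0,i_1)$. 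Finally, because $C_{\phi !}$ restricts to a functor $\Cat \to \Alg_{1,n}$ and so preserves grafting, it carries $K$ onto the relation defining $\frakJJ C_{\phi !} A$, using that the $n$-cells of $C_{\phi !}A$ all have shape $\ytree{\phi}$ and correspond bijectively to the morphisms of $A$. Hence $C_{\phi !}(A'/K) \cong \frakJJ C_{\phi !} A$.

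For (2), I would unfold $\frakII X = \colim_{h\omega \to X} M(\frakJJ h\omega)$ from \cref{def:rezk-cylinder-lambda}. Cocontinuity of $C_{\phi !}$ gives $C_{\phi !}\frakII K \cong \colim_{\Delta[i] \to K} C_{\phi !} N(\frakJJ [i])$, the colimit running over the category of elements of $K$. On the other side, the same support computation shows that $C_{\phi !}K$ carries cells only in shapes lying in $\im C_\phi$, so the category of elements of $C_{\phi !}K$ is that of $K$, whence $\frakII C_{\phi !} K \cong \colim_{\Delta[i]\to K} M(\frakJJ C_\phi[i])$. Comparing summands, the first commutative square of \cref{lemma:C-adjunction-restricts} gives $C_{\phi !} N(\frakJJ [i]) \cong M\, C_{\phi !}(\frakJJ [i])$; part (1) applied to the category $[i]$ gives $C_{\phi !}\frakJJ [i] \cong \frakJJ C_{\phi !}[i]$; and the third square $v\,C_{\phi !} \cong C_{\phi !}\tau$ together with the fact that $v$ fixes free algebras identifies $C_{\phi !}[i]$ with $C_\phi[i]$. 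These isomorphisms are natural in $[i]$ and compatible with the endpoint inclusions and the projection, so they assemble into the desired cylinder isomorphism.

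For (3), an elementary $\frakII$-homotopy from $f$ to $g$ is a map $H : \frakII K \to L$ with $H(i_0,i_1) = (f,g)$ (\cref{def:homotopy}). Applying $C_{\phi !}$ and transporting along the isomorphism of (2) yields $\tilde H : \frakII C_{\phi !} K \to C_{\phi !} L$; since that isomorphism respects the endpoint inclusions, $\tilde H(i_0,i_1) = (C_{\phi !} f, C_{\phi !} g)$, so $\tilde H$ is an elementary homotopy from $C_{\phi !} f$ to $C_{\phi !} g$. As $\simeq$ is the equivalence relation generated by elementary homotopies and $C_{\phi !}$ is a functor, this passes from elementary homotopies to the whole relation, giving $C_{\phi !} f \simeq C_{\phi !} g$.

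I expect the main obstacle to be the relation-matching in (1): verifying that $C_{\phi !}$ carries the quotient $K$ precisely onto the defining relation of $\frakJJ C_{\phi !} A$, with neither missing nor spurious identifications. This rests on the support and sieve properties of $C_{\phi !}$ (that $C_{\phi !} A$ has $(n-1)$- and $n$-cells only in the shapes $\phi$ and $\ytree{\phi}$) together with the preservation of grafting coming from \cref{lemma:C-adjunction-restricts}.
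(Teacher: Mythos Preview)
Your proof is correct and follows essentially the same strategy as the paper: (1) is taken as immediate, (2) uses part (1) together with the commutative squares of \cref{lemma:C-adjunction-restricts}, and (3) follows from (2). The only difference is packaging: the paper treats $\frakII K$ as $N\frakJJ\tau K$ and runs the chain $C_{\phi !} N \frakJJ \tau \cong M_{1,n} C_{\phi !} \frakJJ \tau \cong M_{1,n} \frakJJ C_{\phi !} \tau \cong M_{1,n} \frakJJ v_{1,n} C_{\phi !}$ in one line, whereas you expand $\frakII$ via its colimit definition and apply the same identifications pointwise over the category of elements (using the sieve property of $C_\phi$ to match indexing categories).
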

\begin{proof}
    Point (1) is by definition, and (3) follows from (2). To prove (2), consider
    \begin{align*}
        C_{\phi !} \frakII K
        &= C_{\phi !} N \frakJJ \tau K
            & \text{\cref{def:rezk-cylinder-lambda}} \\
        &\cong M_{1, n} C_{\phi !} \frakJJ \tau K
            & \text{\cref{lemma:C-adjunction-restricts}} \\
        &\cong M_{1, n} \frakJJ C_{\phi !} \tau K
            & \text{\cref{lemma:C-adjunction-restricts}} \\
        &\cong M_{1, n} \frakJJ v_{1, n} C_{\phi !} K
            & \text{\cref{lemma:C-adjunction-restricts}} \\
        &= \frakII C_{\phi !} K
            & \text{\cref{def:rezk-cylinder-lambda}.}
    \end{align*}
\end{proof}

\begin{proposition}
    \label{prop:C-quillen}
    We have a Quillen adjunction $C_{\phi !} : \PshDelta_\infty \adjunction
    \Psh{\bbLambda_{1, n}}_\infty : C_\phi^*$.
\end{proposition}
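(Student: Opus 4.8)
The plan is to verify the Quillen adjunction through the standard recognition criterion for cofibrantly generated model categories: since both $\PshDelta_\infty$ and $\Psh{\bbLambda_{1, n}}_\infty$ are of Cisinski type, it suffices to show that the left adjoint $C_{\phi !}$ carries the generating cofibrations to cofibrations and the generating acyclic cofibrations to acyclic cofibrations. In these structures the cofibrations are exactly the monomorphisms (generated by the boundary inclusions $\sfBB$), while the generating acyclic cofibrations are the members of $\sfHH_{\mathrm{inner}} \cup \sfEE_\frakII$. Three things must therefore be checked: that $C_{\phi !}$ preserves monomorphisms, that it sends inner horn inclusions into the $\frakII$-anodyne extensions, and that it sends endpoint inclusions into the $\frakII$-anodyne extensions.

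First I would record that $C_{\phi !}$ preserves monomorphisms. Because $C_\phi$ exhibits $\bbDelta$ as a sieve of $\bbLambda_{1, n}$, the left Kan extension $C_{\phi !}$ is extension by the empty presheaf: for $\lambda \notin \im C_\phi$ one has $(C_{\phi !} K)_\lambda = \emptyset$, and on $\im C_\phi \cong \bbDelta$ it is the identity. Such a functor is levelwise injective, hence preserves monomorphisms, and in particular sends each boundary inclusion to a cofibration.

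Next I would treat the two families of generating acyclic cofibrations. For the inner horns, \cref{lemma:C-constructions} shows that $C_{\phi !}$ carries each inner horn inclusion $\Lambda^l [k] \longhookrightarrow \Delta[k]$ to an inner horn inclusion of $C_\phi[k]$ in $\bbLambda_{1, n}$; these lie in $\sfHH_{\mathrm{inner}}$, hence are $\frakII$-anodyne. For the endpoint inclusions I would use that $C_{\phi !}$ commutes with the cylinder, $C_{\phi !} \frakII \cong \frakII C_{\phi !}$, by \cref{lemma:C-cylinders}. The unique generating endpoint inclusion of $\PshDelta_\infty$ is $\Delta[0] \longhookrightarrow \frakII \Delta[0]$; applying $C_{\phi !}$, and using $C_{\phi !} \Delta[0] = h \phi$ together with $C_{\phi !} \frakII \Delta[0] \cong \frakII h \phi = \frakII_\phi$, identifies its image with the endpoint inclusion $\sfI_\phi : h \phi \longhookrightarrow \frakII_\phi$, a member of $\sfEE_\frakII$ and thus $\frakII$-anodyne.

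Since every $\frakII$-anodyne extension is an acyclic cofibration, these three facts are precisely what the recognition criterion requires, so $(C_{\phi !}, C_\phi^*)$ is a Quillen adjunction. I expect the only delicate point to be the identification of the images of the generators under $C_{\phi !}$ — in particular matching $\frakII h \phi$ with $\frakII_\phi$ and recognizing $\frakII \Delta[0]$ as the simplicial Rezk interval in the case $n = 1$ — but this bookkeeping is exactly what \cref{lemma:C-constructions,lemma:C-cylinders} supply, after which the argument is purely formal.
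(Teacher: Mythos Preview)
Your argument has a genuine gap: the claim that $\sfHH_{\mathrm{inner}} \cup \sfEE_\frakII$ is \emph{the} set of generating acyclic cofibrations is not correct. In a Cisinski model structure built from a homotopical datum $(\frakII, \mathsf{An})$, the set $\sfHH_{\mathrm{inner}} \cup \sfEE_\frakII$ generates only the class $\mathsf{An}_\frakII$ of $\frakII$-anodyne extensions (\cref{def:i-anodyne-extension}). These are acyclic cofibrations, but in general a \emph{proper} subclass of them; equivalently, naive fibrations need not be fibrations (\cref{lemma:cisinski:1.3.36} only gives this over a fibrant base). Already for $n=1$, in the Joyal model structure on $\PshDelta$ there are acyclic cofibrations that are not inner anodyne. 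So checking the images of $\sfHH_{\mathrm{inner}} \cup \sfEE_\frakII$ does not directly verify the hypothesis of the recognition criterion you invoke.

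Your three checks are nevertheless correct and are exactly what one needs for a slightly different route: they show $C_{\phi !}$ sends cofibrations to cofibrations and $\frakII$-anodyne maps to $\frakII$-anodyne maps, hence by adjunction $C_\phi^*$ preserves trivial fibrations and naive fibrations, in particular fibrant objects. Then \cref{lemma:cisinski:1.3.36} upgrades ``naive fibration with fibrant codomain'' to ``fibration'', so $C_\phi^*$ preserves fibrations between fibrant objects; combined with preservation of trivial fibrations, Dugger's criterion \cite[corollary A.2]{Dugger2001a} (see also \cite[proposition 8.5.4]{Hirschhorn2009}) yields the Quillen adjunction.

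The paper takes a different and shorter line: rather than work on generators, it argues that $C_{\phi !}$ preserves monomorphisms and, via \cref{lemma:C-cylinders}, preserves the $\frakII$-homotopy relation, hence preserves \emph{all} weak equivalences; a functor preserving cofibrations and weak equivalences is automatically left Quillen. Your generator computations (especially that $C_\phi^*$ preserves fibrant objects) are in fact what makes that ``hence'' go through, since the definition of weak equivalence in \cref{th:cisinsli-model-structure} quantifies over fibrant targets and one must transport along the adjunction.
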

\begin{proof}
    Clearly, $C_{\phi !}$ preserves monomorphisms, and by
    \cref{lemma:C-cylinders}, it preserves the homotopy relation, thus the weak
    equivalences.
\end{proof}

\begin{lemma}
    [{Generalization of \cite[proposition 1.13]{Joyal2007} and
    \cite[2.13]{Joyal}}]
    \label{lemma:v-reflects-isofibrations}
    Let $X, Y \in \PshLambda$ be $\infty$-algebras, and $f : X \longrightarrow
    Y$ be an inner fibration (\cref{def:inner-anodyne-extension}). If $v f$ is
    a fibration in $\Alg_{\mathrm{folk}}$ (i.e. $\sfEE_\frakJJ \pitchfork v
    f$), then $\sfEE_\frakII \pitchfork f$.
\end{lemma}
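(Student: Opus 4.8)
The plan is to fix $\phi \in \bbOO_{n-1}$ and reduce the lifting problem $\sfI_\phi \pitchfork f$ to the already-known case $n=1$ through the underlying-category adjunction $C_{\phi !} : \PshDelta \adjunction \Psh{\bbLambda_{1, n}} : C_\phi^*$. Consider a lifting problem
\[
\begin{tikzcd}
h \phi \ar[r, "x"] \ar[d, "\sfI_\phi"'] & X \ar[d, "f"] \\
\frakII_\phi \ar[r, "w"'] \ar[ur, dashed] & Y
\end{tikzcd}
\]
Since $C_{\phi !}\Delta[0] = h\phi$ and, by \cref{lemma:C-cylinders}, $C_{\phi !}(\frakII\Delta[0]) \cong \frakII C_{\phi !}\Delta[0] = \frakII h\phi = \frakII_\phi$, the endpoint inclusion $\sfI_\phi$ is $C_{\phi !}$ applied to the simplicial endpoint inclusion $i_0 : \Delta[0] \hookrightarrow \frakII\Delta[0]$ (the nerve of the free-living isomorphism). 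Transposing across $C_{\phi !} \dashv C_\phi^*$, a solution of $\sfI_\phi \pitchfork f$ is the same datum as a solution of $i_0 \pitchfork C_\phi^* f$ in $\PshDelta$, so it suffices to solve the latter.

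Next I would verify that $g \eqdef C_\phi^* f$ meets the hypotheses of the cited $n=1$ statement \cite[proposition 1.13]{Joyal2007}. Because $f$ is an inner fibration and, by \cref{lemma:C-constructions}, $C_{\phi !}$ carries simplicial inner horn inclusions to inner horn inclusions of $\bbLambda_{1, n}$, the adjunction $C_{\phi !} \dashv C_\phi^*$ shows that $g$ has the right lifting property against all simplicial inner horns, i.e.\ $g$ is an inner fibration. Running the same argument with $X \to 1$ and $Y \to 1$ in place of $f$, together with \cref{lemma:everyone-is-isofibrant} (so that $X$ and $Y$, being $\infty$-algebras, are injective against $\sfHH_{\mathrm{inner}}$), shows that $C_\phi^* X$ and $C_\phi^* Y$ are quasi-categories; this is the only place the fibrancy of $X$ and $Y$ enters.

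It remains to supply the isofibration hypothesis on $\tau g$. Here I would first establish a natural isomorphism $\tau \circ C_\phi^* \cong C_\phi^* \circ v_{1, n}$ on $X$ and $Y$: the unit $Z \to M_{1, n} v_{1, n} Z$, combined with the restriction square $N C_\phi^* = C_\phi^* M_{1, n}$ of \cref{lemma:C-adjunction-restricts}, yields by transposition a comparison map $\tau C_\phi^* Z \to C_\phi^* v_{1, n} Z$, and for an $\infty$-algebra $Z$ this is an isomorphism, as one checks on objects (both sides are the $(n-1)$-cells of shape $\phi$) and on morphisms (both identify $\ytree\phi$-cells under the homotopy relation, which is transported by $C_{\phi !}$ via \cref{lemma:C-cylinders}). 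Granting this, $\tau g \cong C_\phi^*(v f)$. Finally, $C_\phi^*$ restricts to a right adjoint $\Alg_{1, n} \to \Cat$ of $C_{\phi !}$ (\cref{lemma:C-adjunction-restricts}), and $C_{\phi !}$ sends the endpoint inclusion of the free-living isomorphism to $\sfJ_\phi \in \sfEE_\frakJJ$ (again \cref{lemma:C-cylinders}); hence the hypothesis $\sfEE_\frakJJ \pitchfork v f$ gives, by adjunction, that $C_\phi^*(v f) \cong \tau g$ is an isofibration in $\Cat$. Now \cite[proposition 1.13]{Joyal2007} applies to $g$, producing a lift of $i_0 \pitchfork g$, which transposes back to the desired lift $\frakII_\phi \to X$; by naturality of the adjunction this lift restricts to $x$ along $\sfI_\phi$ and lies over $w$.

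The main obstacle is the compatibility $\tau \circ C_\phi^* \cong C_\phi^* \circ v_{1, n}$ on $\infty$-algebras: it asserts that restricting a homotopy-coherent $n$-algebra to its $\bbDelta$-shaped sieve and then forming the simplicial homotopy category agrees with first strictifying and then restricting. This is exactly what converts the \emph{strict} isofibration hypothesis on $v f$ into the simplicial isofibration hypothesis that Joyal's criterion requires; once it is in hand, every remaining step is bookkeeping with the adjunction $C_{\phi !} \dashv C_\phi^*$ and the transport lemmas \cref{lemma:C-constructions,lemma:C-cylinders,lemma:C-adjunction-restricts}.
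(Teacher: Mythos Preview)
Your proposal is correct and follows essentially the paper's argument: reduce via $C_{\phi !} \dashv C_\phi^*$ to Joyal's $n=1$ criterion, checking that $C_\phi^* f$ is an inner fibration between quasi-categories and that $\tau(C_\phi^* f)$ is an isofibration. The only difference is that you flag the comparison $\tau\, C_\phi^* \cong C_\phi^*\, v_{1,n}$ as the main point to justify, while the paper deduces it in one line from \cref{lemma:C-adjunction-restricts} (and invokes \cref{prop:C-quillen} rather than your direct horn argument to see that $C_\phi^* X$, $C_\phi^* Y$ are quasi-categories).
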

\begin{proof}
    If $n = 1$, i.e. $\bbLambda = \bbDelta$, then the result holds by
    \cite[proposition 1.13]{Joyal2007}. Assume $n > 1$, and take $\phi \in
    \bbOO_{n-1}$. By \cref{prop:C-quillen}, $C_\phi^*$ is a right Quillen
    functor, thus $C_\phi^* X$ and $C_\phi^* Y$ are quasi-categories. By
    \cref{lemma:C-constructions}, $C_\phi^* f$ is an inner fibration, since $f$
    is. By assumption, $\sfEE_\frakJJ \pitchfork v_{1, n} f$, and in
    particular, $C_{\phi !} \sfJ_\optOne = \sfJ_\phi \pitchfork v_{1, n} f$. By
    adjunction and \cref{lemma:C-adjunction-restricts}, $\sfJ_\optOne
    \pitchfork C_\phi^* v_{1, n} f = \tau C_\phi^* f$, i.e. $C_\phi^* f$ is a
    fibration in $\Cat_{\mathrm{folk}}$. Finally, $C_\phi^* f$ satisfies the
    conditions of \cite[proposition 1.13]{Joyal2007}, hence $\sfI_\optOne
    \pitchfork C_\phi^* f$. By adjunction, $\sfI_\phi \pitchfork f$
\end{proof}

\subsection{Simplicial tensor and cotensor}

% In this section we provide a simplicial enrichement on $\PshLambda$ such that
% the model structure of \cref{th:pshlambda:infty-model-structure} is
% simplicial (see e.g. \cite[chapter 9]{Hirschhorn2009}).

\begin{definition}
    \label{def:pshlambda:simplicial-tensor}
    For $k \in \bbNN$ and $\lambda \in \bbLambda$, let $\Delta [k] \otimes
    \lambda$ be the nerve
    \[
        \Delta [k] \otimes \lambda
        \eqdef
        M \left(
            \frakJJ \lambda
            \coprod_\lambda \frakJJ \lambda
            \coprod_\lambda \frakJJ \lambda
            \coprod_\lambda \cdots
            \coprod_\lambda \frakJJ \lambda
        \right) ,
    \]
    where there is $k$ instances of $\frakJJ \lambda$. In other words, it is
    the nerve of $k$ instances of the cylinder $\frakJJ \lambda$ ``glued
    end-to-end''. Extending in both variables by colimits yields a tensor
    product $- \otimes - : \PshDelta \times \PshLambda \longrightarrow
    \PshLambda$.

    Let us unfold the definition a little bit. Take $X \in \PshLambda$ and $K
    \in \PshDelta$. Then for each $x \in X_{h \phi}$ with $\phi \in
    \bbOO_{n-1}$, and $k \in K_0$, there is a cell $k \otimes x \in (K \otimes
    X)_{h \phi}$. For all edge $e \in K_1$, there is an isomorphism $e \otimes
    x \in (K \otimes X)_{h \ytree{\phi}}$ with source $d_1 e \otimes x$ and
    target $d_0 e \otimes x$. More generally, for every $m$-cell $k \in K_m$,
    writing $k_0, \ldots, k_m \in K_0$ its vertices, and $k_{i, j}$ its edge
    from $k_i$ to $k_j$, where $0 \leq i < j \leq m$, we have a cell
    \[
        k \otimes x \in (K \otimes X)_{h \omega} ,
        \qqquad
        \omega \eqdef C_\phi [m] = \ytree{\phi} \graft_{[]} \ytree{\phi}
            \graft_{[]} \cdots \graft_{[]} \ytree{\phi} ,
    \]
    such that $\src_{[*^i]} (k \otimes x) = k_{i, i+1} \otimes x$.

    With this description, it is clear that for $\frakII_\optOne \in \PshDelta$
    the nerve of the groupoid generated by one isomorphism
    (\cref{def:rezk-interval-lambda}), we have $\frakII X \cong \frakII_\optOne
    \otimes X$ for all $X \in \PshLambda$.
\end{definition}

\begin{definition}
    \label{def:pshlambda:simplicial-cotensor-mapping-space}
    A mapping space and cotensor can be constructed from the tensor product
    $\otimes$ of \cref{def:pshlambda:simplicial-tensor} so as to make
    $\PshLambda$ tensored and cotensored over $\PshDelta$:
    \[
        \Map (X, Y)_k \eqdef \PshLambda (\Delta [k] \otimes X, Y) ,
        \qqquad
        (Y^K)_\lambda \eqdef \PshLambda (K \otimes \lambda, Y) ,
    \]
    where $X, Y \in \PshLambda$, $K \in \PshDelta$, and $k \in \bbNN$.
\end{definition}

% \begin{proposition}
%     \label{prop:pshlambda:simplicial}
%     The mapping space $\Map$ of
%     \cref{def:pshlambda:simplicial-cotensor-mapping-space} provides a natural
%     simplicial enrichment to $\PshLambda$.
% \end{proposition}
% \begin{proof}
%     Let $X, Y, Z \in \PshLambda$. For $k \in \bbNN$, the composite of $f \in
%     \Map (X, Y)_k = \PshLambda (\Delta [k] \otimes X, Y)$ and $g \in \Map (Y,
%     Z)_k$ is
%     \[
%         \Delta [k] \otimes X
%         \stackrel{\Delta}{\longrightarrow}
%             \left( \Delta [k] \times \Delta [k] \right) \otimes X
%         \stackrel{\cong}{\longrightarrow}
%             \Delta [k] \otimes \Delta [k] \otimes X
%         \xrightarrow{\Delta [k] \otimes f} \Delta [k] \otimes Y
%         \stackrel{g}{\longrightarrow} Z .
%     \]
%     The unit $i_X : \Delta [0] \longrightarrow \Map (X, X)$ simply selects
%     $\id_X : X \cong \Delta [0] \otimes X \longrightarrow X$.
% \end{proof}

\begin{lemma}
    \label{lemma:tensor-cotensor-transpose-preserves-homotopy}
    For $K \in \PshDelta$ and $X, Y \in \PshLambda$, consider the natural
    $\hom$-set isomorphism
    \[
        \Phi : \PshLambda (K \otimes X, Y) \longrightarrow \PshLambda (X, Y^K)
    \]
    of the adjunction $K \otimes - : \PshLambda \adjunction \PshLambda :
    (-)^K$. The map $\Phi$ preserves and reflects the $\frakII$-homotopy
    relation (\cref{def:homotopy,def:rezk-cylinder-lambda}), i.e. it induces an
    isomorphism
    \[
        \Phi : \PshLambda (K \otimes X, Y) / {\simeq}
            \longrightarrow \PshLambda (X, Y^K) / {\simeq} .
    \]
\end{lemma}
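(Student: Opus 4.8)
The plan is to reduce the whole statement to a single structural isomorphism, namely $\frakII(K \otimes X) \cong K \otimes \frakII X$ compatible with endpoint inclusions, and then feed it through naturality of the adjunction bijection $\Phi$. First I would establish that isomorphism. By the remark closing \cref{def:pshlambda:simplicial-tensor} the cylinder functor is $\frakII \cong \frakII_\optOne \otimes -$, and by \cref{def:pshlambda:simplicial-cotensor-mapping-space} the category $\PshLambda$ is tensored over the cartesian monoidal category $\PshDelta$. The tensoring therefore supplies coherent natural isomorphisms $(K \times L) \otimes X \cong K \otimes (L \otimes X)$ and $\Delta [0] \otimes X \cong X$, and combining these with the symmetry $\frakII_\optOne \times K \cong K \times \frakII_\optOne$ gives
\[
    \frakII(K \otimes X)
    \cong \frakII_\optOne \otimes (K \otimes X)
    \cong (\frakII_\optOne \times K) \otimes X
    \cong (K \times \frakII_\optOne) \otimes X
    \cong K \otimes (\frakII_\optOne \otimes X)
    \cong K \otimes \frakII X .
\]
The one point that genuinely needs checking is that, under this composite, each endpoint inclusion $i_e : K \otimes X \longrightarrow \frakII(K \otimes X)$ is carried to $K \otimes i_e : K \otimes X \longrightarrow K \otimes \frakII X$ for $e = 0, 1$. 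Since every $i_e$ is, up to the unit isomorphism $\Delta [0] \otimes - \cong \mathrm{id}$, obtained by tensoring the vertex $\Delta [0] \longrightarrow \frakII_\optOne$ with the relevant object, this reduces to the associativity and symmetry coherence of $\times$ in $\PshDelta$.

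Next I would transport elementary homotopies across $\Phi$. Let $H : \frakII(K \otimes X) \longrightarrow Y$ be an elementary $\frakII$-homotopy from $f$ to $g$ (\cref{def:homotopy}), so that $H \circ i_0 = f$ and $H \circ i_1 = g$. Using the isomorphism above, view $H$ as a map $K \otimes \frakII X \longrightarrow Y$ satisfying $H \circ (K \otimes i_e) = f, g$, and let $H'$ be its adjoint transpose, a map $\frakII X \longrightarrow Y^K$. Naturality of the adjunction bijection in its first variable, applied to the morphisms $i_e : X \longrightarrow \frakII X$, yields $H' \circ i_e = \Phi\big(H \circ (K \otimes i_e)\big)$, whence $H' \circ i_0 = \Phi(f)$ and $H' \circ i_1 = \Phi(g)$. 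Thus $H'$ is an elementary homotopy from $\Phi(f)$ to $\Phi(g)$, so $\Phi$ carries elementary homotopies to elementary homotopies. Running the identical argument through $\Phi^{-1}$ (the transpose of an elementary homotopy $\frakII X \longrightarrow Y^K$ is, by the same naturality and the same structural isomorphism, an elementary homotopy $\frakII(K \otimes X) \longrightarrow Y$) shows that $\Phi^{-1}$ does likewise.

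Finally I would conclude. The relation $\simeq$ is by definition (\cref{def:homotopy}) the equivalence relation generated by elementary homotopies; since $\Phi$ sends each elementary homotopy to an elementary homotopy, it sends any generating zigzag to a zigzag, so $f \simeq g$ implies $\Phi(f) \simeq \Phi(g)$, and the symmetric statement for $\Phi^{-1}$ gives the converse, so $\Phi$ both preserves and reflects $\simeq$. As $\Phi$ is already a bijection of hom-sets, it therefore descends to a bijection on the quotients by $\simeq$. The only step carrying real content is the compatibility of the structural isomorphism with the endpoint inclusions established in the first paragraph; once that coherence is in hand, everything else is pure adjunction naturality.
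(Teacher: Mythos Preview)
Your proposal is correct and follows essentially the same route as the paper: both use the identification $\frakII \cong \frakII_\optOne \otimes -$ together with the associativity and symmetry of the $\PshDelta$-tensor to obtain $\frakII(K \otimes X) \cong K \otimes \frakII X$, and then transpose elementary homotopies through the adjunction (the paper handles the reverse direction with a single ``done similarly''). Your extra care in checking compatibility with the endpoint inclusions and in passing from elementary homotopies to the generated equivalence relation only makes explicit what the paper leaves implicit.
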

\begin{proof}
    It is enough to show that $\Phi$ preserves and reflects the elementary
    $\frakII$-homotopy relation. Let $f, g : K \otimes X \longrightarrow Y$ be
    elementary homotopic maps, i.e. such that there exist a homotopy $H :
    \frakII (K \otimes X) \longrightarrow Y$ making the following triangle
    commute:
    \[
        \triangleDLdiagram
            {(K \otimes X) + (K \otimes X)}{\frakII (K \otimes X)}{Y.}
            {(i_0, i_1)}{f + g}{H}
    \]
    Note that $(K \otimes X) + (K \otimes X) \cong K \otimes (X + X)$, and
    $\frakII (K \otimes X) \cong \frakII_\optOne \otimes K \otimes X \cong
    (\frakII_\optOne \times K) \otimes X \cong (K \times \frakII_\optOne)
    \otimes X \cong K \otimes \frakII X$. Under the adjunction, the triangle
    above transposes as
    \[
        \triangleDLdiagram
            {X + X}{\frakII X}{Y^K ,}
            {(i_0, i_1)}{\Phi f + \Phi g}{\Phi H}
    \]
    exhibiting a homotopy from $\Phi f$ to $\Phi g$. Reflection of homotopies
    is done similarly.
\end{proof}

\begin{lemma}
    \label{lemma:pshlambda:simplicial-tensor:anodyne-extensions}
    Let $K \in \PshDelta$.
    \begin{enumerate}
        \item For $\omega \in \bbOO_{\geq n-1}$, and $e$ an inner face of $h
        \omega$, the map $K \otimes \sfH_{h \omega}^e : K \otimes \Lambda^e h
        \omega \longrightarrow K \otimes h \omega$ is an anodyne extension.
        \item For $\phi \in \bbOO_{n-1}$, the map $K \otimes \sfI_\phi : K
        \otimes h \phi \longrightarrow K \otimes \frakII_\phi$ is an anodyne
        extension.
    \end{enumerate}
\end{lemma}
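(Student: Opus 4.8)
The plan is to deduce both statements from the generating-cofibration case by a skeletal filtration of $K$, and then to recognise each elementary piece as a monomorphism into a \emph{nerve} that becomes invertible under $v$, so that the Generation Lemma (\cref{coroll:generation-lemma}) applies.

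First I would record the Leibniz reformulation. For a fixed map $f$, write $\hat\otimes$ for the Leibniz construction (\cref{def:leibniz-construction}) of $-\otimes-$, so that $K\otimes f=(\emptyset\hookrightarrow K)\hat\otimes f$. Since $-\otimes-$ preserves colimits in each variable and $\mathsf{An}_\frakII={}^\pitchfork(\cdots)$ is saturated, the class $\{\,i\mid i\hat\otimes f\in\mathsf{An}_\frakII\,\}$ of monomorphisms of $\PshDelta$ is closed under pushout, transfinite composition and retract. As $\sfBB_\bbDelta=\{\sfB_k:\partial\Delta[k]\hookrightarrow\Delta[k]\}$ is a cellular model of $\PshDelta$ (\cref{prop:cisinski:boundaries-generate-monomorphisms}), every monomorphism — in particular $\emptyset\hookrightarrow K$ — lies in $\Cell{\sfBB_\bbDelta}$. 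Hence it suffices to prove that $\sfB_k\hat\otimes f$ is $\frakII$-anodyne for all $k\in\bbNN$ and each $f\in\sfHH_{\mathrm{inner}}\cup\sfEE_\frakII$; concretely this exhibits $K\otimes f$ as a transfinite composite of pushouts of the maps $\sfB_k\hat\otimes f$, one per nondegenerate simplex of $K$. Along the way I would check, by the standard pushout-product argument in a presheaf category, that each $\sfB_k\hat\otimes f$ is a monomorphism.

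For part (1), take $f=\sfH^e_{h\omega}$. The codomain $\Delta[k]\otimes h\omega$ is by definition a nerve $M(\cdots)$ (the $k$-fold gluing of the Rezk cylinder $\frakJJ h\omega$), so the Leibniz map $\sfB_k\hat\otimes\sfH^e_{h\omega}$ is a monomorphism into a nerve. Applying $v$ and using that $v$ preserves colimits together with the iterated form of $\frakJJ vX\cong v\frakII X$ (\cref{lemma:cynlinder:I-vs-J}), the object $v(\Delta[m]\otimes X)$ is (up to canonical isomorphism) the $m$-fold gluing of the algebraic Rezk cylinder $\frakJJ$ on $vX$; since $v\sfH^e_{h\omega}$ is an isomorphism (inner anodyne extensions are $\sfSS$-local isomorphisms, \cref{prop:inner-horn-inclusions-S-local-isomorphisms}), each corner of the Leibniz pushout agrees after $v$ with the corresponding cylinder on $vh\omega$, so the transpose of $\sfB_k\hat\otimes\sfH^e_{h\omega}$ is an isomorphism. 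The Generation Lemma (\cref{coroll:generation-lemma}) then shows $\sfB_k\hat\otimes\sfH^e_{h\omega}$ is an inner horn complex, hence $\frakII$-anodyne.

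The main obstacle is part (2), where $f=\sfI_\phi$ and the codomain $\Delta[k]\otimes\frakII_\phi=\Delta[k]\otimes M\frakJJ_\phi$ is no longer manifestly a nerve, because $\frakJJ_\phi$ is not a free algebra on a single opetope; thus the Generation Lemma cannot be applied verbatim. I see two ways to resolve this. The first is to extend the defining nerve formula from representables to cylinders, proving $\Delta[k]\otimes MA\cong M(\frakJJ^{[k]}A)$ for the algebras $A$ at hand, so that $\Delta[k]\otimes\frakII_\phi\cong M(\frakJJ^{[k]}\frakJJ_\phi)$ becomes a nerve of an iterated Rezk cylinder; the explicit description of $\frakJJ\frakJJ_\phi$ from the proof of \cref{prop:cylinder:An2} is exactly the template, and once the codomain is a nerve the computation of $v$ proceeds as in part (1). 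The second, and cleaner, route is to reduce to the simplicial case: writing $\sfI_\phi=C_{\phi !}\sfI_\optOne$ and extending the compatibilities of \cref{lemma:C-adjunction-restricts,lemma:C-cylinders} to the full simplicial tensor so that $K\otimes\sfI_\phi\cong C_{\phi !}(K\otimes\sfI_\optOne)$, the problem becomes the $n=1$ statement $K\otimes\sfI_\optOne\in\mathsf{An}$ in $\PshDelta$ — which is the classical Joyal--Tierney fact \cite{Joyal2007} — together with the fact that the sieve embedding $C_{\phi !}$ carries anodyne extensions to anodyne extensions (it sends inner horns to inner horns and Rezk intervals to Rezk intervals, cf. \cref{lemma:C-constructions,prop:C-quillen}). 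Verifying the requisite tensor--$C_{\phi !}$ compatibility, or equivalently the nerve identification of $\Delta[k]\otimes\frakII_\phi$, is the delicate combinatorial point on which the whole lemma turns.
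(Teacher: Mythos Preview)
Your Leibniz reduction to the generating maps $\sfB_k \mathbin{\hat\otimes} f$ is correct and in fact more carefully stated than the paper's one-line ``it suffices to check the claim when $K$ is representable''; and your treatment of part (1) via \cref{coroll:generation-lemma}, using that $v$ inverts inner horn inclusions, coincides with the paper's.

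For part (2) the paper takes a shorter route than either of your proposals. Rather than proving a nerve identification for $\Delta[m]\otimes\frakII_\phi$ or transporting the problem to $n=1$ via $C_{\phi !}$, the paper simply factors $\Delta[m]\otimes\sfI_\phi$ through an intermediate presheaf. Write $T\cong\sum_{i=0}^m h\phi$ for the sub-presheaf of $\Delta[m]\otimes h\phi$ spanned by the vertices $i_m\otimes *$, and form $X$ as the pushout of $\Delta[m]\otimes h\phi$ along the sum of endpoint inclusions $i_0:T\hookrightarrow\frakII T\cong\sum_{i=0}^m\frakII_\phi$. Then $\Delta[m]\otimes\sfI_\phi$ decomposes as
\[
\Delta[m]\otimes h\phi \stackrel{u}{\longhookrightarrow} X \stackrel{v}{\longhookrightarrow} \Delta[m]\otimes\frakII_\phi ,
\]
where $u$ is a pushout of a coproduct of copies of $\sfI_\phi$ and hence lies in $\mathsf{An}_\frakII$ by definition, while $v$ already has in its image the full cylinder on $h\phi$ together with a Rezk interval at every vertex --- enough to generate the codomain --- so \cref{coroll:generation-lemma} disposes of it. Compared with your two suggestions, this avoids verifying the compatibility of $C_{\phi !}$ with the simplicial tensor, and it replaces the abstract nerve identification by the concrete observation that every cell of $\Delta[m]\otimes\frakII_\phi$ is a composite of cells already present in $X$. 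Note also that this factorisation adapts immediately to the Leibniz map $\sfB_k\mathbin{\hat\otimes}\sfI_\phi$ required by your reduction: for $k\geq 1$ the Leibniz domain already contains $X$ (the Rezk intervals at the vertices come from $\partial\Delta[k]\otimes\frakII_\phi$), so only the Generation-Lemma step remains.
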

\begin{proof}
    In both cases, it is enough to check the claim where $K$ is representable,
    say $K = \Delta [m]$ for some $m \in \bbNN$.
    \begin{enumerate}
        \item Clearly, the inclusion $\Delta [m] \otimes \sfH_{h \omega}^e :
        \Delta [m] \otimes \Lambda^e h \omega \longrightarrow \Delta [m]
        \otimes h \omega$ satisfies the condition of
        \cref{coroll:generation-lemma}, i.e. $\Delta [m] \otimes \Lambda^e h
        \omega$ generates $v (\Delta [m] \otimes h \omega)$.

        \item Write $*$ the unique element of $h \phi_{h \phi}$ (corresponding
        the the identity map $h \phi \longrightarrow h \phi$ in $\bbLambda$),
        and $0_m, \ldots, m_m$ the vertices of $\Delta [m]$. In particular,
        $(\Delta [m] \otimes h \phi)_{h \phi} = \left\{ 0_m \otimes *, \ldots,
        m_m \otimes * \right\}$.

        Let $T \in \PshLambda$ be the sum of $m+1$ copies of $h \phi$, and
        write $T_{h \phi} = \left\{ *_0, \ldots, *_m \right\}$. Clearly,
        $\frakII T$ is the sum of $m+1$ copies of $\frakII_\phi$, and the
        inclusion $i_0 : T \longrightarrow \frakII T$ is the sum of $m+1$
        copies of the endpoint inclusion $\sfI_\phi : h \phi \longrightarrow
        \frakII_\phi$. There is an obvious inclusion $t : T \longrightarrow
        \Delta [m] \otimes h \phi$ mapping the $*_i$ to $i_m \otimes *$. Let $X
        \in \PshLambda$ be defined by the following pushout:
        \[
            \pushoutdiagram
                {T}{\Delta [m] \otimes h \phi}{\frakII T}{X .}
                {t}{i_0}{u}{}
        \]
        In other words, $X$ is $\Delta [m] \otimes h \phi$ where an instance of
        $\frakII_\phi$ has been glued to each cell $i_m \otimes *$. The map
        $\Delta [m] \otimes \sfI_\phi$ factors as
        \[
            \Delta [m] \otimes h \phi
            \stackrel{u}{\longhookrightarrow} X
            \stackrel{v}{\longhookrightarrow} \Delta [m] \otimes \frakII_\phi ,
        \]
        and by construction, $u$ is an anodyne extension. On the other hand,
        $v$ satisfies the conditions of \cref{coroll:generation-lemma}, as the
        cells not in its image can be obtained as composites of cells in $X$.
        Therefore, $v$ is an anodyne extension, and so is $\Delta [m] \otimes
        \sfI_\phi$.
        \qedhere
    \end{enumerate}
\end{proof}

\begin{corollary}
    \label{coroll:pshlambda:simplicial-cotensor:naive-fibrations}
    For $K \in \PshDelta$, the functor $(-)^K$ preserves naive fibration
    (\cref{def:cisinski-model-structure}), and in particular,
    $\infty$-algebras.
\end{corollary}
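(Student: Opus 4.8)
The plan is to exploit the adjunction $K \otimes - \dashv (-)^K$ of \cref{def:pshlambda:simplicial-cotensor-mapping-space} in order to transpose, across $(-)^K$, the lifting problems that define a naive fibration. Recall that $f$ is a naive fibration exactly when $\mathsf{An}_\frakII \pitchfork f$, and since by \cref{def:i-anodyne-extension} we have $\mathsf{An}_\frakII = {}^\pitchfork\bigl((\sfHH_{\mathrm{inner}} \cup \sfEE_\frakII)^\pitchfork\bigr)$, the standard identity ${}^\pitchfork(S^\pitchfork)^\pitchfork = S^\pitchfork$ reduces this to $f$ having the right lifting property against the generating set $\sfHH_{\mathrm{inner}} \cup \sfEE_\frakII$ alone. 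So, to prove that $f^K$ is a naive fibration, it suffices to establish $m \pitchfork f^K$ for every generator $m \in \sfHH_{\mathrm{inner}} \cup \sfEE_\frakII$.

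First I would observe that, by the adjunction $K \otimes - \dashv (-)^K$, a lifting problem of $m$ against $f^K$ transposes to a lifting problem of $K \otimes m$ against $f$; concretely, $m \pitchfork f^K$ holds if and only if $(K \otimes m) \pitchfork f$. The decisive input is then \cref{lemma:pshlambda:simplicial-tensor:anodyne-extensions}, which asserts precisely that $K \otimes \sfH_{h\omega}^e$ (for $\sfH_{h\omega}^e$ an inner horn inclusion) and $K \otimes \sfI_\phi$ (for $\sfI_\phi$ an endpoint inclusion of a Rezk interval) are $\frakII$-anodyne extensions. Hence $K \otimes m \in \mathsf{An}_\frakII$ for each such generator $m$, and since $f$ is a naive fibration we have $\mathsf{An}_\frakII \pitchfork f$, whence $(K \otimes m) \pitchfork f$, i.e. $m \pitchfork f^K$. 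This yields $\mathsf{An}_\frakII \pitchfork f^K$, as required.

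For the final assertion, I would note that $(-)^K$, being a right adjoint, preserves the terminal presheaf, so $1^K \cong 1$; if $P$ is an $\infty$-algebra then the terminal map $P \longrightarrow 1$ is a naive fibration, and by the above $P^K \longrightarrow 1^K \cong 1$ is too, so $P^K$ is fibrant, i.e. an $\infty$-algebra. There is essentially no obstacle here, since all the combinatorial content has already been absorbed into \cref{lemma:pshlambda:simplicial-tensor:anodyne-extensions}; the only point requiring a moment's care is the correct direction of the adjoint transposition, ensuring that the \emph{left} class $\mathsf{An}_\frakII$ is the one moved across $(-)^K$ and reappears as $K \otimes (-)$ applied to generators.
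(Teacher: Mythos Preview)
Your proof is correct and follows essentially the same route as the paper: reduce the naive-fibration condition to the generating set $\sfHH_{\mathrm{inner}} \cup \sfEE_\frakII$, transpose across the adjunction $K \otimes - \dashv (-)^K$, and invoke \cref{lemma:pshlambda:simplicial-tensor:anodyne-extensions} to see that $K \otimes m$ is anodyne for each generator $m$. Your additional sentence justifying the $\infty$-algebra case via $1^K \cong 1$ is a welcome explicitation of what the paper leaves implicit.
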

\begin{proof}
    Let $\sfLL \eqdef \sfHH_{\mathrm{inner}} \cup \sfEE_\frakII$, so that
    $\mathsf{An}_\frakII = {}^\pitchfork \left( \sfLL^\pitchfork \right)$. Let
    $p$ be a naive fibration, i.e. a map such that $\mathsf{An}_\frakII
    \pitchfork p$, or equivalently, such that $\sfLL \pitchfork p$. By
    \cref{lemma:pshlambda:simplicial-tensor:anodyne-extensions}, we have $K
    \otimes \sfLL \pitchfork p$, and by adjunction, $\sfLL \pitchfork p^K$.
    Consequently, $p^K$ is a naive fibration.
\end{proof}

\begin{lemma}
    \label{lemma:tensor-preserves-cofs-weq}
    \begin{enumerate}
        \item For $K \in \PshDelta$, the tensor $K \otimes - :
        \PshLambda_\infty \longrightarrow \PshLambda_\infty$ preserves
        cofibrations and weak equivalences.
        \item Let $X \in \PshLambda$ be an $\infty$-algebra. Then $- \otimes X
        : \PshDelta_{\mathrm{Quillen}} \longrightarrow \PshLambda_\infty$
        preserves cofibrations, and weak equivalences between Kan complexes.
    \end{enumerate}
\end{lemma}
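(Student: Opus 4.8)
The plan is to treat the two asserted properties—preservation of cofibrations and of weak equivalences—separately, and to reduce each to material already established in this section.

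For cofibrations, recall that in both $\PshDelta_\infty$ (resp. $\PshDelta_{\mathrm{Quillen}}$) and $\PshLambda_\infty$ the cofibrations are exactly the monomorphisms. Since the tensor $- \otimes - : \PshDelta \times \PshLambda \longrightarrow \PshLambda$ is cocontinuous in each variable (\cref{def:pshlambda:simplicial-tensor}), I would invoke the two-variable Leibniz calculus: writing $\mathbin{\hat{\otimes}}$ for the associated Leibniz construction (\cref{def:leibniz-construction}), it suffices to prove that $\sfBB_\bbDelta \mathbin{\hat{\otimes}} \sfBB$ consists of monomorphisms. Indeed, since both families of boundary inclusions generate the monomorphisms by \cref{prop:cisinski:boundaries-generate-monomorphisms}, the standard weak-saturation argument then upgrades this to the statement that the Leibniz tensor of any two monomorphisms is a monomorphism. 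Both halves of the lemma follow at once: for part (1), $K \otimes f = (\emptyset \hookrightarrow K) \mathbin{\hat{\otimes}} f$ is a monomorphism whenever $f$ is, and for part (2), $g \otimes X = g \mathbin{\hat{\otimes}} (\emptyset \hookrightarrow X)$ is a monomorphism whenever $g$ is. The only real content is thus the single combinatorial claim that the gap map of $\sfB_m \mathbin{\hat{\otimes}} \sfB_{h \omega}$ is injective on cells, which I would check directly from the explicit presentation of $\Delta [m] \otimes h \omega$ in \cref{def:pshlambda:simplicial-tensor}: a cell $k \otimes x$ with $k$ a nondegenerate $m$-simplex and $x$ a top-dimensional cell of $h \omega$ lies in neither $\partial \Delta [m] \otimes h \omega$ nor $\Delta [m] \otimes \partial h \omega$. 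I expect this bookkeeping, unwinding the nerve presentation of the tensor, to be the main obstacle.

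For the weak equivalences in part (1), I would argue via the adjunction $K \otimes - \dashv (-)^K$ together with \cref{lemma:tensor-cotensor-transpose-preserves-homotopy}, which makes this adjunction descend to an isomorphism $\Ho\PshLambda (K \otimes Z, P) \cong \Ho\PshLambda (Z, P^K)$ natural in $Z$. Given a weak equivalence $f : X \longrightarrow Y$ and an $\infty$-algebra $P$, the transpose of $(K \otimes f)^*$ is $f^* : \Ho\PshLambda (Y, P^K) \longrightarrow \Ho\PshLambda (X, P^K)$; since $P^K$ is again an $\infty$-algebra by \cref{coroll:pshlambda:simplicial-cotensor:naive-fibrations}, this map is a bijection by the definition of weak equivalence (\cref{th:pshlambda:infty-model-structure}), and hence so is $(K \otimes f)^*$. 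Thus $K \otimes f$ is a weak equivalence.

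Finally, for the weak equivalences between Kan complexes in part (2), I would use that a weak equivalence $g : K \longrightarrow L$ between Kan complexes in $\PshDelta_{\mathrm{Quillen}}$ is an $\frakII_\optOne$-homotopy equivalence (all objects being cofibrant and Kan complexes fibrant, this is the Whitehead theorem for the Quillen structure), where $\frakII_\optOne$ is the nerve of the free-living isomorphism. Choosing a homotopy inverse $g'$ and homotopies $H : \frakII_\optOne \times K \longrightarrow K$ and $H' : \frakII_\optOne \times L \longrightarrow L$, I would apply $- \otimes X$ and use the associativity isomorphism $(\frakII_\optOne \times K) \otimes X \cong \frakII_\optOne \otimes (K \otimes X) \cong \frakII (K \otimes X)$ recorded in the proof of \cref{lemma:tensor-cotensor-transpose-preserves-homotopy} (together with $\frakII X \cong \frakII_\optOne \otimes X$) to convert $H \otimes X$ and $H' \otimes X$ into elementary $\frakII$-homotopies, the matching of endpoints being formal by naturality of the associator. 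This exhibits $g \otimes X$ as an $\frakII$-homotopy equivalence, hence an isomorphism in $\Ho\PshLambda$, and therefore a weak equivalence in $\PshLambda_\infty$, since homotopy equivalences are always weak equivalences. The point needing care is that the hypothesis that $X$ is an $\infty$-algebra is what ensures the transported homotopies are genuine $\frakII$-homotopies in $\PshLambda$; the associativity isomorphism itself is purely formal.
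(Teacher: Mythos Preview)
Your approach matches the paper's almost exactly: the paper also disposes of cofibrations with a bare ``clearly'', proves part (1) via the adjunction $K \otimes - \dashv (-)^K$ together with \cref{lemma:tensor-cotensor-transpose-preserves-homotopy} and \cref{coroll:pshlambda:simplicial-cotensor:naive-fibrations}, and proves part (2) by invoking Whitehead's theorem (in the form of \cite[theorem 1.2.10]{Hovey1999}) to transport a homotopy inverse through $- \otimes X$.

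One correction to your final remark: the hypothesis that $X$ is an $\infty$-algebra is \emph{not} what makes the transported homotopies into $\frakII$-homotopies. The identification $(\frakII_\optOne \times K) \otimes X \cong \frakII (K \otimes X)$ is, as you say, purely formal and holds for arbitrary $X$; once you have it, $H \otimes X$ is already an elementary $\frakII$-homotopy, and $\frakII$-homotopy equivalences are weak equivalences in $\PshLambda_\infty$ by the definition in \cref{th:pshlambda:infty-model-structure}, with no fibrancy needed. The paper's proof of part (2) does not use the fibrancy of $X$ at all, and neither does yours; the hypothesis appears to be stated only because it is the case relevant to the subsequent application in \cref{prop:fibrant-cotensor}.
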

\begin{proof}
    \begin{enumerate}
        \item Clearly, $K \otimes -$ preserves monomorphisms. Let $u : X
        \longrightarrow Y$ be a weak equivalence, and $P \in \PshLambda$ be an
        $\infty$-algebra. Recall from
        \cref{lemma:tensor-cotensor-transpose-preserves-homotopy} that we have
        a natural isomorphism
        \[
            \Phi : \PshLambda (K \otimes X, Y) / {\simeq}
                \longrightarrow \PshLambda (X, Y^K) / {\simeq} ,
        \]
        where $\Phi$ is the natural $\hom$-set isomorphism of the adjunction $K
        \otimes - \dashv (-)^K$. Therefore, we have the following naturality
        square:
        \[
            \squarediagram
                {\PshLambda (K \otimes X, P) / {\simeq}}
                    {\PshLambda (K \otimes Y, P) / {\simeq}}
                    {\PshLambda (X, P^K) / {\simeq}}
                    {\PshLambda (Y, P^K) / {\simeq} .}
                {(K \otimes u)^*}{\Phi}{\Phi}{u^*}
        \]
        The vertical maps are bijections. By
        \cref{coroll:pshlambda:simplicial-cotensor:naive-fibrations}, $P^K$ is
        an $\infty$-algebra, thus $u^*$ is a bijection as well. Therefore, $(K
        \otimes u)^*$ is a bijection for all $\infty$-algebra $P$. By
        definition, $K \otimes u$ is a weak equivalence.

        \item Clearly, $- \otimes X$ preserves monomorphisms. Let $w : K
        \longrightarrow L$ be a weak equivalence between Kan complexes. By
        \cite[theorem 1.2.10]{Hovey1999}, it is a homotopy equivalence, meaning
        that it admits an inverse $w^{-1} : L \longrightarrow K$ up to
        homotopy, and $w^{-1} \otimes X$ is an inverse of $w \otimes X$ up to
        homotopy.
        \qedhere
    \end{enumerate}
\end{proof}

\begin{corollary}
    \label{coroll:leibniz-tensor-preserves-cof-weq}
    Let $u : K \longrightarrow L$ be a cofibration between Kan complexes, $v :
    X \longrightarrow Y$ be a cofibration in $\PshLambda$, and consider the
    Leibniz tensor $u \mathbin{\hat{\otimes}} v$
    (\cref{def:leibniz-construction}). Then $u \mathbin{\hat{\otimes}} v$ is a
    cofibration. If either $u$ or $v$ is an acyclic cofibration, then so is $u
    \mathbin{\hat{\otimes}} v$.
\end{corollary}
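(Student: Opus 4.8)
The plan is to deduce this Leibniz (pushout--product) statement from the \emph{unary} preservation properties of \cref{lemma:tensor-preserves-cofs-weq} by the standard two-variable argument, handling the cofibration claim and the acyclic claim separately. The starting observation is that $-\otimes-\colon \PshDelta \times \PshLambda \longrightarrow \PshLambda$ is divisible on both sides: it is a left adjoint in the $\PshDelta$-variable via $K \otimes - \dashv (-)^K$, and in the $\PshLambda$-variable via $- \otimes X \dashv \Map(X, -)$ (\cref{def:pshlambda:simplicial-cotensor-mapping-space}). Hence its Leibniz construction $\mathbin{\hat{\otimes}}$ enjoys the formal properties recorded in \cite[section 4]{Riehl2014} (see \cref{def:leibniz-construction}); in particular $\mathbin{\hat{\otimes}}$ commutes with the weakly saturated closure in each variable.

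For the cofibration claim I would reduce to generators. By \cref{prop:cisinski:boundaries-generate-monomorphisms} the cofibrations of $\PshDelta$ and of $\PshLambda$ are the saturations of $\sfBB_\bbDelta$ and $\sfBB$ respectively, and the cofibrations of $\PshLambda$ form a saturated class; so by the previous paragraph it suffices to check that $\sfB_m \mathbin{\hat{\otimes}} \sfB_{h\omega}$ is a monomorphism for all $m \in \bbNN$ and $\omega \in \bbOO_{n+1}$. This follows from \cref{lemma:tensor-preserves-cofs-weq}, which gives that $\otimes$ preserves monomorphisms in each variable, together with the observation that the square defining the Leibniz tensor of two monomorphisms is cartesian in the topos $\PshLambda$, so that its gap map is again a monomorphism. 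No Kan-complex hypothesis enters here.

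For the acyclic claim I would run a cobase-change and $3$-for-$2$ argument inside the defining pushout square
\[
    \pushoutdiagram
        {K \otimes X}{K \otimes Y}{L \otimes X}{P ,}
        {K \otimes v}{u \otimes X}{}{}
\]
whose gap map into $L \otimes Y$ is $u \mathbin{\hat{\otimes}} v$. Suppose first that $v$ is an acyclic cofibration. Then $K \otimes v$ and $L \otimes v$ are acyclic cofibrations, since $K \otimes -$ and $L \otimes -$ preserve cofibrations and weak equivalences (\cref{lemma:tensor-preserves-cofs-weq}). As acyclic cofibrations are stable under cobase change, the leg $L \otimes X \longrightarrow P$ is an acyclic cofibration; its composite with $u \mathbin{\hat{\otimes}} v$ is $L \otimes v$, again an acyclic cofibration, so $3$-for-$2$ forces $u \mathbin{\hat{\otimes}} v$ to be a weak equivalence, hence (by the cofibration claim) an acyclic cofibration. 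Suppose instead that $u$ is an acyclic cofibration between Kan complexes. Here I would use the legs in the $u$-direction: $- \otimes X$ and $- \otimes Y$ preserve cofibrations and weak equivalences between Kan complexes (\cref{lemma:tensor-preserves-cofs-weq}), so $u \otimes X$ and $u \otimes Y$ are acyclic cofibrations because $K$ and $L$ are Kan. Viewing $K \otimes Y \longrightarrow P$ as the cobase change of $u \otimes X$, it is an acyclic cofibration, and its composite with $u \mathbin{\hat{\otimes}} v$ is $u \otimes Y$; $3$-for-$2$ again yields that $u \mathbin{\hat{\otimes}} v$ is a weak equivalence.

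The main obstacle is bookkeeping: each preservation property of \cref{lemma:tensor-preserves-cofs-weq} must be invoked under exactly the hypothesis it requires, and in the second case one must be careful that only weak equivalences \emph{between Kan complexes} are needed (which is precisely why $u$ is assumed to be a map of Kan complexes). The one point deserving explicit care is that the preservation of such weak equivalences by $- \otimes X$ and $- \otimes Y$ is used for the possibly non-fibrant $X, Y$ appearing here; this is legitimate because that part of \cref{lemma:tensor-preserves-cofs-weq} is proved by exhibiting a homotopy inverse, an argument that does not use fibrancy of the tensoring object.
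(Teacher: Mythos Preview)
Your proof is correct and follows the same approach as the paper: the acyclic case is handled by the identical cobase-change and $3$-for-$2$ argument via \cref{lemma:tensor-preserves-cofs-weq}, and you are right that the $\infty$-algebra hypothesis in part~(2) of that lemma is not actually used in its proof (the paper invokes the lemma in the same way without comment). For the cofibration claim the paper simply asserts that the Leibniz product of monomorphisms is a monomorphism, so your justification via saturation and the cartesian-square observation is more detailed than what appears there.
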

\begin{proof}
    Surely, since $u$ and $v$ are monomorphisms, $u \mathbin{\hat{\otimes}} v$
    is too. Assume that $u$ is an acyclic cofibration. By
    \cref{lemma:tensor-preserves-cofs-weq}, $u \otimes X$ and $u \otimes Y$ are
    too, and so is the pushout $u'$ of $u \otimes X$ along $K \otimes v$. By
    3-for-2, $u \mathbin{\hat{\otimes}} v$ is a weak equivalence. The case
    where $v$ is an acyclic cofibration instead of $u$ is done similarly.
\end{proof}

\begin{proposition}
    \label{prop:fibrant-cotensor}
    Let $X \in \PshLambda$ be an $\infty$-algebra, and $v : K \longrightarrow
    K$ be a cofibration (resp. acyclic cofibration) between Kan complexes. Then
    $X^v : X^L \longrightarrow X^K$ is a fibration (resp. acyclic fibration).
\end{proposition}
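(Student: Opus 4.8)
The plan is to recognize $X^v$ as a Leibniz cotensor and to transpose every lifting problem against it into a lifting problem against the fibrant object $X$, where \cref{coroll:leibniz-tensor-preserves-cof-weq} applies verbatim. Concretely, applying the Leibniz construction (\cref{def:leibniz-construction}) to the two-variable adjunction furnished by the tensor and cotensor of \cref{def:pshlambda:simplicial-cotensor-mapping-space}, one gets, for $v : K \longrightarrow L$ in $\PshDelta$ and any arrow $j : A \longrightarrow B$ in $\PshLambda$, the transposition
\[
    (v \mathbin{\hat{\otimes}} j) \pitchfork (X \to 1)
    \quad\Longleftrightarrow\quad
    j \pitchfork X^v ,
\]
since the Leibniz cotensor of $v$ against the terminal map $X \to 1$ is precisely $X^v : X^L \longrightarrow X^K$ (the pullback $X^K \times_{1^K} 1^L$ collapses to $X^K$, as $1^K = 1^L = 1$). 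This is the standard lifting-transposition for two-variable adjunctions, cf. \cite[section 4]{Riehl2014}.

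First I would treat the fibration case. Let $j$ be an arbitrary acyclic cofibration in $\PshLambda_\infty$. Since $v$ is a cofibration between Kan complexes and $j$ is in particular a cofibration in $\PshLambda$ that is moreover acyclic, \cref{coroll:leibniz-tensor-preserves-cof-weq} shows that $v \mathbin{\hat{\otimes}} j$ is an acyclic cofibration. Because $X$ is an $\infty$-algebra, the terminal morphism $X \to 1$ is a naive fibration, hence a fibration by \cref{lemma:cisinski:1.3.36} (its codomain $1$ being fibrant), so $(v \mathbin{\hat{\otimes}} j) \pitchfork (X \to 1)$. By the transposition above, $j \pitchfork X^v$. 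As $j$ ranges over all acyclic cofibrations, $X^v$ has the right lifting property against acyclic cofibrations, i.e. is a fibration.

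The acyclic-fibration case is identical, with the roles of the two arguments of \cref{coroll:leibniz-tensor-preserves-cof-weq} swapped: now $v$ is an \emph{acyclic} cofibration between Kan complexes and $j$ ranges over \emph{all} cofibrations in $\PshLambda$. The corollary again yields that $v \mathbin{\hat{\otimes}} j$ is an acyclic cofibration, whence $(v \mathbin{\hat{\otimes}} j) \pitchfork (X \to 1)$ and therefore $j \pitchfork X^v$ for every cofibration $j$. By \cref{th:pshlambda:infty-model-structure}, having the right lifting property against all cofibrations is exactly what it means for $X^v$ to be an acyclic fibration.

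The main obstacle is purely the bookkeeping of the two-variable Leibniz adjunction: one must confirm that the tensor and cotensor of \cref{def:pshlambda:simplicial-cotensor-mapping-space} genuinely assemble into a two-variable adjunction (so that the Leibniz construction and its characteristic lifting-transposition are available), and that the Leibniz cotensor against the terminal map degenerates to the bare cotensor $X^v$. Both are routine consequences of the enrichment already established in \cref{sec:underlying-categories}, and once the transposition is in hand the statement reduces immediately to \cref{coroll:leibniz-tensor-preserves-cof-weq} together with the fibrancy of $X$.
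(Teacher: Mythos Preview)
Your proof is correct and is essentially the same argument as the paper's: transpose the lifting problem through the two-variable adjunction so that $j \pitchfork X^v$ becomes $(v \mathbin{\hat{\otimes}} j) \pitchfork X$, then invoke \cref{coroll:leibniz-tensor-preserves-cof-weq} and the fibrancy of $X$. The paper's version is terser (it omits the explicit appeal to \cref{lemma:cisinski:1.3.36} and the identification of the Leibniz cotensor against $X\to 1$ with $X^v$), but the strategy is identical.
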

\begin{proof}
    Assume that $v$ is an cofibration (resp. acyclic cofibration). In order to
    show that $X^v$ is an fibration (resp. acyclic fibration), we must show
    that $u \pitchfork X^v$ for all acyclic cofibration (resp. cofibration) $u$
    in $\PshLambda_\infty$. This is equivalent to $u \mathbin{\hat{\otimes}} v
    \pitchfork X$. By \cref{coroll:leibniz-tensor-preserves-cof-weq}, $u
    \mathbin{\hat{\otimes}} v$ is an acyclic cofibration, and since $X$ is an
    $\infty$-algebra, the result holds.
\end{proof}

\begin{remark}
    Unfortunately, the model structure $\PshLambda_\infty$, together with the
    tensor and cotensor of
    \cref{def:pshlambda:simplicial-tensor,def:pshlambda:simplicial-cotensor-mapping-space}
    cannot be promoted into a simplicial model category. In fact, this already
    fails if $n = 1$ \cite[section 6]{Joyal2007}.
\end{remark}

\section{\texorpdfstring{$\infty$}{Infinity}-algebras vs. complete Segal spaces}
\label{sec:infty-algebras-vs-complete-segal-spaces}

In \cite{HoThanh2020}, we introduced the Segal and Rezk model structure on
$\SpLambda$ as Bousfield localization of the projective structure. Here, we
take a slightly different approach by starting with the \emph{vertical} (or
\emph{Reedy}) structure. This corresponds to the theory we generalize
\cite{Joyal2007,Cisinski2013}, while still being equivalent to the notions of
\cite{HoThanh2020} as there is a Quillen equivalence $\id :
\SpLambda_{\mathrm{proj}} \adjunction \SpLambda_v : \id$.

\subsection{Segal spaces}
\label{sec:segal-spaces}

\begin{definition}
    \label{def:segal-model-structure}
    Recall from \cref{def:vertical-horizontal-model-structures} that
    $\SpLambda_v$ is the Reedy structure on $\SpLambda =
    \PshDelta^{\bbLambda^\op}$ induced by $\PshDelta_{\mathrm{Quillen}}$. Let
    $\SpLambda_{\mathrm{Segal}}$\index{$\SpLambda_{\mathrm{Segal}}$|see {Segal
    model structure}}, the \emph{Segal model structure}\index{Segal model
    structure} on $\SpLambda$, be the left Bousfield localization of
    $\SpLambda_v$ at the set $\sfSS$ of spine inclusions
    (\cref{def:spine-lambda}), which exists by \cite[theorem
    4.1.1]{Hirschhorn2009}.

    Fibrant objects (resp. weak equivalences) in $\SpLambda_{\mathrm{Segal}}$
    are called \emph{Segal spaces}\index{Segal space} (resp. \emph{Segal weak
    equivalences}\index{Segal weak equivalence|see {Segal model structure}}).
    Explicitely, a Segal space $X \in \PshLambda$ is a vertically fibrant space
    such that for all $\omega \in \bbOO_{n-1}$, the map $\sfS_{h \omega}
    \backslash X : h \omega \backslash X = X_{h \omega} \longrightarrow S [h
    \omega] \backslash X$ is a weak equivalence.
\end{definition}

\begin{lemma}
    \label{lemma:saturation-rcp}
    Let $\sfKK$ be a saturated class of monomorphism of $\PshLambda$ having the
    \emph{right cancellation property}\index{right cancellation property}, i.e.
    such that for all composable pair or morphisms $f, g \in \PshLambda$, if
    $fg, g \in \sfKK$, then $f \in \sfKK$.
    \begin{enumerate}
        \item (Generalization of \cite[lemma 3.5]{Joyal2007} and
        \cite[proposition 2.5]{Cisinski2013}) If $\sfSS \subseteq \sfKK$, then
        $\mathsf{An}_{\mathrm{inner}} \subseteq \sfKK$.
        \item (Generalization of \cite[lemma 3.7]{Joyal2007}) If $\sfKK$
        contains all elementary face embeddings, then $\mathsf{An} \subseteq
        \sfKK$.
    \end{enumerate}
\end{lemma}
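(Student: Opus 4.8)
The plan is to exploit that any saturated class is the smallest one containing its generators, so that the right cancellation property can play the role that $3$-for-$2$ plays in \cref{prop:inner-horn-inclusions-S-local-isomorphisms}. Since $\mathsf{An}_{\mathrm{inner}} = {}^\pitchfork(\sfHH_{\mathrm{inner}}^\pitchfork)$ is the smallest saturated class containing $\sfHH_{\mathrm{inner}}$, and $\sfKK$ is saturated, part (1) reduces to proving $\sfHH_{\mathrm{inner}} \subseteq \sfKK$; likewise, since $\mathsf{An}_\frakII = {}^\pitchfork((\sfHH_{\mathrm{inner}} \cup \sfEE_\frakII)^\pitchfork)$, part (2) reduces to proving $\sfHH_{\mathrm{inner}} \cup \sfEE_\frakII \subseteq \sfKK$.

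For part (1), I would show $\sfH_{h\omega}^f \in \sfKK$ by induction on $d = \deg h\omega$, the case $d = 2$ holding because the inner horn inclusion is then the spine inclusion (proof of \cref{prop:spine-complex-inner-anodyne}), which lies in $\sfSS \subseteq \sfKK$. For $d \ge 3$ I reuse verbatim the factorization from the proof of \cref{prop:inner-horn-inclusions-S-local-isomorphisms},
\[
    S[h\omega]
    \stackrel{u}{\longhookrightarrow} \Lambda^I h\omega
    \stackrel{v}{\longhookrightarrow} \Lambda^f h\omega
    \stackrel{\sfH_{h\omega}^f}{\longhookrightarrow} h\omega ,
\]
where $I$ is the set of all inner faces. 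By \cref{lemma:spine-super-horn}, $u$ is a cell complex of spine inclusions of opetopes with at most $d-1$ nodes, hence in $\sfKK$ as $\sfSS \subseteq \sfKK$ and $\sfKK$ is saturated; by \cref{lemma:relative-super-horns-inclusions}, $v$ is a cell complex of inner horn inclusions of opetopes with at most $d-1$ nodes, hence in $\sfKK$ by the induction hypothesis and saturation. Thus $vu \in \sfKK$, and since $\sfS_{h\omega} = \sfH_{h\omega}^f \circ (vu)$ belongs to $\sfSS \subseteq \sfKK$, the right cancellation property gives $\sfH_{h\omega}^f \in \sfKK$.

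For part (2), I would first obtain all horn inclusions of $\bbLambda$ — inner and outer — by a single induction on the number of nodes. The base cases are immediate because the horn inclusions of the degenerate and endotopic opetopes are elementary face embeddings, hence in $\sfKK$ by hypothesis. For the inductive step, given a horn $\sfH_{h\omega}^{f_0}$ obtained by deleting a face $f_0$, I pick a remaining elementary face $g \colon \lambda' \longrightarrow h\omega$ and factor it as $\lambda' \stackrel{\iota}{\longhookrightarrow} \Lambda^{f_0} h\omega \stackrel{\sfH_{h\omega}^{f_0}}{\longhookrightarrow} h\omega$; once $\iota$ is shown to be a cell complex of horn inclusions of opetopes with strictly fewer nodes (hence in $\sfKK$ by induction and saturation), right cancellation yields $\sfH_{h\omega}^{f_0} \in \sfKK$. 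This places $\sfHH \subseteq \sfKK$, in particular $\sfHH_{\mathrm{inner}} \subseteq \sfKK$. It then remains to show $\sfEE_\frakII \subseteq \sfKK$: I would exhibit each endpoint inclusion $\sfI_\phi \colon h\phi \longrightarrow \frakII_\phi = M\frakJJ_\phi$ as a cell complex of (not necessarily inner) horn inclusions, using the explicit generators $j_\phi, j_\phi^{-1}, \id_0, \id_1$ of the Rezk interval (\cref{def:rezk-interval-algebra}) to fill the horns witnessing invertibility. Since $\sfHH \subseteq \sfKK$ and $\sfKK$ is saturated, this gives $\sfI_\phi \in {}^\pitchfork(\sfHH^\pitchfork) \subseteq \sfKK$; equivalently, $\sfI_\phi$ is a trivial cofibration of the Quillen structure $\PshLambda_{\mathrm{Quillen}}$ of \cref{def:pshlambda:quillen-model-structure}, whose generating anodynes are precisely $\sfHH$. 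Combining, $\sfHH_{\mathrm{inner}} \cup \sfEE_\frakII \subseteq \sfKK$, so $\mathsf{An}_\frakII \subseteq \sfKK$.

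I expect the two genuinely delicate points to lie in part (2). The first is the combinatorial claim that the inclusion $\iota$ of an elementary face into a horn is a cell complex of horn inclusions of smaller opetopes: this extends \cref{lemma:relative-super-horns-inclusions} from inner faces to arbitrary, possibly outer, ones, and will demand careful tracking of how the surviving faces of $h\omega$ glue along their codimension-two intersections — the opetopic counterpart of the shuffle arguments of Joyal--Tierney and Cisinski--Moerdijk. The second, and the true heart of the matter, is $\sfEE_\frakII \subseteq \sfKK$: invertibility is not inner-anodyne, so unlike the spine and inner-horn steps one cannot build $\frakII_\phi$ out of $h\phi$ by inner fillings alone, and producing an honest horn-filling presentation of the Rezk interval is the new ingredient that the inner theory does not supply.
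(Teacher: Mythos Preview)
Your argument for part (1) is correct and is precisely what the paper's (very terse) proof intends: factor the spine through $\Lambda^I h\omega$ and $\Lambda^f h\omega$, use \cref{lemma:spine-super-horn} and \cref{lemma:relative-super-horns-inclusions} together with induction on the number of nodes, then right-cancel.

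For part (2) there is a misreading. The class $\mathsf{An}$ in the statement is \emph{not} $\mathsf{An}_\frakII$; it is the saturated class generated by the set $\sfHH$ of \emph{all} horn inclusions (this is how the lemma is applied in \cref{prop:joyal-tierney:3.9} and \cref{prop:joyal-tierney:4.4}, where the ``elementary face embeddings'' are the simplicial $d^i$ and the conclusion is that $\sfKK$ contains the simplicial anodyne maps). Consequently the whole discussion of $\sfEE_\frakII$ --- which you correctly flag as the hardest and most uncertain step --- is simply not part of what must be proved. Your ``true heart of the matter'' is not in the lemma.

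For the remaining content of part (2) (showing $\sfHH \subseteq \sfKK$), your inductive scheme on the number of nodes, with the auxiliary claim that the inclusion of an elementary face into a horn is a cell complex of horn inclusions of strictly smaller opetopes, would work but is more laborious than what the paper does. The paper instead proves the stronger statement that $\sfH_{h\omega}^I \in \sfKK$ for \emph{every} non-empty proper set $I$ of elementary faces, by downward induction on $|F(\omega) \setminus I|$: for $I = F(\omega) \setminus \{f\}$ one has $\Lambda^I h\omega \cong \dom f$ so $\sfH_{h\omega}^I \cong f \in \sfKK$; and each step $\Lambda^{I \cup \{g\}} h\omega \hookrightarrow \Lambda^I h\omega$ is shown, via the evident pushout square, to be a pushout of a \emph{single} elementary face embedding of the smaller opetope $\dom g$, hence in $\sfKK$ by saturation. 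One then right-cancels from an elementary face $f \notin I$ factored through $\Lambda^I h\omega$. This avoids entirely the delicate cell-complex claim you were anticipating.
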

\begin{proof}
    \begin{enumerate}
        \item Since $\sfKK$ is saturated, we have $\Cell\sfSS \subseteq \sfKK$,
        so by \cref{lemma:spine-super-horn}, $\sfHH_{\mathrm{inner}} \subseteq
        \sfKK$. By saturation again, $\mathsf{An}_{\mathrm{inner}} \subseteq
        \sfKK$.

        \item For $\omega \in \bbOO_{n+1}$, let $F (\omega)$ be the set of
        elementary faces of $h \omega$. It suffices to show that for all
        $\omega \in \bbOO_{n+1}$ and non empty set $I \subsetneq F (\omega)$,
        the inclusion $\sfH_{h \omega}^I : \Lambda^I h \omega \longrightarrow$
        (cf. \cref{sec:infty-alg:model-structure:anodyne-extensions} for
        notations) is in $\sfKK$.

        Take a set $I \subsetneq F (\omega)$ and $f \in F (\omega) - I$, say $f
        : h \omega' \longrightarrow h \omega$. If $I = F (\omega) - \{ f \}$,
        then clearly, $\Lambda^I h \omega = \im f \cong h \omega'$, and
        $\sfH_{h \omega}^I \cong f \in \sfKK$.

        Otherwise, let $I \subsetneq J \subsetneq F (\omega)$ be a set of
        elementary faces of $h \omega$ not containing $f$. Then $f$ factors as
        \[
            h \omega'
            \stackrel{u}{\longhookrightarrow} \Lambda^J h \omega
            \stackrel{v}{\longhookrightarrow} \Lambda^I h \omega
            \stackrel{\sfH_{h \omega}^I}{\longhookrightarrow} h \omega .
        \]
        Since $f \in \sfKK$ be assumption, and since $\sfKK$ has the right
        cancellation property, in order to prove that $\sfH_{h \omega}^I$, it
        suffices to show that $u, v \in \sfKK$.
        \begin{enumerate}
            \item Assuming $v \in \sfKK$ for all $J \supsetneq I$ not
            containing $f$, it is enough to show that $u \in \sfKK$ in the case
            where $J = F (\omega) - \{ f \}$. But in this case, as before,
            $\Lambda^J h \omega = \im f \cong h \omega'$, and $u$ is an
            isomorphism.
            \item It is enough to consider the case $J = I + \{ g \}$, for some
            elementary face $g \in F (\omega)$ different from $f$. Since
            $\Lambda^I h \omega = \im f \cup \Lambda^J h \omega$, we have the
            following pushout square:
            \[
                \diagramarrows{c->}{c->}{c->}{c->}
                \pushoutdiagram
                    {\im f \cap \Lambda^J h \omega}{\Lambda^J h \omega}
                        {\im f}{\Lambda^I h \omega .}
                    {}{w}{v}{}
            \]
            Recall that $\im f \cong h \omega'$, and it is easy to see that the
            inclusion $w : \im f \cap \Lambda^J h \omega \longhookrightarrow
            \im f$ is isomorphic to an elementary face of $h \omega'$.
            Consequently, $v$ is the pushout of $w$, which is in $\sfKK$ by
            assumption, and thus $v \in \sfKK$.
            \qedhere
        \end{enumerate}
    \end{enumerate}
\end{proof}

\begin{proposition}
    [{Generalization of \cite[proposition 3.4]{Joyal2007}}]
    \label{prop:joyal-tierney:3.4}
    Let $X \in \SpLambda$ be vertically fibrant. The following are equivalent:
    \begin{enumerate}
        \item $X$ is a Segal space;
        \item the map $\sfH_{h \omega}^f \backslash X$ is an acyclic Kan
        fibration, for all $\omega \in \bbOO_{n+1}$ and all inner horn
        inclusion $\sfH_{h \omega}^f : \Lambda^f h \omega \longrightarrow h
        \omega$;
        \item the map $u \backslash X$ is an acyclic Kan fibration, for all
        inner anodyne extension $u \in \PshLambda$;
        \item the map $X / \sfB_n$ is a inner fibration
        (\cref{def:inner-anodyne-extension}), for all $n \in \bbNN$;
        \item the map $X / v$ is a inner fibration, for all monomorphism $v \in
        \PshDelta$.
    \end{enumerate}
\end{proposition}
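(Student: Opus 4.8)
The plan is to collapse all five conditions to membership statements about a single class of monomorphisms of $\PshLambda$ and then feed them into the saturation machinery of \cref{lemma:saturation-rcp} together with the Joyal--Tierney adjunctions of \cref{sec:joyal-tierney-calculus}. Throughout, $\PshDelta$ carries the Quillen structure, so an acyclic Kan fibration is exactly a trivial fibration (\cref{def:trivial-fibration}). The starting observation is that, since $X$ is vertically fibrant, \cref{prop:joyal-tierney:2.5} applied to $X \twoheadrightarrow 1$ shows that $u \backslash X = \langle u \backslash (X \twoheadrightarrow 1) \rangle$ (\cref{lemma:joyal-tierney:easy-computations}) is a Kan fibration for \emph{every} monomorphism $u$ in $\PshLambda$; hence for such $u$ the map $u \backslash X$ is an acyclic Kan fibration if and only if it is a weak equivalence. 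I therefore set
\[
    \sfKK \eqdef \left\{ u \in \PshLambda^\rightarrow \mid u \text{ a monomorphism and } u \backslash X \text{ a trivial fibration} \right\},
\]
and note that conditions (1), (2), (3) say precisely $\sfSS \subseteq \sfKK$, $\sfHH_{\mathrm{inner}} \subseteq \sfKK$, and $\mathsf{An}_{\mathrm{inner}} \subseteq \sfKK$ respectively, spine inclusions, inner horn inclusions, and inner anodyne extensions all being monomorphisms.

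Next I would establish the two structural properties of $\sfKK$ required by \cref{lemma:saturation-rcp}. For saturation, the adjunctions $u \lboxproduct - \dashv \langle u \backslash - \rangle$ and $- \lboxproduct v \dashv \langle - / v \rangle$ give, for a monomorphism $u$, the equivalences: $u \backslash X$ is a trivial fibration $\iff u \lboxproduct v \pitchfork (X \twoheadrightarrow 1)$ for every monomorphism $v \in \PshDelta$ $\iff u \pitchfork \langle (X \twoheadrightarrow 1)/v \rangle$ for every such $v$. Thus $\sfKK = \{\text{monos}\} \cap {}^\pitchfork \{ \langle (X \twoheadrightarrow 1)/v \rangle \mid v \text{ mono in } \PshDelta \}$ is an intersection of two saturated classes, hence saturated. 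For right cancellation, given composable monomorphisms $f, g$ with $g, fg \in \sfKK$, contravariance of $- \backslash X$ yields $(fg)\backslash X = (g \backslash X)(f \backslash X)$; since $g \backslash X$ and $(fg)\backslash X$ are weak equivalences, 3-for-2 makes $f \backslash X$ a weak equivalence, and because $f$ is a monomorphism $f \backslash X$ is already a Kan fibration, hence a trivial fibration, so $f \in \sfKK$. (The cancellation is only ever invoked for monomorphic $f$, which is all \cref{lemma:saturation-rcp} needs.)

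With these in hand the equivalence of (1), (2), (3) closes as a cycle. Saturation alone gives (2) $\Rightarrow$ (3), since $\mathsf{An}_{\mathrm{inner}}$ is the saturated class generated by $\sfHH_{\mathrm{inner}}$; the containment $\sfHH_{\mathrm{inner}} \subseteq \mathsf{An}_{\mathrm{inner}}$ gives (3) $\Rightarrow$ (2), and together with $\sfSS \subseteq \mathsf{An}_{\mathrm{inner}}$ (\cref{prop:spine-complex-inner-anodyne}) it gives (3) $\Rightarrow$ (1). The remaining implication (1) $\Rightarrow$ (3) is exactly \cref{lemma:saturation-rcp}(1) applied to $\sfKK$: from $\sfSS \subseteq \sfKK$ it concludes $\mathsf{An}_{\mathrm{inner}} \subseteq \sfKK$. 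I expect this to be the conceptual heart of the argument, as it is what upgrades the Segal (spine) condition to the full inner-anodyne lifting condition; everything else is formal.

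Finally I would dispatch (4) and (5) by adjunction bookkeeping. For a monomorphism $v \in \PshDelta$ and $m \in \sfHH_{\mathrm{inner}}$, the same two adjunctions give $m \pitchfork \langle (X \twoheadrightarrow 1)/v \rangle \iff m \lboxproduct v \pitchfork (X \twoheadrightarrow 1) \iff v \pitchfork (m \backslash X)$. Hence $X/v = \langle (X \twoheadrightarrow 1)/v \rangle$ is an inner fibration for every monomorphism $v$ if and only if $v \pitchfork (m \backslash X)$ for all $v$ and all $m$, if and only if every $m \backslash X$ is a trivial fibration, which is (2); this is (5) $\iff$ (2). For (4) one runs the identical computation with $v = \sfB_n$ and uses that the boundary inclusions $\sfBB_\bbDelta$ form a cellular model of $\PshDelta$, so that $m \backslash X$ has the right lifting property against every $\sfB_n$ iff against every monomorphism, i.e.\ iff $m \backslash X$ is a trivial fibration; hence (4) $\iff$ (2). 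The one point demanding care throughout is the systematic identification ``acyclic Kan fibration $=$ weak equivalence among Kan fibrations,'' which is exactly where vertical fibrancy of $X$ enters and must not be dropped.
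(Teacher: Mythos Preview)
Your proof is correct and follows essentially the same route as the paper: define the class $\sfKK$ of monomorphisms $u$ with $u \backslash X$ a trivial fibration, show it is saturated (via the adjunction expressing membership as a left lifting condition against the maps $X / \sfB_n$) and has right cancellation (via 3-for-2, using vertical fibrancy to know $f \backslash X$ is already a Kan fibration), then invoke \cref{lemma:saturation-rcp} for (1) $\Rightarrow$ (3) and \cref{prop:spine-complex-inner-anodyne} for (3) $\Rightarrow$ (1); the remaining equivalences with (2), (4), (5) are handled by the same adjunction bookkeeping the paper dismisses as ``straightforward.'' Your write-up simply spells out the details the paper leaves implicit.
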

\begin{proof}
    All equivalences are straightforward, except for (1) $\iff$ (3).
    \begin{itemize}
        \item (1) $\implies$ (3). Let $\sfKK$ be the class of morphisms $u \in
        \PshLambda$ such that $u \backslash X$ is an acyclic Kan fibration.
        Since $X$ is a segal space, $\sfSS \subseteq \sfKK$, and clearly,
        $\sfKK$ has the right cancellation property. We now show that it is
        saturated. By definition, $u \in \sfKK$ if and only if $u \backslash X$
        is an acyclic Kan fibration, i.e. $\sfB_n \pitchfork u \backslash X$
        for all $n \in \bbNN$. But this is equivalent to $u \pitchfork X /
        \sfB_n$, and thus $\sfKK = {}^\pitchfork \left\{ X / \sfB_n \mid n \in
        \bbNN \right\}$, and in particular, $\sfKK$ is saturated. Finally, by
        \cref{lemma:saturation-rcp}, $\sfKK$ contains all inner anodyne
        extensions.
        \item (3) $\implies$ (1). Recall that spine inclusions are inner
        anodyne extensions by \cref{prop:spine-complex-inner-anodyne}.
        \qedhere
    \end{itemize}
\end{proof}

\begin{corollary}
    [{Generalization of \cite[corollary 3.6]{Joyal2007}}]
    \label{coroll:joyal-tierney:3.6}
    Take $X \in \SpLambda$ be a Segal space, and $K \in \PshDelta$. Then $X /
    K$ is an inner Kan complex. In particular, $X_{-, n} = X / \Delta [n]$ is
    an inner Kan complex for all $n \in \bbNN$.
\end{corollary}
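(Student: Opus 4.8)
The plan is to obtain this as an immediate consequence of \cref{prop:joyal-tierney:3.4} through the Joyal--Tierney calculus, with essentially no new work. First I would observe, using \cref{lemma:everyone-is-isofibrant}, that every object of $\PshLambda$ has the right lifting property against $\sfEE_\frakII$; hence $X/K$ is an inner Kan complex if and only if $\sfHH_{\mathrm{inner}} \pitchfork (X/K)$, that is, if and only if the terminal map $X/K \twoheadrightarrow 1$ is an inner fibration (\cref{def:inner-anodyne-extension}). So the whole task reduces to exhibiting $X/K \twoheadrightarrow 1$ as an inner fibration.

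The key step is to recognise this terminal map as a Leibniz cotensor against a monomorphism of $\PshDelta$. I would take $v$ to be the monomorphism $\emptyset \hookrightarrow K$ and unfold the cartesian gap map $\langle (X \twoheadrightarrow 1)/v \rangle$ of \cref{def:leibniz-box-product}. Since $\emptyset$ is initial in $\PshDelta$ and the box product is a left adjoint in each variable, one computes $X/\emptyset = 1/\emptyset = 1/K = 1$, so the pullback defining the gap map collapses to $1$ and $\langle (X \twoheadrightarrow 1)/v \rangle$ becomes precisely the map $X/K \twoheadrightarrow 1$. In the notation of \cref{lemma:joyal-tierney:easy-computations}(4), this is exactly the map denoted $X/v = X/(\emptyset \hookrightarrow K)$.

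It then remains to invoke the Segal hypothesis: since $X$ is a Segal space, it is in particular vertically fibrant, so the implication $(1) \Rightarrow (5)$ of \cref{prop:joyal-tierney:3.4}, applied to the monomorphism $v = (\emptyset \hookrightarrow K)$, shows that $X/v$ is an inner fibration; by the identification above this is the terminal map $X/K \twoheadrightarrow 1$, whence $X/K$ is an inner Kan complex. For the final clause I would specialise to $K = \Delta[n]$ and use \cref{lemma:joyal-tierney:easy-computations}(2), which identifies $X/\Delta[n]$ with the row $X_{-, n}$. I do not anticipate a real obstacle: the substance is carried entirely by \cref{prop:joyal-tierney:3.4}, and the only point requiring care is the bookkeeping that collapses the defining pullback and thereby turns the abstract cartesian gap map into the concrete terminal map of $X/K$.
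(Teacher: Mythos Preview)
Your proposal is correct and follows essentially the same approach as the paper: both take the monomorphism $\emptyset \hookrightarrow K$, identify $X/(\emptyset \hookrightarrow K)$ with the terminal map $X/K \to 1$, and invoke condition~(5) of \cref{prop:joyal-tierney:3.4} to conclude it is an inner fibration. The paper's proof is a one-liner that leaves implicit the bookkeeping you spell out (the collapse of the pullback and the appeal to \cref{lemma:everyone-is-isofibrant}), but the substance is identical.
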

\begin{proof}
    For $k : \emptyset \longhookrightarrow K$ the initial map, the map $X / k :
    X / K \longrightarrow X / \emptyset = 1$ is an inner fibration by
    \cref{prop:joyal-tierney:3.4}.
\end{proof}

\begin{lemma}
    [{Generalization of \cite[lemma 3.8]{Joyal2007}}]
    \label{lemma:joyal-tierney:3.8}
    Let $f : X \longrightarrow Y$ be an inner fibration between
    $\infty$-algebras. It is an acyclic fibration if and only if it is a weak
    equivalence surjective on $(n-1)$-cells.
\end{lemma}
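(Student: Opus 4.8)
The plan is to prove the two implications separately: the forward direction is immediate, while the converse is reduced to the folk model structure on $\Alg$ through the functor $v$.

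Suppose first that $f$ is an acyclic fibration. Then $f$ is a weak equivalence by definition, and it has the right lifting property against every monomorphism. In particular it lifts against each boundary inclusion $\sfB_{h\phi} \colon \emptyset \hookrightarrow h\phi$ with $\phi \in \bbOO_{n-1}$; here $\partial h\phi = \emptyset$ because $\deg h\phi = 0$ (\cref{def:boundary-lambda}). A lift of the square whose lower edge is a cell $y \in Y_{h\phi}$ is exactly a preimage of $y$, so $f$ is surjective on $(n-1)$-cells. This gives the forward direction.

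For the converse, assume $f$ is an inner fibration, a weak equivalence, and surjective on $(n-1)$-cells. First I would transfer the problem along the left Quillen functor $v \colon \PshLambda_\infty \to \Alg_{\mathrm{folk}}$ (\cref{prop:infty-folk-quillen-adjunction}). Since cofibrations in $\PshLambda_\infty$ are the monomorphisms, every object is cofibrant, so $v$ preserves all weak equivalences by Ken Brown's lemma; hence $vf$ is an algebraic equivalence. Moreover, the spine inclusions in $\sfSS$ are isomorphisms in dimensions $\le n-1$, so the reflection $v$ leaves $(n-1)$-cells untouched, i.e. $(vX)_{n-1} = X_{n-1}$ naturally and $vf$ agrees with $f$ on $(n-1)$-cells; thus $vf$ is again surjective on $(n-1)$-cells. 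By the final clause of \cref{th:folk-model-structure-algebras}, an algebraic equivalence surjective on $(n-1)$-cells is exactly an acyclic fibration in $\Alg_{\mathrm{folk}}$, so $vf$ is one, and in particular a folk fibration, i.e. $\sfEE_\frakJJ \pitchfork vf$ (\cref{th:folk-model-structure-algebras:cofibrantly-generated}).

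Now I would invoke \cref{lemma:v-reflects-isofibrations}: as $X$ and $Y$ are $\infty$-algebras, $f$ is an inner fibration, and $vf$ is a folk fibration, it yields $\sfEE_\frakII \pitchfork f$. Combined with $\sfHH_{\mathrm{inner}} \pitchfork f$ this gives $\mathsf{An}_\frakII \pitchfork f$, so $f$ is a naive fibration; since $Y$ is fibrant, \cref{lemma:cisinski:1.3.36} promotes $f$ to a genuine fibration, and being a weak equivalence as well, $f$ is an acyclic fibration. The main obstacle is the strategy of the converse itself — realizing that one should pass to $\Alg_{\mathrm{folk}}$, exploit the explicit folk description of acyclic fibrations, and then return via \cref{lemma:v-reflects-isofibrations} to upgrade an inner fibration to a fibration — together with the bookkeeping that $v$ preserves weak equivalences and $(n-1)$-cells.
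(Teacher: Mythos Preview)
Your proof is correct and follows essentially the same approach as the paper: the forward direction via lifting against $\emptyset \hookrightarrow h\phi$, and the converse by passing to $\Alg_{\mathrm{folk}}$ via $v$, using Ken Brown's lemma to see that $vf$ is a weak equivalence, the explicit description of folk acyclic fibrations to get $\sfEE_\frakJJ \pitchfork vf$, and then \cref{lemma:v-reflects-isofibrations} together with \cref{lemma:cisinski:1.3.36} to conclude. Your write-up is in fact slightly more explicit than the paper's in justifying why $v$ preserves surjectivity on $(n-1)$-cells.
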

\begin{proof}
    \begin{enumerate}
        \item ($\implies$) Surely, if $f$ is an acyclic fibration, it is a weak
        equivalence. Moreover, $f$ has the right lifting property against all
        monomorphisms (recall \cref{th:pshlambda:infty-model-structure}), and
        in particular, against inclusions of the form $\emptyset
        \longhookrightarrow h \phi$, for $\phi \in \bbOO_{n-1}$. Therefore, $f$
        is surjective on $(n-1)$-cells.

        \item ($\impliedby$) By
        \cref{th:pshlambda:infty-model-structure,lemma:cisinski:1.3.36} $f$ is
        a fibration if and only if it is an inner fibration, and $\sfEE_\frakII
        \pitchfork f$. By \cref{lemma:v-reflects-isofibrations}, it suffices to
        show that $\sfEE_\frakJJ \pitchfork vf$. Note that $f$ is a weak
        equivalence between cofibrant objects. By Ken Brown's lemma \cite[lemma
        1.1.12]{Hovey1999}, $v f$ is a weak equivalence. Besides, it is also
        surjective on $(n-1)$-cells, so by
        \cref{th:folk-model-structure-algebras}, it is an acyclic fibration. In
        particular, it is a fibration, and we apply
        \cref{lemma:v-reflects-isofibrations} to conclude.
        \qedhere
    \end{enumerate}
\end{proof}

\begin{proposition}
    [{Generalization of \cite[proposition 3.9]{Joyal2007}}]
    \label{prop:joyal-tierney:3.9}
    A presheaf $X \in \SpLambda$ is a Segal space if and only if the following
    conditions are satisfied:
    \begin{enumerate}
        \item for all $n \in \bbNN$, the map $X / \sfB_n$ is an inner fibration;
        \item $X_{h \phi}$ is a Kan complex for all $\phi \in \bbOO_{n-1}$;
        \item $X$ is homotopically constant (see
        \cref{def:homotopically-constant}).
    \end{enumerate}
\end{proposition}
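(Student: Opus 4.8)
The plan is to prove the two implications separately, with the forward one falling out immediately from the results already obtained for vertically fibrant spaces, and the reverse one requiring an induction on degree. For the ``only if'' direction, suppose $X$ is a Segal space, so that $X$ is vertically fibrant and satisfies the Segal condition. Condition (1) is then precisely the implication $(1)\Rightarrow(4)$ of \cref{prop:joyal-tierney:3.4}, and condition (3) is \cref{prop:joyal-tierney:2.8}. For condition (2), observe that for $\phi\in\bbOO_{n-1}$ the object $h\phi = h\itree{\phi}$ has degree $0$, so $\partial h\phi = \emptyset$ and the relative matching map $\sfB_{h\phi}\backslash X$ is just $X_{h\phi}\to 1$; vertical fibrancy (\cref{def:vertical-horizontal-model-structures}) forces this to be a Kan fibration, i.e. $X_{h\phi}$ to be a Kan complex.

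For the converse, assume (1)--(3). Since, granting (1), the implication $(4)\Rightarrow(1)$ of \cref{prop:joyal-tierney:3.4} recovers the Segal condition as soon as $X$ is vertically fibrant, the whole task reduces to proving that every relative matching map $\sfB_{h\omega}\backslash X$ is a Kan fibration, which I would establish by induction on $d = \deg h\omega$; the base case $d = 0$ is condition (2) by the computation above. Before the inductive step I would first recast condition (1) in a more usable form. By the Leibniz adjunctions $\sfH_{h\omega}^f\lboxproduct -\dashv\langle\sfH_{h\omega}^f\backslash -\rangle$ and $-\lboxproduct v\dashv\langle -/v\rangle$ of \cref{sec:joyal-tierney-calculus}, together with the fact that $\sfH_{h\omega}^f\lboxproduct -$ sends cell complexes to cell complexes, the statement ``$X/\sfB_n$ is an inner fibration for all $n$'' is equivalent to ``$\sfH_{h\omega}^f\backslash X$ is an acyclic Kan fibration for every inner face $f$''. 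Feeding this into \cref{lemma:saturation-rcp}(1), applied to the saturated, right-cancelling class $\{u : u\backslash X \text{ is an acyclic Kan fibration}\}$ exactly as in the proof of \cref{prop:joyal-tierney:3.4}, then shows that $u\backslash X$ is an acyclic Kan fibration for every inner anodyne extension $u$; in particular, by \cref{prop:spine-complex-inner-anodyne}, the spine matching map $\sfS_{h\omega}\backslash X$ is an acyclic Kan fibration.

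In the inductive step with $d\geq 1$, the inductive hypothesis makes $\partial h\omega\backslash X$ a Kan complex and, for each elementary face $\omega'\subseteq\omega$, exhibits the inclusion of the union of the remaining faces into $\partial h\omega$ as a pushout of the lower boundary inclusion $\sfB_{h\omega'}$, hence the corresponding comparison map on $(-)\backslash X$ as a pullback of the Kan fibration $\sfB_{h\omega'}\backslash X$; the spine reformulation above controls the inner contribution to $\sfB_{h\omega}\backslash X$. The remaining, and genuinely delicate, point concerns the \emph{outer} faces and the degenerate directions (this is also the only available input when $d = 1$, where there are no inner faces): here I would invoke homotopical constancy (condition (3)), in the equivalent forms of \cref{lemma:homotopically-constant}, to identify the outer and degeneracy structure maps with weak equivalences, thereby reducing fibrancy over the outer part of the matching object to the degree-zero Kan condition (2) and upgrading the acyclic fibration supplied by the spine into a genuine Kan fibration of the full matching map $\sfB_{h\omega}\backslash X$. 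Once vertical fibrancy is established, \cref{prop:joyal-tierney:3.4} delivers the Segal condition. I expect this last point to be the main obstacle: bridging the inner-anodyne acyclic fibrations provided by condition (1) to a genuine Kan fibration of the boundary matching map by means of homotopical constancy, while keeping careful track of the two directions of the box-product calculus.
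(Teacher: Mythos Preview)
Your forward direction matches the paper exactly.

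For the converse, your approach has a genuine gap, and the paper takes a quite different route. You attempt to prove vertical fibrancy by induction on $\deg h\omega$, showing each matching map $\sfB_{h\omega}\backslash X$ is a Kan fibration. But your plan to ``invoke homotopical constancy to identify the outer and degeneracy structure maps with weak equivalences'' conflates the two directions of the box product: homotopical constancy (\cref{def:homotopically-constant}) concerns the simplicial direction only---it says the maps $X_{-,l}\to X_{-,k}$ are weak equivalences in $\Psh\bbLambda$. It says nothing about the structure maps in the $\bbLambda$-direction. In the case $d=1$ (where, as you note, there are no inner faces), you need $X_{h\ytree{\psi}}\to\partial h\ytree{\psi}\backslash X$ to be a Kan fibration, and none of (1)--(3) give you a handle on that map via your strategy. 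You yourself flag this as ``the main obstacle'', and indeed it does not close.

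The paper sidesteps this entirely by working on the dual side of the Joyal--Tierney calculus. Instead of showing $\sfB_{h\omega}\backslash X$ is a Kan fibration, it uses the equivalent characterisation of vertical fibrancy from \cref{prop:joyal-tierney:2.5}: $X$ is vertically fibrant if and only if $X/v$ is a trivial fibration for every anodyne extension $v\in\PshDelta$. The key lemma is \cref{lemma:joyal-tierney:3.8}: an inner fibration between $\infty$-algebras is an acyclic fibration as soon as it is a weak equivalence surjective on $(n-1)$-cells. Condition (1) makes each $X/v$ an inner fibration; the computation $(X/L)_{h\phi}\cong\PshDelta(L,X_{h\phi})$ together with condition (2) makes $X/v$ surjective on $(n-1)$-cells whenever $v$ is anodyne. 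Thus $X/v$ is an acyclic fibration precisely when it is a weak equivalence in $\PshLambda_\infty$. Now condition (3), used \emph{in the simplicial direction where it actually lives}, shows $X/d^i$ is a weak equivalence for each coface map; \cref{lemma:saturation-rcp}(2), applied on the $\bbDelta$ side rather than the $\bbLambda$ side, then promotes this to all anodyne $v$. The Segal condition itself follows at the end exactly as you say, via \cref{prop:spine-complex-inner-anodyne}.
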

\begin{proof}
    \begin{itemize}
        \item ($\implies$) Assume that $X \in \SpLambda$ is a Segal space. In
        particular, it is vertically fibrant.
        \begin{enumerate}
            \item This is \cref{prop:joyal-tierney:3.4} (4).
            \item The terminal map $! : X \longrightarrow 1$ is a vertical
            fibration, so by \cref{def:vertical-horizontal-model-structures}, for $\phi \in
            \bbOO_{n-1}$, the map $\langle \sfB_{h \phi} \backslash ! \rangle :
            X_{h \phi} \longrightarrow 1$ is a Kan fibration.
            \item This is \cref{prop:joyal-tierney:2.8}.
        \end{enumerate}

        \item ($\impliedby$) Let $v : K \longrightarrow L$ be an anodyne
        extension in $\PshDelta$ such that $X / v$ is a weak equivalence in
        $\PshLambda_\infty$. We claim that $X / v$ is an acyclic fibration. By
        \cref{lemma:joyal-tierney:3.8}, it suffices to show that $X / v : X / L
        \longrightarrow X / K$ is surjective on $(n-1)$-cells. Note that for
        $\phi \in \bbOO_{n-1}$,
        \[
            (X / L)_{h \phi}
            = \PshLambda (h \phi, X / L)
            \cong \SpLambda (h \phi \boxproduct L, X)
            \cong \PshDelta (L, X_{h \phi}) ,
        \]
        and likewise, $(X / K)_{h \phi} \cong \PshDelta (K, X_{h \phi})$. By
        assumption, $X_{h \phi}$ is a Kan complex, and since $v$ is an anodyne
        extension, the precomposition map on top is surjective
        \[
            \squarediagram
                {\PshDelta (L, X_{h \phi})}{\PshDelta (K, X_{h \phi})}
                    {(X / L)_{h \phi}}{(X / K)_{h \phi}.}
                {v^*}{\cong}{\cong}{(X / v)_{h \phi}}
        \]
        Therefore, $X / v$ is surjective on $(n-1)$-cells, and thus an acyclic
        fibration as claimed.

        We now show that $X$ is vertically fibrant. By
        \cref{prop:joyal-tierney:2.5}, this is equivalent to $X / v$ being an
        acyclic fibration for all anodyne extension $v \in \PshDelta$. Let
        $\sfKK$ be the class of anodyne extensions $v$ such that $X / v$ is an
        acyclic fibration. Note that by the first claim, this is equivalent to
        $X / v$ being a weak equivalence. Using \cref{lemma:saturation-rcp},
        it suffices to show that $\sfKK$ has the right cancellation property,
        is saturated, and that it contains all simplicial face maps.
        \begin{enumerate}
            \item Right cancellation follows from 3-for-2.
            \item For staturation, note that $v \in \sfKK$ if and only if
            $\sfB_{h \omega} \pitchfork X / v$ for all $\omega \in \bbOO_{\geq
            n-1}$ by definition. This is equivalent to $v \pitchfork \sfB_{h
            \omega} \backslash X$, thus $\sfKK = {}^\pitchfork \left\{ \sfB_{h
            \omega} \backslash X) \mid \omega \in \bbOO_{\geq n-1} \right\}$ is
            saturated.
            \item Lastly, let us show that the simplicial face maps $d^i :
            \Delta [n-1] \longrightarrow \Delta [n]$ belong to $\sfKK$. Since
            $X$ is homotopically constant, the simplicial map $X_{-, n}
            \longrightarrow X_{-, 0}$ is a weak equivalence for all $n \in
            \bbNN$, so by 3-for-2, $X / d^i$ is too, as displayed by the
            following triangle
            \[
                \triangleDdiagram
                    {X_{-, n}}{X_{-, n-1}}{X_{-, 0}.}
                    {X / d^i}{\sim}{\sim}
            \]
        \end{enumerate}
        Therefore, by \cref{prop:joyal-tierney:2.5}, $\sfKK$ contains all
        anodyne extensions, and by \cref{prop:joyal-tierney:2.5}, $X$ is
        vertically fibrant.

        Lastly, we show that $X$ satisfies the Segal condition
        \cref{def:segal-model-structure}. It suffices to show that for all
        $\omega \in \bbOO_{\geq n-1}$, the map $\sfS_{h \omega} \backslash X$
        is an acyclic fibration, i.e. that for all $n \in \bbNN$, we have
        $\sfB_n \pitchfork \sfS_{h \omega} \backslash X$. This is equivalent to
        $\sfS_{h \omega} \pitchfork X / \sfB_n$, which holds since $\sfS_{h
        \omega}$ is an inner anodyne extension by
        \cref{prop:spine-complex-inner-anodyne}.
        \qedhere
    \end{itemize}
\end{proof}

\begin{proposition}
    [{Generalization of \cite[proposition 3.10]{Joyal2007}}]
    \label{prop:joyal-tierney:3.10}
    Let $f : X \longrightarrow Y$ be a vertical fibration between two Segal
    spaces.
    \begin{enumerate}
        \item If $u : A \longrightarrow B$ is an inner anodyne extension in
        $\PshLambda$ (\cref{def:inner-anodyne-extension}), then $\langle u
        \backslash f \rangle : B \backslash X \longrightarrow B \backslash Y
        \prod_{A \backslash Y} A \backslash X$ is an acyclic fibration.
        \item If $v : K \longrightarrow L$ is a monomorphism in $\PshDelta$,
        then $\langle f / v \rangle : X / L \longrightarrow Y / L \prod_{Y /
        K} X / K$ is an inner fibration between $\infty$-algebras.
    \end{enumerate}
\end{proposition}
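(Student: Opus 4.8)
The plan is to reduce both parts to the two-variable adjunction recalled in \cref{sec:joyal-tierney-calculus}, which furnishes, for a morphism $u$ in $\PshLambda$, a monomorphism $v$ in $\PshDelta$, and the fixed map $f$, the chain of equivalences $u \lboxproduct v \pitchfork f$ $\iff$ $v \pitchfork \langle u \backslash f \rangle$ $\iff$ $u \pitchfork \langle f / v \rangle$. Throughout I will use that in $\PshDelta_{\mathrm{Quillen}}$ an acyclic Kan fibration is exactly a map with the right lifting property against all monomorphisms, so that acyclicity of a gap map can be tested against the boundary inclusions $\sfB_n$.

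For part (1), I would imitate the proof of \cref{prop:joyal-tierney:3.4} and introduce the class $\sfKK$ of morphisms $u \in \PshLambda$ for which $\langle u \backslash f \rangle$ is an acyclic Kan fibration, the goal being to show $\mathsf{An}_{\mathrm{inner}} \subseteq \sfKK$ via \cref{lemma:saturation-rcp}(1). Saturation is immediate: unwinding acyclicity as $\sfB_n \pitchfork \langle u \backslash f \rangle$ for all $n$ and transposing gives $\sfKK = {}^\pitchfork \left\{ \langle f / \sfB_n \rangle \mid n \in \bbNN \right\}$. The right cancellation property is the crucial structural point: for composable monomorphisms $u, u'$ the cartesian gap map $\langle u'u \backslash f \rangle$ factors as $\langle u' \backslash f \rangle$ followed by a map $\rho$ that is a pullback of $\langle u \backslash f \rangle$. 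Since acyclic Kan fibrations are stable under pullback, $u \in \sfKK$ forces $\rho$ to be acyclic; given also $u'u \in \sfKK$, a 3-for-2 argument shows $\langle u' \backslash f \rangle$ is a weak equivalence, and since it is a Kan fibration by \cref{prop:joyal-tierney:2.5} (as $f$ is a vertical fibration and $u'$ a monomorphism), we conclude $u' \in \sfKK$.

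It then remains to verify $\sfSS \subseteq \sfKK$. Here I would fix a spine inclusion $\sfS_{h \omega}$ and inspect the square defining $\langle \sfS_{h \omega} \backslash f \rangle$: its two horizontal legs are $\sfS_{h \omega} \backslash X$ and $\sfS_{h \omega} \backslash Y$, which are acyclic Kan fibrations by \cref{prop:joyal-tierney:3.4}(3), because $X$ and $Y$ are Segal spaces and $\sfS_{h \omega}$ is inner anodyne (\cref{prop:spine-complex-inner-anodyne}). The gap map is a Kan fibration by \cref{prop:joyal-tierney:2.5}, and its composite with the projection onto $\sfS_{h \omega} \backslash X$ (itself an acyclic fibration, being a pullback of $\sfS_{h \omega} \backslash Y$) recovers an acyclic fibration; by 3-for-2 the gap map is a weak equivalence, hence an acyclic Kan fibration. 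Thus $\sfSS \subseteq \sfKK$ and \cref{lemma:saturation-rcp}(1) yields part (1).

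For part (2) the inner fibration property is then formal: for every inner horn inclusion $w$ one has $w \pitchfork \langle f / v \rangle \iff v \pitchfork \langle w \backslash f \rangle$, and the right-hand side holds because $\langle w \backslash f \rangle$ is an acyclic Kan fibration by part (1) while $v$ is a monomorphism. For the objects, $X / L$ is an inner Kan complex by \cref{coroll:joyal-tierney:3.6}; applying the same inner-fibration argument to $f / K = \langle f / (\emptyset \hookrightarrow K) \rangle$ (\cref{lemma:joyal-tierney:easy-computations}) shows $f/K$ is an inner fibration, so the pullback $Y / L \prod_{Y / K} X / K \longrightarrow Y / L$ is one as well, and since $Y/L$ is an inner Kan complex by \cref{coroll:joyal-tierney:3.6} the codomain is too. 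The main obstacle is the right cancellation property in part (1): it rests on the pullback-factorization of cartesian gap maps under composition and on combining it carefully with the stability of acyclic fibrations under pullback and with \cref{prop:joyal-tierney:2.5} to promote the resulting weak equivalence to an acyclic fibration; once part (1) is established, part (2) is a routine application of the Joyal--Tierney adjunctions.
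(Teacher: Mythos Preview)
Your argument is correct, but for part (1) you take a detour that the paper avoids entirely. You invoke \cref{prop:joyal-tierney:3.4}(3) to conclude that $\sfS_{h\omega}\backslash X$ and $\sfS_{h\omega}\backslash Y$ are acyclic Kan fibrations, citing the fact that spine inclusions are inner anodyne. But \cref{prop:joyal-tierney:3.4}(3) already applies to \emph{every} inner anodyne extension $u$, not just spine inclusions: since $X$ and $Y$ are Segal spaces, both $u\backslash X$ and $u\backslash Y$ are acyclic Kan fibrations for arbitrary inner anodyne $u$. The paper therefore runs your pullback-and-3-for-2 argument once, directly for the given $u$, and is done: the pullback of $u\backslash Y$ along $A\backslash f$ is an acyclic fibration, its composite with $\langle u\backslash f\rangle$ is $u\backslash X$, and 3-for-2 plus \cref{prop:joyal-tierney:2.5} finish. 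Your machinery of saturation and right cancellation via \cref{lemma:saturation-rcp} is correct (and your factorization of $\langle u'u\backslash f\rangle$ through a pullback of $\langle u\backslash f\rangle$ is the standard compatibility of cartesian gap maps with composition), but it is entirely unnecessary here; the ``main obstacle'' you identify is self-imposed.

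For part (2) your argument and the paper's are essentially the same. The only cosmetic difference is how you show the codomain is an $\infty$-algebra: you use that $f/K=\langle f/(\emptyset\hookrightarrow K)\rangle$ is an inner fibration (by part (1)) with codomain the $\infty$-algebra $Y/K$, and pull back along $Y/L\to Y/K$; the paper instead uses that $Y/v$ is an inner fibration (by \cref{prop:joyal-tierney:3.4}) and pulls back along $X/K\to Y/K$. Both routes are immediate.
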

\begin{proof}
    \begin{enumerate}
        \item By \cref{prop:joyal-tierney:2.5}, the map $\langle v \backslash f
        \rangle$ is a Kan fibration. It remains to show that it is a weak
        equivalence. Consider the following diagram:
        \[
            \begin{tikzcd}
                B \backslash X
                    \ar[drr, bend left, "u \backslash X"]
                    \ar[dr, "\langle u \backslash f \rangle"]
                    \ar[ddr, bend right, "B \backslash f"]
                & & \\
                &
                \cdot
                    \ar[r, "p_2"]
                    \ar[d, "p_1"]
                    \ar[dr, phantom, "\lrcorner" very near start] &
                A \backslash X
                    \ar[d, "A \backslash f"] \\
                &
                B \backslash Y
                    \ar[r, "u \backslash Y"] &
                A \backslash Y .
            \end{tikzcd}
        \]
        By \cref{prop:joyal-tierney:3.4}, $u \backslash X$ and $u \backslash Y$
        are trivial fibrations, and so is the pullback map $p_2$. By 3-for-2,
        $\langle u \backslash f \rangle$ is a weak equivalence.

        \item By \cref{prop:joyal-tierney:3.4}, $X / S$ and $X / L$ are
        $\infty$-algebras. In the pullback square
        \[
            \pullbackdiagram
                {Y / L \prod_{Y / K} X / K}{X / K}{Y / L}{Y / K ,}
                {p_2}{}{}{Y / v}
        \]
        the bottom map $Y / v$ is an inner fibration by
        \cref{prop:joyal-tierney:3.4}, and thus $p_2$ is too. Since $X / K$ is
        an $\infty$-algebra, so is $Y / L \prod_{Y / K} X / K$.

        We now show that $\langle f / v \rangle$ is an inner fibration, i.e.
        that $u \pitchfork \langle f / v \rangle$ for all $u \in
        \mathsf{An}_{\mathrm{inner}}$. By (1), $\langle u \backslash f \rangle$
        is an acyclic fibration, so $v \pitchfork \langle u \backslash f
        \rangle$, and by adjunction, $u \pitchfork \langle f / v \rangle$ as
        desired.
        \qedhere
    \end{enumerate}
\end{proof}

\subsection{Complete Segal spaces}
\label{sec:complete-segal-spaces}

\begin{definition}
    [Rezk map]
    \label{def:rezk-map}
    For $\phi \in \bbOO_{n-1}$, recall from \cref{def:rezk-interval-lambda} the
    definition of the Rezk interval $\frakII_\phi$, and the endpoint inclusion
    $\sfI_\phi : h \phi \longrightarrow \frakII_\phi$. There is a canonical
    morphism, called the \emph{Rezk map}\index{Rezk map} at $\phi$,
    \[
        \sfR_\phi : \frakII_\phi \longrightarrow h \phi ,
    \]\index{$\sfR_\phi$|see {Rezk map}}
    mapping $j_\phi$ and $j_\phi^{-1}$ to $\id_\phi$ (see
    \cref{def:rezk-interval-algebra} for notations), and let $\sfRR =
    \left\{\sfR_\phi \mid \phi \in \bbOO_{n-1} \right\}$ be the set of Rezk
    maps.
\end{definition}

\begin{definition}
    \label{def:rezk-model-structure}
    Let $\SpLambda_{\mathrm{Rezk}}$\index{$\SpLambda_{\mathrm{Rezk}}$|see {Rezk
    model structure}}, the \emph{Rezk model structure}\index{Rezk model
    structure} on $\SpLambda$, be the left Bousfield localization of
    $\SpLambda_{\mathrm{Segal}}$ (\cref{def:segal-model-structure}) at the set
    of Rezk maps $\sfRR$, which exists by \cite[theorem 4.1.1]{Hirschhorn2009}.

    Fibrant objects (resp. weak equivalences) in $\SpLambda_{\mathrm{Rezk}}$
    are called \emph{complete Segal spaces}\index{complete Segal space} (resp.
    \emph{Rezk weak equivalences}\index{Rezk weak equivalence|see {Rezk model
    structure}}). Explicitely, a Segal space $X \in \PshLambda$ is complete if
    for all $\phi \in \bbOO_{n-1}$, the map $\sfR_\phi \backslash X : X_{h
    \phi} \longrightarrow \frakII_\phi \backslash X$ is a weak equivalence.
\end{definition}

\begin{lemma}
    [{Generalization of \cite[lemma 4.2]{Joyal2007}}]
    \label{lemma:joyal-tierney:4.2}
    A Segal space $X \in \SpLambda$ is complete if and only if for all $\phi
    \in \bbOO_{n-1}$, the map $\sfI_\phi \backslash X$ is a trivial fibration, where
    $\sfI_\phi : h \phi \longrightarrow \frakII_\phi$ is the endpoint inclusion
    of the Rezk interval $\frakII_\phi$ (\cref{def:rezk-interval-lambda}).
\end{lemma}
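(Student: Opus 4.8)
The plan is to reduce the statement to a single $3$-for-$2$ argument, exploiting the fact that the Rezk map $\sfR_\phi$ is a retraction of the endpoint inclusion $\sfI_\phi$, together with the automatic Kan fibrancy of $\sfI_\phi \backslash X$.

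First I would record that $\sfI_\phi \backslash X$ is automatically a Kan fibration. Indeed, $X$ is a Segal space, hence vertically fibrant (\cref{def:segal-model-structure}), so the terminal map $X \longrightarrow 1$ is a vertical fibration. By \cref{lemma:joyal-tierney:easy-computations}~(4) we have $\sfI_\phi \backslash X = \langle \sfI_\phi \backslash (X \twoheadrightarrow 1) \rangle$, and since $\sfI_\phi$ is a monomorphism, \cref{prop:joyal-tierney:2.5} shows this relative cartesian gap map is a Kan fibration. In particular $\sfI_\phi \backslash X$ is a trivial fibration (\cref{def:trivial-fibration}) if and only if it is moreover a weak equivalence in $\PshDelta_{\mathrm{Quillen}}$.

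Next I would use that $\sfR_\phi \sfI_\phi = \id_{h\phi}$ by construction of the Rezk map (\cref{def:rezk-map}): applying the \emph{contravariant} functor $- \backslash X$ reverses the composite and yields $(\sfI_\phi \backslash X)(\sfR_\phi \backslash X) = \id_{X_{h\phi}}$, so $\sfR_\phi \backslash X : X_{h\phi} \longrightarrow \frakII_\phi \backslash X$ is a section of the Kan fibration $\sfI_\phi \backslash X$. Since the identity is a weak equivalence, $3$-for-$2$ gives that $\sfR_\phi \backslash X$ is a weak equivalence if and only if $\sfI_\phi \backslash X$ is.

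Combining these, for each $\phi \in \bbOO_{n-1}$ the map $\sfR_\phi \backslash X$ is a weak equivalence $\iff$ $\sfI_\phi \backslash X$ is a weak equivalence $\iff$ $\sfI_\phi \backslash X$ is a trivial fibration, the last equivalence using that it is already a Kan fibration. Quantifying over all $\phi$ then identifies completeness (\cref{def:rezk-model-structure}) with the stated condition. The proof is essentially formal; the only non-trivial input is \cref{prop:joyal-tierney:2.5}, invoked to see that $\sfI_\phi \backslash X$ is a Kan fibration, and the one genuine place to slip is keeping straight the directions of $\sfI_\phi$, $\sfR_\phi$, and the contravariance of $- \backslash X$, which is where I would be most careful.
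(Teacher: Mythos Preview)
Your proof is correct and follows essentially the same approach as the paper: both use that $\sfR_\phi \sfI_\phi = \id_{h\phi}$ together with 3-for-2 to identify weak-equivalence of $\sfR_\phi \backslash X$ with that of $\sfI_\phi \backslash X$, and both invoke \cref{prop:joyal-tierney:2.5} to see that $\sfI_\phi \backslash X$ is always a Kan fibration. The only difference is the order of presentation.
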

\begin{proof}
    By definition, $X$ is complete if and only if for all $\phi \in
    \bbOO_{n-1}$, the map $\sfR_\phi \backslash X$ is a weak equivalence. Since
    $\sfR_\phi \sfI_\phi = \id_{h \phi}$, we have $(\sfI_\phi \backslash X)
    (\sfR_\phi \backslash X) = \id_{X_{h \phi}}$, and by 3-for-2, $\sfI_\phi
    \backslash X$ is a weak equivalence if and only if $\sfR_\phi \backslash X$
    is. On the other hand, $\sfI_\phi \backslash X$ is always a Kan fibration by
    \cref{prop:joyal-tierney:2.5}. Hence it is a trivial fibration if and only
    if it is a weak equivalence.
\end{proof}

\begin{lemma}
    [{Generalization of \cite[lemma 4.3]{Joyal2007}}]
    \label{lemma:joyal-tierney:4.3}
    Let $f : X \longrightarrow Y$ be a Rezk fibration (i.e. a fibration in
    $\SpLambda_{\mathrm{Rezk}}$) between two complete Segal spaces, and $u : K
    \longhookrightarrow L$ be a monomorphism in $\PshDelta$. Then the map
    $\langle f / u \rangle : X / L \longrightarrow (Y / L) \prod_{Y / K} (X /
    K)$ is a fibration.
\end{lemma}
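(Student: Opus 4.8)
The plan is to reduce the statement to the characterization of fibrations in $\PshLambda_\infty$ and then to a single acyclic-cofibration claim in $\SpLambda_{\mathrm{Rezk}}$ handled by the Joyal--Tierney calculus. First, observe that a Rezk fibration between complete Segal spaces is in particular a vertical fibration between Segal spaces: complete Segal spaces are Segal (hence vertically fibrant) since the Rezk structure localizes the Segal structure, and a fibration between fibrant objects of a left Bousfield localization is a fibration in the underlying structure. Thus \cref{prop:joyal-tierney:3.10}(2) applies and shows that $\langle f / u \rangle$ is an \emph{inner} fibration whose codomain $(Y/L) \prod_{Y/K} (X/K)$ is an $\infty$-algebra, in particular fibrant. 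By \cref{lemma:cisinski:1.3.36} together with the description of naive fibrations in \cref{th:pshlambda:infty-model-structure}, it then suffices to prove that $\langle f/u\rangle$ is a naive fibration, i.e. that $\mathsf{An}_\frakII \pitchfork \langle f/u\rangle$. Since $\langle f/u\rangle$ is already an inner fibration, this reduces to checking $\sfEE_\frakII \pitchfork \langle f/u\rangle$.

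Next I invoke the Leibniz adjunction $- \lboxproduct u \dashv \langle - / u\rangle$ of \cref{def:leibniz-box-product}: for each $\phi \in \bbOO_{n-1}$ one has $\sfI_\phi \pitchfork \langle f/u\rangle$ if and only if $(\sfI_\phi \lboxproduct u) \pitchfork f$. As $f$ is a Rezk fibration, it has the right lifting property against all acyclic cofibrations of $\SpLambda_{\mathrm{Rezk}}$, so it is enough to prove that $\sfI_\phi \lboxproduct u$ is an acyclic cofibration there. It is a monomorphism (hence a cofibration) by \cref{prop:spc:generating-monomorphisms}, being a Leibniz box product of two monomorphisms. For the weak-equivalence part I factor $\sfI_\phi \boxproduct L$ as $h\phi \boxproduct L \to P \xrightarrow{\ \sfI_\phi \lboxproduct u\ } \frakII_\phi \boxproduct L$, where $P$ is the relevant pushout. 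The first map is the cobase change of $\sfI_\phi \boxproduct K$ along the cofibration $h\phi \boxproduct u$, hence an acyclic cofibration as soon as $\sfI_\phi \boxproduct K$ is; by three-for-two, if moreover $\sfI_\phi \boxproduct L$ is a weak equivalence then so is $\sfI_\phi \lboxproduct u$. Thus everything reduces to the claim that $\sfI_\phi \boxproduct S$ is an acyclic cofibration in $\SpLambda_{\mathrm{Rezk}}$ for every simplicial set $S$.

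To prove this claim I argue by cells of $S$. For $S = \Delta[0]$, the identity $\sfR_\phi \sfI_\phi = \id_{h\phi}$ (\cref{def:rezk-map}) and the fact that $\sfR_\phi \boxproduct \Delta[0]$ is a Rezk weak equivalence by construction of the localization give, by three-for-two, that $\sfI_\phi \boxproduct \Delta[0]$ is a Rezk weak equivalence. For $S = \Delta[m]$, the collapse $\Delta[m] \to \Delta[0]$ box-producted with $h\phi$ and with $\frakII_\phi$ is, column-wise, a coproduct of copies of $\Delta[m]\to\Delta[0]$, hence a vertical (thus Rezk) weak equivalence; three-for-two applied to the resulting naturality square reduces $\sfI_\phi \boxproduct \Delta[m]$ to the case $\Delta[0]$. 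Finally, for general $S$ I induct on a presentation of $\emptyset \hookrightarrow S$ as a cell complex of boundary inclusions $\partial \Delta[m] \hookrightarrow \Delta[m]$: the Rezk structure is left proper, being a left Bousfield localization of the proper structure $\SpLambda_v$, and $h\phi\boxproduct -$, $\frakII_\phi\boxproduct -$ preserve pushouts and monomorphisms, so the gluing lemma propagates the weak equivalence $\sfI_\phi\boxproduct(-)$ across each cell attachment.

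The formal reductions in the first two paragraphs are routine given the machinery already assembled. The real work, and the step I expect to be the main obstacle, is the last claim: spreading the single weak equivalence $\sfI_\phi \boxproduct \Delta[0]$ over an arbitrary simplicial set. The delicate points are correctly identifying the column-wise behaviour of the box product, so that $\Delta[m]\to\Delta[0]$ becomes a vertical weak equivalence after tensoring, and invoking left properness of $\SpLambda_{\mathrm{Rezk}}$ to run the cellular gluing argument; once these are in place the proof closes by three-for-two and the Leibniz adjunction.
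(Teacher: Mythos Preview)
Your proof is correct, but it takes a genuinely different route from the paper at the key step. Both arguments begin identically: use \cref{prop:joyal-tierney:3.10} to see that $\langle f/u\rangle$ is an inner fibration between $\infty$-algebras, then invoke \cref{lemma:cisinski:1.3.36} to reduce to $\sfEE_\frakII \pitchfork \langle f/u\rangle$. At that point you transpose via $- \lboxproduct u \dashv \langle -/u\rangle$ into $\SpLambda$ and prove that $\sfI_\phi \lboxproduct u$ is a Rezk acyclic cofibration, by a cellular induction on the simplicial set. The paper instead transposes via the \emph{other} Leibniz adjunction, $\sfI_\phi \lboxproduct - \dashv \langle \sfI_\phi \backslash -\rangle$, into $\PshDelta$, and shows that $\langle \sfI_\phi \backslash f\rangle$ is an acyclic Kan fibration. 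This is almost immediate from \cref{lemma:joyal-tierney:4.2}: completeness of $X$ and $Y$ says precisely that $\sfI_\phi \backslash X$ and $\sfI_\phi \backslash Y$ are trivial fibrations, and a short pullback-and-3-for-2 argument finishes.

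The paper's approach is considerably shorter because it uses the completeness hypothesis \emph{directly}, via \cref{lemma:joyal-tierney:4.2}, rather than only through the definition of $\SpLambda_{\mathrm{Rezk}}$ as a localization. Your approach, on the other hand, establishes the stronger standalone fact that $\sfI_\phi \lboxproduct u$ is a Rezk acyclic cofibration for \emph{every} monomorphism $u$ in $\PshDelta$, independent of any particular $f$; this is a useful lemma in its own right. The price is the cellular induction, which you should state a bit more carefully: to run the cube/gluing lemma you also need $\sfI_\phi \boxproduct \partial\Delta[m]$ to be a weak equivalence at each stage, so the induction must be organized skeletally (dimension by dimension), and you need that filtered colimits along cofibrations preserve weak equivalences in the left proper model category $\SpLambda_{\mathrm{Rezk}}$ to pass to the transfinite composite. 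These are standard, but worth making explicit.
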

\begin{proof}
    By \cref{prop:joyal-tierney:3.10}, $\langle f / u \rangle$ is an inner
    fibration between $\infty$-algebras. By \cref{lemma:cisinski:1.3.36},
    $\langle f / u \rangle$ us a fibration if and only if it is a naive
    fibration, so it remains to show that $\sfEE_\frakII \pitchfork \langle f /
    u \rangle$.

    By adjunction, for $\phi \in \bbOO_{n-1}$, we have $\sfI_\phi \pitchfork
    \langle f / u \rangle$ if and only if $u \pitchfork \langle \sfI_\phi
    \backslash f \rangle$. Thus, we must show that $\langle \sfI_\phi
    \backslash f \rangle$ is an acyclic fibration. By
    \cref{prop:joyal-tierney:2.5}, it is a fibration. Consider the following
    commutative diagram:
    \[
        \begin{tikzcd}
            \frakII_\phi \backslash X
                \ar[drr, bend left, "\sfI_\phi \backslash X"]
                \ar[dr, "\langle \sfI_\phi \backslash f \rangle"]
                \ar[ddr, bend right, "\frakII_\phi \backslash f"]
            & & \\
            &
            \cdot
                \ar[r, "p_2"]
                \ar[d, "p_1"]
                \ar[dr, phantom, "\lrcorner" very near start] &
            h \phi \backslash Y
                \ar[d, "h \phi \backslash f"] \\
            &
            \frakII_\phi \backslash Y
                \ar[r, "\sfI_\phi \backslash Y"] &
            h \phi \backslash Y .
        \end{tikzcd}
    \]
    By \cref{lemma:joyal-tierney:4.2}, $\sfI_\phi \backslash X$ and $\sfI_\phi
    \backslash Y$ are acyclic fibrations. Since $f$ is a Rezk fibration, it is
    a vertical fibration, and its matching map $\langle (\partial h \phi
    \hookrightarrow h \phi) \backslash f \rangle$ is a fibration by
    \cref{prop:joyal-tierney:2.5}. Since $\phi \in \bbOO_{n-1}$, $\partial h
    \phi = \emptyset$, thus $h \phi \backslash f = \langle (\partial h \phi
    \hookrightarrow h \phi) \backslash f \rangle$ is a fibration. The pullback
    map $p_2$ is an acyclic fibration, and by 3-for-2, so is $\langle \sfI_\phi
    \backslash f \rangle$.
\end{proof}

\begin{proposition}
    [{Generalization of \cite[proposition 4.4]{Joyal2007}}]
    \label{prop:joyal-tierney:4.4}
    A presheaf $X \in \SpLambda$ is a complete Segal space if and only if the
    following conditions are satisfied\footnote{In particular, complete Segal
    spaces are exactly the simplicial resolutions \cite[definition
    4.7]{Dugger2001a}.}:
    \begin{enumerate}
        \item $X / \sfB_n$ is a fibration for all $n \in \bbNN$, i.e. $X$ is
        horizontally fibrant (\cref{def:vertical-horizontal-model-structures});
        \item $X$ is homotopically constant
        (\cref{def:homotopically-constant}).
    \end{enumerate}
\end{proposition}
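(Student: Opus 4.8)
The plan is to deduce both implications from three earlier results: the Segal-space recognition criterion \cref{prop:joyal-tierney:3.4}, the completeness criterion \cref{lemma:joyal-tierney:4.2}, and the fact \cref{lemma:cisinski:1.3.36} that a map of $\PshLambda_\infty$ whose codomain is an $\infty$-algebra is a fibration precisely when it is a naive fibration. The bridge is the following reading of condition~(1). Being a naive fibration means having the right lifting property against $\sfHH_{\mathrm{inner}} \cup \sfEE_\frakII$ (\cref{def:i-anodyne-extension}); and for the matching map $X / \sfB_n = \langle (X \to 1) / \sfB_n \rangle$ (\cref{lemma:joyal-tierney:easy-computations}), the Leibniz adjunctions of \cref{def:leibniz-box-product} give that $\sfI_\phi \pitchfork (X / \sfB_n)$ holds for all $n$ if and only if $\sfB_n \pitchfork (\sfI_\phi \backslash X)$ holds for all $n$, i.e. if and only if $\sfI_\phi \backslash X$ is a trivial fibration. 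So, granting that the codomains $X / \partial \Delta [n]$ are $\infty$-algebras, condition~(1) is equivalent to the conjunction of ``$X / \sfB_n$ is an inner fibration for every $n$'' and ``$\sfI_\phi \backslash X$ is a trivial fibration for every $\phi \in \bbOO_{n-1}$'', the latter being exactly completeness by \cref{lemma:joyal-tierney:4.2}.

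For the direct implication, suppose $X$ is a complete Segal space. Then $X$ is in particular a Segal space, hence vertically fibrant, so $X$ is homotopically constant by \cref{prop:joyal-tierney:2.8}, which is condition~(2). For condition~(1): \cref{prop:joyal-tierney:3.4} gives that $X / \sfB_n$ is an inner fibration, its codomain $X / \partial \Delta [n]$ is an $\infty$-algebra by \cref{coroll:joyal-tierney:3.6}, and completeness together with \cref{lemma:joyal-tierney:4.2} gives that $\sfI_\phi \backslash X$ is a trivial fibration, whence $\sfEE_\frakII \pitchfork (X / \sfB_n)$ by the adjunction above. Thus $X / \sfB_n$ is a naive fibration, hence a fibration by \cref{lemma:cisinski:1.3.36}.

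For the converse, assume (1) and (2); the crux is to show that $X$ is vertically fibrant. First I would record that, since $X$ is horizontally fibrant, the same generation argument as in \cref{prop:joyal-tierney:2.3} (using that $\sfBB_\bbDelta$ generates the monomorphisms of $\PshDelta$, \cref{prop:spc:generating-monomorphisms}) shows that $X / v$ is a fibration of $\PshLambda_\infty$ for \emph{every} monomorphism $v \in \PshDelta$; in particular, taking $v = (\emptyset \hookrightarrow \Delta [m])$, every $X_{-, m}$ is an $\infty$-algebra. Now let $\sfKK$ be the class of monomorphisms $v \in \PshDelta$ such that $X / v$ is a trivial fibration. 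Since $\sfBB$ generates the monomorphisms of $\PshLambda$ (\cref{prop:cisinski:boundaries-generate-monomorphisms}), we have $v \in \sfKK$ if and only if $\sfB_{h \omega} \pitchfork (X / v)$ for all $\omega$, equivalently $v \pitchfork (\sfB_{h \omega} \backslash X)$, so $\sfKK = {}^\pitchfork \{ \sfB_{h \omega} \backslash X \mid \omega \}$ is saturated; it has the right cancellation property because for a monomorphism $f$ the map $X / f$ is already a fibration by the above, and is a weak equivalence by $3$-for-$2$ whenever $X / (fg)$ and $X / g$ are trivial fibrations, hence is a trivial fibration; and $\sfKK$ contains every coface map $d^i$, since $X / d^i$ is a fibration that is a weak equivalence by homotopical constancy (\cref{lemma:homotopically-constant}). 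By the simplicial instance of \cref{lemma:saturation-rcp}~(2) (classically \cite[lemma 3.7]{Joyal2007}), $\sfKK$ contains all anodyne extensions of $\PshDelta$, so by \cref{prop:joyal-tierney:2.5} the space $X$ is vertically fibrant. With vertical fibrancy established, condition~(1) gives that $X / \sfB_n$ is an inner fibration, so \cref{prop:joyal-tierney:3.4} shows $X$ is a Segal space; and, exactly as in the direct implication, condition~(1) forces $\sfI_\phi \backslash X$ to be a trivial fibration, so $X$ is complete by \cref{lemma:joyal-tierney:4.2}.

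The main obstacle is this passage, in the converse, from horizontal to vertical fibrancy. The subtle point is that horizontal fibrancy does not by itself supply fillers for vertical (Kan-direction) horns of $X_{h \phi}$: the box product $h \phi \boxproduct \sfH_m^k$ of a presheaf with a simplicial horn is a monomorphism but not a horizontal weak equivalence, so $X$ need not lift against it. Homotopical constancy is exactly the ingredient that transports the fillers between the two directions, and the saturation argument above is the mechanism carrying out that transport; the care required lies in checking its three hypotheses, where the interplay of \cref{prop:joyal-tierney:2.3}, \cref{lemma:homotopically-constant}, and \cref{lemma:saturation-rcp} must be marshalled precisely.
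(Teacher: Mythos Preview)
Your proposal is correct and follows essentially the same route as the paper. The converse direction is identical: both you and the paper establish vertical fibrancy via the same saturation argument on the class $\sfKK = \{ v \text{ mono} \mid X/v \text{ trivial fibration} \}$, checking saturation, right cancellation, and containment of coface maps, then invoke \cref{prop:joyal-tierney:2.5}; the only cosmetic difference is that after vertical fibrancy the paper appeals to \cref{prop:joyal-tierney:3.9} (checking separately that $X_{h\phi}$ is Kan), whereas your appeal to \cref{prop:joyal-tierney:3.4} is slightly more direct. For the forward direction, the paper packages the verification that $X/\sfB_n$ is a fibration into \cref{lemma:joyal-tierney:4.3} (applied to the terminal map $X\to 1$), while you inline precisely that lemma's proof in the absolute case via \cref{prop:joyal-tierney:3.4}, \cref{coroll:joyal-tierney:3.6}, \cref{lemma:joyal-tierney:4.2}, and \cref{lemma:cisinski:1.3.36}; the ingredients and logic are the same.
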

\begin{proof}
    \begin{itemize}
        \item ($\implies$) By \cref{prop:joyal-tierney:3.9}, $X$ is
        homotopically constant. By \cref{lemma:joyal-tierney:4.3}, $X / \sfB_n
        = \langle (X \rightarrow 1) / \sfB_n \rangle$ is a fibration as $X
        \longrightarrow 1$ is a Rezk fibration.

        \item ($\impliedby$) We first show that $X$ is vertically fibrant. By
        \cref{prop:joyal-tierney:2.5}, this is equivalent to $X / u$ being an
        acyclic fibration for all anodyne extension $u \in \PshDelta$. Let
        $\sfKK$ be the class of monomorphisms $u$ such that $X / u$ is an
        acyclic fibration. Using \cref{lemma:saturation-rcp}, it suffices to
        show that $\sfKK$ has the right cancellation property, is saturated,
        and that it contains all simplicial face maps.
        \begin{enumerate}
            \item By condition (1), $X / u$ is a fibration for any monomorphism
            $u \in \PshDelta$. Thus, $X / u$ is an acyclic fibration if and if
            it is a weak equivalence. The right cancellation property of
            $\sfKK$ then follows from 3-for-2.
            \item For staturation, note that $u \in \sfKK$ if and only if
            $\sfB_{h \omega} \pitchfork X / u$ for all $\omega \in \bbOO_{\geq
            n-1}$ by definition. This is equivalent to $v \pitchfork \sfB_{h
            \omega} \backslash X$, thus $\sfKK = {}^\pitchfork \left\{ \sfB_{h
            \omega} \mid \omega \in \bbOO_{\geq n-1} \right\}$ is saturated.
            \item Lastly, let us show that the simplicial face maps $d^i :
            \Delta [n-1] \longrightarrow \Delta [n]$ belong to $\sfKK$. Since
            $X$ is homotopically constant, the structure map $X_{-, n}
            \longrightarrow X_{-, 0}$ is a weak equivalence for all $n \in
            \bbNN$, so by 3-for-2, $X / d^i$ is too, as displayed by the
            following triangle
            \[
                \triangleDdiagram
                    {X_{-, n}}{X_{-, n-1}}{X_{-, 0}.}
                    {X / d^i}{\sim}{\sim}
            \]
            Since $d^i$ is a monomorphism, $X / d^i$ is a fibration by
            condition (1). Finally, $X / d^i$ is an acyclic fibration.
        \end{enumerate}
        Therefore, by \cref{prop:joyal-tierney:2.5}, $\sfKK$ contains all
        anodyne extensions, and by \cref{prop:joyal-tierney:2.5}, $X$ is
        vertically fibrant.

        Since the terminal map $! : X \longrightarrow 1$ is a vertical
        fibration, by \cref{def:vertical-horizontal-model-structures}, for
        $\phi \in \bbOO_{n-1}$, the map $\langle \sfB_{h \phi} \backslash !
        \rangle : X_{h \phi} \longrightarrow 1$ is a Kan fibration, and $X_{h
        \phi}$ is a Kan complex. By \cref{prop:joyal-tierney:3.9}, $X$ is a
        Segal space.

        Lastly, let us show that $X$ is complete. By
        \cref{lemma:joyal-tierney:4.2}, it suffices to show that $\sfI_\phi
        \backslash X$ is an acyclic fibration, for all $\phi \in \bbOO_n$
        Clearly, the enpoint inclusion $\sfI_\phi : h \phi \longrightarrow
        \frakII_\phi$ is a weak equivalence, with the Rezklemma:saturation-rcp-rcpp to homotopy. Since
        $\sfI_\phi$ is a monomorphism, it is a an acyclic cofibration. By
        condition (1), $\sfI_\phi \pitchfork X / \sfB_n$, for all $n \in
        \bbNN$, so by adjunction, $\sfB_n \pitchfork \sfI_\phi \backslash X$,
        and $\sfI_\phi \backslash X$ is a trivial fibration.
        \qedhere
    \end{itemize}
\end{proof}

\begin{theorem}
    [{Generalization of \cite[theorem 4.5]{Joyal2007}}]
    \label{th:joyal-tierney:4.5}
    \begin{enumerate}
        \item The model structure $\SpLambda_{\mathrm{Rezk}}$ is a Bousfield
        localization of the horizontal model structure $\SpLambda_h$
        (\cref{def:vertical-horizontal-model-structures}). In particular, a
        weak equivalence in $\SpLambda_h$ is a Rezk weak equivalence.
        \item An horizontally fibrant space is a complete Segal space if and
        only if it is homotopically constant
        (\cref{def:homotopically-constant}).
    \end{enumerate}
\end{theorem}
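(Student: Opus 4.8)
The plan is to read off the second assertion from \cref{prop:joyal-tierney:4.4}, and to reduce the first assertion to the already-proven \cref{prop:joyal-tierney:4.4} and \cref{lemma:joyal-tierney:4.3} by means of a recognition principle for left Bousfield localizations. Assertion (2) is essentially a restatement of \cref{prop:joyal-tierney:4.4}: that proposition characterizes the complete Segal spaces as the $X \in \SpLambda$ that are simultaneously horizontally fibrant (i.e.\ $X / \sfB_n$ is a fibration for all $n$) and homotopically constant. Hence, once $X$ is assumed horizontally fibrant, being a complete Segal space is equivalent to being homotopically constant, which is exactly (2); no further work is needed.

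For (1), first note that $\SpLambda_h$ and $\SpLambda_{\mathrm{Rezk}}$ are both Cisinski model structures on $\SpLambda$ (\cref{def:cisinski-model-structure}), so they share the same class of cofibrations, namely the monomorphisms. Since a left Bousfield localization preserves the cofibrations and only enlarges the weak equivalences, it suffices to establish the inclusion $W_{\SpLambda_h} \subseteq W_{\SpLambda_{\mathrm{Rezk}}}$. Once this is known, it follows formally that $\SpLambda_{\mathrm{Rezk}}$ is the left Bousfield localization of $\SpLambda_h$ at a set of generating acyclic cofibrations of $\SpLambda_{\mathrm{Rezk}}$, and the ``in particular'' clause is immediate.

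I would prove the inclusion through the following recognition principle: given two model structures on the same category with the same cofibrations, if \condition{a} every $\SpLambda_{\mathrm{Rezk}}$-fibrant object is $\SpLambda_h$-fibrant, and \condition{b} every $\SpLambda_{\mathrm{Rezk}}$-fibration between $\SpLambda_{\mathrm{Rezk}}$-fibrant objects is an $\SpLambda_h$-fibration, then $W_{\SpLambda_h} \subseteq W_{\SpLambda_{\mathrm{Rezk}}}$. Condition \condition{a} is precisely \cref{prop:joyal-tierney:4.4}: every complete Segal space is horizontally fibrant. Condition \condition{b} is supplied by \cref{lemma:joyal-tierney:4.3} specialized to $u = \sfB_n$: if $f$ is a Rezk fibration between complete Segal spaces, then each relative matching map $\langle f / \sfB_n \rangle$ is a fibration in $\PshLambda_\infty$, which by \cref{lemma:reedy-match} (equivalently, by the description of horizontal fibrations in \cref{def:vertical-horizontal-model-structures}) says exactly that $f$ is a horizontal fibration.

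The recognition principle itself rests on the standard fact that an acyclic cofibration is detected by the left lifting property against fibrations with fibrant codomain. Indeed, let $j$ be an $\SpLambda_h$-acyclic cofibration and let $p : E \twoheadrightarrow Z$ be a Rezk fibration with $Z$ a complete Segal space; then $E$ is again Rezk-fibrant, so by \condition{a} both $E$ and $Z$ are horizontally fibrant, and by \condition{b} the map $p$ is a horizontal fibration, whence $j \pitchfork p$. Thus $j$ is a Rezk acyclic cofibration. Since the acyclic fibrations coincide in the two structures (both classes being the maps with the right lifting property against all monomorphisms, \cref{def:trivial-fibration}), a functorial factorization together with the $3$-for-$2$ property promotes this to $W_{\SpLambda_h} \subseteq W_{\SpLambda_{\mathrm{Rezk}}}$, completing (1). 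The main obstacle is the recognition step, and specifically the identification of condition \condition{b} with the content of \cref{lemma:joyal-tierney:4.3} through the relative matching maps $\langle f / \sfB_n \rangle$; once \condition{b} is seen to hold only between fibrant objects, the rest is formal.
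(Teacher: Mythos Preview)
Your proposal is correct and follows essentially the same approach as the paper. The paper proves (2) by invoking \cref{prop:joyal-tierney:4.4} and proves (1) by observing that both structures are Cisinski (hence share cofibrations), then citing Dugger's criterion \cite[corollary A.2]{Dugger2001a} (equivalently \cite[proposition 8.5.4]{Hirschhorn2009}) to reduce the Quillen adjunction $\id : \SpLambda_h \adjunction \SpLambda_{\mathrm{Rezk}} : \id$ to checking that Rezk fibrations between complete Segal spaces are horizontal fibrations, which is \cref{lemma:joyal-tierney:4.3} at $u = \sfB_n$; your ``recognition principle'' is exactly the content of Dugger's criterion, so the two arguments coincide up to whether one names the reference or sketches it.
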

\begin{proof}
    \begin{enumerate}
        \item By \cref{prop:vertical-horizontal-model-structures:cisinski},
        $\SpLambda_v$ and $\SpLambda_h$ are both of Cisinski type. Since
        $\SpLambda_{\mathrm{Rezk}}$ is a left Bousfield localization of
        $\SpLambda_v$, it is also of Cisinski type. In particular,
        $\SpLambda_{\mathrm{Rezk}}$ and $\SpLambda_h$ have the same
        cofibrations, namely the monomorphisms. Thus, in order to prove the
        claim, it is enough to show that the identity functor induces a Quillen
        adjunction $\id : \SpLambda_h \adjunction \SpLambda_{\mathrm{Rezk}} :
        \id$. By a result of Dugger \cite[corollary A.2]{Dugger2001a} (also
        stated in \cite[proposition 8.5.4]{Hirschhorn2009}), it suffices to
        show that $\id$ preserves cofibrations, and Rezk fibrations between
        complete Segal spaces. In both structures, cofibrations are the
        monomorphisms. By \cref{lemma:joyal-tierney:4.3}, for $f : X
        \longrightarrow Y$ a Rezk fibration between complete Segal spaces, the
        matching map $\langle f / \sfB_n \rangle$ is a fibration for all $n \in
        \bbNN$.
        \item Follows from \cref{prop:joyal-tierney:4.4}
        \qedhere
    \end{enumerate}
\end{proof}

\begin{definition}
    Recall from \cite[proposition 1.16]{Joyal2007} the functor $J : \qCat
    \longrightarrow \Kan$ from quasi-categories to Kan complexes, that maps a
    quasi-category to its maximal sub Kan complex. It is right adjoint and
    preserves cofibrations, fibrations, and weak equivalences (from
    $\PshDelta_\infty$).

    For $X \in \PshLambda$, let $\Gamma X \in \SpLambda$ be given by $\Gamma
    X_{-, k} \eqdef X^{J (\Delta [k])}$, where the cotensor is given in
    \cref{def:pshlambda:simplicial-cotensor-mapping-space}.
\end{definition}

\begin{lemma}
    [{Generalization of \cite[lemma 4.8]{Joyal2007}}]
    \label{lemma:joyal-tierney:4.8}
    Let $\catCC$ be a small category, and $F : \PshDelta^\op \longrightarrow
    \Psh\catCC$ be a continuous functor, i.e. mapping colimits in $\PshDelta$
    to limits in $\Psh\catCC$. Then $F \cong G / -$, where $G \in \Sp\catCC$ is
    the restriction of $F$ to $\bbDelta$, i.e. $G_{-, k} \eqdef F \Delta [k]$,
    for $k \in \bbNN$.
\end{lemma}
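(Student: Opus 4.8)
The plan is to identify $F$ with $G / -$ by comparing the two functors on representables and then propagating the comparison to all of $\PshDelta$ using density together with continuity. First I would make $G$ precise. By definition $G_{-, k} = F\Delta[k]$, and a simplicial operator $\alpha : [k] \to [l]$ induces $\Delta\alpha : \Delta[k] \to \Delta[l]$, whence $F(\Delta\alpha) : F\Delta[l] \to F\Delta[k]$; these maps assemble into a functor $\bbDelta^\op \to \Psh\catCC$, i.e. an object $G \in \Sp\catCC$. Equivalently, $G$ is the composite of the (opposite) Yoneda embedding $\bbDelta^\op \hookrightarrow \PshDelta^\op$ with $F$.

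Next I would record that both functors in play are continuous, i.e. send colimits in $\PshDelta$ to limits in $\Psh\catCC$. For $F$ this is exactly the hypothesis. For $G / -$ it follows from \cref{def:box-product}: the functors $- \backslash G$ and $G / -$ are mutually right adjoint, so $G / -$ carries colimits in $\PshDelta$ to limits in $\Psh\catCC$.

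Then I would check that $F$ and $G / -$ agree on $\bbDelta$, naturally. By \cref{lemma:joyal-tierney:easy-computations}~(2) we have $G / \Delta[k] = G_{-, k}$, which is $F\Delta[k]$ by definition of $G$; this identification is natural in $[k] \in \bbDelta$ because on both sides a simplicial operator $\alpha$ acts through $F(\Delta\alpha)$. Thus the restrictions of $F$ and of $G / -$ along $\bbDelta^\op \hookrightarrow \PshDelta^\op$ are canonically isomorphic.

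Finally, to propagate the comparison to all of $\PshDelta$, I would use that every $K \in \PshDelta$ is the canonical colimit $K \cong \colim_{\Delta[k] \to K} \Delta[k]$ of representables indexed by its category of elements. Applying the two continuous functors turns this colimit into a limit, yielding
\[
    F(K)
    \cong \lim_{\Delta[k] \to K} F\Delta[k]
    \cong \lim_{\Delta[k] \to K} \left( G / \Delta[k] \right)
    \cong G / K ,
\]
where the outer isomorphisms are continuity and the middle one is the comparison on representables. The main obstacle is purely bookkeeping: verifying that the isomorphism on representables is genuinely natural and that the colimit/limit comparisons are compatible with morphisms of $K$, so that the resulting isomorphism $F \cong G / -$ is natural. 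Conceptually this is nothing more than the assertion that a limit-preserving functor out of $\PshDelta^\op$ is the right Kan extension of its restriction to $\bbDelta^\op$, which is the dual of the universal property of $\PshDelta$ as the free cocompletion of $\bbDelta$.
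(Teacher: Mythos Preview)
Your proposal is correct and follows essentially the same approach as the paper: verify that $F$ and $G/-$ agree on representables (using \cref{lemma:joyal-tierney:easy-computations}), note both are continuous, and conclude via the universal property of $\PshDelta$ as the free cocompletion of $\bbDelta$. The paper compresses this into two sentences, while you spell out the continuity of $G/-$ and the density argument explicitly, but the underlying idea is identical.
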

\begin{proof}
    For $k \in \bbNN$, we have $G / \Delta [k] = G_{-, k} = F \Delta [k]$, thus
    $G / -$ and $F$ coincide on $\bbDelta^\op$. Since $\bbDelta^\op$ freely
    generates $\PshDelta^\op$ under small limits, we are done.
\end{proof}

\begin{proposition}
    [{Generalization of \cite[proposition 4.10]{Joyal2007}}]
    \label{prop:joyal-tierney:4.10}
    Let $X \in \SpLambda_\infty$ be an $\infty$-algebra. Then $\Gamma X$ is a
    complete Segal space, and there is a canonical acyclic cofibration
    $X^{\mathrm{disc}} \longrightarrow \Gamma X$, thus exhibiting $\Gamma X$ as
    a fibrant replacement of $X^{\mathrm{disc}}$ in
    $\SpLambda_{\mathrm{Rezk}}$.
\end{proposition}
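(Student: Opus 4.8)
The plan is to apply the intrinsic characterisation of complete Segal spaces from \cref{prop:joyal-tierney:4.4}: I will show that $\Gamma X$ is horizontally fibrant and homotopically constant, then produce the comparison map $X^{\mathrm{disc}} \longrightarrow \Gamma X$ and recognise it as an acyclic cofibration, transferring the conclusion to $\SpLambda_{\mathrm{Rezk}}$ via \cref{th:joyal-tierney:4.5}. The computational engine is the identification $\Gamma X / K \cong X^{J K}$: the functor $K \longmapsto X^{J K}$ from $\PshDelta^\op$ to $\PshLambda$ is continuous and agrees with $\Gamma X / -$ on representables, so \cref{lemma:joyal-tierney:4.8} supplies the natural isomorphism. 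I will also use repeatedly that each $J \Delta [k]$ is a contractible Kan complex (the analogue of Joyal's groupoid interval), so that every vertex inclusion $\Delta [0] \longrightarrow J \Delta [k]$ is an acyclic cofibration between Kan complexes and every terminal map $J \Delta [k] \longrightarrow \Delta [0] = J \Delta [0]$ is a weak equivalence.

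For homotopical constancy, by \cref{lemma:homotopically-constant} it suffices to check that $\Gamma X / d$ is a weak equivalence for every $d : [0] \longrightarrow [k]$. Under the isomorphism above this map is $X^{J d}$ with $J d$ a vertex inclusion, hence an acyclic cofibration between Kan complexes, so $X^{J d}$ is an acyclic fibration by \cref{prop:fibrant-cotensor}. For horizontal fibrancy, \cref{def:vertical-horizontal-model-structures} reduces the claim to showing that each relative matching map $\Gamma X / \sfB_n$ is a fibration in $\PshLambda_\infty$; continuity identifies this matching map with $X^{J \sfB_n}$, and since $J$ preserves cofibrations (the monomorphisms), $J \sfB_n : J \partial \Delta [n] \longrightarrow J \Delta [n]$ is a cofibration between Kan complexes, whence $X^{J \sfB_n}$ is a fibration by \cref{prop:fibrant-cotensor}. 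With both conditions verified, \cref{prop:joyal-tierney:4.4} shows $\Gamma X$ is a complete Segal space.

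For the comparison, recall $X^{\mathrm{disc}} = X \boxproduct \Delta [0]$ is constantly $X$ in the simplicial direction (\cref{lemma:joyal-tierney:easy-computations}). The terminal maps $J \Delta [k] \longrightarrow \Delta [0]$ assemble into a map of cosimplicial objects to the constant one and induce $c : X^{\mathrm{disc}} \longrightarrow \Gamma X$ with $c_k : X \longrightarrow X^{J \Delta [k]}$. A vertex $s : \Delta [0] \longrightarrow J \Delta [k]$ yields $X^{s}$ with $X^{s} c_k = \id_X$, so $c_k$ is a split monomorphism; moreover $s$ is an acyclic cofibration between Kan complexes, so $X^{s}$ is an acyclic fibration by \cref{prop:fibrant-cotensor}, and 3-for-2 forces $c_k$ to be a weak equivalence. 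Thus $c$ is a levelwise monomorphism and a levelwise weak equivalence, i.e. an acyclic cofibration in $\SpLambda_h$. By \cref{th:joyal-tierney:4.5} the Rezk structure is a left Bousfield localization of $\SpLambda_h$ with the same cofibrations, so $c$ is also an acyclic cofibration in $\SpLambda_{\mathrm{Rezk}}$; since $\Gamma X$ is a complete Segal space, hence Rezk-fibrant, $c$ exhibits it as a fibrant replacement of $X^{\mathrm{disc}}$.

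The main obstacle is the horizontal fibrancy step, which rests on two points that must be set up with care: the clean identification $\Gamma X / K \cong X^{J K}$, so that the Reedy matching maps literally become cotensors $X^{J \sfB_n}$, and the fact that $J$ carries the boundary inclusions $\sfB_n$ to cofibrations between Kan complexes, so that \cref{prop:fibrant-cotensor} applies. The contractibility of the $J \Delta [k]$, underpinning both homotopical constancy and the weak-equivalence half of the comparison map, is the other input one must secure.
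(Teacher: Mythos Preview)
Your proof is correct and follows essentially the same route as the paper's: identify $\Gamma X/(-)\cong X^{J(-)}$ via \cref{lemma:joyal-tierney:4.8}, use \cref{prop:fibrant-cotensor} to verify the two conditions of \cref{prop:joyal-tierney:4.4}, and build the comparison map level-wise from the terminal maps $J\Delta[k]\to\Delta[0]$, recognising it as a horizontal acyclic cofibration and transporting through \cref{th:joyal-tierney:4.5}. The only cosmetic differences are that you spell out explicitly why $J\sfB_n$ is a cofibration between Kan complexes before invoking \cref{prop:fibrant-cotensor}, and you argue the weak-equivalence half of $c_k$ directly via $X^s$ rather than by first establishing homotopical constancy of $\Gamma X$; the paper does the latter and then quotes it, but the underlying computation is the same.
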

\begin{proof}
    \begin{enumerate}
        \item Note that $X^{J (-)} : \PshDelta^\op \longrightarrow \PshLambda$
        is a continuous functor, thus by \cref{lemma:joyal-tierney:4.8}, there
        exists $G \in \SpLambda$ such that $X^{J (-)} \cong G / -$. In
        particular, for $k \in \bbNN$, we have $G_{-, k} \cong G / \Delta [k]
        \cong \Gamma X_{-, k}$, and therefore, $G \cong \Gamma X$, and $X^{J
        (-)} \cong \Gamma X / -$.

        \item As a consequence, and by \cref{prop:fibrant-cotensor}, $\Gamma X
        / \sfB_k$ is a fibration, for all $k \in \bbNN$. Likewise, the map $d :
        \Delta [0] \longrightarrow \Delta [n]$ is an acyclic cofibration in
        $\PshDelta_{\mathrm{Quillen}}$. Thereofre $\Gamma X / d : \Gamma X_{-,
        n} \longrightarrow \Gamma X_{-, 0}$ is a weak equivalence, and by
        \cref{lemma:homotopically-constant}, $\Gamma X$ is homotopically
        constant. By \cref{prop:joyal-tierney:4.4}, it is a complete Segal
        space.

        \item Note that $\Gamma X_{-, 0} \cong \Gamma X / \Delta [0] \cong X^{J
        (\Delta [0])} = X$, so we have an isomorphism $f : X \longrightarrow
        \Gamma X_{-, 0}$. We how extend $f$ to a morphism $\gamma :
        X^{\mathrm{disc}} \longrightarrow \Gamma X$. First, let $\gamma_0
        \eqdef f : X^{\mathrm{disc}}_{-, 0} = X \longrightarrow \Gamma X_{-,
        0}$. Next, note that in $\bbDelta$, the terminal map $s_k : [k]
        \longrightarrow [0]$ is a retraction of any iterated coface map $d :
        [0] \longrightarrow [k]$. Thus, in addition to being a weak equivalence
        (as $\Gamma X$ is homotopically constant), the structure map $s_k :
        \Gamma X_{-, 0} \longrightarrow \Gamma X_{-, k}$ is a section of $d :
        \Gamma X_{-, k} \longrightarrow \Gamma X_{-, 0}$, thus injective.
        Letting $\gamma_k \eqdef s_k f : X^{\mathrm{disc}}_{-, k} = X
        \longrightarrow \Gamma X_{-, k}$ gives rise to the desired map $\gamma
        : X^{\mathrm{disc}} \longrightarrow \Gamma X$ which by construction is
        a horizontal acyclic cofibration. By \cref{th:joyal-tierney:4.5}, it is
        a Rezk acyclic cofibration.
        \qedhere
    \end{enumerate}
\end{proof}

\begin{theorem}
    [{Generalization of \cite[proposition 4.7 and theorem 4.11]{Joyal2007}}]
    \label{th:joyal-tierney:4.11}
    We have a Quillen equivalence $(-)^\mathrm{disc} : \PshLambda_\infty
    \adjunction \SpLambda_{\mathrm{Rezk}} : (-)_{-, 0}$ (see
    \cref{sec:joyal-tierney-calculus}).
\end{theorem}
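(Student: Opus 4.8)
The plan is to prove the statement in two stages: first that the pair $(-)^{\mathrm{disc}} \dashv (-)_{-,0}$ is a Quillen adjunction (the analogue of \cite[proposition 4.7]{Joyal2007}), and then that it is a Quillen equivalence. For the Quillen pair I would factor it through the horizontal model structure $\SpLambda_h$. Under the identification $\SpLambda \cong \PshLambda^{\bbDelta^\op}$ of \cref{def:vertical-horizontal-model-structures}, the space $X^{\mathrm{disc}}$ is the constant $\bbDelta^\op$-diagram at $X$, while $(-)_{-,0} = -/\Delta[0]$ is evaluation at $[0]$; since $\partial \Delta[0] = \emptyset$, the relative matching map of a map $f$ at $[0]$ is exactly $f_{-,0}$. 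Hence $(-)_{-,0}$ carries horizontal fibrations and acyclic fibrations to fibrations and acyclic fibrations of $\PshLambda_\infty$, so that $(-)^{\mathrm{disc}} : \PshLambda_\infty \adjunction \SpLambda_h : (-)_{-,0}$ is Quillen. Composing with the left Quillen identity functor $\SpLambda_h \to \SpLambda_{\mathrm{Rezk}}$ of \cref{th:joyal-tierney:4.5} then yields the desired Quillen adjunction into the Rezk structure.

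For the equivalence I would invoke the standard criterion for Quillen equivalences (e.g.\ the dual of \cite[corollary 1.3.16]{Hovey1999}): it suffices that (a) the right adjoint $(-)_{-,0}$ reflects weak equivalences between fibrant objects, and (b) the derived unit is a weak equivalence at every cofibrant object. For (a), let $f : X \to Y$ be a map of complete Segal spaces with $f_{-,0}$ a weak equivalence in $\PshLambda_\infty$. Complete Segal spaces are fibrant in $\SpLambda_{\mathrm{Rezk}}$, and since this is a left Bousfield localization of $\SpLambda_h$ (\cref{th:joyal-tierney:4.5}), a map between them is a Rezk weak equivalence if and only if it is a horizontal one. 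As complete Segal spaces are in particular Segal spaces, hence vertically fibrant, \cref{prop:joyal-tierney:2.9} identifies horizontal weak equivalences between them with exactly those maps whose first row is a weak equivalence. Thus $f$ is a Rezk weak equivalence, proving (a).

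For (b), note that every object of $\PshLambda_\infty$ is cofibrant and that the unit $\eta_X : X \to (X^{\mathrm{disc}})_{-,0}$ is an isomorphism, as $(X^{\mathrm{disc}})_{-,0} = X$. When $X$ is an $\infty$-algebra, \cref{prop:joyal-tierney:4.10} supplies a fibrant replacement $X^{\mathrm{disc}} \to \Gamma X$ of $X^{\mathrm{disc}}$ in $\SpLambda_{\mathrm{Rezk}}$; applying $(-)_{-,0}$ and using the isomorphism $\Gamma X_{-,0} \cong X^{J(\Delta[0])} = X$ recorded in the proof of \cref{prop:joyal-tierney:4.10} exhibits the derived unit as an isomorphism. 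For a general cofibrant $X$, I would choose an acyclic cofibration $X \to X'$ with $X'$ an $\infty$-algebra; then $\Gamma X'$ is a fibrant replacement of $X^{\mathrm{disc}}$, and the derived unit factors, up to the isomorphisms above, as $X \to X' \cong \Gamma X'_{-,0}$, which is a weak equivalence. Together with (a) this completes the proof.

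I expect the main obstacle to be step (a): correctly threading the comparison between Rezk, horizontal, and first-row weak equivalences. It rests on the fact that complete Segal spaces are simultaneously Rezk-fibrant, horizontally fibrant, and vertically fibrant, so that the Bousfield-localization principle (weak equivalences between fibrant objects are already detected before localizing) can be combined with \cref{prop:joyal-tierney:2.9}. By contrast, the derived-unit computation in (b) is essentially the formal identity $\Gamma X_{-,0} \cong X$ already established in \cref{prop:joyal-tierney:4.10}, and the Quillen-pair verification reduces to the Reedy matching map at $[0]$.
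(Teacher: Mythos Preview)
Your proof is correct and uses the same essential ingredients as the paper (\cref{th:joyal-tierney:4.5}, \cref{prop:joyal-tierney:4.10}, and homotopical constancy of complete Segal spaces), but the organization differs in two minor ways. First, for the Quillen pair the paper works with the \emph{left} adjoint: $(-)^{\mathrm{disc}}$ obviously preserves monomorphisms, and it sends weak equivalences to row-wise (hence horizontal, hence by \cref{th:joyal-tierney:4.5} Rezk) weak equivalences; you instead verify the right adjoint is Quillen from $\SpLambda_h$ via the matching map at $[0]$ and then localize. Second, for the equivalence the paper checks both the derived counit (using that the counit $\epsilon : (X_{-,0})^{\mathrm{disc}} \to X$ is level-wise the structure map $X_{-,0} \to X_{-,n}$, a weak equivalence by homotopical constancy) and the derived unit (via $\Gamma$), whereas you replace the counit computation by the reflection criterion of \cite[corollary 1.3.16]{Hovey1999}, invoking \cref{prop:joyal-tierney:2.9}. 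Your reflection argument and the paper's counit argument are formally equivalent unwindings of the same fact; your route has the slight advantage of making explicit why the derived unit check for general cofibrant $X$ reduces to the $\infty$-algebra case, which the paper leaves implicit.
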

\begin{proof}
    \begin{enumerate}
        \item We first shows that the adjunction is Quillen. Clearly,
        $(-)^{\mathrm{disc}}$ preserves monomorphisms and maps weak
        equivalences to horizontal weak equivalences, which by
        \cref{th:joyal-tierney:4.5} are Rezk weak equivalences. Therefore,
        $(-)^{\mathrm{disac}}$ is left Quillen functor.

        \item We show that for a complete Segal space $X \in \SpLambda$, the map
        \[
            (Q X_{-, 0})^{\mathrm{disc}}
            \xrightarrow{q^{\mathrm{disc}}} (X_{-, 0})^{\mathrm{disc}}
            \stackrel{\epsilon}{\longrightarrow} X
        \]
        is a Rezk weak equivalence, where $q : Q X_{-, 0} \longrightarrow X_{-,
        0}$ a cofibrant replacement of $X_{-, 0}$. First, since all objects are
        cofibrant in $\PshLambda_\infty$, whoose $q$ to be an identity. Next,
        by \cref{th:joyal-tierney:4.5}, it suffices to show that $\epsilon$ is
        a weak equivalence in $\SpLambda_h$, i.e. that $\epsilon_{-, n} :
        (X_{-, 0})^{\mathrm{disc}}_{-, n} = X_{-, 0} \longrightarrow X_{-, n}$
        is a weak equivalence. Clearly, $\epsilon_{-, n}$ is induced by the
        terminal map $[n] \longrightarrow [0]$ in $\bbDelta$, and thus is a
        weak equivalence since $X$ is homotopically constant by
        \cref{prop:joyal-tierney:4.4}.

        \item We show that for an $\infty$-algebra $X \in \PshLambda_\infty$,
        the map
        \[
            X
            \stackrel{\cong}{\longrightarrow} (X^{\mathrm{disc}})_{-, 0}
            \xrightarrow{r_{-, 0}} (R X^{\mathrm{disc}})_{-, 0}
        \]
        is a Rezk weak equivalence, where $r : X^{\mathrm{disc}}
        \longrightarrow R X^{\mathrm{disc}}$ is a fibrant replacement of
        $X^{\mathrm{disc}}$. Choosing it to be $\gamma : X^{\mathrm{disc}}
        \longrightarrow \Gamma X$ (\cref{prop:joyal-tierney:4.10}) concludes
        the proof.
        \qedhere
    \end{enumerate}
\end{proof}

\printindex

\pagebreak

\bibliographystyle{alpha}
\bibliography{bibliography}

% \addtocontents{toc}{\protect\end{multicols}}
\end{document}